\newtheorem{theorem}{Theorem}[section]
\newtheorem{thm}[theorem]{Theorem}
\newtheorem{prop}[theorem]{Proposition} 
\newtheorem{cor}[theorem]{Corollary} 
\newtheorem{corollary}[theorem]{Corollary} 
\newtheorem{properties}[theorem]{Properties} 
\newtheorem{consequence}[theorem]{Consequence}
\theoremstyle{definition}
\newtheorem{definition}[theorem]{Definition}
\newtheorem{problem}[theorem]{Problem} 
\newtheorem{problems}[theorem]{Problems}
\theoremstyle{remark}
\newtheorem{remark}[theorem]{Remark} 
\newtheorem{remarks}[theorem]{Remarks} 
\newtheorem{conjectur}[theorem]{Conjectures} 
\newtheorem{question}[theorem]{Question}
\DeclareMathOperator{\cf}{cf}
\DeclareMathOperator{\COV}{COV}
\newcommand{\brfr}{$\hspace{0 pt}$} 
\begin{document}
\pagespan{3}{}
\keywords{$(\lambda ,\mu )$-regular, $\kappa $-decomposable, 
$\lambda $-descendingly incomplete
ultrafilters; products, sums of ultrafilters; cofinalities, cardinalities of ultrapowers.
}
\subjclass[msc2000]{03E20, 03E55; 03E65, 03E75, 03E35, 03C95,
54D20}



\title[REGULAR ULTRAFILTERS IN ZFC]{ MORE RESULTS  ON REGULAR \\ ULTRAFILTERS IN ZFC }


\author[Paolo Lipparini]{Paolo Lipparini}
\address
{
Dipartimento di Matematica,
Viale della Cicerca Scientifica,
II Universit\`a di Roma (Tor Vergata),
I-00133 ROME
ITALY
}
\begin{abstract}

We prove, in ZFC alone, some new results on regularity and
decomposability of ultrafilters; among them:

(a)
If $m\geq 1$ and
the ultrafilter  $D$ is
$(\beth_m{(\lambda  ^{+n}}),\beth_m{(\lambda^{+n}}))$-regular then $D$ is
$\kappa  $-decomposable for some $\kappa  $ with
$\lambda  \leq\kappa  \leq2^\lambda  $ (Theorem \ref{4.3}(a$'$)).

(b) If  $\lambda $ is a strong limit cardinal and $D$ is $(\beth_m(\lambda^{+n}
), \beth_m(\lambda^{+n} ))$-regular then either $D$ is $(\cf \lambda, \cf \lambda )$-regular or there are arbitrarily large $\kappa<\lambda $ for
which $D$ is $\kappa $-decomposable (Theorem \ref{4.3}(b)).

(c) Suppose that $\lambda $ is singular, $\lambda<\kappa $, $\cf\kappa
\not=\cf\lambda $ and  $D$ is $(\lambda^+,\kappa )$-regular. Then:

(i)
$D$ is either
 $(\cf\lambda  ,{\rm cf}\lambda  )$-regular, or
$(\lambda',\kappa  )$-regular for some $\lambda'<\lambda $ (Theorem
\ref{2.2}).

(ii) If $\kappa $ is regular then $D$ is either
$(\lambda,\kappa )$-regular, or
$(\omega ,\kappa')$-regular for every $\kappa'<\kappa $ (Corollary
\ref{6.4}).

(iii) If
either
(1) $\lambda$ is a strong limit cardinal and $\lambda^{<\lambda
}<2^\kappa $, or
 (2) $\lambda^{<\lambda }<\kappa $,
  then  $D$ is either $\lambda$-decomposable, or
$(\lambda',\kappa )$-regular for some $\lambda'<\lambda $ (Theorem
\ref{6.5}).

(d) If $\lambda$ is singular,  $D$ is $(\mu,{\rm cf}\lambda )$-regular and
there are  arbitrarily
large $\nu<\lambda $ for which $D$ is $\nu $-decomposable
then
$D$ is $\kappa$-decomposable for some $\kappa$ with
$\lambda\leq\kappa\leq\lambda^{<\mu }$ (Theorem \ref{5.1}; actually,
our result is stronger and involves a covering number).

(e) $D\times D'$ is $(\lambda,\mu )$-regular if and only if
 there is a $\nu$ such that $D$
is $(\nu,\mu )$-regular and $D'$ is $(\lambda,\nu')$-regular for all $\nu'<\nu$
(Proposition \ref{7.1}).

We also list some problems, and furnish applications to
topological spaces and to extended logics  (Corollaries \ref{4.6} and
\ref{4.8}).

\end{abstract}
\maketitle                   






%
%
%
%
%
%
%
%
%
%
%
%

\section{Introduction}
\label{intro}

The notion of a $(\lambda ,\mu )$-regular
 ultrafilter has proven particularly useful
in Model Theory, Set Theory and even General Topology,
see e.g.
\cite{BS, CK, BF, KSV, Lp1, lpndj}; \cite{CN, KM, Ka, Fo, DN};
\cite{KV, Ca, Ko, GF, Sa, Lp2, Lp8}.
 
In this paper we are concerned with theorems of the form ``every $(\lambda,\mu)$-regular ultrafilter is $(\lambda',\mu')$-regular'': many results are
known in
this direction,
but most of them rely on assumptions not decided by ZFC (Zermelo-Fraenkel Set
Theory with Choice):
see \cite{Do, DD} and \cite{FMS,Fo}, \cite[p. 427-431]{Wo} for recent advances.

However, some theorems hold without special assumptions: in this paper we prove
some new results of this kind; moreover we furnish some simplified proofs
or
slight improvements of known results. We (try to) collect
all known results valid in ZFC alone (that is, that do not refer to large
cardinals, inner models or other special assumptions of Set Theory).
We also state some problems, and deal with the closely related notion of
$\kappa$-decomposability.

Apart from the results we prove, and their applications,
we hope to convince the reader that the study of $(\lambda ,\mu )$-regular
ultrafilters in ZFC has some interest in itself, and that many theorems are
still to be discovered.
Even those interested solely in independence
results might find some delight in trying to measure
the exact consistency strength of the failure of some
natural (but false) generalizations of the results provable in ZFC. See, e.g.,
Problems \ref{2.8},  \ref{5.6}, Remarks \ref{2.4} \ref{5.5} and
the comments after Theorems \ref{2.13ex2.11},  \ref{2.15ex2.13},
Question  \ref{3.1}, Problems  \ref{5.2},  \ref{6.8},
Proposition  \ref{6.7} and Definition  \ref{6.9}.

Let us recall the basic notions (see \cite{CK, Lp1, CN}, or \cite{KM} 
for other
unexplained notions).

$S_\mu(X)$ denotes the set of all subsets of $X$
of cardinality $<\mu $,
and $S(X)$ denotes the set of all subsets of $X$.

We shall give the definition of
$(\mu ,\lambda )$-regularity in several equivalent forms.

An ultrafilter $D$ is
{\it $(\mu ,\lambda )$-regular }
if and only if

\smallskip

(FORM I) There is a family of $\lambda $ members of $D$
such that the intersection of any $\mu $ members of the family 
is empty.

\smallskip

The above notion is due to \cite{Kei},
who gave it in the following equivalent form: an ultrafilter $D$
over $I$
is  $(\mu ,\lambda )$-regular
if and only if

\smallskip

(FORM II) There is  a function $f:I\to S_\mu(\lambda )$
such that, for every $\alpha\in\lambda $,
$\{i\in I|\alpha\in f(i) \}\in D $.

\smallskip

The two forma are indeed equivalent. If $f$ is a function as given by
Form II, then define, for $ \alpha \in \lambda $,
$X_ \alpha =\{i\in I|\alpha\in f(i) \}$.
$(X_ \alpha)_{\alpha \in \lambda} $
is then a family witnessing $( \mu, \lambda )$-regularity as given
by Form I (cf. \cite[Lemma 1.2]{Kei}).

Conversely, if 
$(X_ \alpha)_{\alpha \in \lambda} $
is a family witnessing $( \mu, \lambda )$-regularity as given
by Form I, define
$f:I\to S_\mu(\lambda )$
by
$f(i)= \{ \alpha \in \lambda | i \in X_ \alpha \} $.
Then $f$  
witnesses $( \mu, \lambda )$-regularity as given
by Form II.

There is a useful (apparently weaker but actually equivalent) 
version of Form II.

\smallskip

(FORM II$'$) There is  a function $f:I\to S_\mu(\lambda )$
such that 
$|\{\alpha \in \lambda |  \{i\in I|\alpha\in f(i) \}\in D \}|=\lambda   $.

\smallskip

Form II trivially implies Form II$'$; conversely if Form II$'$ holds,
let
$X=\{\alpha \in \lambda |  \{i\in I|\alpha\in f(i) \}\in D \}$,
and let $f'(i)=f(i)\cap X$. Since $|X|=\lambda $, then
$\langle S_\mu(X), \subseteq \rangle $
and
$\langle S_\mu(\lambda ), \subseteq \rangle $
are isomorphic; thus, $f'$ composed with an isomorphism
witnesses the $(\mu ,\lambda )$-regularity
of $D$ as given by Form II.

It is interesting to translate the above equivalent conditions in terms of the
ultrapower
of
$ S_\mu(\lambda )$
taken modulo $D$.
It is immediate (from Forms II, II$'$) to see that an ultrafilter $D$
is $(\mu, \lambda)$-regular
if and only if

\smallskip

(FORM III)  In the ultrapower
$\prod_D \langle  S_\mu(\lambda ),\subseteq, \{\alpha \}
 \rangle_{\alpha\in\lambda }   $
there is an element $x$ such that
$d(\{\alpha \} )\subseteq x$,
for every $\alpha\in\lambda $.

\smallskip
Equivalently, 
\smallskip

(FORM III$'$)  In the ultrapower
$\prod_D \langle  S_\mu(\lambda ),\subseteq, \{\alpha \}
 \rangle_{\alpha\in\lambda }   $
there is an element $x$ such that
$|\{ \alpha\in\lambda | \brfr d(\{\alpha \} ) \subseteq x \}| = \lambda $.

\smallskip

Here and in what follows $d$ denotes the
{\it natural embedding} \cite{CK}.

The definitions given according to Forms III, III$'$
are particularly useful for two reasons: first,
we can work in a model of the form
$\langle S_\mu(\lambda ), \subseteq, \dots \rangle $
and freely use \L o\v s Theorem.
 Second, and more important,
these reformulations allow us to translate arguments concerning
$(\mu,\lambda )$-regularity  of ultrafilters into results about models of the above kind.
This aspect will play no role in the present paper, but we hope
that most results presented here can be generalized to this extended
setting (Problem  \ref{8.4}).
The whole matter is described in detail in \cite[Section 0]{Lp1},
and applications are given in  \cite{Lp1, Lp3, Lp4} (in some of those references
the order of $\lambda $ and $\mu $ is exchanged). See also \cite[Theorem 2]{Lp5}.

In the above definitions we assume that $\mu $ and $\lambda $ are infinite
cardinals. The notion of an $(\alpha,\mu  )$-regular
ultrafilter can be defined even for $\alpha $ an ordinal \cite{BK}, and some results
are indeed theorems in ZFC \cite{Ta}, but we shall not deal with this generalized
notion here. See also \cite[Corollary 5]{Lp5}.

We now briefly discuss two notions closely related to
$(\mu,\lambda )$-regularity:
$\lambda $-descending incompleteness and $\lambda$-decomposability. It turns
out that $\lambda $-descending incompleteness is equivalent to $({\rm
cf}\lambda,{\rm cf}\lambda )$-regularity, so that it is nothing but a
reformulation of $(\lambda,\lambda )$-regularity (for $\lambda$  regular).
However, it is useful since it can be defined in terms of the ultrapower of a
linear order (rather than of the partial order
$ S_\lambda (\lambda )$).

As far as $\lambda$-decomposability is concerned, it is equivalent to
$(\lambda,\lambda )$-regularity for $\lambda$ regular, but it  is a stronger
notion
for $\lambda$ singular. Essentially, $D$ is $\lambda$-decomposable if and only if some
quotient of $D$ is uniform over $\lambda$. The main point in applications of
decomposability is that,
for essentially all purposes,
 it is enough to
consider uniform ultrafilters, and  any uniform ultrafilter {\it must} be uniform on
some set: from the cardinality of such a set we can get information about
regularity properties of the ultrafilter; see Remark  \ref{1.5}(b) below   and, e.g.,
the proofs of
Proposition  \ref{3.3}, Theorem  \ref{5.1}, and Corollary
\ref{5.3}.

Now for the definitions:
$D$ is {\it $\lambda $-descendingly incomplete}
if and only if there is a decreasing sequence $(X_\alpha )_{ \alpha\in\lambda} $ of sets in
$D$ with empty intersection. In terms of ultrapowers, $D$ is
$\lambda $-descendingly incomplete
if and only if in $\prod_D \langle\lambda,< \rangle  $
there is an element $x$ such that $d(\alpha)<x$,
for every $\alpha\in\lambda$.

An ultrafilter $D$ over $I$ is said to be {\it uniform}
if and only if $|X|=|I|$ for every $X\in D$. It is enough to consider uniform ultrafilters
because, were $D$ not uniform, it could be replaced by $D|X$, with $X$ a set
in $D$ of minimal cardinality.  An ultrafilter  $D$ over $I$ is {\it principal}
if and only if there is $i\in I$
such that, for every $X\subseteq I$,
$X\in D$ if and only if $i\in X$. Principal ultrafilters are the trivial ones:
if $D$ is principal and uniform, then $|I|=1$.

If $D$ is over $I$, $D$ is $\lambda $-{\it decomposable }
if and only if there is a partition of $I$
into $\lambda $ classes, the union of
$<\lambda $ classes of which never belongs to $D$.
A partition as above will be called a $\lambda$-{\it decomposition} (of $D$).
If $\Pi$ is 
a partition of $I$, we say
that $\Pi$
{\it has $\kappa $ classes modulo $D$} if and only if $\kappa $ is the least cardinal for
which there is $X\in D$ such that $\Pi$ restricted to $X$
has $\kappa $ classes. Notice that, if this is the case, then $\Pi$ induces a
$\kappa $-decomposition of $D$:
just consider $\Pi'=\Pi_{| X} \cup \{I \setminus X\} $.
It is easy to see that $D$ is $\lambda $-decomposable  if and only if
there is a function $f:I\to \lambda $ such that whenever $X \subseteq \lambda $
and $|X|< \lambda $  then $f ^{-1}(X)\not\in D $. Such an $f$
will be called a $\lambda$-{\it decomposition}, too.
Notice that every ultrafilter is $1$-decomposable, and no ultrafilter is
$m$-decomposable for $1<m<\omega $.

If $D$ is over $I$ and $D'$ is over $I'$, then $D'\leq D$
in the {\it Rudin Keisler} (pre-)order means that
there is  a surjection  $f:I\to I'$ such that $X\in D'$
if and only if $f^{-1}(X)\in D $.
In the above situation, some authors say that
$D'$ is a
\emph{quotient} or a 
\emph{projection} of $D$.

We now recall some facts about the above notions;
most of these facts are trivial or easy, but it is hard to find all of them
collected in a single place
(most of them can be found in  \cite[Section 4]{DJK}).

\begin{properties}
\label{1.1}
Assume that $ \lambda $, $\mu$ and  $ \kappa $ are infinite cardinals.  

(i)
 $(\mu,\lambda)$-regularity is preserved by making $\mu$ larger or $\lambda$
smaller.

(ii) If $D'$ is
 $(\mu,\lambda)$-regular
and $D'\leq D$ then $D$ is
 $(\mu,\lambda)$-regular.

(iii) $D$ is $\lambda$-decomposable
if and only if there is a $D'$ which is uniform on $\lambda$ and
$\leq D$. In particular, every ultrafilter uniform over $ \lambda $
is  $ \lambda $-decomposable.

(iv) $D$ is $\lambda$-descendingly incomplete if and only if it is
$\cf \lambda$-descendingly incomplete.

(v) Every
$({\rm cf}\lambda, {\rm
cf}\lambda)$-regular ultrafilter is
 $(\lambda,\lambda)$-regular.

(vi) Every ultrafilter uniform on $\lambda$ is
$({\rm cf}\lambda, {\rm
cf}\lambda)$-regular
and  $(\lambda,\lambda)$-regular.

(vii) Every $\lambda$-decomposable ultrafilter is
$({\rm cf}\lambda, {\rm
cf}\lambda)$-regular
and  $(\lambda,\lambda)$-regular.

(viii) If $\lambda $ is regular, then every
 $(\lambda,\lambda)$-regular ultrafilter is
$\lambda$-decomposable.

(ix) If $\lambda $ is regular, then an ultrafilter is
 $\lambda$-decomposable
if and only if it is
 $\lambda$-descendingly incomplete.

(x) If $D'$ is $\lambda$-decomposable
and $D'\leq D$ then
 $D$ is $\lambda$-decomposable.

(xi) In particular,
if $\lambda $ is regular, then $D$ is
 $(\lambda,\lambda)$-regular
if and only if $D$ is
 $\lambda$-descendingly incomplete, if and only if
$D$ is $\lambda$-decomposable,
if and only if there is a $D'$ which is uniform over $\lambda$ and
$\leq D$.

(xii) If $D$ is  $(\mu,\lambda)$-regular, $\kappa $ is regular, and
$\mu\leq\kappa\leq\lambda $ then $D$ is $\kappa $-decomposable.

(xiii) If $\mu> \lambda $ then every ultrafilter is $(\mu,\lambda)$-regular.
\end{properties}

\begin{proof}
A proof of (iv) can be found, e.g., in \cite[p. 198--199]{CN}; (v) and (vi) 
come
from \cite[ Lemma 1.3(iv)(iii)]{Kei}; (vii) is immediate from (iii), (vi) and (ii).
See e.g. \cite[p. 179]{KM} for a proof of (viii).
(xi) follows from (iii), (vii), (viii) and (ix). (xii) follows from (i) and
(viii). All other statements are trivial. \end{proof}

As a consequence of  \ref{1.1}, most results on regularity of
ultrafilters have many
equivalent reformulations. For example:

\begin{consequence}
\label{1.2}
If $\kappa $ is a regular cardinal, and
$\mu,\lambda $ are cardinals, then the following are equivalent:

(a) Every ultrafilter uniform on $\kappa $ is $(\mu,\lambda )$-regular.

(b) Every  $\kappa $-decomposable ultrafilter  is $(\mu,\lambda )$-regular.

(c) Every  $(\kappa,\kappa ) $-regular ultrafilter  is $(\mu,\lambda )$-regular.
\end{consequence}

\begin{proof}
The equivalence of (b) and (c) is immediate from  \ref{1.1}(xi),
 since $\kappa $ is assumed to be regular.
 (c)$\Rightarrow  $(a) follows from  \ref{1.1}(vi). Finally, if $D$
 is $\kappa $-decomposable, then by  \ref{1.1}(iii) there is $D'\leq
D$,
 $D'$ uniform on $\kappa$; if (a) holds then $D'$ is
$(\mu,\lambda )$-regular, and $D$, too, is $(\mu,\lambda )$-regular
by  \ref{1.1}(ii).
Thus, (a)$\Rightarrow $(b). \end{proof}

Many results on regularity of ultrafilters have the
form described in   \ref{1.2}, and are usually stated as
in clause (a). However, we believe that clause (c) is
the most convenient way to state the results. Formally, (c) is more natural in
the sense that involves just one notion, regularity, rather than two notions,
regularity and uniformity (or decomposability). For example, a classical result
(see Theorem \ref{2.1}(b)) states, when expressed as in clause (a), 
that

\smallskip

(*) every uniform ultrafilter over $\kappa^+$ is $(\kappa,\kappa )$-regular.

\smallskip

If we state this result as in clause (c), that is

\smallskip

(**) every $(\kappa^+,\kappa^+ )$-regular ultrafilter is
$(\kappa,\kappa )$-regular,

\smallskip

\noindent we immediately get that  every $(\kappa^{++} ,\kappa^{++}  )$-regular ultrafilter is
$(\kappa,\kappa )$-regular, a corollary which is not that obvious, if we keep
the theorem in the form (*).

Moreover, in many
 applications, (c) is what is really used (see
Corollaries \ref{4.6} and
\ref{4.8}, as far as this paper is concerned).

The main advantage of clause (c), however, is that
it naturally lends itself to generalizations. We can take a known result in the
form given by (c): every  $(\kappa,\kappa ) $-regular ultrafilter  is
$(\mu,\lambda )$-regular, and try to see whether it generalizes to: every
$(\kappa,\kappa' ) $-regular ultrafilter  is $(\mu,\lambda' )$-regular, for
appropriate $\kappa'\geq\kappa $ and $ \lambda '\geq \lambda  $.

It turns out that usually such a generalized statement holds, and many examples
are provided in the present paper: the statements of
Theorems  \ref{2.2},  \ref{2.13ex2.11}(ii), \ref{6.5}(a), and Proposition \ref{7.3}
 have all been devised by applying the above described pattern (there are more
possibilities: see Conjecture  \ref{2.16ex2.14}, and Problem  
\ref{2.20ex2.18}(b)).

The following result has a very easy proof (for example, it is the easy part of
\cite[Theorem 1.3]{BK}), but it has many interesting consequences.

\begin{prop}
\label{1.3}
If $\lambda $ is regular and the ultrafilter $D$
is $(\lambda,\kappa )$-regular, then ${\rm cf}(\prod_D \langle
\lambda,<\rangle)>\kappa $.
\end{prop}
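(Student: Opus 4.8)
The plan is to argue by contradiction: assume $\cf(\prod_D \langle \lambda, <\rangle) \le \kappa$ and produce, from a putative cofinal family of size $\le \kappa$, a single element of the ultrapower lying strictly above every member of that family. The only tool I need is the regularity of $D$ in Form II: since $D$ is $(\lambda,\kappa)$-regular there is a function $f : I \to S_\lambda(\kappa)$ such that, for each $\alpha \in \kappa$, the set $A_\alpha = \{i \in I : \alpha \in f(i)\}$ belongs to $D$. Thus each $\alpha < \kappa$ is seen by $D$-almost every coordinate, while each coordinate $i$ only sees fewer than $\lambda$ of the indices $\alpha$.

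Suppose then that $\{[g_\alpha]_D : \alpha < \kappa\}$ were cofinal in the ultrapower, where each $g_\alpha : I \to \lambda$ (if the cofinality is strictly below $\kappa$ I simply repeat entries so as to index the family by $\kappa$). Define $h : I \to \lambda$ by $h(i) = \sup\{ g_\alpha(i) : \alpha \in f(i)\} + 1$. The crucial point is that $h$ really does map into $\lambda$: for fixed $i$ the set $\{g_\alpha(i) : \alpha \in f(i)\}$ is a set of fewer than $\lambda$ ordinals below $\lambda$, and since $\lambda$ is regular such a set is bounded below $\lambda$, so the supremum and its successor remain below $\lambda$. Now fix any $\alpha < \kappa$. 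For every $i \in A_\alpha$ we have $\alpha \in f(i)$, whence $g_\alpha(i) < h(i)$ directly from the definition of $h$; since $A_\alpha \in D$, it follows by \L o\v s's Theorem that $[g_\alpha]_D < [h]_D$. Hence $[h]_D$ strictly dominates every member of the supposedly cofinal family, the desired contradiction. Therefore no family of size $\le \kappa$ can be cofinal, and $\cf(\prod_D \langle \lambda, <\rangle) > \kappa$.

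The only genuine obstacle is the boundedness step, and it is exactly there that the hypothesis ``$\lambda$ regular'' is consumed: were $\lambda$ singular, a fiber $f(i)$ of size $<\lambda$ could carry an unbounded set of values and $h(i)$ would fail to land in $\lambda$. The matching between ``each coordinate sees fewer than $\lambda$ indices'' (from $f(i)\in S_\lambda(\kappa)$) and ``each index is seen on a $D$-large set'' (from $A_\alpha\in D$) is precisely the content of Form II, so once that reformulation is in hand the verification is routine and requires no appeal to Properties \ref{1.1}.
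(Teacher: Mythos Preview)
Your proof is correct and is exactly the standard argument (the ``easy part'' of \cite[Theorem 1.3]{BK} that the paper cites rather than reproduces): use a Form~II witness to dominate any putative $\kappa$-sized cofinal family coordinatewise, with regularity of $\lambda$ ensuring the diagonal function lands in $\lambda$. There is nothing to add.
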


The following cardinality result has many consequences, too. It is just a
particular case of \cite[Theorem 2.1]{Kei}.

\begin{prop}
\label{1.4}
If the ultrafilter $D$ is
$(\mu,\lambda )$-regular then $|\prod_D 2^{<\mu }|\geq2^\lambda $.
\end{prop}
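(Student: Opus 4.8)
The plan is to exhibit an explicit injection from $S(\lambda )$ into the ultrapower $\prod_D 2^{<\mu }$, which immediately yields $|\prod_D 2^{<\mu }|\geq |S(\lambda )|=2^\lambda $. Let $D$ be over $I$. Since $D$ is $(\mu ,\lambda )$-regular, Form II provides a function $f:I\to S_\mu(\lambda )$ such that $\{i\in I\mid \alpha \in f(i)\}\in D$ for every $\alpha \in \lambda $. Because $|f(i)|<\mu $ for each $i$, the order type $\eta_i=\mathrm{otp}(f(i))$ of $f(i)$ (under the usual ordering of ordinals) satisfies $\eta_i<\mu $, so we may fix, for each $i\in I$, the increasing enumeration $f(i)=\{\beta^i_\xi\mid \xi<\eta_i\}$.

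Now, for each $A\subseteq \lambda $, define $h_A\in (2^{<\mu })^I$ by letting $h_A(i)\colon \eta_i\to 2$ be the characteristic function of $A$ read off along this enumeration; that is, $h_A(i)(\xi)=1$ if $\beta^i_\xi\in A$ and $h_A(i)(\xi)=0$ otherwise. Since $\eta_i<\mu $, each $h_A(i)$ is a binary sequence of length $<\mu $, hence an element of $2^{<\mu }$, and $[h_A]_D$ is a well-defined element of $\prod_D 2^{<\mu }$. I claim the assignment $A\mapsto [h_A]_D$ is injective. Suppose $A\neq B$ and pick $\alpha \in (A\setminus B)\cup(B\setminus A)$, say $\alpha \in A\setminus B$. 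For every $i$ with $\alpha \in f(i)$ we have $\alpha =\beta^i_\xi$ for some $\xi<\eta_i$, and then $h_A(i)(\xi)=1\neq 0=h_B(i)(\xi)$, so $h_A(i)\neq h_B(i)$. Thus $\{i\in I\mid h_A(i)\neq h_B(i)\}\supseteq \{i\in I\mid \alpha \in f(i)\}\in D$, whence $[h_A]_D\neq [h_B]_D$ by the definition of the ultrapower.

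Injectivity gives $2^\lambda =|S(\lambda )|\leq |\prod_D 2^{<\mu }|$, as required. The substance of the argument is entirely the regularity hypothesis, used in exactly one place: it guarantees that each ordinal $\alpha <\lambda $ lies in $f(i)$ for a $D$-large set of $i$, which is what forces distinct subsets of $\lambda $ to be separated modulo $D$. The only point requiring care is the bookkeeping that makes $h_A$ genuinely land in $2^{<\mu }$ (rather than in, say, $\prod_{i}2^{f(i)}$): fixing once and for all the enumerations of the sets $f(i)$ converts each ``partial characteristic function on $f(i)$'' into an honest element of $2^{<\mu }$, and this choice is harmless since it does not affect the separation computation above. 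I do not expect any real obstacle beyond this encoding; the result is, as noted, a special case ($\kappa =2$) of the cardinality bound in \cite[Theorem 2.1]{Kei}.
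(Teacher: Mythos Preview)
Your proof is correct and is essentially the standard argument from Keisler's original paper; the paper itself does not supply a proof but merely cites \cite[Theorem~2.1]{Kei}, of which this proposition is the case $\kappa=2$. Your encoding of $2^{<\mu}$ as binary sequences of ordinal length $<\mu$ is legitimate (this set has cardinality exactly the cardinal $2^{<\mu}$, since $\mu\leq 2^{<\mu}$), and the key step---that $\eta_i<\mu$ because $\mu$ is a cardinal and $|f(i)|<\mu$---is handled correctly.
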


\begin{remarks}
\label{1.5}
(a) It is also interesting to note that if $\lambda $ is regular then the
following
are equivalent:
(i) $D$ is $(\lambda,\lambda )$-regular; (ii) {\rm cf}$(\prod_D \langle
\lambda,<\rangle )>\lambda $;
(iii) {\rm cf}$(\prod_D \langle \lambda,<\rangle )\not=\lambda $.

(i)$\Rightarrow$(ii) is an instance of Proposition  \ref{1.3};
(ii)$\Rightarrow$(iii) is trivial,
 and
(iii)
implies that $D$ is $\lambda $-descendingly incomplete, hence
$(\lambda,\lambda )$-regular by \ref{1.1}(xi).

Actually, the above remark is the particular case $\lambda=\kappa $
of Theorem
 \ref{2.13ex2.11}(iii).

(a$'$) If $D$ is over $I$, $\lambda $ is regular, and 
$D$ is $(\lambda,\lambda )$-regular, then $|I| \geq\lambda $. Indeed,
 by  \ref{1.1}(viii), $D$ is $\lambda $-decomposable, and by \ref{1.1}(iii)
 there is $D'\leq D$ uniform on $\lambda $, thus $|I|\geq\lambda $.

Moreover, if $D$ is over $I$, $\mu<\lambda $, and $D$ is $(\mu ,\lambda )$-regular, then $|I| \geq\lambda $.
Indeed, by \ref{1.1}(i), $D$ is $(\lambda',\lambda')$-regular for all
$\lambda'$ with $\mu \leq \lambda' \leq \lambda $. By the preceding
paragraph, we get $\lambda'\leq|I|$ for all regular 
cardinals $\lambda'$ with $\mu \leq \lambda' \leq \lambda $, and this implies $\lambda \leq |I|$.

Notice, however, that if $\mu $ is
singular then
every ultrafilter uniform over {\rm cf}$\mu $ is $(\mu,\mu )$-regular,
by  \ref{1.1}(vi)(v) (take $ \lambda = \cf \mu$).

(b) A subset
$X$ of
$ S_\mu(\lambda )$
is {\it cofinal} if and only if for every $y\in S_\mu(\lambda )$ there is $x\in X$
such that $y\subseteq x$; the minimal cardinality of such an $X$ is the {\it
cofinality} of $ S_\mu(\lambda )$, and is denoted by cf$ S_\mu(\lambda
)$. Notice that in Forms II, II$'$ of the  definition of $(\mu,\lambda )$-regularity 
we can equivalently ask that $f: I \to X$, where $X$ is a cofinal subset of 
$S_{\mu}(\lambda )$. Similarly, in Forms III, III$'$
 it is enough
to refer to the ultrapower of a cofinal subset $X$ of $S_\mu(\lambda )$, assuming,
without loss of generality, that $\{\alpha \}\in X $ for every $\alpha\in\lambda
$.

 Whence if
$D$ is $(\mu,\lambda )$-regular then there is a $(\mu,\lambda )$-regular
quotient of $D$ which is uniform over some $\kappa\leq {\rm cf} S_\mu(\lambda
)$; moreover, if either $\mu=\lambda  $ is regular, or $\mu<\lambda $ then we have
$\kappa\geq \lambda $ by (a$'$). (See also Proposition \ref{6.7}(ii).)

In particular, if $\mu<\lambda $, $D$ is $(\mu,\lambda )$-regular and  {\rm cf}$S_\mu(\lambda )=\lambda $
then $D$  is $\lambda $-decomposable; this applies, for example, when $\lambda^{<\mu }=\lambda $, since $| S_\mu(\lambda )|=\lambda^{<\mu }$; in particular, every $(\omega,\lambda )$-regular ultrafilter is $\lambda $-decomposable. Notice also that if $\mu $ is regular then
{\rm cf}$S_\mu(\mu ^{+n} )=\mu^{+n} $, for every natural number $n$.
\end{remarks}

An ultrafilter $D$ is $\lambda$-{\it complete} if and only if the intersection of any
family of
$<\lambda$ members of $D$ belongs to $D$. It is quite easy to show that
if $\lambda > \omega $, then
$D$ is $\lambda$-complete if and only if  for no infinite $\lambda'<\lambda$ $D$ is
$\lambda'$-decomposable, if and only if  for no infinite $\lambda'<\lambda$ $D$ is
$(\lambda',\lambda')$-regular.

A cardinal $\lambda>\omega  $ is {\it  measurable}  if and only if
there exists a $\lambda $-complete ultrafilter uniform over $\lambda $.
By the preceding remark and  \ref{1.1}(iii)(x), if an ultrafilter
$D$ is
$\kappa $-decomposable for some infinite cardinal $\kappa $, the first such $\kappa $ is
either $\omega $ or a measurable cardinal. Moreover,
if an ultrafilter $D$ is $(\kappa,\kappa )$-regular for some infinite cardinal
$\kappa $, the first such $\kappa $ is either $\omega $ or a measurable
cardinal; this is proved as follows: because of  $(\kappa,\kappa )$-regularity
$D$ is not principal, hence uniform over some infinite cardinal, hence
$\kappa'$-decomposable for some infinite $\kappa' $ (by \ref{1.1}(iii)); the first such $\kappa' $
is either
$\omega $ or a measurable cardinal, by above, and $D$ is
$(\kappa',\kappa' )$-regular by  \ref{1.1}(vii); then $D$ is
$\kappa'$-complete, and $D$ is not
$(\kappa,\kappa )$-regular for $\kappa<\kappa'$, by the remark after the
definition of $\lambda $-completeness, thus $\kappa' $ is also the first $\kappa
$ for which $D$ is $(\kappa, \kappa )$-regular.

The cardinal $\lambda>\omega  $ is {\it  $\kappa $-compact} if and only if there is a
$\lambda $-complete $(\lambda,\kappa) $-regular ultrafilter. $\lambda $ is {\it
strongly compact} if and only if it is $\kappa $-compact for all $\kappa $. It is well
known that the above definitions are equivalent to the more usual ones (see e.g.
\cite[Theorems 5.9 and 5.10]{Ket1}  or \cite[Section 15]{KM}).

We shall sometimes use the following known fact (see e.g. \cite[p. 190]{KM}):
if $\lambda \leq \kappa $ are regular, and $\lambda $ is $\kappa $-compact
then $\kappa^{<\lambda }=\lambda $. 
It follows from the above identity and trivial cardinality arithmetic that
if $\lambda $ is regular, 
$\kappa $ is any cardinal, $\cf \kappa\geq \lambda $ and
$\lambda $ 
is $\kappa' $-compact for all $\kappa'<\kappa $, then 
$\kappa^{<\lambda }=\lambda $ still holds.
Moreover, 
if $\kappa $ is singular, $\lambda $ is regular,
$\lambda $ is $\kappa ^+$-compact and $\cf\kappa<\lambda $ then 
$\kappa^+\leq \kappa ^{\cf\kappa }\leq \kappa^{<\lambda }\leq(\kappa^+)^{<\lambda }=\kappa^+$,
hence $\kappa^{<\lambda }=\kappa^+$.
In particular, by \ref{1.1}(ii) and Remark \ref{1.5}(b), if $\lambda $
is regular, then $\lambda$ is  $\kappa $-compact if and only if there is a
$\lambda $-complete $(\lambda,\kappa) $-regular ultrafilter over $\kappa $
(a fact first stated as Theorem 5.10 in \cite{Ket1}).

We have promised to consider only results in ZFC and,
needless to say, measurable and strongly compact cardinals are large cardinals;
but we shall use them only in order to get counterexamples, that is, in order to
show that certain statements are not theorems of ZFC (as usual, whenever we
mention any such large cardinal, we implicitly assume its consistency).
In  recent developments of set theory the notion of  {\it supercompactness}
has played a very central role (see e.g. \cite{Ka}): 
supercompactness is a stronger property than strong compactness
 and, in certain respects, it is better behaved;
however, in our counterexamples we need only the weaker
notion of strong compactness. The relationship between strong compactness and supercompactness has been analyzed in several recent papers by A. Apter and others; see e.g. \cite{A},
 and further references there.

Clearly, if there is a measurable cardinal, there are largely irregular
ultrafilters. In most cases, from an irregular ultrafilter, it is possible to
construct a model of set theory with a large cardinal.

\begin{thm}
\label{1.6}
 \cite[Theorem 4.5]{Do}  If there is no inner model with a
measurable cardinal then, for every cardinal $\kappa $, every ultrafilter
uniform over $\kappa $ is $(\omega,\kappa')$-regular for every $\kappa'<\kappa
$.
\end{thm}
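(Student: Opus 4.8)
Since Theorem \ref{1.6} is quoted from \cite{Do}, I only sketch the route I would take; the statement lies well beyond elementary combinatorics and genuinely requires inner-model theory. The plan is to argue by contraposition: assume some uniform ultrafilter $D$ over $\kappa$ fails to be $(\omega,\kappa')$-regular for a fixed $\kappa'<\kappa$, and derive the existence of an inner model with a measurable cardinal. The first step is purely reformulatory. By Form III, $(\omega,\kappa')$-regularity of $D$ is the assertion that in the ultrapower $\prod_D\langle S_\omega(\kappa'),\subseteq,\{\alpha\}\rangle_{\alpha<\kappa'}$ there is an element $x$ with $d(\{\alpha\})\subseteq x$ for every $\alpha<\kappa'$. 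Writing $j\colon V\to M=\mathrm{Ult}(V,D)$, this says exactly that the pointwise image $j[\kappa']$ is contained in a single set $x$ that $M$ regards as finite, i.e.\ $j[\kappa']$ is hyperfinitely coverable inside $M$. Its failure says that $j[\kappa']$ admits no such cover --- a strong near-completeness of $D$ at $\kappa'$, which I would mine for largeness.

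The engine of the proof is the Dodd--Jensen core model $K$ together with its covering lemma: under the hypothesis ``no inner model with a measurable cardinal'', $K$ exists, is rigid (there is no nontrivial elementary $j\colon K\to K$), and every uncountable set of ordinals $X$ is contained in some $Y\in K$ with $|Y|=|X|+\aleph_1$. The strategy is to transfer the ultrafilter analysis down to $K$. Concretely, I would restrict $j$ to $K$ and study $j\restriction K\colon K\to K^{M}$, using covering to control the external cardinalities of the relevant images and to replace the (possibly ill-founded) quotient by a genuinely well-founded, $K$-amenable object. The aim is to manufacture, from the failure of hyperfinite covering of $j[\kappa']$, a $K$-ultrafilter (or iterable premouse) that is sufficiently complete to witness measurability in an inner model, contradicting the rigidity of $K$; along the way the covering lemma is precisely what prevents the non-regularity from being smeared out harmlessly and forces it to concentrate into an honest measure.

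The main obstacle is exactly this last transfer: turning a combinatorial failure of regularity in $V$ into a well-founded, fully iterable witness to a measurable cardinal in $K$. This requires the fine-structural heart of the covering machinery --- comparison of mice via iterated ultrapowers, the Dodd--Jensen lemma on minimality of iteration maps, and the cardinality bookkeeping supplied by covering --- rather than any soft argument. In particular one cannot simply observe that $j\restriction K$ is nontrivial: because $D$ need not be countably complete, $M$ and $K^{M}$ are in general ill-founded, so the rigidity of $K$ cannot be invoked directly, and it is precisely the reconstruction of a well-founded measure out of the covering lemma that constitutes the real content (and the reason the theorem is only cited here). An entirely different, arithmetic-flavoured route would try to feed the strong cardinal-arithmetic consequences of covering (correct computation of successors of singulars, square principles, pcf structure) into the ZFC implications relating regularity to decomposability and cardinal arithmetic developed later in this paper; I expect this to recover weaker statements, but not the full strength of Theorem \ref{1.6}.
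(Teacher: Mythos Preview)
The paper gives no proof of Theorem~\ref{1.6}: it is simply quoted from \cite[Theorem 4.5]{Do} and used as a black box throughout. You correctly recognize this and offer a sketch rather than a proof; your outline---contrapose, work with the Dodd--Jensen core model $K$ and its covering lemma, and extract from the failure of regularity a $K$-measure witnessing an inner model with a measurable---is indeed the shape of Donder's argument, and you are right that the delicate part is the well-foundedness/iterability issue when $D$ is not countably complete. There is nothing to compare against in the present paper.
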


It is conceivable that the conclusion in Theorem  \ref{1.6} can be
improved to
$(\omega,\kappa)$-regular (the maximum of regularity attainable), but, to the
best of our knowledge, a proof has not been found yet (however, \cite{Do} contains
some more results towards this direction).

If $\lambda $ is a limit cardinal and $D$ is an ultrafilter, we say that there
are arbitrarily large $\nu<\lambda $ such that $D$ is $\nu $-decomposable if and
only if for every   $\nu '<\lambda $ there is
$\nu $ such that $\nu '<\nu <\lambda $ and $D$ is $\nu $-decomposable.

Occasionally, we shall use the following principle.

\begin{definition}
\label{1.7}
If $\lambda $ is a limit cardinal, $U'(\lambda )$ means that for every
ultrafilter $D$, if there are arbitrarily large $\nu<\lambda $ such that $D$ is
$\nu $-decomposable,
 then $D$ is $(\lambda ,\lambda )$-regular.
\end{definition}

Slightly weaker principles have been used in \cite{Lp1, Lp4}. See Definitions
\ref{6.9} and the subsequent discussion for the consistency strength of these principles.

$\lambda^{+\alpha } $ denotes the $\alpha^{\rm th}  $ successor of $\lambda  $:
that is,
if $\lambda=\omega_\beta  $ then $\lambda^{+\alpha } $ is
$\omega_{\beta+\alpha } $. $\lambda^{<\mu } $
is $\sup \{ \lambda^{\mu'}|\mu'<\mu  \}  $.

\section{From successors to predecessors.}
\label{fstp}

As the main result of this section, we will prove a generalization (with a new
proof) of the following known result.

\begin{thm}
\label{2.1}
 (a) If the ultrafilter $D$ is  $(\lambda^+,\lambda^+
)$-regular, then $D$ is either
 $({\rm cf}\lambda  ,{\rm cf}\lambda  )$-regular, or
$(\lambda',\lambda^+ )$-regular for some regular $\lambda'\leq\lambda $.

(b) In particular, every $(\lambda^+,\lambda^+ )$-regular ultrafilter is
$(\lambda,\lambda)$-regular.
\end{thm}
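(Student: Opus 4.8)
The plan is to prove Theorem~\ref{2.1}(a) directly, since part~(b) then follows immediately: if $D$ is either $(\cf\lambda,\cf\lambda)$-regular or $(\lambda',\lambda^+)$-regular for some regular $\lambda'\leq\lambda$, then in the first case $(\cf\lambda,\cf\lambda)$-regularity gives $(\lambda,\lambda)$-regularity by Properties~\ref{1.1}(v), while in the second case $(\lambda',\lambda^+)$-regularity gives $(\lambda',\lambda)$-regularity by Properties~\ref{1.1}(i) (shrinking $\lambda^+$ down to $\lambda$), and then $(\lambda,\lambda)$-regularity follows again by~\ref{1.1}(i) since $\lambda'\leq\lambda$. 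So the whole content is in part~(a).

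To prove~(a), I would work with Form~III/III$'$ and the natural embedding $d$ into the ultrapower of $\langle S_{\lambda^+}(\lambda^+),\subseteq\rangle$, using the covering/cofinality reformulation of Remark~\ref{1.5}(b): it suffices to analyze a uniform $(\lambda^+,\lambda^+)$-regular quotient of $D$. The key idea is a case split on the behavior of a regularity witness $f:I\to S_{\lambda^+}(\lambda^+)$ relative to the singular structure of $\lambda$. Fix an increasing sequence $(\lambda_\xi)_{\xi<\cf\lambda}$ of regular cardinals cofinal in $\lambda$. For each $i\in I$, the set $f(i)\in S_{\lambda^+}(\lambda^+)$ has cardinality $\leq\lambda$, so it can be covered by $\cf\lambda$ many pieces each of size $<\lambda$; more precisely I would stratify $f(i)$ by enumerating it in order type $<\lambda^+$ and partitioning according to which band $[\lambda_\xi,\lambda_{\xi+1})$ the local ``rank'' falls into. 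This produces, for each $i$, a function $\cf\lambda\to S_\lambda(\lambda^+)$ whose union recovers $f(i)$.

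The dichotomy I expect is the following. Either (first case) the decomposition into $\cf\lambda$ bands is itself a regularity witness — i.e.\ for cofinally many $\xi$ the relevant traces land in $D$ in a way that yields a $(\cf\lambda,\cf\lambda)$-regular configuration, giving the $(\cf\lambda,\cf\lambda)$-regular conclusion directly — or else (second case) one single band of bounded size $\lambda'<\lambda$ already carries $\lambda^+$ of the coordinates $\alpha$ with $\{i:\alpha\in f(i)\}\in D$, and after passing to that band via Form~II$'$ one extracts a $(\lambda',\lambda^+)$-regular witness with $\lambda'$ regular and $\lambda'\leq\lambda$. Formally the second alternative comes from a pigeonhole: if the first case fails, then the ``large'' index set $\{\alpha<\lambda^+:\{i:\alpha\in f(i)\}\in D\}$, which has size $\lambda^+$, cannot be spread thinly across the $\cf\lambda$ bands without producing the $(\cf\lambda,\cf\lambda)$-witness, so some $\lambda'<\lambda$ absorbs $\lambda^+$ of them.

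The main obstacle, and the delicate point, is making the pigeonhole on the bands genuinely dichotomous rather than leaving a middle ground: one must argue that failure of the $(\cf\lambda,\cf\lambda)$-regular alternative forces the index set to concentrate in a single band of size below $\lambda$, rather than merely being ``spread out'' in a way that witnesses neither conclusion. I would handle this by choosing the band structure so that the map $\alpha\mapsto(\text{the least $\xi$ with $\alpha$ appearing in band $\xi$ on a $D$-large set})$ is well defined on the large index set; if this map is finite-to-one into $\cf\lambda$ on a cofinal set, that directly builds the $(\cf\lambda,\cf\lambda)$-witness, whereas if some fibre has size $\lambda^+$ one lands in the second case. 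Bounding the fibre and verifying the regularity of the resulting $\lambda'$ (replacing it by its cofinality if necessary, which is harmless by~\ref{1.1}(i)) is the computational core that I would carry out carefully.
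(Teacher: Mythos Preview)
Your overall strategy---rank each $f(i)\in S_{\lambda^+}(\lambda^+)$ by an enumeration into $\lambda$, then dichotomize---is the paper's approach (the paper proves Theorem~\ref{2.2} and recovers~\ref{2.1}(a) as the case $\mu=\lambda^+$). But you have misidentified where the dichotomy splits, and the ``finite-to-one versus large fibre'' alternative you propose is not the crux.

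Fix for each $z\in S_{\lambda^+}(\lambda^+)$ an injection $\phi(z,-):z\to\lambda$. For each $\alpha<\lambda^+$ the rank $i\mapsto\phi(f(i),\alpha)$ (on the $D$-large set where $\alpha\in f(i)$) determines an element $r(\alpha)\in\prod_D\langle\lambda,<\rangle$. The actual dichotomy is: either some $r(\alpha)$ lies above every $d(\beta)$ for $\beta<\lambda$, in which case that single $r(\alpha)$ witnesses $\lambda$-descending incompleteness and hence $(\cf\lambda,\cf\lambda)$-regularity by~\ref{1.1}(iv)(xi); or every $r(\alpha)$ is bounded by some $d(\beta_\alpha)$ with $\beta_\alpha<\lambda$. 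In the second case the assignment $\alpha\mapsto\beta_\alpha$ sends $\lambda^+$ into $\lambda$, so by plain cardinality some level $\beta<\lambda$ absorbs $\lambda^+$ many $\alpha$'s, and $i\mapsto\{\alpha\in f(i):\phi(f(i),\alpha)\leq\beta\}$ witnesses $(|\beta|^+,\lambda^+)$-regularity via Form~III$'$.

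Your map $\alpha\mapsto(\text{least }\xi\text{ with }\alpha\text{ in band }\xi\text{ on a }D\text{-large set})$ is an attempt to pin down $\beta_\alpha$, and the well-definedness problem you sense is exactly the first horn: when $r(\alpha)$ is unbounded there is no such $\xi$. But once the map \emph{is} defined for every $\alpha$, there is no further split to perform---any function $\lambda^+\to\cf\lambda$ has a fibre of size $\lambda^+$ automatically, so your ``finite-to-one'' alternative is vacuous and there is no middle ground. In particular, the $(\cf\lambda,\cf\lambda)$-regularity does not arise from the band decomposition forming a regularity family in its own right; it comes from one unbounded rank giving descending incompleteness directly.
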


(b) follows from (a) because of  \ref{1.1}(v) and  \ref{1.1}(i).
Notice that (a) is stronger
than  (b) only in the case when $\lambda $ is singular. If $\lambda $ is
regular, then ${\rm cf}\lambda =\lambda  $, hence
$({\rm cf}\lambda  ,{\rm cf}\lambda  )$-regularity
is the same as $(\lambda,\lambda)$-regularity,
so that (b) implies (a), and, actually, (b) implies that the first alternative
holds in the conclusion of (a).

As we remarked in  \ref{1.2}, and the comment below, Theorem  
\ref{2.1} is
usually stated in
some equivalent form.

Under instances of GCH, \cite{Ch} proved  \ref{2.1}(b) 
for $ \lambda $ regular, and a 
slightly 
weaker form of
 \ref{2.1}(a). Without assuming GCH, Theorem  \ref{2.1}
 is proved in  \cite[Theorem 1]{CC} and \cite[Theorem 2.1]{KP}. In 
the particular case when
 $\lambda$ is regular, case (b) can be obtained also as a
consequence of either 
\cite[Corollary 1.8]{BK} or \cite{Jo}, using \ref{1.1}(i) and the characterization of
$(\lambda,\lambda )$-regularity
given in Remark  \ref{1.5}(a). See also \cite[Theorem 8.35, Corollary
8.36,
and p. 203]{CN}. Now we present our generalization of Theorem \ref{2.1}.

\begin{thm}
\label{2.2}
 If $\mu\geq\lambda^+$, $\cf \mu\not=\cf\lambda $ 
and $D$ is a $(\lambda^+,\mu )$-regular ultrafilter, then $D$ is either
 $({\rm cf}\lambda  ,{\rm cf}\lambda  )$-regular, or
$(\lambda',\mu )$-regular for some regular $\lambda'\leq\lambda $.
\end{thm}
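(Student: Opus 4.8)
The plan is to start from Form II: fix a function $f\colon I\to S_{\lambda^+}(\mu)$ with $\{i\in I\mid \alpha\in f(i)\}\in D$ for every $\alpha<\mu$, so each fibre $f(i)$ is a set of ordinals of cardinality $\le\lambda$. Writing $h(i)=\mathrm{ot}(f(i))<\lambda^+$ and letting $a^i\colon h(i)\to f(i)$ be the increasing enumeration, I assign to each $\alpha<\mu$ its \emph{position} $\iota_\alpha(i)=(a^i)^{-1}(\alpha)$, defined on $X_\alpha=\{i\mid\alpha\in f(i)\}\in D$. Since $\alpha<\beta$ forces $\iota_\alpha(i)<\iota_\beta(i)$ on $X_\alpha\cap X_\beta\in D$, the map $\alpha\mapsto[\iota_\alpha]_D$ is a strictly increasing embedding of $\mu$ into $\prod_D\langle\lambda^+,<\rangle$ lying below the single element $[h]_D$. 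Thus the whole difficulty is to analyse an increasing $\mu$-chain sitting underneath one element of an ultrapower of $\lambda^+$, and to convert the information into a smaller cover.

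The heart of the argument is a dichotomy on $\cf\lambda$-descending incompleteness. If $D$ is $\cf\lambda$-descendingly incomplete then it is $(\cf\lambda,\cf\lambda)$-regular (\ref{1.1}(iv),(xi)) and the first alternative holds. So I may assume the opposite, i.e.\ that every $g\colon I\to\cf\lambda$ is bounded mod $D$. Next I fix, for $D$-almost every $i$, an increasing sequence $\langle d^i_\xi\mid\xi<\cf\lambda\rangle$ cofinal in $h(i)$ --- legitimate exactly when $\cf(h(i))\le\cf\lambda$, which is automatic when $\lambda$ is regular and which I arrange separately in the singular case. For each $\alpha$ the level $r_\alpha(i)=\min\{\xi<\cf\lambda\mid\iota_\alpha(i)<d^i_\xi\}$ is then a map $I\to\cf\lambda$, hence bounded mod $D$ by some $\xi_\alpha<\cf\lambda$. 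Setting $A_\xi=\{\alpha<\mu\mid\xi_\alpha\le\xi\}$ gives an increasing decomposition $\mu=\bigcup_{\xi<\cf\lambda}A_\xi$.

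This is where the hypothesis $\cf\mu\neq\cf\lambda$ is used. When $\cf\mu>\cf\lambda$ a union of $\cf\lambda$ sets cannot exhaust $\mu$ unless one piece already has size $\mu$; so some $A_{\xi^\ast}$ has cardinality $\mu$, and then $g(i)=\{a^i_\zeta\mid\zeta<d^i_{\xi^\ast}\}$ covers $A_{\xi^\ast}$ (a set of size $\mu$) by fibres that are proper initial segments of the $f(i)$. Via Form II$'$ this yields $(\lambda',\mu)$-regularity, where $\lambda'$ is a regular bound for the fibre sizes, and the second alternative holds. The complementary configuration $\cf\mu<\cf\lambda$ forces $\mu$ to be singular with $\cf\mu<\cf\lambda\le\lambda$; here I would instead run the argument block by block, observing that $D$ is $(\lambda^+,\nu)$-regular for every regular $\nu<\mu$ above $\lambda$ (\ref{1.1}(i)), in particular for the terms $\mu_\zeta$ of a cofinal sequence in $\mu$, and then amalgamate the resulting covers --- the inequality $\cf\mu\neq\cf\lambda$ again guaranteeing that the amalgamation does not collapse back to size $\le\lambda$.

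The step I expect to be the main obstacle is the uniform control of the fibre cardinalities after truncation: a proper initial segment $\{a^i_\zeta\mid\zeta<d^i_{\xi^\ast}\}$ of positions can still have cardinality $\lambda$, so producing a \emph{single} regular $\lambda'\le\lambda$ with all truncated fibres of size $<\lambda'$ requires genuine work, and is exactly the point where the regular/singular split of $\lambda$ (and the freedom in choosing the $d^i_\xi$) must be exploited. A secondary difficulty is that, since $D$ need not be complete at any relevant cardinal, the cofinalities $\cf(h(i))$ cannot simply be made constant by restricting $D$; these must be handled by ultrafilter-theoretic case analysis rather than pointwise, with $\cf\mu\neq\cf\lambda$ serving throughout to exclude the diagonal ``staircase'' in which neither alternative of the conclusion would hold.
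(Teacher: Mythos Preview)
Your broad strategy---a dichotomy on $\cf\lambda$-descending incompleteness, followed by finding a single level below $\lambda$ that captures $\mu$-many of the positions---is the same as the paper's. However, you have created the two obstacles you identify by insisting on the \emph{order-preserving} enumeration $a^i$ of $f(i)$. The paper instead fixes, for each $z\in S_{\lambda^+}(\mu)$, an \emph{arbitrary} injection $\phi(z,-)\colon z\to\lambda$ and works with the position $\phi(f(i),\alpha)\in\lambda$ in place of your $\iota_\alpha(i)\in\lambda^+$. This one change dissolves both difficulties at once: since the codomain is $\lambda$ itself, the cofinality $\cf(h(i))$ never enters and no case split on $\lambda$ regular versus singular is needed; and the truncation $F(z,\beta)=\{\alpha\in z\mid\phi(z,\alpha)<\beta\}$ has cardinality $\le|\beta|$ trivially, so the regular $\lambda'$ in the conclusion is simply $|\beta|^+$ for the level $\beta$ you find.

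The paper also handles the case $\cf\mu<\cf\lambda$ inside the same framework, without any block-by-block amalgamation. Writing $X_\delta=\{\alpha<\mu\mid\beta_\alpha<\varepsilon_\delta\}$ for a cofinal sequence $(\varepsilon_\delta)_{\delta<\cf\lambda}$ in $\lambda$, one has $\bigcup_\delta X_\delta=\mu$; when $\cf\mu<\cf\lambda$, pick $\cf\mu$-many $\nu$'s converging to $\mu$, for each such $\nu$ choose $\delta_\nu$ with $|X_{\delta_\nu}|\ge\nu$, and note that the $\delta_\nu$'s are bounded by some $\delta$ since $\cf\mu<\cf\lambda$, giving $|X_\delta|=\mu$. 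So the hypothesis $\cf\mu\ne\cf\lambda$ is used exactly once, to guarantee some $X_\delta$ has full size $\mu$, and your separate treatment is unnecessary.
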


\begin{proof} For every $z\subseteq \mu $
with $|z|\leq\lambda $ let
$\phi (z, - ):z\to\lambda $
be an injection; and for every $\beta<\lambda $ let
$F(z, \beta )=\{\alpha \in z|\phi(z,\alpha )<\beta  \} $.
Thus, for every $z \subseteq \mu$ and $\beta < \lambda $,
$|F(z,\beta )|\leq|\beta |<\lambda  $.

Let us work in  ${\bf B} = \prod_D {\bf A} $, where
{\bf A} is an appropriate expansion of $S_{\lambda^+}(\mu ) $.
Suppose that $D$ is $(\lambda^+,\mu )$-regular, and let
$x\in B$ witness it (as given by Form III).

Case (a): there is
$\alpha<\mu $ such that $\phi_{\bf B}(x,d(\alpha))>_{\bf B} d(\beta  ) $
for all $\beta<\lambda $. Then $\phi_{\bf B}(x,d(\alpha))$ witnesses the
$\lambda $-descending incompleteness of $D$, hence $D$ is
$({\rm cf}\lambda  ,{\rm cf}\lambda  )$-regular because of
 \ref{1.1}(iv) and of  \ref{1.1}(xi) (applied with ${\rm cf} \lambda
$ in place of $\lambda
$).

Case (b):
otherwise. For every $\alpha<\mu  $ choose some $\beta_\alpha<\lambda $ such that
$\phi_{\bf B}(x,d(\alpha ))\leq_{\bf B}d(\beta_\alpha) $.

If we show that there is $\beta<\lambda $ such that
$|\{\alpha<\mu| \phi_{\bf B}(x,d(\alpha))\leq_{\bf B} d(\beta  )    \}  | =\mu $
then
$|\{\alpha<\mu|d(\{\alpha \} )\subseteq_{\bf B}F_{\bf B}(x,d(\beta+1))\} |=\mu
$,
and this implies that $D$
is $(|\beta |^+,\mu )$-regular (Form III$'$),
since $F_{\bf B}(x,d(\beta+1)) $
belongs to
$\prod_D S_{|\beta |^+} (\mu )$.

In order to show the existence of a $\beta $ as above,
consider an increasing sequence of ordinals
$(\varepsilon_\delta)_{\delta\in{\rm cf}\lambda }$ cofinal in
$\lambda $, and let
$X_\delta= \{ \alpha<\mu| \beta_\alpha < \varepsilon_\delta  \} $.
It is enough to show that $|X_\delta |=\mu$ for some $\delta\in{\rm cf}\lambda
$, since in this case we can take $\beta=\varepsilon_\delta $.

Since we are in Case (b), $\bigcup_{\delta\in{\rm cf}\lambda } X_\delta = \mu $.
If ${\rm cf}\mu>{\rm cf}\lambda $ then it is trivial that $|X_\delta |=\mu$ for
some $\delta\in{\rm cf}\lambda $.
Otherwise by hypothesis ${\rm cf}\mu<{\rm cf}\lambda $. For every $\nu<\mu $
there is  $X_{\delta_\nu } $ of cardinality $\geq\nu$ (otherwise there is
$\nu<\mu$ such that $\mu\leq\nu\cdot{\rm cf}\lambda <\mu$, absurd). We can
consider a sequence of  ${\rm cf}\mu $-many $\nu$'s converging to $\mu$; then the
$\delta_\nu$'s  are bounded by some  $\delta\in{\rm cf}\lambda$, since
${\rm cf}\mu<{\rm cf}\lambda $, and then $|X_\delta | = \mu$, since
$\delta'<\delta $ implies $X_{\delta'}\subseteq X_\delta $.
\end{proof}

Theorem  \ref{2.1}(a) is the particular case $\mu=\lambda^+$ of 
Theorem  \ref{2.2}.

Theorem  \ref{2.2} strengthens the classical result Theorem  
\ref{2.1} only 
in the case when
$\lambda$ is singular:
if  $\mu\geq\lambda^+$ and $D$ is $(\lambda^+,\mu )$-regular, then $D$ is
trivially $(\lambda^+,\lambda^+)$-regular, by  \ref{1.1}(i), hence
$(\lambda,\lambda)$-regular, by Theorem  \ref{2.1}(b);
if $\lambda $ is regular, $({\rm cf}\lambda  ,{\rm cf}\lambda  )$-regularity is
the same as $(\lambda,\lambda)$-regularity,
and we get the first alternative in the conclusion of Theorem  \ref{2.2}.
Anyway, the proof we have given has the advantage of a greater
simplicity (at least, in our opinion).

\begin{remark}
\label{2.3}
 The assumption
 $\mu\geq\lambda^+$
 is not needed in Theorem  \ref{2.2}, but this is the only 
interesting case. Since
 every ultrafilter is ($\lambda ,\mu$)-regular for  $\mu<\lambda$,
 the theorem is
trivially true for $\mu<\lambda$.

For $\mu=\lambda$ Theorem  \ref{2.2} would be false: any ultrafilter 
is ($\lambda^+,\lambda $)-regular, but any principal ultrafilter is neither
$({\rm cf}\lambda  ,{\rm cf}\lambda  )$-regular, nor
$(\lambda',\lambda  )$-regular for $\lambda'\leq\lambda $ (of course, the case
$\mu=\lambda$  is prevented by the hypothesis
$\cf\mu\not= \cf\lambda $).
\end{remark}

\begin{remark}
\label{2.4}
The dichotomy in the conclusion of Theorem  \ref{2.2} cannot be 
avoided. On one side,
if $\kappa$ is $\kappa^{+\omega+1+\alpha }$-compact then there is a
$(\kappa^{+\omega+1},\kappa^{+\omega+1+\alpha })$-regular
(in fact, 
 $(\kappa,\kappa^{+\omega+1+\alpha})$-regular)
ultrafilter $D$ which is $\kappa$-complete, and hence not $(\omega,\omega
)=({\rm cf}\kappa^{+\omega }, {\rm cf}\kappa^{+\omega } )$-regular.

On the other side, 
\cite{BM} shows that
if it is
consistent to have a $\kappa^+$-compact cardinal $\kappa$ then it is consistent
to have an $(\omega_{\omega+1},\omega_{\omega+1})$-regular ultrafilter 
$D$ which for no $n>0$ is
$(\omega_n,\omega_n)$-regular (see also \cite{AH}).
Hence, by \ref{1.1}(i), for no $\lambda ' <\omega _\omega $ $D$ is
$(\lambda ', \omega _{\omega+1})$-regular.  

We do not know the exact consistency strengths (for each $\alpha>0$) of a $(\lambda^+,
\lambda^{+\alpha })$-regular ultrafilter which is not
$(\lambda, \lambda^{+\alpha })$-regular (cf. also Remark \ref{5.5}).

We do not know whether the hypothesis {\rm cf}$\mu\not={\rm cf}\lambda $ in
Theorem  \ref{2.2} can be
omitted (when $\lambda$ is regular the hypothesis is unnecessary, since we
always get $(\lambda,\lambda )$-regularity from
$(\lambda^+,\lambda^+)$-regularity,
hence from $(\lambda^+, \mu)$-regularity). Another case in which {\rm cf}$\mu\not={\rm cf}\lambda $ is not
necessary is presented in Proposition \ref{8.2}.
\end{remark}

We do not know whether the proof of Theorem  \ref{2.2} can be 
extended in order to
show:

\begin{conjectur}
\label{2.5}
If $\mu\geq\lambda^{+n}$ and $D$ is a
$(\lambda^{+n},\mu )$-regular ultrafilter, then $D$ is either $({\rm
cf}\lambda,{\rm cf}\lambda )$-regular, or $(\lambda',\mu )$-regular
for some regular $\lambda'\leq\lambda $.
\end{conjectur}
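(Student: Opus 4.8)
The natural strategy is induction on $n$, the base case $n=1$ being Theorem \ref{2.2} itself. So I would suppose the statement holds with $n-1$ in place of $n$ (for every base cardinal), assume $n\geq 2$, put $\nu=\lambda^{+(n-1)}$, and let $D$ be $(\lambda^{+n},\mu)$-regular, that is, $(\nu^+,\mu)$-regular, with $\mu\geq\nu^+$. Since $\nu$ is a successor cardinal it is regular, so $\cf\nu=\nu$, and Theorem \ref{2.2} applied with $\nu$ in place of $\lambda$ (when $\cf\mu\neq\nu$) would give that $D$ is either $(\nu,\nu)$-regular or $(\lambda',\mu)$-regular for some regular $\lambda'\leq\nu=\lambda^{+(n-1)}$. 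In the second alternative one is done at once: if $\lambda'\leq\lambda$ this is the desired conclusion, while if $\lambda<\lambda'\leq\lambda^{+(n-1)}$ then $\lambda'=\lambda^{+j}$ for some $1\leq j\leq n-1$, so $D$ is $(\lambda^{+j},\mu)$-regular and the inductive hypothesis (with $j\leq n-1$) finishes the argument.

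The whole difficulty is concentrated in the first alternative, $(\lambda^{+(n-1)},\lambda^{+(n-1)})$-regularity: here the largeness parameter $\mu$ has been lost, and from $(\lambda^{+(n-1)},\lambda^{+(n-1)})$-regularity one can only descend, via iterated use of Theorem \ref{2.1}, to $(\cf\lambda,\cf\lambda)$-regularity or to $(\lambda',\lambda^+)$-regularity, the latter being strictly weaker than the required $(\lambda',\mu)$-regularity since $\mu\geq\lambda^{+n}>\lambda^+$. Thus Theorem \ref{2.2} cannot be used as a black box; instead I would re-run its proof directly, with an $n$-fold nested system of compressions. Working in $\mathbf B=\prod_D\mathbf A$ with $\mathbf A$ an expansion of $S_{\lambda^{+n}}(\mu)$, fix a witness $x$ of $(\lambda^{+n},\mu)$-regularity (Form III); then $|x|_{\mathbf B}\leq d(\lambda^{+(n-1)})$, and for every $z$ of cardinality $\leq\lambda^{+(n-1)}$ fix an enumeration $e(z,-)\colon z\to\lambda^{+(n-1)}$, so that each $\alpha<\mu$ receives an internal rank $r_\alpha=e_{\mathbf B}(x,d(\alpha))<_{\mathbf B}d(\lambda^{+(n-1)})$, with $\alpha\mapsto r_\alpha$ injective. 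Partitioning $\mu$ according to the least $k\leq n-1$ with $r_\alpha<_{\mathbf B}d(\lambda^{+k})$, some band $A_{k^\ast}$ has cardinality $\mu$; one then tries to truncate $x$ to the elements of rank $<d(\lambda^{+k^\ast})$ and recurse on the band, peeling off one cardinal at a time until, at level $0$, the ranks lie below $d(\lambda)$ and the dichotomy of Theorem \ref{2.2} (a cofinal rank yields $\lambda$-descending incompleteness, hence $(\cf\lambda,\cf\lambda)$-regularity; uniformly bounded ranks yield $(\lambda',\mu)$-regularity with $\lambda'\leq\lambda$) can be invoked.

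The step I expect to be the real obstacle is exactly this passage between levels. To shrink an internal object from cardinality $\lambda^{+k}$ to $\lambda^{+(k-1)}$ one must re-enumerate a band by cardinality, and such a re-enumeration cannot preserve the order type, since the interval $[\,\lambda^{+(k-1)},\lambda^{+k})$ has order type $\lambda^{+k}$ and so admits no order embedding into $\lambda^{+(k-1)}$. But the only mechanism producing the first alternative, $(\cf\lambda,\cf\lambda)$-regularity, is descending incompleteness, which is an order phenomenon: an element lying above all standard markers of a linear order of length $\lambda$. Reconciling the cardinality compression from $\lambda^{+(n-1)}$ down to $\lambda$ with the order sensitivity needed to detect $\lambda$-descending incompleteness (rather than the useless intermediate $(\lambda^{+k},\lambda^{+k})$-regularity) is the crux; a lexicographic or tree-like enumeration keeping a leading coordinate in $\lambda$ is the obvious thing to try, but the blocks of such a decomposition still have full cardinality $\lambda^{+(n-1)}$, so a bounded leading coordinate does not shrink the object. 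Compounding this, the conjecture assumes no analogue of the hypothesis $\cf\mu\neq\cf\lambda$ used in Case (b) of Theorem \ref{2.2}; at each level one would need the corresponding cofinality argument to go through unconditionally, and even for $n=1$ removing this hypothesis is open (Remark \ref{2.4}). For these reasons I would expect a successful proof to require a genuinely new idea for the inter-level compression, rather than merely a more careful bookkeeping of the proof of Theorem \ref{2.2}.
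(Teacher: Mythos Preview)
Your analysis is sound, and your conclusion is correct---but you should be aware that the paper does \emph{not} prove this statement. It is stated explicitly as a \emph{conjecture} (Conjecture~\ref{2.5}), introduced with the phrase ``We do not know whether the proof of Theorem~\ref{2.2} can be extended in order to show \dots''. There is no proof in the paper for you to compare against.

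The paper records only the following partial results: the conjecture holds when $\lambda$ is regular (via \ref{1.1}(i) and iterated Theorem~\ref{2.1}(b), exactly as you observe), when $n=0$ (by Proposition~\ref{2.6}), and when $\mu$ is singular with $\cf\mu<\cf\lambda$ (via Theorem~\ref{2.15ex2.13}(ii)). The general case, in particular for $\lambda$ singular and $n\geq 2$, is left open.

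Your diagnosis of the obstacle matches the paper's implicit position. The inductive step fails precisely where you locate it: in the first alternative of Theorem~\ref{2.2} one obtains only $(\nu,\nu)$-regularity with $\nu=\lambda^{+(n-1)}$, and the second coordinate $\mu$ is lost. Your remark that the cofinality hypothesis $\cf\mu\neq\cf\lambda$ is absent from the conjecture, while it appears essential in Theorem~\ref{2.2}, is also to the point; the paper raises exactly this issue in Remark~\ref{2.4} and after Conjecture~\ref{2.16ex2.14}. Your attempted direct argument with nested enumerations runs into the genuine difficulty you describe, and the paper offers no way around it. In short: you have not missed a proof---there isn't one.
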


If $\lambda $ is regular Conjecture  \ref{2.5} is true: by  
\ref{1.1}(i) we
get
$(\lambda^{+n},\lambda^{+n})$-regularity, hence
 $(\lambda,\lambda)$-regularity, by iterating Theorem  \ref{2.1}(b).

In case $n=0$ Conjecture  \ref{2.5} has an affirmative answer, too, 
by the next proposition. Then Theorem \ref{2.15ex2.13}
implies that Conjecture \ref{2.5} is true also in case when $\mu$
is singular and $\cf \mu < \cf \lambda $.

The next proposition is
 an immediate consequence of
\cite[Theorem 0.20(iv)]{Lp1}.  

\begin{prop}
\label{2.6}
If $\lambda $ is singular then every
$(\lambda,\mu )$-regular ultrafilter is either 
$(\cf\lambda ,\brfr \cf \lambda )$-regular, or $(\lambda',\mu )$-regular
for some  $\lambda'<\lambda $.
\end{prop}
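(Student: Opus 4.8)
My plan is to argue directly from Form II of the definition, splitting into two cases according to whether, modulo $D$, the cardinalities $|f(i)|$ climb cofinally toward $\lambda$ or stay bounded below it. To set up, fix a function $f\colon I\to S_\lambda(\mu)$ witnessing $(\lambda,\mu)$-regularity via Form II, so that $\{i\in I\mid\alpha\in f(i)\}\in D$ for every $\alpha<\mu$. Since $\lambda$ is singular I fix an increasing sequence of cardinals $(\lambda_\delta)_{\delta<\cf\lambda}$, cofinal in $\lambda$, each $\lambda_\delta<\lambda$. As $|f(i)|<\lambda$ for every $i$, I can let $c(i)$ be the least $\delta<\cf\lambda$ with $|f(i)|\leq\lambda_\delta$; this defines a function $c\colon I\to\cf\lambda$, and the entire argument turns on the $D$-behaviour of $c$.

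In the first case, suppose that for every $\delta<\cf\lambda$ the set $X_\delta=\{i\in I\mid c(i)>\delta\}$ belongs to $D$. Then $(X_\delta)_{\delta<\cf\lambda}$ is a decreasing family of members of $D$, and its intersection is empty because $c(i)<\cf\lambda$ for each $i$. Hence $D$ is $\cf\lambda$-descendingly incomplete, and since $\cf\lambda$ is regular, Properties \ref{1.1}(xi) (applied with $\cf\lambda$ in place of $\lambda$) gives that $D$ is $(\cf\lambda,\cf\lambda)$-regular, which is the first alternative.

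In the remaining case there is some $\delta_0<\cf\lambda$ with $Z=\{i\in I\mid c(i)\leq\delta_0\}\in D$, equivalently $\{i\in I\mid |f(i)|\leq\lambda_{\delta_0}\}\in D$. I set $\lambda'=(\lambda_{\delta_0})^+$; because a singular cardinal is a limit cardinal, $\lambda_{\delta_0}<\lambda$ forces $\lambda'<\lambda$. Defining $f'(i)=f(i)$ for $i\in Z$ and $f'(i)=\emptyset$ otherwise, I get $|f'(i)|\leq\lambda_{\delta_0}<\lambda'$, so $f'\colon I\to S_{\lambda'}(\mu)$. For each $\alpha<\mu$ we have $\{i\in I\mid\alpha\in f'(i)\}=Z\cap\{i\in I\mid\alpha\in f(i)\}\in D$, so $f'$ witnesses $(\lambda',\mu)$-regularity via Form II, giving the second alternative.

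I expect no genuine obstacle here; the only points needing a word of care are that the two cases are exhaustive (immediate, since either every $X_\delta$ lies in $D$ or some complement $Z$ does) and that $\lambda'<\lambda$ in the second case (which is exactly where singularity, hence limithood, of $\lambda$ is used). The whole content is the dichotomy that the cardinalities $|f(i)|$ are either cofinal in $\lambda$ modulo $D$, yielding descending incompleteness at $\cf\lambda$, or bounded by some $\lambda_{\delta_0}$, yielding regularity at its successor. The same argument could equally be phrased through Form III by working in an expansion $\mathbf{B}=\prod_D\mathbf{A}$ of $S_\lambda(\mu)$ carrying the map $c$ and splitting on whether $c_{\mathbf B}(x)>_{\mathbf B}d(\delta)$ for all $\delta<\cf\lambda$, exactly paralleling Cases (a) and (b) of Theorem \ref{2.2}.
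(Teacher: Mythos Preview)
Your proof is correct. The paper itself does not give a proof of this proposition; it merely cites \cite[Theorem 0.20(iv)]{Lp1} and calls the result an immediate consequence. Your direct argument via Form II, splitting on whether the function $c(i)$ measuring $|f(i)|$ against a cofinal sequence is unbounded modulo $D$, is exactly the natural elementary proof and is self-contained. It is also, as you observe, the same dichotomy that drives the proof of Theorem~\ref{2.2}, just in a simpler setting where no injection $\phi$ is needed because you are bounding cardinalities rather than ordinal positions.
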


If $\mu\geq\lambda $ and $\lambda $ is regular then every
$(\lambda,\mu )$-regular ultrafilter is
$(\lambda,\lambda )=({\rm cf}\lambda,{\rm cf}\lambda )$-regular, so 
that the hypothesis $\lambda $ singular is not necessary in  \ref{2.6},
if $\mu\geq\lambda $.

The following proposition is a variation (and relies heavily) on 
\cite[Theorem 4.5]{Do}  (stated here as Theorem   \ref{1.6}).

\begin{prop}
\label{2.7}
If there is no inner model with a measurable
cardinal, then, for every cardinal $\lambda  $, every
$(\lambda ,\lambda  )$-regular ultrafilter is both
$({\rm cf} \lambda ,{\rm cf} \lambda  )$-regular and
$(\omega ,\nu   )$-regular, for every $\nu<{\rm cf}\lambda $.
\end{prop}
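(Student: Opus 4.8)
The plan is to reduce everything to Dodd's Theorem \ref{1.6} by producing a quotient of $D$ that is uniform over a cardinal $\geq\cf\lambda$, and then transporting regularity back along the Rudin--Keisler order by \ref{1.1}(ii). First I would record the following consequence of the hypothesis: if there is no inner model with a measurable cardinal, then any ultrafilter $E$ uniform over a cardinal $\kappa$ is $(\omega,\nu)$-regular for every $\nu<\kappa$ (this is exactly Theorem \ref{1.6}), and hence is $(\theta,\theta)$-regular for every regular $\theta\leq\kappa$: for $\theta<\kappa$ this follows from $(\omega,\theta)$-regularity by \ref{1.1}(i), while for $\theta=\kappa$ it is \ref{1.1}(vi). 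Thus it suffices to find a quotient $D^*\leq D$ that is uniform over some $\kappa^*\geq\cf\lambda$; then $D^*$ is $(\cf\lambda,\cf\lambda)$-regular and, since $\cf\lambda\leq\kappa^*$, also $(\omega,\nu)$-regular for every $\nu<\cf\lambda$, and by \ref{1.1}(ii) the same two conclusions pass to $D$, which is precisely what is claimed.

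To produce $D^*$ I would split on whether $\lambda$ is regular. If $\lambda$ is regular, then $\cf\lambda=\lambda$ and $(\lambda,\lambda)$-regularity gives, by \ref{1.1}(xi), that $D$ is $\lambda$-decomposable, so by \ref{1.1}(iii) there is $D^*\leq D$ uniform over $\lambda=\cf\lambda$. If $\lambda$ is singular, I would apply Proposition \ref{2.6} with its $\mu$ taken to be $\lambda$: either $D$ is already $(\cf\lambda,\cf\lambda)$-regular, and then, $\cf\lambda$ being regular, \ref{1.1}(viii)(iii) yield $D^*\leq D$ uniform over $\cf\lambda$; or $D$ is $(\lambda',\lambda)$-regular for some $\lambda'<\lambda$, in which case Remark \ref{1.5}(b) (whose lower bound applies since the first parameter $\lambda'$ satisfies $\lambda'<\lambda$) provides a quotient $D^*\leq D$ uniform over some $\kappa^*\geq\lambda>\cf\lambda$. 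In every case $\kappa^*\geq\cf\lambda$, as required, and the argument of the first paragraph finishes the proof.

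The main obstacle is exactly the singular case. There $(\lambda,\lambda)$-regularity is genuinely weaker than $\cf\lambda$-decomposability, and it does not by itself deliver a quotient uniform over a cardinal $\geq\cf\lambda$: Remark \ref{1.5}(b) guarantees the lower bound $\kappa^*\geq\lambda$ on the uniformity cardinal only when the first regularity parameter is $<\lambda$ or is a regular cardinal equal to the second parameter, and the relevant instance here, with both parameters equal to the singular $\lambda$, falls outside that range. Proposition \ref{2.6} is what overcomes this, splitting $(\lambda,\lambda)$-regularity into exactly the two shapes — namely $(\cf\lambda,\cf\lambda)$-regular, or $(\lambda',\lambda)$-regular with $\lambda'<\lambda$ — for each of which a quotient uniform over a large enough cardinal is available, the first yielding $\kappa^*=\cf\lambda$ and the second $\kappa^*\geq\lambda$.
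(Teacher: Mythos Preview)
Your proof is correct and follows essentially the same approach as the paper: both handle regular $\lambda$ directly via \ref{1.1}(xi), and for singular $\lambda$ both invoke Proposition~\ref{2.6} to split into the $(\cf\lambda,\cf\lambda)$-regular case (where $\cf\lambda$-decomposability yields a quotient uniform over $\cf\lambda$) and the $(\lambda',\lambda)$-regular case, then apply Theorem~\ref{1.6} to the resulting quotient. The only cosmetic difference is in the second subcase: the paper picks a regular $\mu$ with $\lambda'\leq\mu<\lambda$ and $\mu>\cf\lambda$, gets $\mu$-decomposability from \ref{1.1}(xii), and applies Theorem~\ref{1.6} at $\mu$, whereas you invoke Remark~\ref{1.5}(b) to obtain a quotient uniform over some $\kappa^*\geq\lambda$; these are interchangeable routes to the same conclusion.
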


\begin{proof} If $\lambda $ is regular, the statement in  \ref{2.7}
is equivalent to the statement in Theorem  \ref{1.6}, by  \ref{1.2}.

If $\lambda $ is singular, and $D$ is $(\lambda ,\lambda  )$-regular then, by
Proposition  \ref{2.6}, $D$  is either $({\rm
cf}\lambda,{\rm cf}\lambda )$-regular, or $(\lambda',\lambda  )$-regular
for some  $\lambda'<\lambda $.

In the first case, $D$ is
${\rm cf} \lambda $-decomposable by  \ref{1.1}(viii), and the
conclusion follows from
Theorem  \ref{1.6} (with ${\rm cf}\lambda $ in place of $\kappa $).

In the second case, there is a regular $\mu \geq \lambda'$, with ${\rm cf}
\lambda<\mu<\lambda $ and such that $D$ is $(\mu ,\lambda  )$-regular (by
\ref{1.1}(i)),
 hence $D$ is
$\mu $-decomposable by  \ref{1.1}(xii). By applying Theorem
\ref{1.6} with
$\mu$ in place of
$\kappa $,  we get that $D$ is $(\omega ,{\rm cf}\lambda )$-regular (more than
requested, by  \ref{1.1}(i)).
\end{proof}

Essentially, the proof of Proposition  \ref{2.7}
is implicit in the proof of \cite[Proposition 4.2]{Lp1}.

\begin{problem}
\label{2.8}
Find the exact consistency strength of a
$(\lambda,\lambda)$-regular not $({\rm cf}\lambda,{\rm cf}\lambda )$-regular
ultrafilter.
\end{problem}

Remark  \ref{2.4} gives an example of a $(\lambda,\lambda  )$-regular 
not $(\cf\lambda,\cf\lambda )$-regular ultrafilter.

The following theorem, reformulated here using \ref{1.1}(xi), is proved in \cite[Corollary 7]{lpndj}. 

\begin{theorem} \label{adesso2.9}
If $ \lambda  $ is a singular cardinal
and the ultrafilter $D$ is not $ (\cf  \lambda, \cf \lambda ) $-regular,
then the following conditions are equivalent:

(a) There is $ \lambda ' < \lambda $ such that  $D$ is
$ (\kappa , \kappa )$-regular for all regular cardinals $ \kappa $
with $ \lambda ' < \kappa < \lambda $.

(b) $D$ is $(\lambda^+, \lambda ^+)$-regular.

(c) There is $ \lambda' < \lambda $ such that 
$D$ is $(\lambda',\lambda^+ )$-regular.

(d) $D$ is $(\lambda ,\lambda )$-regular.

(e) There is $ \lambda' < \lambda $ such that 
$D$ is $(\lambda',\lambda)$-regular.

(f) There is $ \lambda' < \lambda $ such that
$D$ is $(\lambda'',\lambda'')$-regular
for every $ \lambda ''$ with $ \lambda' < \lambda ''< \lambda $. 
\end{theorem}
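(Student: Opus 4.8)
My plan is to prove the theorem as a single cycle of implications, isolating the one genuinely substantial step. Write $\theta=\cf\lambda$; since $\lambda$ is singular, $\theta<\lambda$, and any regular $\lambda'\le\lambda$ produced below automatically satisfies $\lambda'<\lambda$. Most links are free of charge from \ref{1.1}(i): $(c)\Rightarrow(b)$ by enlarging the first coordinate; $(e)\Rightarrow(d)$ by enlarging the first coordinate; and $(e)\Rightarrow(f)\Rightarrow(a)$, where for $(e)\Rightarrow(f)$, given $\lambda'<\lambda''<\lambda$ I enlarge the first coordinate of $(\lambda',\lambda)$ to $\lambda''$ and then shrink the second to $\lambda''$, and $(f)\Rightarrow(a)$ is just the restriction to regular $\lambda''$. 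Two further links come from results already in hand: $(b)\Rightarrow(d)$ is exactly Theorem \ref{2.1}(b), and $(d)\Rightarrow(e)$ is Proposition \ref{2.6} applied with $\mu=\lambda$, its first alternative being excluded by the standing hypothesis that $D$ is not $(\theta,\theta)$-regular.

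The most pleasant of the remaining links is $(b)\Rightarrow(c)$, which I would extract from Theorem \ref{2.2} with $\mu=\lambda^+$: here $\mu=\lambda^+\ge\lambda^+$ and $\cf\mu=\lambda^+\ne\theta=\cf\lambda$, so a $(\lambda^+,\lambda^+)$-regular $D$ is either $(\theta,\theta)$-regular, which is impossible, or $(\lambda',\lambda^+)$-regular for some regular $\lambda'\le\lambda$, that is $\lambda'<\lambda$, which is precisely $(c)$. At this stage I have the chain $(c)\Leftrightarrow(b)\Rightarrow(d)\Leftrightarrow(e)\Rightarrow(f)\Rightarrow(a)$, so the whole list collapses to one equivalence class as soon as I establish a single upward implication folding $(a)$ back into $(b)$.

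The hard part is therefore $(a)\Rightarrow(b)$, and this is where not being $(\theta,\theta)$-regular does the real work. From $(a)$ I first produce, by \ref{1.1}(i), a $(\mu,\theta)$-regular instance with $\mu<\lambda$: choose any regular $\kappa_0$ with $\lambda'<\kappa_0<\lambda$ (note $\theta\le\lambda'$, since otherwise $(a)$ applied to $\kappa=\theta$ would yield $(\theta,\theta)$-regularity) and shrink the second coordinate of $(\kappa_0,\kappa_0)$ from $\kappa_0$ down to $\theta$. Simultaneously each such $\kappa_0$ is regular and $(\kappa_0,\kappa_0)$-regular, hence $\kappa_0$-decomposable by \ref{1.1}(viii); as these $\kappa_0$ are cofinal in $\lambda$, there are arbitrarily large $\nu<\lambda$ for which $D$ is $\nu$-decomposable. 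Feeding these two facts into the decomposition-combining result Theorem \ref{5.1} (with $\mu=\kappa_0$) gives that $D$ is $\kappa$-decomposable for some $\kappa$ with $\lambda\le\kappa\le\lambda^{<\kappa_0}$. Crucially $\kappa\ne\lambda$: a $\lambda$-decomposable ultrafilter is $(\theta,\theta)$-regular by \ref{1.1}(vii), against hypothesis; hence $\kappa\ge\lambda^+$, so $D$ is decomposable strictly above $\lambda$.

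I expect the final conversion of this decomposability above $\lambda$ into genuine $(\lambda^+,\lambda^+)$-regularity to be the main obstacle. Decomposability does not descend, so $\kappa$-decomposability for $\kappa>\lambda^+$ does not by itself give $\lambda^+$-decomposability, and a uniform ultrafilter on a large regular cardinal may be wildly non-regular below it. The route I would pursue is to invoke the sharper, covering-number form of Theorem \ref{5.1} promised in the abstract in order to pin the decomposition cardinal down to $\lambda^+$ exactly, equivalently (by \ref{1.1}(iii)) to produce a quotient of $D$ uniform over $\lambda^+$; then $(\lambda^+,\lambda^+)$-regularity follows from \ref{1.1}(vi) and passes back to $D$ by \ref{1.1}(ii). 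This closes the loop $(a)\Rightarrow(b)\Rightarrow(c)\Rightarrow\cdots\Rightarrow(a)$ and yields the equivalence of (a)--(f).
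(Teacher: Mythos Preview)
Your cycle of easy implications is correct, and you have correctly isolated $(a)\Rightarrow(b)$ as the only substantive step. The gap is precisely where you flag it: your proposed route through Theorem~\ref{5.1} and its covering-number refinement does \emph{not} pin the decomposition cardinal down to $\lambda^+$. The conclusion of Theorem~\ref{5.1}(b) (even in its sharp form) is only $\lambda\le\kappa\le\COV(\lambda,\lambda',\mu)$, and there is no ZFC reason for this upper bound to equal $\lambda^+$. Indeed, this is exactly why Problem~\ref{5.2} is left open and why Corollary~\ref{5.3} only handles special cases (e.g.\ when $\COV(\lambda,\lambda,(\cf\lambda)^+)=\lambda^{+n}$). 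Having $\kappa$-decomposability for some $\kappa>\lambda^+$ gives you nothing about $(\lambda^+,\lambda^+)$-regularity, as you yourself note.

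The paper does not prove this theorem internally; it cites \cite{lpndj}. The missing ingredient is precisely Theorem~\ref{abst} (also imported from \cite{lpndj}): from (a) and \ref{1.1}(viii) you get $\nu$-decomposability for \emph{all} regular $\nu$ in a final segment of $\lambda$, not merely for cofinally many, and Theorem~\ref{abst} then yields that $D$ is $\lambda$- or $\lambda^+$-decomposable. The first alternative is excluded by \ref{1.1}(vii) and the standing hypothesis, so $D$ is $\lambda^+$-decomposable and hence $(\lambda^+,\lambda^+)$-regular by \ref{1.1}(xi). The proof of Theorem~\ref{abst} uses Shelah's pcf theory and is genuinely deeper than the combinatorics of Theorem~\ref{5.1}; the passage from ``arbitrarily large decomposable below $\lambda$'' to ``all large enough regular below $\lambda$'' is exactly what makes the pcf machinery applicable and what your Theorem~\ref{5.1} argument cannot supply.
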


By Proposition  \ref{2.7}, if there is a $(\lambda,\lambda  )$-regular not $({\rm
cf}\lambda,{\rm cf}\lambda )$-regular ultrafilter then there is an inner model
with a measurable cardinal. Using 
Theorem \ref{adesso2.9}, a  much stronger result can be proved.

The principle $\Box_\mu$ has been introduced by R. Jensen  in his
study of the fine structure of $L$, and is now ``ubiquitous in set theory'' \cite{SZ}.
For our purposes here, the exact definition of $\Box_\mu$ is not relevant: we
only need to know the following classical result.

\begin{theorem}\label{box}
If $\Box_\mu$ holds then every
$(\mu^+,\mu^+)$-regular ultrafilter is $(\kappa ,\kappa )$-regular, for every  $\kappa\leq\mu$.
 \end{theorem}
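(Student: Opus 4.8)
The plan is to reduce the statement to a purely combinatorial use of the square sequence on a \emph{uniform} ultrafilter, and then to extract a descending-incompleteness witness at each regular $\kappa\le\mu$. First I would perform the standard reductions. By Consequence \ref{1.2}, applied with the regular cardinal $\mu^+$, it suffices to prove that every ultrafilter $D$ uniform over $\mu^+$ is $(\kappa,\kappa)$-regular for every $\kappa\le\mu$; so assume from now on that $D$ is uniform over $I=\mu^+$. Next, if $\kappa\le\mu$ is singular then ${\rm cf}\kappa<\kappa$ is regular and, by \ref{1.1}(v), $({\rm cf}\kappa,{\rm cf}\kappa)$-regularity implies $(\kappa,\kappa)$-regularity; hence it is enough to treat regular $\kappa\le\mu$. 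Finally the case $\kappa=\mu$ (when $\mu$ is regular) is already covered, with no appeal to $\Box_\mu$, by Theorem \ref{2.1}(b), so the genuine content lies in the regular cardinals $\kappa<\mu$. Throughout I would work, as in the proof of Theorem \ref{2.2}, inside ${\bf B}=\prod_D{\bf A}$ for a suitable expansion ${\bf A}$ of $\langle\mu^+,<\rangle$, where the identity function represents a point $\delta^*\in B$ with $\delta^*>_{\bf B}d(\alpha)$ for every $\alpha<\mu^+$ (this is exactly uniformity); $\delta^*$ is the ``generic'' ordinal that will carry all the information.

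The engine is the square sequence. Fixing a $\Box_\mu$-sequence $\langle C_\alpha\rangle_{\alpha<\mu^+}$, with each $C_\alpha$ club in $\alpha$, ${\rm otp}(C_\alpha)\le\mu$, and coherent, I would form Todorcevic's minimal-walk function $\rho\colon[\mu^+]^2\to\mu$ (equivalently, extract a coherent sequence $\langle e_\beta\rangle_{\beta<\mu^+}$ of injections $e_\beta\colon\beta\to\mu$ with $e_\beta\restriction\alpha$ and $e_\alpha$ differing on a set of size $<\mu$ whenever $\alpha<\beta$). The decisive feature, which is precisely what coherence buys, is that the associated tree $T=\{\,e_\beta\restriction\alpha : \alpha\le\beta<\mu^+\,\}$ is a \emph{special} $\mu^+$-Aronszajn tree: it has height $\mu^+$, levels of size $\le\mu$, no cofinal branch, and a specialization witnessing that each of its chains is ``spread across'' all of $\mu$. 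I would then adjoin $\rho$ (and the relevant data of $T$) to the structure ${\bf A}$, so that $T$, its levels, and the specialization all have internal meaning in ${\bf B}$ and obey their defining properties there by \L o\v s's Theorem.

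The construction then proceeds at the generic point. Because $\delta^*$ dominates every standard ordinal, the internal branch of $T$ determined by $\delta^*$ is a chain of internal length $\delta^*$ meeting every standard level $\alpha<\mu^+$; applying the (internal) specialization to this chain produces, for each regular $\kappa\le\mu$, a function $G\colon\mu^+\to\kappa$ whose class $[G]_D$ lies in $\prod_D\kappa$ strictly above $d(\xi)$ for every $\xi<\kappa$. Such a $G$ is exactly a witness to $\kappa$-descending incompleteness, hence---$\kappa$ being regular---to $\kappa$-decomposability and to $(\kappa,\kappa)$-regularity, by \ref{1.1}(ix),(xi). I expect the main obstacle to be precisely the verification that $[G]_D$ clears every standard $d(\xi)$, i.e. that each preimage $\{\beta : G(\beta)\le\xi\}$ fails to belong to $D$: this is where coherence is indispensable, since the mere bound ${\rm otp}(C_\alpha)\le\mu$ (without coherence) does not prevent these preimages from being $D$-large, and it is exactly the specialness of $T$ that forces the generic branch to exhaust $\kappa$. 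A secondary point to handle is the case of singular $\mu$, where the Aronszajn-tree picture must be taken relative to a fixed cofinal sequence in $\mu$; the walk analysis adapts, but the bookkeeping with ${\rm cf}\mu$ needs care. Everything else---the reductions and the transfer of properties through ${\bf B}$---is routine.
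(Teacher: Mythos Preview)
The paper does not actually prove this theorem: it treats it as a classical result, citing \cite[p.~219--221]{KM} for the case $\kappa$ regular (together with \ref{1.1}(xi)(ii) to pass between the various equivalent formulations) and then invoking \ref{1.1}(v) to cover singular $\kappa$. Your two reductions---to a uniform ultrafilter on $\mu^+$ via Consequence \ref{1.2}, and to regular $\kappa$ via \ref{1.1}(v)---are exactly those the paper indicates, so at the level of the outer structure you and the paper agree. Beyond that there is nothing in the paper to compare your argument against.

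As for the argument itself: the plan of feeding the $\Box_\mu$-sequence (or the derived coherent sequence $\langle e_\beta\rangle$) into the ultrapower and reading off descending incompleteness at the generic point $\delta^*$ is indeed the standard route, and the classical proof in \cite{KM} proceeds in this spirit, though it works directly with the $\Box_\mu$-clubs rather than via minimal walks and special trees. The one place your sketch is genuinely underspecified is the sentence ``applying the (internal) specialization to this chain produces, for each regular $\kappa\le\mu$, a function $G\colon\mu^+\to\kappa$ whose class $[G]_D$ lies in $\prod_D\kappa$ strictly above $d(\xi)$ for every $\xi<\kappa$.'' A specialization of a $\mu^+$-Aronszajn tree is a single map into $\mu$, not a family of maps into each regular $\kappa\le\mu$; it is not clear from what you wrote how a witness to $\kappa$-descending incompleteness is extracted for a \emph{given} regular $\kappa<\mu$. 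This is precisely the heart of the matter, and your own caveat (``I expect the main obstacle to be\dots'') flags the difficulty without resolving it. In the classical argument the key is that coherence lets one push the analysis down along the walk so that the relevant order types are controlled at cofinality $\kappa$; you would need to make that mechanism explicit, or else argue differently (for instance, aim directly for $(\omega,\mu)$-regularity as in the stronger result of \cite[Theorem 1.4]{Do} that the paper mentions, from which $(\kappa,\kappa)$-regularity for every $\kappa\le\mu$ follows by \ref{1.1}(i)).
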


A proof for the case $ \kappa $ regular can be found, e. g., in \cite[p. 219-221]{KM}, using  \ref{1.1}(xi)(ii), of
course. The case $\kappa $ singular then follows by \ref{1.1}(v).

In fact, a stronger result holds: \cite[Theorem 1.4]{Do} implies that
if $\Box^-_{\mu ^+}$ (a principle weaker than $\Box_\mu $)
holds, then every  $(\mu^+,\mu^+)$-regular ultrafilter
is $( \omega ,\mu)$-regular, hence $\kappa $-decomposable for all
$\kappa $ with $ \omega \leq \kappa \leq \mu$, by
\ref{1.1}(i) and Remark \ref{1.5}(b).

\begin{prop}
\label{nuo2.9ora2.11}
Suppose that $\lambda $ is singular, and $D$ is
a $(\lambda,\lambda  )$-regular not $(\cf\lambda,\cf\lambda )$-regular
ultrafilter.
Then 
$\Box_\lambda $ fails. 
\end{prop}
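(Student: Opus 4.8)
The plan is to argue by contradiction: assuming $\Box_\lambda$, I will contradict the standing hypothesis that $D$ is not $(\cf\lambda,\cf\lambda)$-regular. The strategy is to first bootstrap $(\lambda,\lambda)$-regularity \emph{up} to $(\lambda^+,\lambda^+)$-regularity, and then use $\Box_\lambda$ to come back \emph{down} all the way to $(\cf\lambda,\cf\lambda)$-regularity, which is precisely what is forbidden. The whole proof is thus a two-step chaining of the two previously quoted theorems, with the non-$(\cf\lambda,\cf\lambda)$-regularity hypothesis doing double duty.

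For the first step I would invoke Theorem \ref{adesso2.9}. Its standing hypotheses---$\lambda$ singular and $D$ not $(\cf\lambda,\cf\lambda)$-regular---are exactly our assumptions, so the six conditions (a)--(f) listed there are equivalent. Our hypothesis that $D$ is $(\lambda,\lambda)$-regular is condition (d); applying the equivalence (d)$\Leftrightarrow$(b) therefore yields that $D$ is $(\lambda^+,\lambda^+)$-regular.

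For the second step, suppose toward a contradiction that $\Box_\lambda$ holds. Then Theorem \ref{box}, applied with $\mu=\lambda$, says that every $(\lambda^+,\lambda^+)$-regular ultrafilter is $(\kappa,\kappa)$-regular for every $\kappa\leq\lambda$. Since $\lambda$ is singular we have $\cf\lambda<\lambda\leq\lambda$, so $\kappa=\cf\lambda$ is an admissible choice, and we conclude that $D$ is $(\cf\lambda,\cf\lambda)$-regular---contradicting the hypothesis. Hence $\Box_\lambda$ must fail. There is essentially no obstacle to overcome here once the two theorems are in hand; the only points that genuinely need checking are that the hypotheses of both theorems are satisfied, namely that $\lambda$ being singular both triggers Theorem \ref{adesso2.9} and guarantees $\cf\lambda\leq\lambda$ so that $\cf\lambda$ is a legal value of $\kappa$ in Theorem \ref{box}, and that the assumption ``$D$ not $(\cf\lambda,\cf\lambda)$-regular'' is simultaneously the activating hypothesis of Theorem \ref{adesso2.9} and the statement we ultimately contradict.
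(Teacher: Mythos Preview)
Your proof is correct and follows exactly the same route as the paper: apply Theorem~\ref{adesso2.9}(d)$\Rightarrow$(b) to obtain $(\lambda^+,\lambda^+)$-regularity, then invoke Theorem~\ref{box} with $\mu=\lambda$ and $\kappa=\cf\lambda$ to derive the contradiction. Your write-up simply spells out the hypothesis-checking more explicitly than the paper's terse two-line version.
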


\begin{proof}
By Theorem \ref{adesso2.9}(d)$\Rightarrow $(b), $D$ is 
$( \lambda ^+, \lambda ^+)$-regular. Suppose by contradiction that
$\Box_\lambda $ holds. Then Theorem \ref{box} implies that $D$ is
$(\cf\lambda,\cf\lambda )$-regular, contradiction.
\end{proof} 

As far as we know,  the exact consistency strength of the failure of $\Box_\lambda $
for some singular $\lambda  $ has not been evaluated yet. However,
\cite{SZ} announces that in many cases we get the consistency of many Woodin
cardinals. See \cite{SZ, St} and references there for more details. 
See Proposition \ref{vec2.9} for a strengthening of Proposition \ref{nuo2.9ora2.11}. 

Maybe the following is also true.

\begin{conjectur}
\label{2.12ex2.10}
  If $\lambda<\mu $ and  $D$ is
$(\lambda^+,\mu)$-regular
then $D$ is either $\lambda$-decomposable, or $(\lambda',\mu)$-regular
for some regular $\lambda'\leq\lambda $.
\end{conjectur}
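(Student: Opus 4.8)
The plan is to reduce to the singular case and then refine the proof of Theorem~\ref{2.2}, upgrading its first alternative from $(\cf\lambda,\cf\lambda)$-regularity to genuine $\lambda$-decomposability. When $\lambda$ is regular there is nothing to prove: by \ref{1.1}(i) a $(\lambda^+,\mu)$-regular $D$ is $(\lambda^+,\lambda^+)$-regular, hence $(\lambda,\lambda)$-regular by Theorem~\ref{2.1}(b), hence $\lambda$-decomposable by \ref{1.1}(viii); so the first alternative holds. Thus I would assume $\lambda$ singular and reopen the proof of Theorem~\ref{2.2}: fix a Form~II witness $g\colon I\to S_{\lambda^+}(\mu)$, the element $x$, the injections $\phi(z,-)\colon z\to\lambda$ and the functions $F(z,\beta)$, and consider the position elements $h_\alpha=\phi_{\bf B}(x,d(\alpha))\in\prod_D\langle\lambda,<\rangle$. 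As there, either (Case a) some $h_{\alpha_0}$ exceeds all constants $d(\beta)$, $\beta<\lambda$, or (Case b) every $h_\alpha$ is bounded by some $d(\beta_\alpha)$. The new idea is that in Case~(a) one should not stop at $\lambda$-descending incompleteness but analyse the full projection $E=(h_{\alpha_0})_*(D)$, an ultrafilter on $\lambda$ with $E\leq D$.

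First I would dispose of the two favourable outcomes, which between them recover the dichotomy of Theorem~\ref{2.2} in decomposability form. In Case~(a) the inequality $h_{\alpha_0}>_{\bf B}d(\beta)$ for all $\beta<\lambda$ says exactly that $E$ extends the co-bounded filter on $\lambda$, so $E$ cannot concentrate on any set bounded below $\lambda$; hence either $E$ is uniform on $\lambda$, in which case $D$ is $\lambda$-decomposable by \ref{1.1}(iii) (first alternative), or $E$ concentrates on an \emph{unbounded} set $Y\subseteq\lambda$ with $|Y|<\lambda$. In Case~(b), when $\cf\mu\neq\cf\lambda$, the argument of Theorem~\ref{2.2} produces a single $\beta<\lambda$ good for $\mu$-many $\alpha$'s, so that $d(\{\alpha\})\subseteq_{\bf B}F_{\bf B}(x,d(\beta+1))$ for $\mu$-many $\alpha$, giving $(|\beta|^+,\mu)$-regularity with $|\beta|^+\leq\lambda$ regular; this is precisely the second alternative.

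The main obstacle is the pair of leftover situations: the unbounded-small concentration on $Y$ in Case~(a), which yields only $|Y|$-decomposability with $\cf\lambda\leq|Y|<\lambda$, and, in Case~(b) when $\cf\mu=\cf\lambda$, the collapse of the last step into a \emph{staircase} of regularities $(\lambda'_\delta,\mu_\delta)$ whose coordinates tend to $(\lambda,\mu)$ without any single $\beta$ working. Both are manifestations of the singular gap between $(\cf\lambda,\cf\lambda)$-regularity and $\lambda$-decomposability, and the second is exactly where the hypothesis $\cf\mu\neq\cf\lambda$ entered Theorem~\ref{2.2} (cf. Remark~\ref{2.4}). I would attack them uniformly by a summation argument: realise $D$ as the $E$-sum of its restrictions $D|h_{\alpha_0}^{-1}(\gamma)$ along $Y$ (respectively along the staircase) and try to amalgamate the fibrewise data into one $\lambda$-decomposition, in the spirit of Theorem~\ref{5.1}. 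I expect this to be the crux: Theorem~\ref{5.1} pins decomposability only to the interval $[\lambda,\lambda^{<\mu}]$, so forcing the level to be exactly $\lambda$ — equivalently, ruling out the unbounded-small concentration outright — is the genuinely hard point, and the consistency-strength phenomena surrounding Theorem~\ref{2.2} and Proposition~\ref{nuo2.9ora2.11} suggest it may require an essentially new idea rather than a routine refinement.
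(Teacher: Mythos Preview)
The statement you are attacking is presented in the paper as a \emph{conjecture} (Conjecture~\ref{2.12ex2.10}), not a theorem; the paper does not prove it in general. Immediately after its statement the paper records exactly what your first paragraph establishes: when $\lambda$ is regular the conjecture follows trivially from \ref{1.1}(i), Theorem~\ref{2.1}(b) and \ref{1.1}(viii). For $\lambda$ singular the paper offers only partial results under additional hypotheses (Theorem~\ref{6.5} and Corollary~\ref{6.4}), obtained by a quite different route through $\kappa$-least functions and the Kanamori--Ketonen machinery of Section~\ref{lfaa}, not by a refinement of the proof of Theorem~\ref{2.2} as you propose.

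Your proposal is candid about its status: it is a strategy, not a proof, and you correctly isolate the two genuine obstacles --- the unbounded-small concentration of the projection $E$ in Case~(a), and the $\cf\mu=\cf\lambda$ staircase in Case~(b). Your suggestion to attack both by a summation argument in the spirit of Theorem~\ref{5.1} is reasonable, but, as you yourself observe, Theorem~\ref{5.1} traps decomposability only in an interval $[\lambda,\lambda^{\cf\lambda}]$ (or $[\lambda,\COV(\lambda,\lambda',\mu)]$), and forcing the level down to exactly $\lambda$ is precisely what the side conditions in Theorem~\ref{6.5} (such as $\lambda$ strong limit, or $\lambda^{<\lambda}<\kappa$, or a pcf restriction) are there to secure. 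So the gap you identify is real, it coincides with the gap the paper leaves open, and your outline does not close it; nor does the paper expect it to be closable by a routine argument, in view of the consistency-strength considerations around Remark~\ref{2.4} and Proposition~\ref{nuo2.9ora2.11} that you allude to.
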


The significant case in Conjecture  \ref{2.12ex2.10} is when $\lambda $ is 
singular. When
$\lambda $ is regular, we get $(\lambda,\lambda )$-regularity from
 \ref{1.1}(i) and Theorem
 \ref{2.1}(b), hence $\lambda $-decomposability from  
\ref{1.1}(viii), so
that the conjecture
always holds when $\lambda $ is regular.

For $\lambda $ singular, Conjecture  \ref{2.12ex2.10} is true under some 
cardinality and cofinality
assumptions: see Theorem \ref{6.5}.

The following are proved in \cite[Theorem A and Corollary 1]{Lp5} 
 (we left (ii) as an open problem in the first
version of the present paper).

\begin{thm}
\label{2.13ex2.11}
 (i) If $D$ is a $(\lambda^+,\kappa )$-regular
ultrafilter then either:

(a) $D$ is $(\lambda,\kappa )$-regular, or

(b) the cofinality of the linear order $\prod_D \langle \lambda,< \rangle $ is
{\rm cf}$\kappa$, and $D$ is $(\lambda,\kappa' )$-regular for all
$\kappa'<\kappa $.

(ii) In particular, by \ref{1.1}(i), every $(\lambda^+,\kappa^+)$-regular
ultrafilter is $(\lambda,\kappa )$-regular.

(iii) If $\lambda\leq\kappa $, $\lambda $ is regular and $D$ is
$(\lambda^+,\kappa )$-regular then the following are equivalent:

(a) $D$ is $(\lambda,\kappa )$-regular;
(b) the cofinality of $\prod_D \langle \lambda,<\rangle$
is $>\kappa $;
(c) the cofinality of $\prod_D \langle \lambda,<\rangle$
is $\not={\rm cf}\kappa $.
\end{thm}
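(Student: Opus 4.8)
The plan is to isolate part (i) as the heart of the matter and to deduce (ii) and (iii) from it together with Proposition \ref{1.3}. For (ii), applying (i) with $\kappa^+$ in place of $\kappa$ shows that a $(\lambda^+,\kappa^+)$-regular $D$ is either $(\lambda,\kappa^+)$-regular, hence $(\lambda,\kappa)$-regular by \ref{1.1}(i), or else $(\lambda,\kappa')$-regular for every $\kappa'<\kappa^+$, in particular for $\kappa'=\kappa$; either way (ii) holds. For (iii), assuming $\lambda\le\kappa$ and $\lambda$ regular: (a)$\Rightarrow$(b) is exactly Proposition \ref{1.3}; (b)$\Rightarrow$(c) is immediate since $\cf(\prod_D\langle\lambda,<\rangle)>\kappa\ge\cf\kappa$; and (c)$\Rightarrow$(a) follows from (i), because the second alternative of (i) forces $\cf(\prod_D\langle\lambda,<\rangle)=\cf\kappa$, which (c) excludes.

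For (i) itself I would imitate the device from the proof of Theorem \ref{2.2}. Work in $\mathbf{B}=\prod_D\mathbf{A}$, where $\mathbf{A}$ expands $S_{\lambda^+}(\kappa)$ by, for each $z$ with $|z|\le\lambda$, an injection $\phi(z,-)\colon z\to\lambda$, together with the sets $F(z,\beta)=\{\alpha\in z:\phi(z,\alpha)<\beta\}$. Fix $x\in B$ witnessing $(\lambda^+,\kappa)$-regularity via Form III, so $d(\{\alpha\})\subseteq x$ for all $\alpha<\kappa$. Then $c_\alpha:=\phi_{\mathbf{B}}(x,d(\alpha))$ defines $\kappa$ elements of $\prod_D\langle\lambda,<\rangle$ (distinct, by injectivity of $\phi_{\mathbf{B}}(x,-)$ via \L o\v s). The key observation is that for any $y\in\prod_D\langle\lambda,<\rangle$ the element $F_{\mathbf{B}}(x,y)$ lies in $\prod_D S_\lambda(\kappa)$, since its fibres have size $\le|y_i|<\lambda$, and that $d(\{\alpha\})\subseteq F_{\mathbf{B}}(x,y)$ holds exactly when $c_\alpha<y$. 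Hence, by Form III$'$, if some $y$ satisfies $|\{\alpha:c_\alpha<y\}|=\kappa$ then $D$ is $(\lambda,\kappa)$-regular.

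The dichotomy is then on whether alternative (a) holds. Suppose (a) fails, fix a cofinal increasing sequence $\langle y_\xi:\xi<\theta\rangle$ in $\prod_D\langle\lambda,<\rangle$ with $\theta=\cf(\prod_D\langle\lambda,<\rangle)$, and set $B_\xi=\{\alpha<\kappa:c_\alpha<y_\xi\}$. By the previous paragraph each $|B_\xi|<\kappa$; were $\{c_\alpha\}$ bounded, some $B_\xi$ would equal $\kappa$, so $\{c_\alpha\}$ is cofinal and thus $\theta\le\kappa$, while $\bigcup_{\xi<\theta}B_\xi=\kappa$. A short cofinality computation now pins down $\theta$: covering $\kappa$ by the $\theta$ increasing sets $B_\xi$ of size $<\kappa$ gives $\cf\theta\ge\cf\kappa$; conversely, choosing $\mu_\zeta\uparrow\kappa$ for $\zeta<\cf\kappa$ and letting $s(\zeta)$ be least with $|B_{s(\zeta)}|\ge\mu_\zeta$ shows $\{s(\zeta):\zeta<\cf\kappa\}$ is cofinal in $\theta$, whence $\theta\le\cf\kappa$; since $\theta$ is regular this yields $\theta=\cf\kappa$. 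Here one first notes $\sup_\xi|B_\xi|=\kappa$ (otherwise $\kappa=|\bigcup_\xi B_\xi|\le\theta\cdot\sup_\xi|B_\xi|<\kappa$), treating the degenerate case $\theta=\kappa$ separately, where $\kappa$ is regular and $\theta=\cf\kappa$ trivially. Finally $\sup_\xi|B_\xi|=\kappa$ gives, for each $\kappa'<\kappa$, some $\xi$ with $|B_\xi|\ge\kappa'$; restricting $F_{\mathbf{B}}(x,y_\xi)$ to $\kappa'$ of the good indices and invoking Form III$'$ (as in the Form II$'$--II equivalence) yields $(\lambda,\kappa')$-regularity. This is alternative (b).

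The main obstacle is the cofinality bookkeeping in the second case: one must verify that the nonstandard cutoffs $F_{\mathbf{B}}(x,y_\xi)$ genuinely land in $\prod_D S_\lambda(\kappa)$, that the index sets $B_\xi$ form an increasing cover of $\kappa$ by sets of size $<\kappa$, and that the two inequalities combine to give $\theta=\cf\kappa$ uniformly across all configurations of $\cf\kappa$ and $\cf\lambda$ (in particular when $\kappa$ is singular). By contrast, the ultrapower setup and the deductions of (ii) and (iii) are routine once (i) is established.
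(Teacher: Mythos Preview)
The paper does not actually prove Theorem~\ref{2.13ex2.11}; it cites \cite[Theorem~A and Corollary~1]{Lp5} for the proof, and remarks afterwards that the result ``can be proved also by a slight extension of the techniques in \cite{Pr2}'' and that the equivalence (a)$\Leftrightarrow$(b) in (iii) is due to \cite{BK}. Your argument is precisely such an extension: you adapt the injection-and-truncation device from the proof of Theorem~\ref{2.2} to analyse the positions $c_\alpha=\phi_{\mathbf B}(x,d(\alpha))$ inside $\prod_D\langle\lambda,<\rangle$, and this is in the spirit the paper alludes to. Your deductions of (ii) and (iii) from (i), via \ref{1.1}(i) and Proposition~\ref{1.3}, are exactly right.

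One small point in your treatment of (i) deserves tightening. In the case $\theta=\kappa$ (so $\kappa$ is regular) you correctly note that $\theta=\cf\kappa$ is automatic, but your argument for ``$D$ is $(\lambda,\kappa')$-regular for all $\kappa'<\kappa$'' invokes $\sup_\xi|B_\xi|=\kappa$, and your justification of that identity (namely $\kappa\le\theta\cdot\sup_\xi|B_\xi|$) does not rule out $\sup_\xi|B_\xi|<\kappa$ when $\theta=\kappa$. The fix is immediate: since $\theta=\kappa$ is regular, any $\kappa'<\kappa=\theta$ many of the $c_\alpha$ are bounded in $\prod_D\langle\lambda,<\rangle$ by some $y$, and $F_{\mathbf B}(x,y)$ then witnesses $(\lambda,\kappa')$-regularity directly via Form~III$'$. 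With this adjustment your proof of (i) is complete and correct.
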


Shortly after \cite{Lp5} was published, we realized that
Theorem  \ref{2.13ex2.11} can be proved also by a slight extension of the 
techniques in
\cite{Pr2}. The equivalence of (a) and (b) in Theorem \ref{2.13ex2.11}(iii)
is due to \cite{BK}.

\cite{FMS} shows that, modulo the consistency of (something less than) a huge
cardinal, for every regular $\lambda$ it is consistent
to have a
$(\lambda^+,\lambda^+ )$-regular not $(\lambda,\lambda^+) $-regular
ultrafilter (see also \cite{Ka2} \cite{Hu}, \cite[p. 427-431]{Wo}).

Even more irregular ultrafilters
have been constructed by M. Foreman \cite{Fo}: modulo some large cardinal
consistency assumption, it is consistent to have a uniform
ultrafilter $D$ over $\omega_2$ such that 
$|\prod_D \omega|=\omega_1$
(\cite[Corollary 3.2]{Fo}); as remarked by Foreman, the above identity implies 
$|\prod_D \omega_1|=\omega_2$, and the same argument gives
$|\prod_D \omega_2|=\omega_3$.
Notice also that if $D$ is uniform over $\omega_2$
and $|\prod_D \omega|=\omega_1$
then 
$D$  is not be $(\omega,\omega_1)$-regular, because of
\cite[Theorem 2.1]{Kei}  (see
Proposition
 \ref{1.4} here); moreover, 
$D$ is not $(\omega_1,\omega_2)$-regular, by Theorem \ref{2.13ex2.11}(ii), or by easy cardinality considerations. On the other hand, 
$D$ is $(\omega_2,\omega_2)$-regular by \ref{1.1}(vi),
$(\omega_1,\omega_1)$-regular and
$(\omega,\omega)$-regular by Theorem \ref{2.1}(b).
Since by Proposition \ref{1.4} 
$\omega_1=|\prod_D \omega| \geq 2^\omega \geq \omega_1 $,
we get 
$2^\omega=\omega_1$,
hence $2^{\omega_1}=\omega_2$,
by Theorem \ref{2.18ex2.16}(ii),
hence also   
$2^{\omega_2}=\omega_3$, again by 
\ref{2.18ex2.16}.

It should be mentioned that the construction of an
irregular ultrafilter is only one among many other important applications of
results in \cite{Fo}.
According to \cite{Fo}, the problem of whether  similar results can be obtained for
$\omega_n$ ($2<n<\omega$) in place of $\omega_2$
{\it looks like only a ``technical problem'', but perhaps not}.
Thus we do not know whether for $n>2$ it is consistent to have
some ultrafilter uniform over $\omega_n$ not
$(\omega,\omega_1)$-regular.

An infinitary generalization of Foreman's result, if possible, probably would be
more than a
``technical problem''!

\begin{problem}
\label{2.14ex2.12}
 Is it consistent to have an ultrafilter $D$ which is $(\omega_n,\omega_n)$-regular, for
every $n<\omega $, but which for no $n<\omega $ is
$(\omega_n,\omega_{n+1} )$-regular? (see the remark after Proposition \ref{7.4})

Can we get $|\prod_D \omega_n|=\omega_{n+1} $ for all $n\in\omega$?
\end{problem}

Be that as it may, the situation changes for singular cardinals.

\begin{thm}
\label{2.15ex2.13}
 (i) \cite[Corollary 2.4]{Ka1} If $\lambda$ is singular
then every $(\lambda^+,\lambda^+)$-regular
ultrafilter is $(\lambda,\lambda^+)$-regular.

(ii) \cite[Corollary B]{Lp5}
 Suppose that $\kappa$ is a singular cardinal, $\kappa>\lambda $ and
either
$\lambda$ is regular, or ${\rm cf}\kappa<{\rm cf}\lambda $. Then every
$(\lambda^{+n},\kappa )$-regular
ultrafilter is $(\lambda,\kappa )$-regular.
\end{thm}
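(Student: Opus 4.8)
The plan is to derive both parts from the dichotomy in Theorem~\ref{2.13ex2.11}(i), peeling off one successor at a time from the first coordinate and using Proposition~\ref{1.3} to kill the ``cofinality'' alternative at every \emph{regular} first coordinate; the only step that is not automatic is the single passage through the singular cardinal itself.

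\emph{Part (ii).} Write $\lambda^{+n}=(\lambda^{+(n-1)})^+$ and apply Theorem~\ref{2.13ex2.11}(i) with first coordinate $\lambda^{+(n-1)}$: either $D$ is $(\lambda^{+(n-1)},\kappa)$-regular, or $\cf(\prod_D\langle\lambda^{+(n-1)},<\rangle)=\cf\kappa$ together with $(\lambda^{+(n-1)},\kappa')$-regularity for all $\kappa'<\kappa$. Whenever the first coordinate $\lambda^{+j}$ is regular---which is the case for every $j\ge 1$, and also for $j=0$ when $\lambda$ is regular---Proposition~\ref{1.3} rules out the second alternative, since $(\lambda^{+j},\kappa')$-regularity for all $\kappa'<\kappa$ forces $\cf(\prod_D\langle\lambda^{+j},<\rangle)\ge\kappa>\cf\kappa$ (using that $\kappa$ is singular). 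Thus we stay in the first alternative and descend to $(\lambda^+,\kappa)$-regularity; if $\lambda$ is regular we descend one further step to $(\lambda,\kappa)$-regularity and are done. If $\lambda$ is singular, I apply the dichotomy once more with first coordinate $\lambda$: if the first alternative holds we are done, and if the second holds we have $(\lambda,\kappa')$-regularity for every $\kappa'<\kappa$. In that case I would glue the witnesses: writing $\kappa=\sup_{j<\rho}\kappa_j$ with $\rho=\cf\kappa$ and taking Form~II witnesses $h_j\colon I\to S_\lambda(\kappa_j)$, the function $h(i)=\bigcup_{j<\rho}h_j(i)$ still satisfies $|h(i)|<\lambda$, because $\sup_{j<\rho}|h_j(i)|$ is a supremum of fewer than $\cf\lambda$ cardinals below $\lambda$ and hence is below $\lambda$; so $h$ witnesses $(\lambda,\kappa)$-regularity. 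This gluing is exactly where the hypothesis $\cf\kappa<\cf\lambda$ enters.

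\emph{Part (i).} Now $\lambda$ is singular and $D$ is $(\lambda^+,\lambda^+)$-regular. Since $\cf\lambda^+=\lambda^+\ne\cf\lambda$, Theorem~\ref{2.2} applies with $\mu=\lambda^+$ (as does Theorem~\ref{adesso2.9}, whose clause (b) is our hypothesis): either $D$ is $(\lambda',\lambda^+)$-regular for some $\lambda'\le\lambda$, and then $(\lambda,\lambda^+)$-regular by \ref{1.1}(i), or $D$ is $(\cf\lambda,\cf\lambda)$-regular. So everything reduces to the case in which $D$ is simultaneously $(\lambda^+,\lambda^+)$-regular and $(\cf\lambda,\cf\lambda)$-regular, which is the heart of the matter.

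In that case I would translate the goal into a boundedness statement. Fix a Form~II witness $g\colon I\to S_{\lambda^+}(\lambda^+)$ of $(\lambda^+,\lambda^+)$-regularity (so $|g(i)|\le\lambda$ and $\{i:\alpha\in g(i)\}\in D$ for all $\alpha<\lambda^+$), and for each $i$ let $r_i\colon g(i)\to\lambda$ be the rank function of a well-ordering of $g(i)$. For a threshold $t\colon I\to\lambda$ the set $h(i)=\{\alpha\in g(i):r_i(\alpha)<t(i)\}$ always lies in $S_\lambda(\lambda^+)$, and it witnesses $(\lambda,\lambda^+)$-regularity as soon as $[t]$ is a strict upper bound in $\prod_D\langle\lambda,<\rangle$ for the canonical family $\{c_\alpha:\alpha<\lambda^+\}$, where $c_\alpha$ is the class of $i\mapsto r_i(\alpha)$. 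Hence it suffices to bound this $\lambda^+$-sized family. By Theorem~\ref{2.13ex2.11}(i) I may assume $\cf(\prod_D\langle\lambda,<\rangle)=\lambda^+$ (otherwise $(\lambda,\lambda^+)$-regularity holds outright), so the family has exactly the critical size and \emph{a priori} could be cofinal. The main obstacle is to show that this particular family cannot be cofinal: this is where the singularity of $\lambda$ must be used essentially---by pushing the $c_\alpha$ down to the induced level functions in $\prod_D\langle\cf\lambda,<\rangle$ and exploiting $(\cf\lambda,\cf\lambda)$-regularity together with a scale on $\prod_{\beta<\cf\lambda}\lambda_\beta$---and it is precisely the content of \cite[Corollary 2.4]{Ka1}. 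That singularity is indispensable is confirmed by the fact that for regular $\lambda$ no such bound need exist, matching the consistency of $(\lambda^+,\lambda^+)$-regular but not $(\lambda,\lambda^+)$-regular ultrafilters.
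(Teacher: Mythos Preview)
The paper does not prove Theorem~\ref{2.15ex2.13} at all; both parts are merely cited from \cite{Ka1} and \cite{Lp5}. So there is no ``paper's own proof'' to compare against, and the question is simply whether your argument stands on its own.

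Your Part~(ii) is a correct and complete proof. The inductive descent via Theorem~\ref{2.13ex2.11}(i), using Proposition~\ref{1.3} to kill alternative~(b) at each regular level (since $\kappa$ singular gives $\cf\kappa<\kappa\le\cf(\prod_D\langle\lambda^{+j},<\rangle)$), is exactly the mechanism behind \cite[Corollary~B]{Lp5}; and the gluing of Form~II witnesses when $\cf\kappa<\cf\lambda$ is the trivial case of Proposition~\ref{5.4}(b). So this is essentially the intended proof, just reconstructed from the tools the paper already provides.

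Your Part~(i), however, is not a proof. After the reduction via Theorem~\ref{2.2} you correctly isolate the hard case ($D$ both $(\lambda^+,\lambda^+)$-regular and $(\cf\lambda,\cf\lambda)$-regular, with $\cf(\prod_D\langle\lambda,<\rangle)=\lambda^+$), translate it into a boundedness problem for the rank functions $c_\alpha$, and then write that settling this boundedness ``is precisely the content of \cite[Corollary~2.4]{Ka1}.'' But \cite[Corollary~2.4]{Ka1} \emph{is} Part~(i); you are invoking the statement you are supposed to prove. The sentence about ``pushing the $c_\alpha$ down \dots\ and exploiting a scale on $\prod_{\beta<\cf\lambda}\lambda_\beta$'' is a plausible-sounding gesture, not an argument: you never say which scale, why it interacts with your particular family $\{c_\alpha\}$, or how $(\cf\lambda,\cf\lambda)$-regularity produces the bound. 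The actual proof in \cite{Ka1} goes through weakly normal filters and least functions (cf.\ Problem~\ref{2.17ex2.15} and Section~\ref{lfaa}, especially Theorems~\ref{6.2}--\ref{6.3} and Corollary~\ref{6.4}), and that machinery is precisely what your sketch omits. If you want a self-contained argument, you need to either reproduce the least-function analysis or genuinely carry out the scale argument; as written, Part~(i) has a gap at the only nontrivial step.
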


Notice that the conclusion in Theorem  \ref{2.15ex2.13}(i) cannot be 
improved to ``there
is $\lambda '<\lambda$  such that D is
$(\lambda',\lambda ^+)$-regular'',
because of the result from
\cite{BM} mentioned in Remark  \ref{2.4}.

Similarly,
 the conclusion in Theorem  \ref{2.15ex2.13}(ii) cannot be improved to 
``there
is $\lambda '<\lambda$  such that D is
$(\lambda ',\kappa)$-regular'':
if there are $\omega_1$ strongly compact cardinals, say  $\lambda_\alpha $
$(\alpha\in\omega_1)$, $\lambda_\alpha $ increasing, let
$\lambda=\sup_{\alpha\in\omega_1} \lambda_\alpha $ and let $\kappa$
be any cardinal $>\lambda $ (in particular, we can have $\cf \kappa=\omega $).
For each $\alpha\in\omega_1$ there is an ultrafilter $D_\alpha $ which is
$(\lambda_\alpha ,\kappa )$-regular and not $(\nu, \nu)$-regular for
all $\nu<\lambda_\alpha $.
Let D be uniform over $\omega_1$, and consider the sum $D'=\sum_D D_\alpha $
(see Section \ref{rup} for the definition).
By Proposition \ref{7.4}(c) (with $\mu=\nu=\kappa $, using \ref{1.1}(i)(xiii)) $D'$ is
$(\lambda,\kappa )$-regular; by \ref{7.4}(d) $D$ is not $(\lambda', \lambda')$-regular,
for every regular cardinal $\lambda'$ with $\omega_1<\lambda'<\lambda$, hence
not $(\lambda',\kappa )$-regular for
 every $\lambda'<\lambda $, by \ref{1.1}(i).

We expect that just one strongly compact cardinal is sufficient
in order to obtain a
counterexample as above:
if $\lambda $ is
 $\lambda ^{+\alpha}$-compact then it is probably possible to
make $\lambda $
singular (by some variation on Prikry forcing \cite{Pr1})
in such a way that  in the resulting model there
is a
 $(\lambda ,\lambda ^{+\alpha})$-regular ultrafilter which for no $\lambda
'<\lambda $ is
 $(\lambda' ,\lambda ^{+\alpha})$-regular.

Notice also that \ref{2.1}(b) and \ref{2.15ex2.13}(i) imply that if
$\lambda$ is singular and $n>0$ then every $(\lambda^{+n},\lambda^{+n})$-regular
ultrafilter is $(\lambda,\lambda^+)$-regular.

In Theorem  \ref{2.15ex2.13}(ii)
the hypothesis ``$\kappa $ is a singular cardinal''
cannot be weakened to  ``$\kappa $ is a limit cardinal'':
see Remark
\ref{5.5}.

\begin{conjectur}
\label{2.16ex2.14}
 If $\lambda$ is singular and $\lambda<\mu $
then every $(\lambda^+,\mu )$-regular ultrafilter is $(\lambda,\mu )$-regular
 (maybe some assumption on {\rm cf}$\mu$ is necessary, say {\rm
cf}$\mu\not={\rm cf}\lambda$, or some similar condition).
\end{conjectur}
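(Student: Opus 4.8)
The plan is to strip the statement down to a single residual case using the two dichotomy results already available, and then confront that case directly. Since $\cf\mu\neq\cf\lambda$ and $\mu>\lambda$ (so $\mu\geq\lambda^+$), Theorem \ref{2.2} applies and yields that $D$ is either $(\cf\lambda,\cf\lambda)$-regular or $(\lambda',\mu)$-regular for some regular $\lambda'\leq\lambda$. In the second alternative we are done at once: by Properties \ref{1.1}(i), enlarging the first coordinate from $\lambda'$ to $\lambda$ shows $D$ is $(\lambda,\mu)$-regular. So I may assume $D$ is $(\cf\lambda,\cf\lambda)$-regular. Next I would invoke Theorem \ref{2.13ex2.11}(i) with $\kappa=\mu$ (note this part does not require $\lambda$ regular): either $D$ is already $(\lambda,\mu)$-regular, and we are done, or case (b) holds, that is, $\cf(\prod_D\langle\lambda,<\rangle)=\cf\mu$ and $D$ is $(\lambda,\mu')$-regular for every cardinal $\mu'<\mu$. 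Thus the whole content of the conjecture is concentrated in the claim that this case (b), together with $(\cf\lambda,\cf\lambda)$-regularity, still forces $(\lambda,\mu)$-regularity.

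The second step is a cofinality computation that pins down the arithmetic of the residual case. Fixing an increasing sequence $\langle\lambda_j\rangle_{j<\cf\lambda}$ cofinal in $\lambda$, the strictly increasing map $j\mapsto\lambda_j$ induces an order-embedding of $\prod_D\langle\cf\lambda,<\rangle$ into $\prod_D\langle\lambda,<\rangle$ whose image is cofinal (any $h\colon I\to\lambda$ is dominated pointwise by $i\mapsto\lambda_{k(i)}$ for a suitable $k\colon I\to\cf\lambda$). Hence $\cf(\prod_D\langle\lambda,<\rangle)=\cf(\prod_D\langle\cf\lambda,<\rangle)$. Since $\cf\lambda$ is regular and $D$ is $(\cf\lambda,\cf\lambda)$-regular, Remark \ref{1.5}(a) gives $\cf(\prod_D\langle\cf\lambda,<\rangle)>\cf\lambda$. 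Combined with case (b) this forces $\cf\mu=\cf(\prod_D\langle\lambda,<\rangle)>\cf\lambda$. So in the residual case one always has $\cf\mu>\cf\lambda$; note that this is precisely the configuration not covered by the already-proved Theorem \ref{2.15ex2.13}(ii), which settles singular $\mu$ with $\cf\mu<\cf\lambda$.

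It remains to upgrade ``$(\lambda,\mu')$-regular for all $\mu'<\mu$'' to ``$(\lambda,\mu)$-regular'' under $\cf\mu>\cf\lambda$. I would attempt a gluing construction: set $\theta=\cf\mu$, fix $\langle\mu_\xi\rangle_{\xi<\theta}$ cofinal in $\mu$, witnesses $g_\xi\colon I\to S_\lambda(\mu_\xi)$ of $(\lambda,\mu_\xi)$-regularity (Form II), and a cofinal increasing sequence $\langle y_\xi\rangle_{\xi<\theta}$ in $\prod_D\langle\lambda,<\rangle$ (available by case (b)); then define a single $f(i)$ as a union of suitably many $g_\xi(i)$, using the $y_\xi$ to bound the pieces and a diagonal choice of how far to go at each $i$ so that every $\alpha<\mu$ is still covered on a set in $D$. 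The main obstacle is keeping $|f(i)|<\lambda$: because $\theta=\cf\mu>\cf\lambda$, a $\theta$-indexed union of sets of size $<\lambda$ need not have size $<\lambda$, and even a $\cf\lambda$-subfamily of the bounds $y_\xi(i)$ may already be cofinal in $\lambda$. Controlling these sizes therefore appears to need genuinely new input—most plausibly a covering-number argument in the spirit of Theorem \ref{5.1}, or a $\pcf$-style analysis of $\prod_D\langle\lambda,<\rangle$—rather than the elementary manipulations that sufficed for Theorem \ref{2.2}. The successor case $\mu=\rho^+$ (where $\theta=\mu>\lambda$ and the hypothesis ``$(\lambda,\mu')$-regular for $\mu'<\mu$'' reduces to the single instance $(\lambda,\rho)$-regular) looks the most delicate and may require a separate successor-step argument, perhaps by re-applying Theorem \ref{2.13ex2.11} to an auxiliary quotient of $D$.
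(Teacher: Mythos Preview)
This statement is labeled \emph{Conjecture} in the paper and is not proved there; there is no ``paper's own proof'' to compare against. The paper explicitly treats it as open, offering only partial results (Theorem~\ref{2.15ex2.13}, Corollary~\ref{6.4}, Propositions~\ref{6.7} and~\ref{8.2}) and remarking after the conjecture that the range $\cf\lambda\le\cf\mu$ is the decisive one, since Theorem~\ref{2.15ex2.13}(ii) already handles $\cf\mu<\cf\lambda$.

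Your reduction is sound and in fact recovers exactly the paper's state of knowledge. The first step via Theorem~\ref{2.2} and the second via Theorem~\ref{2.13ex2.11}(i) are correct, and your cofinality computation showing that the residual case forces $\cf\mu>\cf\lambda$ is precisely the obstruction the paper isolates. Your honest assessment that the gluing attempt founders on the size bound $|f(i)|<\lambda$ when $\cf\mu>\cf\lambda$ is accurate: this is why Proposition~\ref{5.4} carries side hypotheses (a)--(d), none of which is available in your residual configuration (in particular \ref{5.4}(c) fails because you have just computed $\cf(\prod_D\langle\cf\lambda,<\rangle)=\cf\mu$). For $\mu$ regular the paper gets as far as Corollary~\ref{6.4}, which still leaves a disjunct short of the conjecture; for $\mu$ a successor your instinct that a separate argument is needed is borne out by Remark~\ref{5.5} and Problem~\ref{5.6}, which show that even the natural forcing route to a counterexample is unresolved.

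In short: you have not found a gap in your own reasoning so much as rediscovered why the statement is a conjecture. Nothing you wrote is wrong, but the final step you could not complete is genuinely open in the paper as well.
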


A positive answer to Conjecture  \ref{2.16ex2.14} would encompass (the case 
when $\lambda $ is
singular of) Theorem  \ref{2.2}, in view of Proposition   \ref{2.6}.
Thus, the dichotomy in the conclusion of Theorem  \ref{2.2} might 
depend only on the
dichotomy in the conclusion of  \ref{2.6}. Also, an affirmative 
solution to
Conjecture  \ref{2.16ex2.14} with no assumption on
{\rm cf}$\mu$ would make the assumption {\rm cf}$\mu\not={\rm cf}\lambda$ in
Theorem  \ref{2.2} unnecessary. Theorem  \ref{2.15ex2.13}(ii) shows that if 
Conjecture  \ref{2.16ex2.14} is
true
whenever {\rm cf}$\lambda\leq {\rm cf} \mu $
then it is true for every $\lambda$ and $\mu$. See Theorem  
\ref{2.15ex2.13}, Corollary \ref{6.4}
and Propositions \ref{6.7} and \ref{8.2} for partial answers to
Conjecture  \ref{2.16ex2.14}.

\begin{problem}
\label{2.17ex2.15}
   The proof of Theorem  \ref{2.15ex2.13}(i) uses notions of a large
cardinal
nature, as least functions, weak normality, and the like (see Section 
\ref{lfaa}). Is
there a simpler
proof which makes use of elementary arguments only?
\end{problem}

Also the following results can be proved:

\begin{thm}
\label{2.18ex2.16}
 (a) (\cite[Corollary 2.2]{BK}, \cite[Theorem 1.11]{Ket}) If
$2^\kappa=\kappa^+$ and $2^{\kappa^+}>\kappa^{+ +}$ then every
$(\kappa^+,\kappa^+)$-regular  ultrafilter is $(\kappa,\kappa^+)$-regular.

(b) If
$2^{\kappa^{+n} } =\kappa^{+n+1} $ and $2^{\kappa^{+n+1}}>\kappa^{+n +2}$ then
every $(\kappa^{+n+1},\kappa^{+n+1})$-regular  ultrafilter is
$(\kappa,\kappa^+)$-regular.

(c) \cite[Theorem 7.2.1]{JP} If $\omega_{\omega_1}>2^\omega>\omega_1$ and
$2^{\omega_1}>2^\omega$ then every $(\omega_1,\omega_1)$-regular
ultrafilter is $(\omega,\omega_1)$-regular.
\end{thm}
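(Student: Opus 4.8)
The plan is to take parts (a) and (c) as established (they are quoted from the literature, so nothing is owed for them), and to concentrate on part (b), which I would obtain from (a) by a relabeling of the cardinal indices followed by a finite descent through successors. The guiding observation is that the two cardinal-arithmetic hypotheses of (b), namely $2^{\kappa^{+n}}=\kappa^{+n+1}$ and $2^{\kappa^{+n+1}}>\kappa^{+n+2}$, are precisely the hypotheses of (a) read one rung higher on the ladder $\kappa,\kappa^+,\kappa^{++},\dots$; so (a) applies at that level, and Theorem \ref{2.13ex2.11}(ii) then lets one climb back down the ladder one successor at a time.

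Concretely, I would set $\lambda=\kappa^{+n}$. When $n=0$ this is degenerate: the hypotheses become $2^\kappa=\kappa^+$ and $2^{\kappa^+}>\kappa^{++}$ and the asserted conclusion is exactly that of (a), so there is nothing to prove. For $n\geq 1$ the substitution $\lambda=\kappa^{+n}$ turns the hypotheses of (b) into $2^\lambda=\lambda^+$ and $2^{\lambda^+}>\lambda^{++}$, which are the hypotheses of (a) with $\lambda$ in place of $\kappa$. Applying (a) in this relabeled form yields that every $(\lambda^+,\lambda^+)=(\kappa^{+n+1},\kappa^{+n+1})$-regular ultrafilter is $(\lambda,\lambda^+)=(\kappa^{+n},\kappa^{+n+1})$-regular. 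This reduces the task to descending from $(\kappa^{+n},\kappa^{+n+1})$-regularity to $(\kappa,\kappa^+)$-regularity.

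Finally I would run that descent using Theorem \ref{2.13ex2.11}(ii), which states (with no side hypotheses) that every $(\mu^+,\nu^+)$-regular ultrafilter is $(\mu,\nu)$-regular. Taking $\mu=\kappa^{+(j-1)}$ and $\nu=\kappa^{+j}$, one application carries $(\kappa^{+j},\kappa^{+j+1})$-regularity to $(\kappa^{+(j-1)},\kappa^{+j})$-regularity; iterating for $j=n,n-1,\dots,1$ (exactly $n$ steps) delivers $(\kappa,\kappa^+)$-regularity, as desired. There is no genuine obstacle: the whole argument is the reduction to (a) plus a routine iteration, and the only point needing care is to verify that at each stage both coordinates are honest successor cardinals so that \ref{2.13ex2.11}(ii) is legitimately applicable, which holds at every step of the descent.
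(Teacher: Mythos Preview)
Your proof is correct and is essentially identical to the paper's own argument: apply part (a) with $\kappa^{+n}$ in place of $\kappa$ to obtain $(\kappa^{+n},\kappa^{+n+1})$-regularity, then iterate Theorem~\ref{2.13ex2.11}(ii) $n$ times to reach $(\kappa,\kappa^+)$-regularity.
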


\begin{proof}[ Proof of (b)]
 Suppose that $D$ is $(\kappa^{+n+1},\kappa^{+n+1})$-regular. By (a)
(with $\kappa^{+n}$ in place of $\kappa $) $D$ is
$(\kappa^{+n},\kappa^{+n+1})$-regular. Now apply Theorem  
\ref{2.13ex2.11}(ii) $n$ times. \end{proof}

For sake of completeness we shall mention also the following result, though it
deals with (moderately) large cardinals.

\begin{thm}
\label{2.19ex2.17}
 \text{\cite{CC}; see also \cite[Theorem 3.1]{Pr}}  If $\kappa $ is inaccessible and not
$\omega $-Mahlo, or is weakly inaccessible and not $\omega $-weakly-Mahlo then every
$(\kappa,\kappa )$-regular ultrafilter is either $(\lambda,\kappa )$-regular for
some $\lambda<\kappa $, or $(\lambda,\lambda )$-regular for all $\lambda<\kappa
$.
\end{thm}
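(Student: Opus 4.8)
The plan is to argue by contraposition. Since $\kappa$ is regular, Consequence~\ref{1.2} lets me replace $D$ by a quotient that is uniform over $\kappa$, so I assume $D$ is uniform over $\kappa$ (hence $(\kappa,\kappa)$-regular by \ref{1.1}(vi)). Suppose that both alternatives in the conclusion fail: (A) $D$ is not $(\lambda,\kappa)$-regular for any $\lambda<\kappa$, and (B) regularity fails at some cardinal below $\kappa$; by \ref{1.1}(v) I may take the witness to be a regular $\mu<\kappa$ with $D$ not $(\mu,\mu)$-regular. The goal is to extract from (A) and (B) enough reflection to force $\kappa$ to be $\omega$-Mahlo, contradicting the hypothesis.

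First I would record a downward propagation principle. By \ref{1.1}(vii),(viii) a regular $\lambda<\kappa$ is $(\lambda,\lambda)$-regular iff $D$ is $\lambda$-decomposable; and by Theorem~\ref{2.1}(b) every $(\lambda^+,\lambda^+)$-regular ultrafilter is $(\lambda,\lambda)$-regular. Taking contrapositives, if $D$ is not $(\lambda,\lambda)$-regular then it is not $(\lambda^+,\lambda^+)$-regular. Hence the set $G=\{\lambda<\kappa:\lambda\text{ regular}, D\text{ not }(\lambda,\lambda)\text{-regular}\}$ is closed under successors, so every member of $\kappa\setminus G$ that is a limit of members of $G$ --- I will call these the \emph{jump cardinals} --- is a limit cardinal, and at each of them $D$ reacquires decomposability after a stretch in which it was lost. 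Condition (B) gives $G\neq\emptyset$, so at least one jump cardinal exists. Using Theorem~\ref{2.13ex2.11}(i) together with (A), I would then argue that the jump cardinals cannot be bounded below $\kappa$: a final segment of $\kappa$ lying in $\kappa\setminus G$ would mean $D$ is $\lambda$-decomposable for all regular $\lambda$ in a tail $(\nu^*,\kappa)$, and --- since $\kappa$ is regular and, by \ref{2.13ex2.11}(i) case (b), the relevant ultrapower cofinalities all equal $\kappa$ --- this cofinal decomposability can be assembled into a $(\lambda_0,\kappa)$-regularity for some $\lambda_0<\kappa$, against (A). Thus the jump cardinals are cofinal in $\kappa$.

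The crux is the reflection step. I would pass to a minimal projection of $D$ in the Rudin--Keisler order (a weakly normal, least-function representative on $\kappa$, in the spirit of the machinery underlying Theorem~\ref{2.15ex2.13}(i)), which converts the cofinality of the jump cardinals into \emph{stationarity}: a regressive-function/Fodor argument shows that the jump cardinals form a stationary subset of $\kappa$ and, because $\kappa$ is a strong limit, that each of them is genuinely inaccessible. Hence $\kappa$ is Mahlo. One then observes that each jump cardinal $\nu$ carries, below itself, a uniform ultrafilter exhibiting exactly the configuration (A)+(B) relative to $\nu$, so the argument applies recursively; by induction on $n$ the jump cardinals are themselves $n$-Mahlo, whence the $n$-Mahlo cardinals are stationary in $\kappa$ and $\kappa$ is $(n+1)$-Mahlo for every $n$, i.e.\ $\omega$-Mahlo --- the desired contradiction. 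The main obstacle is precisely this reflection: producing the weakly normal representative, getting Fodor to apply to the jump cardinals, and, most delicately, propagating stationarity up through every finite level of the Mahlo hierarchy in a uniform way.

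Finally, the weakly inaccessible case runs in parallel, with ``weakly inaccessible'' and ``$\omega$-weakly-Mahlo'' replacing ``inaccessible'' and ``$\omega$-Mahlo'' throughout; the strong-limit hypothesis is used only in the inaccessible case, to keep the cardinalities $|S_\lambda(\kappa)|$ and the ultrapowers in \ref{2.13ex2.11}(i) below $\kappa$ so that the jump cardinals come out strongly rather than merely weakly inaccessible, and it is dropped in the weakly inaccessible variant.
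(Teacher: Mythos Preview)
The paper does not give its own proof of Theorem~\ref{2.19ex2.17}; the result is simply quoted from \cite{CC} and \cite{Pr}, so there is no in-paper argument to compare against. What one can assess is whether your sketch stands on its own, and here there are genuine gaps.

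The contrapositive strategy and the passage to a weakly normal quotient are reasonable, and indeed Theorem~\ref{6.2} does guarantee a $\kappa$-least function under your hypotheses (A) and (B) (since failure of a least function would give $(\omega,\kappa')$-regularity for all $\kappa'<\kappa$, hence $(\mu,\mu)$-regularity, against (B)). But the step you label ``assembly'' does not go through as written. You assert that if $D$ is $\lambda$-decomposable for all regular $\lambda$ in a tail $(\nu^*,\kappa)$, then Theorem~\ref{2.13ex2.11}(i)(b) yields $(\lambda_0,\kappa)$-regularity for some $\lambda_0<\kappa$. Theorem~\ref{2.13ex2.11}(i) has the hypothesis $(\lambda^+,\kappa)$-regularity, not merely $(\lambda,\lambda)$-regularity for each $\lambda$ separately; it lets you push $\lambda$ down once you already have two-cardinal regularity up to $\kappa$, but it does not manufacture $(\lambda_0,\kappa)$-regularity from a cofinal family of one-cardinal regularities. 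For inaccessible $\kappa$ this ``assembly'' is not a known ZFC fact and is essentially the content one is trying to prove. You also do not treat the complementary case where $G$ is unbounded in $\kappa$, which is needed before concluding that jump cardinals are cofinal.

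The recursive reflection is likewise too thin. Saying that each jump cardinal $\nu$ ``carries, below itself, a uniform ultrafilter exhibiting exactly the configuration (A)+(B)'' requires producing such an ultrafilter from $D$; the obvious projection of $D$ to $\nu$ need not fail $(\lambda,\nu)$-regularity for all $\lambda<\nu$, so the inductive hypothesis does not transfer automatically. The classical proofs in \cite{CC,Pr} do proceed by exploiting the failure of $\omega$-Mahloness, but they do so by fixing a club avoiding the relevant Mahlo level and running an Ulam-matrix argument along it, rather than by the self-referential reflection you sketch. If you want to pursue your approach, the missing ingredient is a direct combinatorial link between the least function $f$, the set $\{i:\cf f(i)\geq\lambda\}\in D$ (which Theorem~\ref{6.3} gives you from (A)), and the club structure witnessing ``not $\omega$-Mahlo''; that link does the real work and is not supplied by Theorems~\ref{2.1} or~\ref{2.13ex2.11} alone.
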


\begin{problems}
\label{2.20ex2.18}
 (a) Does
Theorem  \ref{2.19ex2.17} apply to more inaccessible cardinals (for 
example, to $\kappa $'s
which are not $\omega+1$-Mahlo)? It probably applies to more cardinals (see
\cite{Ha}), but \cite{Shr} imposes limitations.

(b) We do not know whether a two cardinals version of  \ref{2.19ex2.17} 
holds, that is whether
(for $\kappa $ as in the statement of  \ref{2.19ex2.17}, and appropriate 
$\kappa' $) it is
true that every $(\kappa,\kappa' )$-regular ultrafilter is either
$(\lambda,\kappa' )$-regular for some $\lambda<\kappa $, or
$(\lambda,\lambda )$-regular for all $\lambda<\kappa $.
 \end{problems}

\section{Down from exponents (Part I)}
\label{dfe1}

One could ask whether a version of Theorem  \ref{2.1}(b) holds when 
successors
are replaced by exponents; namely whether it can be proved that every
$(2^\lambda,2^\lambda )$-regular
ultrafilter is $(\lambda,\lambda )$-regular. In this form, the problem has a
negative answer: if we start with a model satisfying GCH and with a measurable
cardinal $\mu$, and we add $\mu$ Cohen reals, then in the resulting model
$2^\omega=2^{\omega_1}=\mu $, and there is a $\mu$-decomposable ultrafilter
which is not $\lambda$-decomposable, for every $\lambda$ with
$\omega<\lambda<\mu $ (see e.g. \cite[p. 62]{Ket}; see \cite{Shr} for related results).
Thus, by \ref{1.1}(xi), there is a
$(2^{\omega_1},2^{\omega_1})$-regular
not $(\omega_1,\omega_1)$-regular ultrafilter.

On the other side, if there is no inner model with a measurable cardinal, then
by Proposition  \ref{2.7}
every
$(2^\lambda,2^\lambda )$-regular
ultrafilter is
$(\omega,\nu )$-regular for all $\nu < {\rm cf} 2^ \lambda  $ , hence
$(\lambda,\lambda )$-regular, by  \ref{1.1}(i) and since
$\lambda<{\rm cf}2^\lambda  $.

By the above remarks, the existence of a measurable cardinal is equiconsistent
with the existence, 
for some $ \lambda $,
of a
$(2^\lambda,2^\lambda )$-regular
not $(\lambda,\lambda )$-regular ultrafilter.

In a previous version of the present paper we refined the above problem
to:

\begin{question}
\label{3.1}
 Suppose that $2^\kappa=\lambda $, and that $\kappa$ is the
first
cardinal such that $2^\kappa=\lambda $. Is it true that every $(\lambda,\lambda
)$-regular
ultrafilter is $(\kappa,\kappa )$-regular?
\end{question}

Also \ref{3.1} has a negative answer: start with a model of GCH in 
which $\mu $
is $\mu ^{+\omega_1+1}$-compact, and add
 $\mu^{+\omega_1}$ Cohen reals. Then in the resulting model
$2^{\omega}=\mu^{+\omega_1}$ and
there is a $(\mu,\mu^{+\omega_1+1})$-regular ultrafilter which is
not $\nu$-decomposable, for every $\nu$ with $\omega<\nu<\mu$.
If we put
$\kappa =\omega_1$ and $\lambda =
\mu^{+\omega_1+1}$, we have that $\kappa $ is the first cardinal such that
$2^\kappa =\lambda $, and there exists a $(\lambda ,\lambda )$-regular not
$(\kappa ,\kappa )$-regular ultrafilter.

The above example also shows that \ref{3.1} can be false even if we 
strengthen
the hypothesis of $(\lambda ,\lambda )$-regularity to
$\lambda $-decomposability.

However, we do not know whether it is possible to get a counterexample to
\ref{3.1} starting with something less than a 
$\mu^{+\omega_1+1}$-compact
 cardinal $\mu$. Does a measurable suffice?

In spite of the above counterexamples, we have some positive results, and
we
can actually show that some amount of decomposability
(and hence regularity) can be brought down from exponents.

Let $\beth_n(\lambda )$ denote the $n^{\rm th} $ iteration of the power set of
$\lambda$; that is, $\beth_0(\lambda)=\lambda $, and
$\beth_{n+1}(\lambda)=2^{\beth_n(\lambda)}$.

\begin{thm}
\label{3.2}
(a) If $D$ is $(2^{2^\lambda }, 2^{2^\lambda })$-regular (or just $|\prod_D
2^{2^\lambda }|>2^{2^\lambda }$)
then $D$ is $\kappa $-decomposable for some $\kappa $ with
$\lambda\leq\kappa\leq2^\lambda $.

(a$'$) More generally, if $n\geq1$ and $D$ is $(\beth_n(\lambda ), \beth_n(\lambda
))$-regular (or just if $|\prod_D \beth_n(\lambda )|> \beth_n(\lambda )$)
then $D$ is $\kappa $-decomposable for some $\kappa $ with
$\lambda\leq\kappa\leq2^\lambda $.

(b) Suppose that $\lambda $ is a strong limit cardinal, and that
$D$ is $(\beth_n(\lambda ), \beth_n(\lambda ))$-regular for some $n\geq 0$ (or
just that $|\prod_D \beth_n(\lambda )|>\beth_n(\lambda )$).
Then either $D$ is $({\rm cf} \lambda, {\rm cf} \lambda )$-regular or there are
arbitrarily large $\kappa<\lambda $ for which $D$ is $\kappa $-decomposable.
If in addition $U'(\lambda )$ holds (recall Definition  \ref{1.7}) then $D$ is
$(\lambda, \lambda)$-regular.
\end{thm}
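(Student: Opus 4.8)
The plan is to reduce all three parts to a single base case by a descent in the $\beth$-hierarchy, the engine being the inequality $|\prod_D 2^\theta|\le 2^{|\prod_D\theta|}$, valid for every cardinal $\theta$. Indeed, an internal subset $[g]\in\prod_D\langle\mathcal P(\theta)\rangle$ is determined by the genuine set $\{[h]\in\prod_D\theta : \{i:h(i)\in g(i)\}\in D\}$ (by \L o\v s Theorem), so $\prod_D\mathcal P(\theta)$ injects into $\mathcal P(\prod_D\theta)$. Since $\beth_k(\lambda)=2^{\beth_{k-1}(\lambda)}$ and $|\prod_D\beth_{k-1}(\lambda)|\ge\beth_{k-1}(\lambda)$, the hypothesis $|\prod_D\beth_k(\lambda)|>\beth_k(\lambda)$ gives $2^{|\prod_D\beth_{k-1}(\lambda)|}\ge|\prod_D\beth_k(\lambda)|>2^{\beth_{k-1}(\lambda)}$; as the exponent on the left is $\ge$ the one on the right, they must differ, i.e.\ $|\prod_D\beth_{k-1}(\lambda)|>\beth_{k-1}(\lambda)$. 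Iterating, the hypotheses of (a), (a$'$) collapse to the base hypothesis $|\prod_D 2^\lambda|>2^\lambda$, and (since $n\ge 0$ is allowed) those of (b) collapse to $|\prod_D\lambda|>\lambda$.

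The base case I would isolate is: if $|\prod_D 2^\lambda|>2^\lambda$, then $D$ is $\kappa$-decomposable for some $\kappa$ with $\lambda\le\kappa\le 2^\lambda$. Arguing by contradiction, suppose $D$ is not $\kappa$-decomposable for any $\kappa\in[\lambda,2^\lambda]$. For each $f\colon I\to 2^\lambda$ the projection of $D$ under $f$ is uniform over some cardinal $\kappa_f\le 2^\lambda$, and by \ref{1.1}(iii) this makes $D$ $\kappa_f$-decomposable; by assumption $\kappa_f\notin[\lambda,2^\lambda]$, hence $\kappa_f<\lambda$, so every $f$ concentrates $D$ on a set of size $<\lambda$. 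Counting through these sets, $|\prod_D 2^\lambda|\le(2^\lambda)^{<\lambda}\cdot\sup_{\nu<\lambda}|\prod_D\nu|=2^\lambda\cdot\sup_{\nu<\lambda}|\prod_D\nu|$, so $|\prod_D 2^\lambda|>2^\lambda$ forces some $\nu_0<\lambda$ with $|\prod_D\nu_0|>2^\lambda$.

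The hard part will be converting this largeness back into regularity: from $|\prod_D 2^{\nu_0}|\ge|\prod_D\nu_0|>2^\lambda$ I would conclude that $D$ is $(\nu_0^+,\lambda)$-regular, invoking the sharper form of \cite[Theorem 2.1]{Kei} (the converse of Proposition \ref{1.4}, of which \ref{1.4} is only the easy half). This is the main obstacle, since a mere cardinality bound on an ultrapower does not in general encode a regularizing family, and the step must stay within ZFC. Granting it, note $\nu_0^+\le\lambda$, and $\lambda$ is regular in the boundary case $\nu_0^+=\lambda$; so Remark \ref{1.5}(b) furnishes a $(\nu_0^+,\lambda)$-regular projection of $D$ uniform over some $\kappa$ with $\lambda\le\kappa\le\cf S_{\nu_0^+}(\lambda)\le\lambda^{\nu_0}\le 2^\lambda$, i.e.\ $\kappa$-decomposability with $\kappa\in[\lambda,2^\lambda]$ — contradicting the assumption and proving (a), (a$'$).

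For (b), $\lambda$ is a strong limit and the descent leaves $|\prod_D\lambda|>\lambda$. I would split on descending incompleteness. If $\prod_D\langle\lambda,<\rangle$ has an element above every $d(\rho)$, $\rho<\lambda$, then $D$ is $\lambda$-descendingly incomplete, hence $\cf\lambda$-descendingly incomplete and $(\cf\lambda,\cf\lambda)$-regular by \ref{1.1}(iv)(xi): the first alternative. Otherwise every element of $\prod_D\lambda$ lies below some constant, so $\prod_D\lambda=\bigcup_{\rho<\lambda}\{[f]:f<_D d(\rho)\}$ and $|\prod_D\lambda|>\lambda$ forces $\sup_{\nu<\lambda}|\prod_D\nu|>\lambda$; by monotonicity $|\prod_D\nu|>\lambda>2^\nu$ (using that $\lambda$ is a strong limit) for all $\nu$ in a tail of $\lambda$, whence $|\prod_D 2^\nu|>2^\nu$ and the base case yields, for cofinally many $\nu<\lambda$, a decomposition cardinal in $[\nu,2^\nu]\subseteq[\nu,\lambda)$: the second alternative gives arbitrarily large $\kappa<\lambda$ with $D$ $\kappa$-decomposable. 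Finally, under $U'(\lambda)$ (Definition \ref{1.7}) the second alternative upgrades to $(\lambda,\lambda)$-regularity, while in the first alternative \ref{1.1}(v) already gives $(\lambda,\lambda)$-regularity; so $D$ is $(\lambda,\lambda)$-regular in either case.
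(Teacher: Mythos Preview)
Your descent step (iterating $|\prod_D 2^\theta|\le 2^{|\prod_D\theta|}$ to reduce to $|\prod_D 2^\lambda|>2^\lambda$, respectively $|\prod_D\lambda|>\lambda$) is correct and is exactly what the paper does, citing it as a standard argument from \cite{AJ}. Your treatment of (b) is also essentially the paper's: the split on $\lambda$-descending incompleteness is equivalent to the paper's split on $(\cf\lambda,\cf\lambda)$-regularity, and the bounded case yields $|\prod_D\nu|>\lambda>2^\nu$ for a tail of $\nu<\lambda$, whence one feeds back into the base case. So (b) stands or falls with your base case for (a)/(a$'$).

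That base case has a genuine gap, and it is the one you yourself flag. There is no ``converse of Proposition~\ref{1.4}'' in \cite{Kei}: Keisler's Theorem 2.1 is a one-way implication (regularity $\Rightarrow$ large ultrapower), and Proposition~\ref{1.4} is a particular case of it, not its easy half. A cardinality bound $|\prod_D 2^{\nu_0}|>2^\lambda$ does \emph{not} in general yield $(\nu_0^+,\lambda)$-regularity, and you offer no argument for why it should hold under your contradiction hypothesis. Your reduction up to that point is fine: from ``no $\kappa$-decomposition with $\lambda\le\kappa\le 2^\lambda$'' you correctly deduce that every $f\colon I\to 2^\lambda$ concentrates on a set of size $<\lambda$, hence $|\prod_D 2^\lambda|\le (2^\lambda)^{<\lambda}\cdot\sup_{\nu<\lambda}|\prod_D\nu|=2^\lambda\cdot\sup_{\nu<\lambda}|\prod_D\nu|$, forcing some $\nu_0<\lambda$ with $|\prod_D\nu_0|>2^\lambda$. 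But this is where your argument stalls.

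The paper closes the gap not by extracting regularity from cardinality, but by a direct combinatorial construction (Proposition~\ref{3.3}, a refinement of \cite{AJ}). Applied with $\mu=2^\lambda$, the least $\nu$ with $(2^\lambda)^\nu>2^\lambda$ is exactly $\lambda^+$; one then builds an increasing chain of partitions of $I$, each step refining by a function $f\colon I\to 2^\lambda$ whose induced partition has $<\lambda^+$ classes modulo $D$, and shows that after at most $\lambda^+$ steps the resulting partition has between $\lambda$ and $2^\lambda$ classes modulo $D$ --- giving the required decomposition directly, without any detour through regularity. This is the missing idea: the information in ``$|\prod_D 2^\lambda|>2^\lambda$ but all projections to $2^\lambda$ have small range modulo $D$'' is exploited by iteratively refining partitions, not by invoking a converse to Keisler.
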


In order to prove Theorem \ref{3.2} we need the following 
proposition, a slight
improvement on \cite[Theorem 1]{AJ}, and  which has independent interest.

\begin{prop}
\label{3.3}
Suppose that $|\prod_D\mu |>\mu $,
and let $\nu $ be the smallest cardinal such that $\mu^\nu >\mu $. Then either:

(i) $D$ is $\kappa $-decomposable for some $\kappa $ with $\nu\leq\kappa\leq\mu
$;
or

(ii) for every $\nu'<\nu$ there is $\kappa $ such that
$\nu'\leq\kappa<\nu$ and $D$ is $\kappa $-decomposable; in addition, $D$ is
$\kappa $-decomposable for some $\kappa $ with $\mu<\kappa\leq2^\nu $; and
moreover $|\prod_D\mu |=\sup_{\nu'<\nu } |\prod_D \nu'| $.

In particular, if $\nu $ is a successor cardinal, then $D$ is $\kappa $-decomposable
for some $\kappa $ with $\nu^-\leq\kappa\leq\mu $ ($\nu^-$ denotes the
predecessor of $\nu $).

\end{prop}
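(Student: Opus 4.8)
The plan is to analyze each function $f\colon I\to\mu$ through the number of classes, modulo $D$, of the partition of $I$ into the fibers of $f$; write $c(f)$ for this cardinal. Since no ultrafilter is $m$-decomposable for $1<m<\omega$, either $c(f)=1$ (equivalently, $f$ agrees with a constant on a set in $D$, so $[f]$ lies in the range of $d$) or $c(f)$ is infinite, in which case the fiber partition restricted to a suitable $X\in D$ induces a $c(f)$-decomposition of $D$, as noted after the definition of ``$\kappa$ classes modulo $D$''. Hence $c(f)$ is always $1$ or an infinite cardinal $\kappa\le\mu$ for which $D$ is $\kappa$-decomposable. The backbone of the argument is the counting inequality
\[
\Bigl|\prod_D\mu\Bigr|\ \le\ \mu+\sum_{\kappa}\mu^{\kappa}\cdot\Bigl|\prod_D\kappa\Bigr|,
\]
where $\kappa$ ranges over the infinite decomposability cardinals of $D$ that are $\le\mu$: indeed, every $[f]$ with $c(f)=\kappa$ is obtained, after restricting to some $X\in D$ on which $f$ takes exactly $\kappa$ values, as the composite of a pattern $g\colon I\to\kappa$ (an element of $\prod_D\kappa$) with one of the at most $\mu^{\kappa}$ injections $\kappa\to\mu$ enumerating the range.

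If $D$ is $\kappa$-decomposable for some $\kappa$ with $\nu\le\kappa\le\mu$ we land in conclusion (i). So assume not; then every relevant $\kappa$ is $<\nu$. Since $\nu$ is least with $\mu^{\nu}>\mu$ we have $\mu^{<\nu}=\mu$, whence $\mu^{\kappa}=\mu$ for every such $\kappa$, and $|\prod_D\kappa|\le\sup_{\nu'<\nu}|\prod_D\nu'|$. The inequality collapses to $|\prod_D\mu|\le\mu\cdot\sup_{\nu'<\nu}|\prod_D\nu'|$. The reverse inequality is trivial because $\nu'<\nu\le\mu$ gives an injection $\prod_D\nu'\hookrightarrow\prod_D\mu$. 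As $|\prod_D\mu|>\mu$, the supremum cannot be $\le\mu$ (a supremum of cardinals each $\le\mu$ is $\le\mu$); so it exceeds $\mu$, the factor $\mu$ is absorbed, and we obtain the equality $|\prod_D\mu|=\sup_{\nu'<\nu}|\prod_D\nu'|$ of conclusion (ii), with common value $>\mu$. In particular some $\nu_0<\nu$ has $|\prod_D\nu_0|>\mu$.

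It remains to produce decompositions cofinally below $\nu$ and a decomposition in $(\mu,2^{\nu}]$. The route is to let $\lambda$ be the least cardinal with $|\prod_D\lambda|>\mu$ (so $\lambda\le\nu_0<\nu$). Applying the dichotomy ``either $D$ is $\lambda$-decomposable, or every $f\colon I\to\lambda$ has $c(f)<\lambda$'' and re-running the fiber count with the minimality of $\lambda$ shows $D$ must be $\lambda$-decomposable, since the second alternative would force $|\prod_D\lambda|\le\mu$. Carrying this out for the least such cardinal lying above each given $\nu'<\nu$ yields decomposability cofinally below $\nu$. For the decomposition above $\mu$ the plan is to feed $\lambda$-decomposability together with the lower bound $|\prod_D\lambda|>\mu$ into Proposition \ref{1.4}: passing to a quotient uniform on $\lambda$ controls $|\prod_D 2^{<\lambda}|$ by $2^{\lambda}\le 2^{\nu}$, and one extracts from the large ultrapower a single partition of $I$ with more than $\mu$ classes modulo $D$, that is, a $\kappa$-decomposition with $\mu<\kappa\le 2^{\nu}$.

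I expect this last step — manufacturing one partition with strictly more than $\mu$ classes modulo $D$ out of the mere inequality $|\prod_D\lambda|>\mu$, while keeping the number of classes at most $2^{\nu}$ — to be the main obstacle; the fiber-counting underlying the cardinality equation is routine. The final ``in particular'' is then immediate: if $\nu=(\nu^-)^+$, then in case (ii) taking $\nu'=\nu^-$ produces a decomposition cardinal $\kappa$ with $\nu^-\le\kappa<\nu$, forcing $\kappa=\nu^-$, while in case (i) every $\kappa\in[\nu,\mu]$ already lies in $[\nu^-,\mu]$.
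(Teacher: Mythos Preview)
Your derivation of the identity $|\prod_D\mu|=\sup_{\nu'<\nu}|\prod_D\nu'|$ via the fiber count is correct and is exactly how the paper handles that part. Your argument that $D$ is $\lambda_0$-decomposable for the \emph{global} least $\lambda_0$ with $|\prod_D\lambda_0|>\mu$ is also fine: minimality of $\lambda_0$ gives $|\prod_D\kappa|\le\mu$ for every $\kappa<\lambda_0$, and the fiber count then bounds $|\prod_D\lambda_0|$ by $\mu$, a contradiction.

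The gap is in the sentence ``carrying this out for the least such cardinal lying above each given $\nu'<\nu$''. Once $\nu'>\lambda_0$, the least $\lambda\ge\nu'$ with $|\prod_D\lambda|>\mu$ is just $\nu'$ itself, and the re-run of the fiber count for $\lambda=\nu'$ needs $|\prod_D\kappa|\le\mu$ for all $\kappa<\nu'$; but this is false for every $\kappa$ in $[\lambda_0,\nu')$. So your counting argument produces a single decomposition cardinal $\lambda_0<\nu$, not cofinally many below $\nu$. I do not see a way to push the pure counting method past this point; the same circularity blocks any attempt to bound $\sup K$ from below, where $K$ is the set of decomposability cardinals $\le\mu$. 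Your plan for the $(\mu,2^\nu]$-decomposition is, as you already suspect, also incomplete: knowing $|\prod_D\lambda_0|>\mu$ does not by itself produce a single partition with $>\mu$ classes modulo $D$ while keeping the total number of classes $\le 2^\nu$.

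The missing idea, which the paper supplies and which resolves both gaps at once, is to build a chain of partitions $(\Pi_\alpha)_{\alpha\le\nu}$ \emph{one refinement at a time}: $\Pi_0$ trivial, $\Pi_{\alpha+1}$ the common refinement of $\Pi_\alpha$ with the fiber partition of a single $f$ not yet captured (chosen with $<\nu$ classes modulo $D$, possible since (i) fails), and $\Pi_\alpha$ the common refinement at limits. The one-at-a-time construction controls the number of classes: $\Pi_\alpha$ has at most $\nu^{|\alpha|}\le\mu$ classes for $\alpha<\nu$, and $\Pi_\nu$ has at most $2^\nu$ classes. A separate short argument shows that $\Pi_{\nu'}$ has at least $\nu'$ classes modulo $D$ for every $\nu'\le\nu$. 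Then $\Pi_{\nu'}$ gives a $\kappa$-decomposition with $\nu'\le\kappa\le\mu$ for each $\nu'<\nu$ (this is the cofinality below $\nu$), and $\Pi_\nu$ gives one with $\nu\le\kappa\le 2^\nu$, forced into $(\mu,2^\nu]$ since (i) fails.
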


\begin{proof} [Proof of \ref{3.3}.] 
Let $D$ be over $I$.
Recall from the introduction that if $\Pi$ is 
a partition of $I$, $\Pi$
 has $\kappa $ classes modulo $D$ if and only if $\kappa $ is the least cardinal for
which there is $X\in D$ such that $\Pi$ restricted to $X$
has $\kappa $ classes. If this is the case, then $\Pi$ induces a
$\kappa $-decomposition of $D$. 

Any representative $f:I \to \mu$ of an element $f_D\in \prod_D\mu $
induces the partition $\Pi_f=\{ (i,j)| f(i)=f(j)\} $, which has at most $\mu $
classes. If for some $f:I \to \mu$ $\Pi_f$ has $\kappa $ classes modulo $D$, and
$\nu\leq\kappa\leq\mu $,
then (i) holds, so that we can suppose that

\smallskip

(*) for every $f : I \to \mu $, $\Pi_f$
has $<\nu$ classes modulo D (whence every $f_D\in\prod_D\mu$ has a
representative $f$ such that $\Pi_f$ has $<\nu$ classes).

\smallskip

We now find an ordinal $\rho$ and construct a chain of partitions $\Pi_\alpha $
$(\alpha\leq\rho)$
of $I$ according to the following rules:

(a) $\Pi_0$ is the trivial partition;

(b) if $\alpha $ is limit, $\Pi_\alpha $ is the common refinement of the
$\Pi_\beta$'s, for $\beta<\alpha $;

(c) if $\alpha=\beta+1$, there are two cases:

(c$_1$) every element of $\prod_D\mu $ can be represented as $f_D$, for some $f$
such that $f(i)=f(j)$ whenever $i$ and $j$ belong to the same $\Pi_\beta $
class. In this case, take $\rho=\beta $, and the construction ends.

(c$_2$) Otherwise: take an element $f_D$ which cannot be represented in that
way, and choose by (*) a representative $f$ in such a way that $\Pi_f$ has $<\nu$
classes;
then define $\Pi_\alpha $ to be the common refinement of $\Pi_\beta $ and
$\Pi_f$.
Thus, $\Pi_\alpha $ properly refines $\Pi_\beta $.

Notice that

\smallskip

(**) if $\alpha<\nu $ then, by (c$_2$), $\Pi_\alpha $ (if defined)
has at most $\nu^{|\alpha |}\leq\mu^{<\nu }=\mu  $
classes; and that $\Pi_\nu $ (if defined) has at most
$2^\nu $ classes.

\smallskip

$\Pi_\rho $ has at least $\nu $ classes modulo $D$,
 since if it has only $\nu'<\nu $ classes modulo $D$ then 
by (c$_1$) $|\prod_D \mu
|\leq\mu^{\nu'}=\mu $, a contradiction. Whence if $\rho<\nu $ then $\Pi_\rho $
induces a  $\kappa $-decomposition of $D$ for some $\kappa $ with
$\nu\leq\kappa\leq\mu $, and we are in case (i).

Hence, we can suppose $\rho\geq\nu $.

We now show:

\smallskip
{\it Claim. If $\nu'\leq\rho $ then $\Pi_{\nu'}$ has at least $\nu'$ classes
modulo $D$.
}
\smallskip
\begin{proof}[Proof of the Claim] Fix $\nu'\leq\rho $, and suppose that $\Pi_{\nu'}$
 has $\kappa $
classes modulo $D$, witnessed by $X\in D$. For $\alpha\leq\nu' $, let
$\Pi^*_\alpha $ be $\Pi_\alpha $ restricted to $X$; for every $\alpha<\nu' $
$\Pi^*_{\alpha+1}  $ is a proper refinement of $\Pi^*_\alpha $, 
by (c$_2$) and since $f_D=g_D$,
if $f(i)=g(i)$ for every $i\in X$.

We shall define by induction a sequence $(C _ \alpha ) _{ \alpha \leq \nu'} $
of subsets of $X$ in such a way that, 
for $ \alpha \leq \nu' $,   $|C_ \alpha |= | \alpha | $,
and different elements of $C _ \alpha $ belong to different
$\Pi^*_ \alpha $ classes. Let   
$C_0=\emptyset$; $C_\alpha=\bigcup_{\beta<\alpha } C_\beta $ for $\alpha
$ limit; and, if $\alpha=\beta+1 $, let $C_\alpha =C_\beta \cup \{ p\} $, where
$p\in X$ is such that no element of $C_\beta $ is in the same $\Pi^*_\alpha $
class of $p$ (such a $p $ exists since $\Pi^*_\alpha $ properly refines
$\Pi^*_\beta  $).

Thus, $|C _{\nu'}  |= | \nu' | $,
and different elements of $ C _{\nu'}  $  belong to different
$\Pi^* _{\nu'} $ classes;
hence $\Pi^*_{\nu'}$
has at least $\nu' $ classes, that is $\nu'\leq\kappa $, so that the claim is
proved. \end{proof}

Proof of \ref{3.3} (continued). 
Now consider $\Pi_\nu $: the claim shows that $\Pi_\nu $ has at least $\nu $
classes modulo $D$; on the other side, $\Pi_\nu $ has at most $2^\nu $ classes,
so that $D$
is
$\kappa $-decomposable for some $\kappa $ with $\nu\leq\kappa\leq2^\nu $
(whence $\mu<\kappa\leq2^\nu $ if we are not in case (i)).

Moreover, for every $\nu'<\nu $ the claim shows that
$\Pi_{\nu'} $
has at least $\nu' $ classes modulo $D$; by (**) $\Pi _{\nu'}$ has at most $\mu $
classes, so that
$\Pi_{\nu'} $
induces a
$\kappa $-decomposition of $D$ for some $\kappa$ with
$\nu'\leq\kappa\leq\mu $ (whence $\nu'\leq\kappa<\nu $
if we are not in case (i)).

The above proof is essentially taken from \cite{AJ}. The only difference is that \cite{AJ}
applies the arguments in the proof of the claim only to the case $\nu'=\nu$.
Considering the general case $\nu'\leq\nu $ provides a strengthening without
which we could not prove Theorem \ref{3.2}. Notice that statements in \cite{AJ}
talk about descending incompleteness; however, proofs actually deal with decomposability. 

It remains to prove the last identity in (ii). This is easy:
 we mentioned that, if (i) fails, we can suppose that for every $f_D$
$\Pi_f$ has $<\nu $ classes modulo $D$. Then $\prod_D \mu=\bigcup \{\prod_D
x|x\in S_\nu(\mu )\} $; but $|\prod_D \mu |>\mu $ and $|S_\nu(\mu )|=\mu^{<\nu
}=\mu $, so that $|\prod_D\mu|=\sup_{\nu '<\nu } |\prod_D \nu'| $, since
$|x|=\nu'$ implies $|\prod_D x|=|\prod_D\nu'|$.
 \end{proof}

The proof of Proposition \ref{3.3} should be compared with the proof 
of \cite[Lemma 2]{Si}.

\begin{proof}[Proof of \ref{3.2}.]
It is well known (at least for $\mu $ regular)
that if $D$ is $(\mu,\mu )$-regular then
$|\prod_D\mu|>\mu $ (if $\mu $ is singular, use eventually different functions
from $S_\mu(\mu )$ to $\mu $: see \cite[Theorem 0.25]{Lp1}).

Moreover, standard arguments (e.g. \cite[Lemma 4]{AJ}) show that, for
every $\mu $,

(*) $|\prod_D 2^\mu|>2^\mu$ implies $|\prod_D \mu|>\mu$.

Whence, in case (a), $|\prod_D 2^{2^\lambda } |>2^{2^\lambda }$, by hypothesis
and the first remark, hence $|\prod_D 2^{\lambda } |>2^{\lambda }$ by (*).

Arguing in a similar way and iterating (*) $n-1$ times we get
$|\prod_D 2^{\lambda } |>2^{\lambda }$ also from the hypothesis of (a$'$).

Now take $\mu=2^\lambda$ in Proposition \ref{3.3}: by standard 
cardinal arithmetic,
the least $\nu $ such that $(2^{\lambda })^\nu > 2^\lambda  $ is $\geq\lambda^+
$, so that the conclusion of (a) and (a$'$) follows from 
Proposition
\ref{3.3}.

As for (b), arguing as before, we get $|\prod_D \lambda |>\lambda $ in each
case.

If $\lambda $ is regular, then $\lambda $ is the least $\nu$ such that
$\lambda^\nu>\lambda $, since $\lambda $ is supposed to be a strong limit
cardinal.
Hence the conclusion follows from  Proposition \ref{3.3} and
\ref{1.1}(vii).

If $\lambda $ is singular and $D$ is not
$({\rm cf}\lambda,{\rm cf}\lambda )$-regular, then 
$ |\prod _D \lambda | > \lambda $ and  
an easy argument 
(\cite[Lemma 2]{AJ} or  \cite[Lemma 2.1]{Lp1})  show that $|\prod_D
\lambda'|>\lambda $, for some $\lambda'<\lambda $.
If $\lambda'\leq\mu<\lambda $ then $|\prod_D 2^\mu|\geq|\prod_D
\lambda'|>\lambda>2^\mu$, since $\lambda $ is a strong limit cardinal. By (a$'$)
applied with $n=1$ and  $\mu$ in place of $\lambda $, $D$ is $\kappa $-decomposable for
some $\kappa $ with $\mu\leq\kappa\leq 2^\mu<\lambda  $.
By taking  arbitrarily large $\mu $'s $<\lambda $ we get arbitrarily large
$\kappa $'s $<\lambda $ for which $D$ is $\kappa $-decomposable.

As for the last statement, if $D$ is
$({\rm cf}\lambda,{\rm cf}\lambda )$-regular
 then $D$ is $(\lambda,\lambda )$-regular by  \ref{1.1}(v). If $D$ is
not
$({\rm cf}\lambda,{\rm cf}\lambda )$-regular, we have just shown that
there are arbitrarily large $\kappa $'s $<\lambda $ for which $D$ is
$\kappa $-decomposable, hence $D$ is $(\lambda,\lambda )$-regular by applying the definition
of $U'(\lambda )$ (Definition  \ref{1.7}).
\end{proof}

\begin{remarks}
\label{3.4}
 (a) The hypothesis that $\lambda $ is a strong limit
cardinal in Theorem \ref{3.2}(b) is necessary. This is particularly 
evident in the
case $n=0$:
 if $2^\omega>\lambda $, $\cf \lambda>\omega $, and 
$D$ is uniform over $\omega $,  then, 
by Proposition \ref{1.4},
$|\prod_D \lambda |\geq|\prod_D \omega
|=2^\omega>\lambda $, but $\kappa=\omega $ is the only infinite $\kappa $ for which $D$
is $\kappa $-decomposable.

(b) The above counterexample involves the weaker assumption $|\prod_D
\lambda|>\lambda $; indeed, if we assume $(\lambda,\lambda )$-regularity then
case $n=0$ of \ref{3.2}(b) is always true for every limit cardinal 
$\lambda $, because
of Proposition  \ref{2.6} and  \ref{1.1}(i)(viii).

(c) However, case $n=1$ of \ref{3.2}(b) may fail even when $D$ is 
assumed to be
$(2^\lambda,2^\lambda )$-regular, if $\lambda $ is not a strong limit cardinal.
This goes exactly as in the example at the beginning of this section: start with
a model of GCH in which $\mu $ is measurable and add $\mu $ Cohen reals. Take
$\lambda<\mu $ singular with $\omega\not={\rm cf}\lambda $. In the resulting
model $2^\omega=2^\lambda=\mu $, and there is an ultrafilter $D$ which is
$\kappa $-decomposable exactly for $\kappa=\omega $ and $\kappa=\mu $; thus $D$
is not $({\rm cf}\lambda,{\rm cf}\lambda )$-regular, by \ref{1.1}(viii), but $D$ is
$(2^\lambda,2^\lambda )$-regular (actually, $2^\lambda $-decomposable). Hence
the conclusion of \ref{3.2}(b) fails.

(d) We were led to the formulation of  Theorem \ref{3.2} by easy 
results of the
following kind: {\it every $(2^\omega,2^\omega )$-regular
ultrafilter is $(\omega,\omega )$-regular}. This fact can be obtained as a
consequence of   Theorem \ref{3.2}(b), taking $\lambda=\omega $ and 
$n=1$, and since
no ultrafilter is $m$-decomposable, if $1<m<\omega $. However, the following is
a simpler proof: as we mentioned after the definition of measurability,
the first cardinal $\mu$ for which an ultrafilter is $(\mu,\mu )$-regular
is either $\omega $ or a measurable cardinal, and it is well-known that
measurable cardinals are strongly inaccessible, thus $2^\omega <$ the first
measurable cardinal.
\end{remarks}

\begin{problems}
\label{3.5}
(a) Does (i) follow from the hypotheses of Proposition \ref{3.3}? 
Notice that this
would improve
the conclusion of Theorem \ref{3.2}(a)(a$'$) to 
$\lambda<\kappa\leq2^\lambda $, and
would render the hypothesis $U'(\lambda ) $ unnecessary in 
\ref{3.2}(b), in the case
when $\lambda $ is regular.

{\rm \cite[p. 832]{AJ}} asked something slightly weaker, namely whether,
under
the
hypotheses of \ref{3.3}, $D$ is $\kappa $-{\hspace{0 pt}}descendingly incomplete for 
some $\kappa $
with
$\nu\leq\kappa \leq\mu $.

(a$'$) Can we show, at least, that if (i) in \ref{3.3} fails then $D$ 
is
$2^\nu$-decomposable?

(b) A more general problem: find pairs of cardinals $\mu \leq \mu '$ and
$\kappa
<\kappa '$
such that $D$ is $(\kappa ,\kappa ')$-regular whenever $|\prod_D \mu |>\mu '$.
Notice that,  under
particular assumptions on cardinal
arithmetic, \ref{1.1}(xi), Proposition \ref{3.3} and Theorems \ref{2.1}(b) and \ref{2.15ex2.13}(i) 
actually furnish
examples of such pairs; more examples can be found in combination with
Theorems  \ref{2.18ex2.16} 
 \ref{3.2}, \ref{4.3}.
\end{problems}

We notice also the following corollary of Proposition \ref{3.3}
which deals with exponentiation with a larger base.

\begin{corollary}\label{3.6}
If $|\prod_{D} \mu^\lambda |> \mu^\lambda $
 then $D$ is $\kappa $-decomposable for some $\kappa $ with
$\lambda\leq\kappa\leq \mu^\lambda $.
 \end{corollary}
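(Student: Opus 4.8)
The plan is to apply Proposition \ref{3.3} directly, with the cardinal $M=\mu^\lambda$ playing the role of $\mu$. The hypothesis $|\prod_D M|>M$ is precisely what we are given, so Proposition \ref{3.3} applies verbatim. Let $\nu$ be the least cardinal with $M^\nu>M$. Everything will follow once we locate $\nu$ relative to $\lambda$ and $M$; concretely, I will establish
\[
\lambda<\nu\leq M .
\]

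For the lower bound, observe that for every infinite $\nu'\leq\lambda$ we have $M^{\nu'}=(\mu^\lambda)^{\nu'}=\mu^{\lambda\cdot\nu'}=\mu^\lambda=M$, and $M^{\nu'}=M$ for finite $\nu'\geq 1$ as well, since $M$ is infinite. Hence $M^{\nu'}=M$ for every $\nu'\leq\lambda$, so the least $\nu$ with $M^\nu>M$ must exceed $\lambda$. For the upper bound, König's theorem gives $M^{\cf M}>M$, whence $\nu\leq\cf M\leq M$.

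Now I read off the conclusion from the two cases of Proposition \ref{3.3}. In case (i), $D$ is $\kappa$-decomposable for some $\kappa$ with $\nu\leq\kappa\leq M$; since $\lambda<\nu$, this already yields $\lambda\leq\kappa\leq\mu^\lambda$. In case (ii), I invoke the clause ``for every $\nu'<\nu$ there is $\kappa$ with $\nu'\leq\kappa<\nu$ and $D$ is $\kappa$-decomposable'' with the choice $\nu'=\lambda$, which is legitimate because $\lambda<\nu$; this produces a $\kappa$ with $\lambda\leq\kappa<\nu\leq M$, again of the desired form.

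The only genuine content is the two-sided estimate on $\nu$, and the step requiring care is the upper bound. In case (ii) the decomposition cardinal satisfies merely $\kappa<\nu$, so to keep $\kappa\leq\mu^\lambda$ one really needs $\nu\leq M$; this is exactly where König's inequality $M^{\cf M}>M$ enters, and I expect it to be the one point that would otherwise be overlooked. By contrast, the lower bound $\lambda<\nu$ is the routine cardinal-arithmetic identity $(\mu^\lambda)^\lambda=\mu^\lambda$, and no further obstacle arises.
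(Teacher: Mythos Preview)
Your proof is correct and follows essentially the same route as the paper: both apply Proposition~\ref{3.3} with $\mu^\lambda$ in place of $\mu$ and use $(\mu^\lambda)^\lambda=\mu^\lambda$ to get $\nu\geq\lambda^+$. The paper simply invokes ``the last statement in Proposition~\ref{3.3}'' (the successor-cardinal clause), whereas you spell out cases (i) and (ii) separately and make explicit the K\"onig bound $\nu\leq\cf(\mu^\lambda)\leq\mu^\lambda$ needed for the upper estimate in case (ii); this is arguably more careful, since the paper's reference to the successor clause tacitly relies on the same bound.
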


\begin{proof}
If $\nu$ is the first cardinal such that 
$(\mu^\lambda )^\nu > \mu^\lambda $, then 
$\nu \geq \lambda^+$. Then apply 
the last statement in Proposition \ref{3.3}. 
\end{proof}

\section{ Down from exponents (Part II)}
\label{dfe2}

\begin{prop}
\label{4.1}
For every ultrafilter $D$,
and every cardinals $\kappa $, $\nu$,
$|\prod_D \nu^\kappa |
 \leq |\prod_D \nu^{<\kappa }|^{{\rm cf}(\prod_D \langle \kappa ,<
\rangle)}$.
\end{prop}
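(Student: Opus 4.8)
The plan is to exhibit an explicit injection from $\prod_D \nu^\kappa$ into $(\prod_D \nu^{<\kappa})^\theta$, where $\theta = \cf(\prod_D \langle \kappa, <\rangle)$, from which the cardinality bound is immediate. Here I read $\nu^{<\kappa}$ as the set $\bigcup_{\alpha<\kappa}\nu^\alpha$ of functions from an ordinal below $\kappa$ into $\nu$, so that $\prod_D \nu^{<\kappa}$ makes sense. First I would fix a strictly increasing cofinal sequence $(a_i)_{i<\theta}$ in the linear order $\prod_D \langle \kappa, <\rangle$ (this is a linear order by \L o\v s's Theorem, and since $\kappa$ is a limit ordinal it has no greatest element, whence $\theta$ is infinite regular and every element of $\prod_D \kappa$ lies strictly below some $a_i$). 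For each $i<\theta$ choose a representative $h_i : I \to \kappa$ with $a_i = [h_i]_D$.

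The key step is the restriction map. Given $[G]_D \in \prod_D \nu^\kappa$ represented by $G: I \to \nu^\kappa$, and given $i < \theta$, I would set $r_i(G) = [\,i'\mapsto G(i')\upharpoonright h_i(i')\,]_D$. Since $h_i(i') < \kappa$, each $G(i')\upharpoonright h_i(i')$ lies in $\nu^{h_i(i')} \subseteq \nu^{<\kappa}$, so $r_i(G) \in \prod_D \nu^{<\kappa}$; moreover $r_i(G)$ depends only on the class $[G]_D$, since agreement of $G,G'$ on a $D$-large set passes to the restrictions. The candidate injection is $\Phi([G]_D) = (r_i(G))_{i<\theta}$.

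For injectivity, suppose $[G]_D \neq [G']_D$, so that $A = \{i' : G(i')\neq G'(i')\}\in D$. On $A$ let $\beta(i')$ be the least $\gamma<\kappa$ with $G(i')(\gamma)\neq G'(i')(\gamma)$, and set $\beta(i')=0$ off $A$; then $[\beta]_D \in \prod_D\kappa$, so by cofinality there is $i$ with $[\beta]_D < a_i$, i.e. $B = \{i' : \beta(i')<h_i(i')\}\in D$. For every $i'\in A\cap B$ the functions $G(i')$ and $G'(i')$ already differ at $\beta(i')<h_i(i')$, whence their restrictions to $h_i(i')$ differ; since $A\cap B\in D$ this gives $r_i(G)\neq r_i(G')$, so $\Phi$ is injective. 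This yields $|\prod_D \nu^\kappa| \leq |\prod_D\nu^{<\kappa}|^\theta$, as required.

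The only genuinely delicate point, and the one I would check most carefully, is the passage from a pointwise first difference to a difference that survives modulo $D$: it is precisely the cofinality of the \emph{ultrapower} $\prod_D\langle \kappa,<\rangle$ (rather than $\cf\kappa$ or $|\prod_D\kappa|$) that forces the first-difference element $[\beta]_D$ below some $a_i$, and so makes exactly this exponent appear. Everything else is routine bookkeeping with \L o\v s's Theorem and the well-definedness of ultrapower classes.
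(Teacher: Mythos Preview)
Your proof is correct and follows essentially the same idea as the paper's: an element of $\prod_D \nu^\kappa$ is determined by its restrictions along a cofinal sequence in $\prod_D \langle \kappa, < \rangle$. The paper packages this via a first-order structure (with predicates for $\kappa$, $\nu^\kappa$, $\nu^{<\kappa}$ and the restriction operation) and an appeal to \L o\v s's Theorem in an arbitrary elementarily equivalent model, whereas you unwind the same argument by hand with explicit representatives and a first-difference computation; the mathematical content is identical.
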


\begin{proof}
Consider a model {\bf A} of the form
$\langle A, U, <, V, W, F, G, H, J \rangle $, where $U, V, W, J$ are unary predicates, 
\makebox{$\langle U,<\rangle = \langle \kappa,<\rangle  $}, $|J|=\nu$
$V$ is (in a one to one correspondence with) $^{\kappa }\nu$, the set
of all functions from $\kappa $ to $\nu$, and
$W$ is (in a one to one correspondence with)
$\bigcup_{\beta<\kappa } \ ^{\beta }\nu$,
 the set
of all functions from
 $\beta$ to $\nu$, for all $\beta<\kappa $.

$F$ is a function from $W$ to $\kappa $, and we require that if $W(w)$ then $w$ is
(corresponds to) a function from
$F(w)$ to $\nu$.
$G$ and $H$ are functions which represent the functions in $V,W$; namely if
$V(v)$ and $v$ is (corresponds to)
$f:\kappa \to \nu$ then $G(v,\alpha )=f(\alpha )$, for all $\alpha<\kappa $.
Similarly, if $W(w)$ and
$w$ is (corresponds to)
$g:F(w)\to \nu$ then $H(w,\alpha  )=g(\alpha  )$, for all $\alpha <F(w)$.

What matters is that (in {\bf A}) $|V|=\nu^\kappa $ and
$|W|=\nu^{<\kappa }$; we shall get the desired inequality by taking the ultrapower
of {\bf A} modulo $D$, and then computing the cardinality of the unary
predicates $V,W$ there.

Let {\bf B} be any model elementarily equivalent to {\bf A}. The following
formula holds in {\bf A},
hence in {\bf B}:

\[ 
 \forall v \forall u  (V(v) \wedge U( u)   \Rightarrow  \exists w (W(w)
\wedge F(w)= u \wedge   \forall x
( x< u  \Rightarrow  G(v, x )=H(w, x )  )    )   ),
\] 
in words, for every function $f$ in $V$ and for every  element $u$ in $U$ there
is a function $g$ in $W$ such that $g $ is $f$ restricted to the domain
$[0,u)=\{x| U(x)
\wedge x<u\} $. Thus, all ``initial segments'' of functions in $V$ can be found
in $W$; in particular, working in 
{\bf B}, if $\lambda $ is the cofinality of $\langle
U,<\rangle $ and  $u_\gamma \ (\gamma<\lambda )$ is a cofinal sequence, then any
$f$ in $V$ is determined by the functions $f_{|[0,u_\gamma  )}$
($\gamma<\lambda $).

For each $\gamma<\lambda $ there are at most $|W|$ (computed in {\bf B})
functions with domain $[0,u_\gamma )$,
whence in {\bf B} $|V|\leq|W|^\lambda $ (notice that in {\bf A} different
elements in $V$ correspond to different functions, and this can be expressed by
a
first order sentence, using $G$).

Now let ${\bf B} =\prod_D {\bf A} $. By \L o\v s Theorem,
{\bf B} is elementarily equivalent to {\bf A}, and the above argument gives the
result, recalling that in {\bf A}  $\langle U,<\rangle = \langle
\kappa,<\rangle  $.
  \end{proof}

\begin{cor}
\label{4.2}
 Suppose that $\kappa $ and $\lambda $ are infinite cardinals,  $\nu$ is a cardinal, 
 $|\prod_D \nu^\kappa |>\nu^\lambda  $, and $|\prod_D \nu^{<\kappa
}|\leq \nu^\lambda $.

Then
$ \nu^\lambda < 
|\prod_D \nu^\kappa | \leq
|\prod_D \nu^{<\kappa} |^{\cf(\prod_D \langle\kappa,< \rangle)} \leq
\nu^{\lambda \cdot \cf(\prod_D \langle\kappa,< \rangle)}$,
hence
${\rm cf}(\prod_D \langle\kappa,< \rangle)>\lambda   $.

If in addition $D$ is $(\kappa^+,\lambda )$-regular,
then $D$ is $(\kappa, \lambda )$-regular (by Theorem  \ref{2.13ex2.11}(i)).
\end{cor}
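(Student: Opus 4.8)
The statement packages together a chain of inequalities and then an application of a prior result. The plan is to assemble the pieces in the order they are stated, since each link follows either from the hypotheses or from Proposition 4.1.

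First I would establish the central inequality by simply invoking Proposition 4.1 with the given $\kappa$, $\nu$; this gives
\[
|\textstyle\prod_D \nu^\kappa | \leq |\textstyle\prod_D \nu^{<\kappa}|^{\cf(\prod_D \langle \kappa,< \rangle)}.
\]
Next I would feed in the second hypothesis $|\prod_D \nu^{<\kappa}| \leq \nu^\lambda$ to bound the base, yielding
\[
|\textstyle\prod_D \nu^\kappa | \leq (\nu^\lambda)^{\cf(\prod_D \langle \kappa,< \rangle)} = \nu^{\lambda \cdot \cf(\prod_D \langle \kappa,< \rangle)},
\]
using the standard identity $(\nu^\lambda)^\theta = \nu^{\lambda\cdot\theta}$. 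Combining these with the first hypothesis $\nu^\lambda < |\prod_D \nu^\kappa|$ gives the full displayed chain. This part is entirely routine cardinal arithmetic together with a single application of the preceding proposition.

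The one step requiring a small argument is deducing $\cf(\prod_D \langle \kappa,<\rangle) > \lambda$. From the chain I have $\nu^\lambda < \nu^{\lambda \cdot \cf(\prod_D \langle\kappa,<\rangle)}$, so in particular the two exponents cannot give equal values; the point is that if $\cf(\prod_D \langle\kappa,<\rangle) \leq \lambda$ then $\lambda \cdot \cf(\prod_D \langle\kappa,<\rangle) = \lambda$, whence $\nu^{\lambda\cdot\cf(\prod_D\langle\kappa,<\rangle)} = \nu^\lambda$, contradicting the strict inequality. (Here I use that $\lambda$ is infinite so that $\lambda \cdot \theta = \lambda$ for every nonzero $\theta \leq \lambda$, and that the multiplication is nonzero since a cofinality of an infinite ultrapower order is at least $1$.) This forces $\cf(\prod_D \langle\kappa,<\rangle) > \lambda$, as claimed. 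This is the only genuinely logical (rather than purely computational) step, and it is the place to be careful that the cardinal arithmetic cannot collapse the two sides.

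Finally, for the last sentence, I would observe that $\cf(\prod_D \langle \kappa,<\rangle) > \lambda \geq \cf\lambda$ means the cofinality of the ultrapower order is strictly above $\lambda$, which is precisely the hypothesis allowing Theorem \ref{2.13ex2.11}(i) to be applied: under $(\kappa^+,\lambda)$-regularity, alternative (b) there would force the cofinality of $\prod_D \langle\kappa,<\rangle$ to equal $\cf\lambda$, so having cofinality $>\lambda$ rules out (b) and delivers alternative (a), namely $(\kappa,\lambda)$-regularity. The main obstacle, such as it is, is purely bookkeeping: ensuring that the cofinality bound just derived is exactly the quantity appearing in Theorem \ref{2.13ex2.11}(i) and lines up with the correct alternative there, so that no additional hypothesis is silently required.
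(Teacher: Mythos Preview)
Your proof is correct and follows exactly the approach the paper intends: the displayed chain is immediate from Proposition~\ref{4.1} plus the two hypotheses and elementary cardinal arithmetic, and the final clause is just the observation that alternative (b) of Theorem~\ref{2.13ex2.11}(i) (with the roles of $\lambda$ and $\kappa$ swapped) is excluded by $\cf(\prod_D\langle\kappa,<\rangle)>\lambda\geq\cf\lambda$. The paper in fact gives no separate proof of this corollary, treating it as self-evident from the preceding proposition.
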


The following theorem improves Theorem \ref{3.2}.

\begin{thm}
\label{4.3}
 (a) If $|\prod_D \nu^{\kappa ^{+n}}| > \nu^{\kappa ^{+n}}$
(in particular, if
$D$ is $(\nu^{\kappa^{+n}},\nu^{\kappa^{+n}})$-regular)
then $D$ is
$\mu $-decomposable for some $\mu $ with
$\kappa \leq\mu \leq \nu^\kappa $.
If in addition $\nu^\kappa=\kappa^{+p}$ for some $p<\omega $ then $D$ is
$(\kappa,\kappa )$-regular.

(a$'$) If $m\geq 1$ and
$|\prod_D \beth_m({\kappa ^{+n}})| > \beth_m{(\kappa ^{+n}})$ 
(in particular, if
$D$ is $(\beth_m{(\kappa ^{+n}}),\beth_m{(\kappa ^{+n}}))$-regular)
 then $D$ is
$\mu $-decomposable for some $\mu $ with
$\kappa \leq\mu \leq2^\kappa $.
If in addition $2^\kappa=\kappa^{+p}$ for some $p<\omega $ then $D$ is
$(\kappa,\kappa )$-regular.

(b) Suppose that $\kappa $ is a strong limit cardinal and that
$|\prod_D \beth_m(\kappa^{+n} )|>\beth_m(\kappa^{+n} )$
(in particular, this holds when
$D$ is $(\beth_m(\kappa^{+n} ), \beth_m(\kappa^{+n} ))$-regular).
Then either $D$ is $({\rm cf} \kappa, {\rm cf} \kappa )$-regular or there are
arbitrarily large $\mu<\kappa $ for which $D$ is $\mu $-decomposable.
\end{thm}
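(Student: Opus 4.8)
The plan is to prove (a) first, by peeling the successors off the exponent one at a time, and then to read off (a$'$) and (b) from (a) together with results already in hand. Fix $\nu\geq 2$. I would argue by downward induction on $j$, establishing the dichotomy: from $|\prod_D \nu^{\kappa^{+j}}|>\nu^{\kappa^{+j}}$ (with $1\leq j\leq n$) \emph{either} the inequality survives one level down, i.e. $|\prod_D \nu^{\kappa^{+(j-1)}}|>\nu^{\kappa^{+(j-1)}}$, \emph{or} $D$ is $(\kappa^{+j},\kappa^{+j})$-regular. Iterating, one either reaches the bottom with $|\prod_D \nu^{\kappa}|>\nu^{\kappa}$, or halts at some level with $(\kappa^{+j},\kappa^{+j})$-regularity.

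The inductive step is exactly where Proposition \ref{4.1} does the work. Suppose the step down fails, so $|\prod_D \nu^{\kappa^{+(j-1)}}|=\nu^{\kappa^{+(j-1)}}$. Since $\kappa^{+j}$ is a successor cardinal, $\nu^{<\kappa^{+j}}=\nu^{\kappa^{+(j-1)}}$, and since the cardinality of an ultrapower depends only on the size of the base, Proposition \ref{4.1} gives
\[
\nu^{\kappa^{+j}}<\bigl|\textstyle\prod_D \nu^{\kappa^{+j}}\bigr|\leq \bigl(\nu^{\kappa^{+(j-1)}}\bigr)^{\cf(\prod_D\langle \kappa^{+j},<\rangle)}.
\]
Because $\nu^{\kappa^{+j}}=(\nu^{\kappa^{+(j-1)}})^{\kappa^{+j}}$ and cardinal exponentiation is monotone in the exponent, this forces $\cf(\prod_D\langle \kappa^{+j},<\rangle)>\kappa^{+j}$; as $\kappa^{+j}$ is regular, Remark \ref{1.5}(a) then gives that $D$ is $(\kappa^{+j},\kappa^{+j})$-regular, which is the claimed alternative.

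To finish (a) I treat the two outcomes. If $|\prod_D \nu^\kappa|>\nu^\kappa$, I apply Proposition \ref{3.3} with $\mu=\nu^\kappa$: since $(\nu^\kappa)^\kappa=\nu^\kappa$, the least $\rho$ with $(\nu^\kappa)^\rho>\nu^\kappa$ is $\geq\kappa^+$, and then (exactly as in the proof of Theorem \ref{3.2}(a)) both alternatives of \ref{3.3} yield a $\mu$-decomposition with $\kappa\leq\mu\leq\nu^\kappa$. If instead $D$ is $(\kappa^{+j},\kappa^{+j})$-regular, note $\kappa^{+j}>\kappa$; when $\kappa^{+j}\leq\nu^\kappa$, $D$ is $\kappa^{+j}$-decomposable by \ref{1.1}(viii), already in range, and when $\kappa^{+j}>\nu^\kappa$, then $\nu^\kappa\geq 2^\kappa\geq\kappa^+$ forces $\nu^\kappa=\kappa^{+r}$ for some $1\leq r<j$, so iterating Theorem \ref{2.1}(b) from $(\kappa^{+j},\kappa^{+j})$ down to $(\kappa^{+r},\kappa^{+r})$ makes $D$ be $\nu^\kappa$-decomposable, again in range. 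The extra clause ($\nu^\kappa=\kappa^{+p}$) then follows, since the decomposition lands at some $\kappa^{+q}$ with $0\leq q\leq p$ and iterating \ref{2.1}(b) $q$ times yields $(\kappa,\kappa)$-regularity. Part (a$'$) is the case $\nu=2$ of (a): I first apply the implication $|\prod_D 2^\sigma|>2^\sigma\Rightarrow|\prod_D\sigma|>\sigma$ (used in the proof of \ref{3.2}) $m-1$ times to pass from $\beth_m(\kappa^{+n})$ to $2^{\kappa^{+n}}=\beth_1(\kappa^{+n})$.

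For (b), with $\kappa$ a strong limit, I reduce $|\prod_D \beth_m(\kappa^{+n})|>\beth_m(\kappa^{+n})$ to $|\prod_D 2^{\kappa^{+n}}|>2^{\kappa^{+n}}$ by iterating that same implication, and then run the successor-peeling of (a) with $\nu=2$. Either I extract $(\kappa^{+j},\kappa^{+j})$-regularity for some $j\geq 1$, or I reach $|\prod_D 2^\kappa|>2^\kappa=|\prod_D\beth_1(\kappa)|$. In the second case the hypothesis of Theorem \ref{3.2}(b) holds with $\kappa$ in place of $\lambda$, so that theorem delivers the conclusion verbatim; in the first case, iterating Theorem \ref{2.1}(b) gives $(\kappa,\kappa)$-regularity, which is $(\cf\kappa,\cf\kappa)$-regularity when $\kappa$ is regular and, when $\kappa$ is singular, splits by Proposition \ref{2.6} into $(\cf\kappa,\cf\kappa)$-regularity or $(\kappa',\kappa)$-regularity for some $\kappa'<\kappa$, the latter producing $\sigma$-decompositions for cofinally many regular $\sigma<\kappa$ via \ref{1.1}(xii). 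The main obstacle throughout is the bookkeeping around the narrow target interval $[\kappa,\nu^\kappa]$: making Proposition \ref{4.1} bite requires the identifications $\nu^{<\kappa^{+j}}=\nu^{\kappa^{+(j-1)}}$ and the base-size invariance of $|\prod_D(\cdot)|$, and converting the resulting $(\kappa^{+j},\kappa^{+j})$-regularity into a decomposition that actually lands in $[\kappa,\nu^\kappa]$ is precisely why the case split on whether $\kappa^{+j}\leq\nu^\kappa$, resolved through Theorem \ref{2.1}(b), is essential rather than cosmetic.
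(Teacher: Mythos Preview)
Your argument is correct and follows essentially the same route as the paper: the paper picks the least $q$ with $|\prod_D \nu^{\kappa^{+q}}|>\nu^{\kappa^{+q}}$ (equivalent to your downward induction), applies Corollary~\ref{4.2}/Remark~\ref{1.5}(a) when $q>0$ and Corollary~\ref{3.6} when $q=0$, then iterates Theorem~\ref{2.1}(b); your extra case split on whether $\kappa^{+j}\le\nu^\kappa$ is unnecessary since iterating \ref{2.1}(b) down to $\kappa^+$ always lands in $[\kappa,\nu^\kappa]$. For (b) the paper is slightly cleaner: it uses both implications $|\prod_D\lambda^+|>\lambda^+\Rightarrow|\prod_D\lambda|>\lambda$ and $|\prod_D 2^\lambda|>2^\lambda\Rightarrow|\prod_D\lambda|>\lambda$ to reduce directly to $|\prod_D\kappa|>\kappa$ and then invokes Theorem~\ref{3.2}(b), avoiding your case split (and smoothly covering $m=0$).
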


\begin{proof}
 (a) Let $q$ be the smallest integer such that $|\prod_D \nu^{\kappa ^{+q}}| >
\nu^{\kappa ^{+q}}$.

If $q=0$ the conclusion follows from Corollary  \ref{3.6}.

Otherwise, $|\prod_D \nu^{\kappa ^{+q-1}}| = \nu^{\kappa ^{+q-1}}$
and $\nu^{\kappa^{+q} }<|\prod_D \nu^{\kappa^{+q}}|$, hence
${\rm cf}(\prod_D \langle\kappa^{+q} ,< \rangle)>\kappa^{+q}    $ by Corollary
\ref{4.2}, by taking $\kappa=\lambda $ there to be $\kappa^{+q}$.

Hence $D$ is $(\kappa^{+q},\kappa^{+q})$-regular by Remark
\ref{1.5}(a), and
$(\kappa^+,\kappa^+)$-regular by an iteration of Theorem  
\ref{2.1}(b), since $q>0$.
Hence $D$ is  $\kappa^+$-decomposable by  \ref{1.1}(viii),
and the conclusion holds with $\mu=\kappa^+$.
.

If $\nu^\kappa=\kappa^{+p}$ for some $p<\omega$, then
the $\mu $ given by the preceding statement satisfies
$\kappa\leq\mu\leq \nu^\kappa=\kappa^{+p}$, hence $\mu=\kappa^{+p'}$ for some
$p'\leq p$. Since $D$ is $\mu $-decomposable, $D$ is
$(\kappa^{+p'},\kappa^{+p'})$-regular by  \ref{1.1}(vii). If $p'=0$
this is what we
want; otherwise it is enough to use Theorem  \ref{2.1}(b) a 
sufficient number of times.

A remark: the reader might observe that in the course of the proof we have
obtained  $(\kappa^+,\kappa^+)$-regularity, which implies $(\kappa,\kappa)$-regularity,
so that the hypothesis $\nu^\kappa=\kappa^{+p}$ might appear to be unnecessary.
However, we get $(\kappa^+,\kappa^+)$-regularity only in the case
$q>0$, while we do not necessarily have it in the case $q=0$, which uses Corollary 
\ref{3.6}.
In fact, the hypothesis
$\nu^\kappa=\kappa^{+p}$ is necessary: as in the example
at the beginning of Section \ref{dfe1}, take $\mu $ measurable, and 
add $\mu $ Cohen
reals. Then $ 2^\lambda=\mu $, for every  $\lambda<\mu $: take $\lambda $
regular with  $\mu>\lambda>\omega $. As in the beginning of Section 
\ref{dfe1}, we have
an ultrafilter $D$ $\mu $-decomposable but not $\lambda $-decomposable; $D$ is
not $(\lambda,\lambda )$-regular, and is $(\mu,\mu )$-regular  (by
\ref{1.1}(vii)),
hence $|\prod_D \mu | > \mu $ (by the first lines in the proof of 
\ref{3.2}).
$\mu=2^{\lambda ^{+n}}$, so that
$|\prod_D 2^{\lambda  ^{+n}}| > 2^{\lambda  ^{+n}}$, but $D$ is not
$(\lambda,\lambda )$-regular, hence the conclusion in the third statement of
\ref{4.3}(a) fails.

(a$'$) From the hypothesis we get
$|\prod_D 2^{\kappa^{+n}}|>2^{\kappa^{+n}}$
 by iterating the fact (already mentioned in the proof 
of \ref{3.2})
that
$|\prod_D 2^{2^\lambda } |>2^{2^\lambda }$
implies $|\prod_D 2^{\lambda } |>2^{\lambda }$.
The conclusion then follows from (a),
with $\nu=2$ and $n=0$.

(b) follows from Theorem \ref{3.2}(b) (case $n=0$) and iterating the fact that 
both $|\prod_D
\lambda^+| >\lambda^+ $ and $|\prod_D 2^\lambda| >2^\lambda $ imply $|\prod_D
\lambda| >\lambda $.
 \end{proof}

Theorems \ref{3.2} and \ref{4.3} can be improved in many ways. For 
example, 
Theorem \ref{4.3}
holds when in place of $\beth_m({\kappa ^{+n}})$
we consider any iteration (in any order)
of any finite number of the $\beth$  and of the successor functions (with at
least one occurrence of $\beth$ in (a$'$)). This is because both $|\prod_D
\lambda^+| >\lambda^+ $ and $|\prod_D 2^\lambda| >2^\lambda $ imply $|\prod_D
\lambda| >\lambda $.

More generally, one can use the formula $|\prod_D \lambda^\mu | \leq |\prod_D
\lambda|^{|\prod_D \mu |}  $, so that, for example, $(\lambda^\mu,\lambda^\mu
)$-regularity implies
$|\prod_D \lambda^\mu |> \lambda ^\mu$, hence
either $|\prod_D \lambda| >\lambda $
or $|\prod_D \mu | >\mu  $: then we can apply the statements of 
\ref{3.2} or \ref{4.3} (or
the methods of proof). Other results can be obtained from Propositions 
\ref{3.3}, \ref{4.1}  and
theorems about cardinalities of ultrapowers (e.g. \cite{Kei}, or 
\cite[Theorem 0.25]{Lp1}). One can also mix in the results of Section \ref{fstp}. We 
leave details to the
reader, since statements become quite involved, and since we do not know
whether Proposition \ref{3.3} is the best possible result.

Notice also that if Problem \ref{3.5}(a) has an affirmative answer, 
then we can
improve the conclusion in Theorem \ref{4.3}(a$'$) to ``{\it for some 
$\mu$ with
$\kappa<\mu\leq2^\kappa $}''.

\begin{problems}
\label{4.4}
 (a) \cite{Lp7}
Suppose that $2^\kappa <\mu <2^{\kappa ^+}$ and $D$ is $\mu $-decomposable.
Is $D$ necessarily $\lambda $-decomposable for some $\lambda $ with $\kappa
\leq\lambda \leq2^\kappa $?
(maybe the assumption $\mu$ regular is necessary)

(b) \cite{Lp6} Is it consistent to have an
$\omega_1$-complete   ultrafilter uniform on some $\mu $ with $\mu^\omega>\mu $?

\end{problems}

Theorems \ref{3.2} and \ref{4.3} have the following consequences for 
topological spaces and
for extensions of first-order logic (see \cite{Ca, Lp2};  
\cite{Ma, Lp1}).

Recall that a topological space is $[\kappa,\mu  ]$-\emph{compact} if and only if every open cover
by $\mu $ many sets has a subcover by $<\kappa $ many sets. A family $\mathcal F $
of topological spaces is {\it productively} $[\kappa,\mu ]$-\emph{compact} if and only if every
product of members of $\mathcal F$ is
$[\kappa,\mu ]$-compact.

In \cite[Theorem 3]{Lp2}  we have shown, improving results from \cite{Ca}:

\begin{thm}
\label{4.5}
 Let $\lambda,\mu $ be infinite cardinals, and $(\kappa_i)_{i\in
I}$ be a family of infinite cardinals. Then the following are equivalent:

(i) Every productively $[\lambda,\mu ]$-compact topological space is
$[\kappa_i,\kappa_i]$-compact for some $i\in I$.

(ii) Every productively $[\lambda,\mu ]$-compact family of topological spaces is
productively $[\kappa_i,\kappa_i]$-compact for some $i\in I$.

(iii) Every $(\lambda,\mu )$-regular ultrafilter is
$(\kappa_i,\kappa_i)$-regular for some $i \in I$.
 \end{thm}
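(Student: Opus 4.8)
The plan is to route the whole equivalence through the intermediate notion of \emph{$D$-compactness}. For an ultrafilter $D$ over $I$, call a space $X$ \emph{$D$-compact} if every family $(x_i)_{i\in I}$ of points of $X$ has a \emph{$D$-limit}, i.e.\ a point $p$ with $\{i\in I\mid x_i\in V\}\in D$ for every neighbourhood $V$ of $p$. Two facts about this notion drive everything. First, a short pigeonhole argument gives the \emph{regularity bridge}: \emph{if $D$ is $(\kappa,\mu)$-regular and $X$ is $D$-compact, then $X$ is $[\kappa,\mu]$-compact.} Indeed, given an open cover $(U_\alpha)_{\alpha<\mu}$ with no subcover of size $<\kappa$, use Form~II to fix $f\colon I\to S_\kappa(\mu)$ with $A_\alpha=\{i\mid\alpha\in f(i)\}\in D$ for every $\alpha$; since $|f(i)|<\kappa$, one may pick $x_i\notin\bigcup_{\alpha\in f(i)}U_\alpha$. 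A $D$-limit $p$ of $(x_i)$ lies in some $U_{\alpha_0}$, so $\{i\mid x_i\in U_{\alpha_0}\}$ and $A_{\alpha_0}$ both lie in $D$; any $i$ in their intersection satisfies $\alpha_0\in f(i)$ and $x_i\in U_{\alpha_0}\subseteq\bigcup_{\alpha\in f(i)}U_\alpha$, contradicting the choice of $x_i$. Second, $D$-compactness is \emph{productive} (computing $D$-limits coordinatewise), so any product of $D$-compact spaces is again $D$-compact; together with the bridge this shows that if $X$ is $D$-compact for a $(\lambda,\mu)$-regular $D$, then all powers of $X$ are $[\lambda,\mu]$-compact, i.e.\ $X$ is productively $[\lambda,\mu]$-compact.

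I would prove the core equivalence (i)$\Leftrightarrow$(iii) contrapositively, using two consequences of the bridge. The first, call it (R), is the bridge read contrapositively over a fixed space $X$ that is \emph{not} $[\kappa_i,\kappa_i]$-compact for any $i$: for such an $X$, if $X$ is $E$-compact for some ultrafilter $E$, then $E$ is $(\kappa_i,\kappa_i)$-regular for no $i$. The second, call it (K), is the genuine content: \emph{every productively $[\lambda,\mu]$-compact space is $D$-compact for some $(\lambda,\mu)$-regular ultrafilter $D$.} Granting (K), the implication $\neg$(i)$\Rightarrow\neg$(iii) (hence (iii)$\Rightarrow$(i)) is immediate: from a space $X$ witnessing $\neg$(i) --- productively $[\lambda,\mu]$-compact yet failing every $[\kappa_i,\kappa_i]$-compactness --- lemma (K) produces a $(\lambda,\mu)$-regular $D$ with $X$ $D$-compact, whereupon (R) forces $D$ to be $(\kappa_i,\kappa_i)$-regular for no $i$, which is exactly $\neg$(iii). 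For the reverse $\neg$(iii)$\Rightarrow\neg$(i) (hence (i)$\Rightarrow$(iii)) I would attach to a $(\lambda,\mu)$-regular $D$ that is $(\kappa_i,\kappa_i)$-regular for no $i$ a canonical ``$D$-space'' $X_D$ that is $D$-compact and whose $[\kappa,\kappa]$-compactness holds precisely when $D$ is $(\kappa,\kappa)$-regular. Being $D$-compact, $X_D$ is productively $[\lambda,\mu]$-compact by the first paragraph, while the equivalence built into $X_D$ makes it fail $[\kappa_i,\kappa_i]$-compactness for every $i$ simultaneously, witnessing $\neg$(i).

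The remaining condition is handled around this core. The implication (ii)$\Rightarrow$(i) is trivial: a productively $[\lambda,\mu]$-compact space $X$ yields the one-element family $\{X\}$, whose products are exactly the powers of $X$, so (ii) gives an $i$ for which all powers --- in particular $X$ itself --- are $[\kappa_i,\kappa_i]$-compact. For (iii)$\Rightarrow$(ii) I argue contrapositively and reduce to the single-space case: if $\mathcal F$ is productively $[\lambda,\mu]$-compact but, for each $i$, some product $P_i$ of members of $\mathcal F$ fails $[\kappa_i,\kappa_i]$-compactness, set $P=\prod_{i}P_i$. Every power of $P$ is again a product of members of $\mathcal F$, so $P$ is itself productively $[\lambda,\mu]$-compact; and since $[\kappa,\mu]$-compactness passes to continuous images, the coordinate projections $P\to P_i$ show that $P$ fails $[\kappa_i,\kappa_i]$-compactness for every $i$. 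Thus $P$ witnesses $\neg$(i), and the implication $\neg$(i)$\Rightarrow\neg$(iii) established above yields a $(\lambda,\mu)$-regular ultrafilter witnessing $\neg$(iii), completing the cycle.

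The main obstacle is entirely concentrated in the two load-bearing constructions of the middle paragraph, both of which are the converse (``lower bound'') half of the space--ultrafilter correspondence. On the topological-to-combinatorial side this is lemma (K): extracting from the productive $[\lambda,\mu]$-compactness of $X$ a \emph{single} $(\lambda,\mu)$-regular ultrafilter for which $X$ is $D$-compact, which I expect to require applying $[\lambda,\mu]$-compactness to a carefully chosen power of $X$ (indexed by a cofinal subset of $S_\lambda(\mu)$) and reading off the limit ultrafilter from the resulting covering data. On the combinatorial-to-topological side it is the construction of $X_D$ whose compactness spectrum reproduces the regularity spectrum of $D$ on the nose; here the delicate point is that a single space must simultaneously witness the failure of $(\kappa_i,\kappa_i)$-regularity for all $i$. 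Verifying that the regularity parameters match the covering parameters exactly, rather than up to a gap, is where I anticipate the bulk of the effort, and where the machinery of \cite{Ca} and \cite[Theorem~3]{Lp2} would be invoked.
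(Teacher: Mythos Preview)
The paper does not contain its own proof of Theorem~\ref{4.5}; it is quoted verbatim from \cite[Theorem~3]{Lp2} (with antecedents in \cite{Ca}) and only the implication (iii)$\Rightarrow$(ii) is ever invoked (in the proof of Corollary~\ref{4.6}). So there is no in-paper argument to compare against.

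That said, your outline is the standard one and matches the structure of the proof in \cite{Lp2}. The reduction via $D$-compactness, the ``regularity bridge'', the productivity of $D$-compactness, and the product trick for (iii)$\Rightarrow$(ii) are all correct and are exactly how the cited references organize the argument. You have also correctly located the two nontrivial ingredients: lemma (K) (extracting a single $(\lambda,\mu)$-regular ultrafilter from productive $[\lambda,\mu]$-compactness) and the construction of a canonical space $X_D$ whose $[\kappa,\kappa]$-compactness mirrors the $(\kappa,\kappa)$-regularity of $D$. These are precisely the contributions of \cite{Ca} and \cite{Lp2}, and you are right that they are where the real work lies; your sketch defers to those references at exactly the points the paper does.
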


\begin{cor}
\label{4.6}
Any productively $[2^{2^\kappa  },2^{2^\kappa  } ]$-compact family of
topological spaces is productively $[\mu ,\mu  ]$-compact for some $\mu  $ with
$\kappa \leq\mu \leq 2^\kappa  $.

More generally, if $m\geq 1$ then
any productively
$[\beth_m{(\kappa ^{+n}}),\beth_m{(\kappa ^{+n}})]$-compact family of
topological spaces is productively $[\mu ,\mu  ]$-compact for some $\mu  $ with
$\kappa \leq\mu\leq 2^\kappa  $.

If $\kappa $ is a strong limit cardinal, and $U'(\kappa )$ holds, then
any productively $[2^\kappa ,2^\kappa  ]$-compact
(even, every productively
$[\beth_m(\kappa^{+n} ), \beth_m(\kappa^{+n} )]$-compact)
family of topological spaces is productively $[\kappa,\kappa ]$-compact.
\end{cor}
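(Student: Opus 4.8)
The plan is to read off all three assertions from the ultrafilter-theoretic results of this section, using the equivalence of clauses (ii) and (iii) in Theorem \ref{4.5}. In each case I would fix the pair $(\lambda,\mu)$ of that theorem to be $(\beth_m(\kappa^{+n}),\beth_m(\kappa^{+n}))$ and choose the index family $(\kappa_i)_{i\in I}$ so that clause (iii) becomes precisely the conclusion furnished by Theorem \ref{4.3}; clause (ii) is then the desired statement about families of topological spaces.

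First I would note that the opening statement is the special case $m=2$, $n=0$ of the second one, since $\beth_2(\kappa^{+0})=\beth_2(\kappa)=2^{2^\kappa}$; so it suffices to treat the general $\beth_m(\kappa^{+n})$ with $m\geq 1$. For that statement, take $I=\{\mu':\kappa\leq\mu'\leq 2^\kappa\}$ with $\kappa_{\mu'}=\mu'$. By Theorem \ref{4.5} the topological conclusion (ii) is equivalent to clause (iii): every $(\beth_m(\kappa^{+n}),\beth_m(\kappa^{+n}))$-regular ultrafilter $D$ is $(\mu',\mu')$-regular for some $\mu'$ with $\kappa\leq\mu'\leq 2^\kappa$. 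This last statement is immediate from the machinery already in place: by Theorem \ref{4.3}(a$'$) such a $D$ is $\mu'$-decomposable for some $\mu'$ in that range, and by \ref{1.1}(vii) a $\mu'$-decomposable ultrafilter is $(\mu',\mu')$-regular.

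For the final statement I would again apply Theorem \ref{4.5}, now with the singleton family $I=\{0\}$, $\kappa_0=\kappa$, so that clause (iii) reads: every $(\beth_m(\kappa^{+n}),\beth_m(\kappa^{+n}))$-regular ultrafilter $D$ is $(\kappa,\kappa)$-regular (the primary case $[2^\kappa,2^\kappa]$ being $m=1$, $n=0$). Here the strong-limit hypothesis on $\kappa$ lets me invoke Theorem \ref{4.3}(b), which yields the dichotomy: either $D$ is $(\cf\kappa,\cf\kappa)$-regular, or there are arbitrarily large $\mu<\kappa$ for which $D$ is $\mu$-decomposable. In the first case $D$ is $(\kappa,\kappa)$-regular by \ref{1.1}(v); in the second case $D$ is $(\kappa,\kappa)$-regular by the defining property of $U'(\kappa)$ (Definition \ref{1.7}). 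Either way clause (iii) holds, hence so does the topological conclusion (ii).

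The mathematical content here is carried entirely by Theorems \ref{4.5} and \ref{4.3}; the only points demanding attention are bookkeeping ones. I must match the index family $(\kappa_i)$ exactly to the decomposability range $\kappa\leq\mu'\leq 2^\kappa$ produced by \ref{4.3}(a$'$), keeping the cardinal $\mu'$ of that range notationally distinct from the $\mu$-coordinate of Theorem \ref{4.5}. For the last statement the one genuine subtlety is that \ref{4.3}(b) delivers only the dichotomy, so the passage from $\mu$-decomposability (respectively from $(\cf\kappa,\cf\kappa)$-regularity) to full $(\kappa,\kappa)$-regularity has to be made as a separate step, invoking $U'(\kappa)$ in the one branch and \ref{1.1}(v) in the other.
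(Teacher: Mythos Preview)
Your proof is correct and follows essentially the same route as the paper: invoke Theorem~\ref{4.3} (together with \ref{1.1}(vii)) to verify clause~(iii) of Theorem~\ref{4.5}, then pass to clause~(ii). The only cosmetic difference is that for the last statement the paper cites Theorem~\ref{3.2}(b), whose final sentence already packages the passage from the dichotomy to $(\kappa,\kappa)$-regularity under $U'(\kappa)$, whereas you reconstruct that passage explicitly from \ref{4.3}(b), \ref{1.1}(v), and Definition~\ref{1.7}; the content is identical.
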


\begin{proof} By \ref{1.1}(vii), Theorems \ref{3.2}, \ref{4.3} and 
\ref{4.5}(iii)$\Rightarrow$(ii) (actually, this
implication in Theorem \ref{4.5} is a corollary of results in 
\cite{Ca}). 
\end{proof}

In what follows, by a {\it logic}, we mean a {\it regular  logic}  in the sense
of
\cite{Eb}. Typical examples of regular logics are extensions
of
first-order logic obtained by adding new quantifiers (e.g., cardinality quantifiers,
asserting ``there are at least $\omega_\alpha $ $x$'s such that \dots"), or by
allowing infinitary conjunctions and disjunctions, and possibly simultaneous
quantification over infinitely many variables (infinitary logics).

Roughly, a logic is regular if and only if it shares all the good properties of
the above typical examples. The reader is invited to look at
\cite{BF} for more
information about logics.

A logic $L$ is $[\lambda,\mu ]$-{\it compact} if and only if for every pair of
sets $\Gamma$ and $\Sigma$ of sentences of $L$, with $|\Sigma|\leq\lambda $, the
following holds: if $\Gamma \cup \Sigma'$ has a model for every $\Sigma'
\subseteq \Sigma$ with $|\Sigma'|<\mu $, then $\Gamma \cup \Sigma$ has a
model.

There is an older (and weaker) notion, called $(\lambda,\mu)$-{\it compactness},
which corresponds to the above definition in the particular case when
$\Gamma=\emptyset$. In a series of papers, J. Makowsky and S. Shelah showed that
the new stronger $[\lambda,\mu ]$-compactness is much better behaved (see \cite{Ma}
for a survey). In particular, Makowsky and Shelah defined what it means for an
ultrafilter to be {\it related} to a logic, and showed that a logic $L$ is
$[\lambda,\mu ]$-compact if and only if there is a $(\mu,\lambda )$-regular
ultrafilter related to $L$. An immediate consequence is:

\begin{cor}
\label{4.7}
Let $\lambda,\mu $ be cardinals, and $(\kappa_i)_{i\in
I}$ be a family of cardinals. If

(i) for every  $(\mu,\lambda )$-regular ultrafilter $D$ there
is $i\in I$
such that $D$ is $(\kappa_i,\kappa_i )$-regular,

then

(ii) for every  $[\lambda ,\mu]$-compact logic $L$ there
is $i\in I$
such that $L$ is $[\kappa_i,\kappa_i]$-compact.
 \end{cor}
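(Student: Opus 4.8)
The plan is to invoke the Makowsky--Shelah characterization of $[\lambda,\mu]$-compactness in both directions, with hypothesis (i) serving only to transfer a regularity property between the two applications. First I would take an arbitrary $[\lambda,\mu]$-compact logic $L$ and apply the characterization in the stated form: $L$ is $[\lambda,\mu]$-compact if and only if there is a $(\mu,\lambda)$-regular ultrafilter related to $L$. This produces a witness $D$ that is related to $L$ and is $(\mu,\lambda)$-regular. Next I would feed this $D$ directly into hypothesis (i): since $D$ is $(\mu,\lambda)$-regular, (i) yields an index $i\in I$ for which $D$ is $(\kappa_i,\kappa_i)$-regular.

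The crucial---though entirely trivial---observation is that the relation ``$D$ is related to $L$'' is a fixed property of the pair $(D,L)$ and is untouched by the previous step. Thus the \emph{same} ultrafilter $D$ is simultaneously related to $L$ and $(\kappa_i,\kappa_i)$-regular. I would then apply the Makowsky--Shelah equivalence a second time, now with the parameters $\kappa_i,\kappa_i$ in place of $\lambda,\mu$: $L$ is $[\kappa_i,\kappa_i]$-compact if and only if there is a $(\kappa_i,\kappa_i)$-regular ultrafilter related to $L$. Since $D$ is precisely such an ultrafilter, $L$ is $[\kappa_i,\kappa_i]$-compact, which is exactly (ii).

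The only point requiring care is the bookkeeping of the argument order: $[\lambda,\mu]$-compactness corresponds to $(\mu,\lambda)$-regularity, with the two cardinals swapped, so one must verify that the swap does no harm. It does none, because hypothesis (i) is stated for $(\mu,\lambda)$-regular ultrafilters---matching the regularity of the witness $D$ exactly---while the target pair $(\kappa_i,\kappa_i)$ is symmetric, so the swap is invisible in the conclusion. Accordingly I expect no genuine obstacle here; all the content is carried by the cited Makowsky--Shelah equivalence, and (i) enters as a purely ultrafilter-theoretic statement applied to the single witness $D$. This is why the corollary is correctly described as an immediate consequence.
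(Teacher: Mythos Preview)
Your proposal is correct and matches the paper's own reasoning exactly: the paper states Corollary~\ref{4.7} as an ``immediate consequence'' of the Makowsky--Shelah equivalence (a logic $L$ is $[\lambda,\mu]$-compact iff there is a $(\mu,\lambda)$-regular ultrafilter related to $L$), and your argument simply unwinds that implication by applying the equivalence once to obtain the witness $D$, invoking (i), and applying the equivalence again at $(\kappa_i,\kappa_i)$. Your attention to the swap in the order of the cardinals is appropriate and handled correctly.
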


The analysis of connections between regularity of ultrafilters and compactness
of logics
can be further elaborated: see \cite[Section 0]{Lp1}, and some references there.
Indeed, we recently proved that conditions (i) and (ii)
in Corollary \ref{4.7} are actually equivalent.
The case  when
all the $\kappa_i $'s are assumed to be regular cardinals
is easier and had been proved in \cite[Theorem 4.1]{Lp1}
(in an equivalent form, by Proposition \ref{7.6}). 
The general case when
the $\kappa_i $'s are allowed to be singular cardinals is proved 
in \cite[Theorem 10]{lpndj}, is
connected with Problem
 \ref{2.8}, and has led to the formulation of the principle 
$U'(\lambda )$ of
Definition  \ref{1.7}. See \cite[Section 4]{Lp1}; see also the end of
Section \ref{lfaa} in the present paper.

Anyway, from Corollary \ref{4.7}, \ref{1.1}(vii) and Theorems  \ref{3.2} and 
\ref{4.3}, we 
get:

\begin{cor}
\label{4.8}
 Every  $[2^{2^\kappa },2^{2^\kappa  } ]$-compact
logic is $[\mu ,\mu  ]$-compact for some $\mu  $ with $\kappa \leq\mu \leq
2^\kappa  $.

More generally, if $m\geq 1$ then
every
$[\beth_m{(\kappa ^{+n}}), \beth_m{(\kappa ^{+n}})]$-compact
logic is $[\mu ,\mu  ]$-compact for some $\mu  $ with $\kappa \leq\mu  \leq
2^\kappa  $.

If $\kappa $ is a strong limit cardinal, and $U'(\kappa )$ holds, then
every  $[2^\kappa ,2^\kappa  ]$-compact logic
(even, every  $[\beth_m(\kappa^{+n} ),\brfr \beth_m(\kappa^{+n} )]$-compact logic) is
$[\kappa,\kappa ]$-compact.
 \end{cor}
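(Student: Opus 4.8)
The plan is to reduce each clause to the corresponding statement about regular ultrafilters and then invoke Corollary \ref{4.7}, whose hypothesis (i) is a statement about regular ultrafilters and whose conclusion (ii) is the desired statement about compact logics. In applying \ref{4.7} I will choose the family $(\kappa_i)_{i\in I}$ to match the conclusion sought: for the first two clauses I take $(\kappa_i)_{i\in I}$ to enumerate all cardinals in the interval $[\kappa,2^\kappa]$, and for the last clause I take the one-element family with single value $\kappa$. Note that \ref{4.7} relates $(\mu,\lambda)$-regular ultrafilters to $[\lambda,\mu]$-compact logics; since in every clause the two compactness parameters coincide, this reversal of order is harmless.

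First I would check hypothesis (i) of \ref{4.7} for the first clause, where the compactness pair is $[2^{2^\kappa},2^{2^\kappa}]$ and hence the relevant ultrafilters are the $(2^{2^\kappa},2^{2^\kappa})$-regular ones. By Theorem \ref{3.2}(a) (applied with the cardinal called $\lambda$ there equal to our $\kappa$), any such $D$ is $\theta$-decomposable for some $\theta$ with $\kappa\leq\theta\leq2^\kappa$, and then $D$ is $(\theta,\theta)$-regular by \ref{1.1}(vii). This is exactly (i), so \ref{4.7} yields the conclusion. The second clause is handled identically, replacing Theorem \ref{3.2}(a) by Theorem \ref{4.3}(a$'$) (with $m\geq1$) to pass from $(\beth_m(\kappa^{+n}),\beth_m(\kappa^{+n}))$-regularity to $\theta$-decomposability for some $\theta$ with $\kappa\leq\theta\leq2^\kappa$, again followed by \ref{1.1}(vii).

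For the third clause I would use the one-element family, so that (i) reduces to the single assertion that every $(2^\kappa,2^\kappa)$-regular (respectively every $(\beth_m(\kappa^{+n}),\beth_m(\kappa^{+n}))$-regular) ultrafilter is $(\kappa,\kappa)$-regular. Since $\kappa$ is a strong limit cardinal, Theorem \ref{3.2}(b) (in the case $n=1$) and Theorem \ref{4.3}(b) supply the dichotomy: either $D$ is $(\cf\kappa,\cf\kappa)$-regular, in which case $D$ is $(\kappa,\kappa)$-regular by \ref{1.1}(v), or there are arbitrarily large $\theta<\kappa$ for which $D$ is $\theta$-decomposable, in which case $(\kappa,\kappa)$-regularity follows from the hypothesis $U'(\kappa)$ (Definition \ref{1.7}). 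In either case (i) holds and \ref{4.7} gives $[\kappa,\kappa]$-compactness of the logic.

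I do not expect any genuine obstacle here: all the mathematical content has already been isolated in Theorems \ref{3.2} and \ref{4.3}, with Corollary \ref{4.7} serving purely as the ultrafilter-to-logic translation. The only point requiring attention is the bookkeeping just noted—keeping the order of parameters straight in \ref{4.7}, and checking that the decomposability intervals furnished by \ref{3.2} and \ref{4.3} are exactly $[\kappa,2^\kappa]$, matching the intervals claimed in the statement.
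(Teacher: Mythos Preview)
Your proposal is correct and follows essentially the same approach as the paper, which simply cites Corollary~\ref{4.7}, \ref{1.1}(vii), and Theorems~\ref{3.2} and~\ref{4.3} without further elaboration. Your write-up spells out the details the paper leaves implicit, including the choice of the family $(\kappa_i)_{i\in I}$ and the use of \ref{1.1}(v) and $U'(\kappa)$ in the third clause.
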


Of course, we could use Theorem \ref{4.3}(b) in order to improve the 
last part of
Corollary \ref{4.6} to: {\it
if $\kappa $ is a strong limit cardinal then
any productively $[\beth_m(\kappa^{+n} ), \beth_m(\kappa^{+n} )]$-compact
family $\mathcal F$ of topological spaces either is productively $[{\rm
cf}\kappa,{\rm cf}\kappa ]$-compact, or there are arbitrarily large $\mu <\kappa
$ for which $\mathcal F$ is productively $[\mu ,\mu  ]$-compact.}

A similar remark holds for logics and Corollary \ref{4.8}.

\begin{remark}
\label{4.9}
 The remarks at the beginning of Section \ref{dfe1}, together with
\cite[Theorem 2.5(iv)$\Leftrightarrow$(vi)]{Lp3}, show that it is possible to have a
$(2^\lambda,2^\lambda )$-compact not $(\lambda,\lambda )$-compact logic, thus
solving negatively \cite[Problem III]{Lp4}. Actually, what we get is a
$[2^{\omega_1},2^{\omega_1}]$-compact not $(\omega_1,\omega_1 )$-compact logic.
\end{remark}

\section{ Above limits.}
\label{al}

If $\lambda$ is a limit cardinal and an ultrafilter enjoys some form of
regularity for arbitrarily large  cardinals below $\lambda$ then in some cases
regularity can be lifted up to $\lambda$. In this section we provide
some examples and counterexamples. Again, many problems seem still open.

For $ \lambda \geq \lambda ' \geq \mu $,
let $\COV(\lambda, \lambda ',\mu )$ denote the minimal cardinality of a family of
subsets of $\lambda$, each of cardinality  $< \lambda ' $, such that every subset of
$\lambda$ of cardinality $<\mu $ is contained in at least one set of the family.
In particular, $\COV(\lambda,\mu,\mu) $ is the cofinality of (the partial order)
$S_\mu(\lambda )$. Notice that $\COV(\lambda, \lambda ',\mu )\leq |S_\mu( \lambda )|= \lambda ^{< \mu}$.

\begin{thm}
\label{5.1}
 Suppose that $\lambda$ is a limit cardinal, and that there are  arbitrarily
large $\nu<\lambda $ for which $D$ is $\nu $-decomposable. Then:

(a) $D$ is $\kappa$-decomposable for some $\kappa$ with
$\lambda\leq\kappa\leq\lambda^{{\rm cf}\lambda }$.

(b) In addition, suppose that $\lambda$ is singular
 and $D$ is $(\mu,{\rm cf}\lambda )$-regular. Then
$D$ is $\kappa$-decomposable for some $\kappa$ with
$\lambda\leq\kappa\leq\lambda^{<\mu }$. More generally, for every $\lambda'$ with
$\mu\leq\lambda'<\lambda $ there exists
$\kappa$ such that $D$ is $\kappa$-decomposable and
$\lambda\leq\kappa\leq \COV(\lambda,\lambda',\mu )$.
\end{thm}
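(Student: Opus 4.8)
The plan is to exhibit the desired decomposition as the partition induced by a single function $c\colon I\to\kappa$ obtained by amalgamating the given decompositions, and then to show that this partition has at least $\lambda$ classes modulo $D$ (so that, as noted in the introduction, it induces a $\kappa$-decomposition with $\kappa\geq\lambda$), while having at most the stated number of classes overall. Throughout I would fix an increasing sequence $(\nu_\delta)_{\delta<\cf\lambda}$ of cardinals below $\lambda$, cofinal in $\lambda$, with $D$ being $\nu_\delta$-decomposable for each $\delta$; this is possible because $\cf\lambda$ is the cofinality of $\lambda$ and there are arbitrarily large $\nu<\lambda$ with this property. Let $g_\delta\colon I\to\nu_\delta$ witness $\nu_\delta$-decomposability, so that $g_\delta^{-1}(W)\notin D$ whenever $|W|<\nu_\delta$.

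For part (a), where no regularity is assumed, I would take $g(i)=(g_\delta(i))_{\delta<\cf\lambda}\in\prod_{\delta<\cf\lambda}\nu_\delta$ and let $\Pi$ be the partition defined by $g(i)=g(j)$; its total number of classes is at most $\prod_\delta\nu_\delta\leq\lambda^{\cf\lambda}$. The point is that $\Pi$ has at least $\lambda$ classes modulo $D$: if some $X\in D$ had fewer, then $g(X)$ would have some cardinality $\theta<\lambda$; choosing $\delta^*$ with $\nu_{\delta^*}>\theta$ and projecting onto the $\delta^*$-th coordinate would place $X$ inside $g_{\delta^*}^{-1}(W)$ for a set $W$ of size $\leq\theta<\nu_{\delta^*}$, contradicting $\nu_{\delta^*}$-decomposability. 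Hence the number of classes modulo $D$ lies between $\lambda$ and $\lambda^{\cf\lambda}$, giving (a).

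For part (b) the new ingredient is the $(\mu,\cf\lambda)$-regularity, which lets one look at fewer than $\mu$ coordinates at a time and thereby replaces $\lambda^{\cf\lambda}$ by the covering number. Using FORM II, fix $h\colon I\to S_\mu(\cf\lambda)$ with $\{i\mid\alpha\in h(i)\}\in D$ for every $\alpha<\cf\lambda$, and a bijection $p\colon\lambda\times\lambda\to\lambda$; define $F(i)=\{\,p(\delta,g_\delta(i))\mid\delta\in h(i)\,\}\in S_\mu(\lambda)$. Given a family $\{C_\eta\mid\eta<\COV(\lambda,\lambda',\mu)\}$ of $<\lambda'$-subsets of $\lambda$ witnessing the covering number, let $c(i)$ be the least $\eta$ with $F(i)\subseteq C_\eta$; the induced partition has at most $\COV(\lambda,\lambda',\mu)$ classes. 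To see it has at least $\lambda$ classes modulo $D$, suppose some $X\in D$ had fewer: then $U=\bigcup_{i\in X}C_{c(i)}$ has cardinality $\theta<\lambda$ (a product of two cardinals below the limit cardinal $\lambda$) and $F(i)\subseteq U$ for all $i\in X$. Picking $\delta^*$ with $\nu_{\delta^*}>\theta$, the set $X_{\delta^*}=\{i\mid\delta^*\in h(i)\}$ lies in $D$ by regularity, and for $i\in X\cap X_{\delta^*}$ we get $p(\delta^*,g_{\delta^*}(i))\in U$, whence $g_{\delta^*}(i)$ lands in $W=\{\xi\mid p(\delta^*,\xi)\in U\}$, a set of size $\leq\theta<\nu_{\delta^*}$, again contradicting $\nu_{\delta^*}$-decomposability. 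The first assertion of (b) then follows from $\COV(\lambda,\lambda',\mu)\leq|S_\mu(\lambda)|=\lambda^{<\mu}$.

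The heart of the matter---and the step I expect to be most delicate---is the passage from ``few classes on some $X\in D$'' to a contradiction: the coding $F$ must be arranged so that collapsing its image on $X$ forces a single coordinate $g_{\delta^*}$ to collapse into a set of size $<\nu_{\delta^*}$. This is exactly where the cofinal choice of the $\nu_\delta$ (to beat the cardinal $\theta$) and, in (b), the regularity of $D$ (to guarantee $\delta^*\in h(i)$ on a $D$-large set) are used essentially; keeping the cardinal bookkeeping on $|U|$ and $|W|$ below $\nu_{\delta^*}$ is the only genuine calculation.
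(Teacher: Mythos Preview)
Your proof is correct and follows essentially the same approach as the paper. Part (a) is identical in content: the common refinement of the $\nu_\delta$-decompositions is exactly your partition by $g(i)=(g_\delta(i))_\delta$. For the $\COV$ statement in (b), the paper uses the same coding idea (cover the set $\{g_\delta(i):\delta\in h(i)\}$ by a member of a $\COV(\lambda,\lambda',\mu)$-sized family), but establishes the lower bound $\kappa\geq\lambda$ via a minimality trick (choosing the covering assignment to minimize $|\{c(i):i\in I\}|$, which forces $c$ itself to be a decomposition) rather than your direct contradiction argument; the underlying cardinality computation $\nu_{\delta^*}\leq\lambda'\cdot|c(X)|$ is the same in both.
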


\begin{proof} Let $(\nu_\alpha )_{\alpha\in \cf\lambda }$ be a sequence cofinal in
$\lambda$, such that $D$ is $\nu_\alpha $-decomposable, for $\alpha\in cf\lambda
$; and for $\alpha\in {\rm cf}\lambda  $ let $\Pi_\alpha $ be a
$\nu_\alpha $-decomposition.

In order to prove (a) it is enough to consider the common refinement of all the
$\Pi_\alpha $'s (compare with the proofs of \cite[Proposition 2]{Pr}  and
\cite[Theorem 2]{AJ}).

The last statement in (b) implies the statement that precedes it, 
since, trivially, $\COV(\lambda, \lambda ',\mu ) \leq \lambda ^{< \mu} $.
Anyway, we shall give a proof for
the first statement, too, because it is rather simpler.
First, notice that if $ \mu >\cf \lambda $ then (b) follow from (a), hence without loss of generality
we can suppose $ \mu \leq \cf \lambda $.

Let the $\Pi_\alpha $'s be as above, and for $X\subseteq {\rm cf}\lambda $ with
$|X|<\mu $
let $\Pi_X$ be the common refinement of $\{ \Pi_\alpha|\alpha\in X\} $. $\Pi_X$
has $\leq\lambda^{<\mu }$ classes. Let $D$ be over $I$, and let $f:I\to
S_\mu({\rm cf} \lambda )$ witness
the $(\mu,{\rm cf}\lambda )$-regularity of $D$, as given by Form II.
Define $\Pi$ on $I$ by $i\sim j$ if and only if $f(i)=f(j)$ and $(i,j)\in \Pi_{f(i)}$. $\Pi$ has
$\leq\lambda^{<\mu } \cdot ({\rm cf}\lambda )^{<\mu }=\lambda^{<\mu }$ classes.

On the other side, for every $\alpha$, $\Pi$ has more than $\nu_\alpha $ classes
(modulo $D$): for every $\alpha\in {\rm cf}\lambda $, $I_\alpha=\{i\in I|\alpha\in
f(i)\}\in D $, and $\Pi_{|I_\alpha }$ refines
$\Pi_{\alpha \ |I_\alpha }$ (which has $\nu_\alpha $ classes modulo $D$).
Thus, $\Pi$ has $ \kappa $ classes modulo $D$  for some $ \kappa $
with $ \lambda \leq \kappa \leq \lambda ^{< \kappa } $,
and this proves the $ \kappa $-decomposability of $D$.

In order to prove the last statement in (b), let $D$ be over $I$, let $f:I\to
S_\mu({\rm cf} \lambda )$ witness
the $(\mu,{\rm cf}\lambda )$-regularity of $D$
(Form II), and for $\alpha\in {\rm
cf}\lambda $ let
$f_\alpha :I\to \nu_\alpha $ be a $\nu_\alpha $-decomposition.

Let $H$ be a family of subsets of $\lambda$ as given by
$\COV(\lambda,\lambda',\mu )$: thus, for every $i\in I$ there is $h(i)\in H$ such
that $\{ f_\alpha(i)|\alpha\in f(i)\} \subseteq h(i) $, and $|h(i)|<\lambda'$.
Choose $f$, $H$, the $f_\alpha $'s and the
$h(i)$'s
in such a way that the cardinality of $K=\{h(i)|i\in I  \} $ is minimal; this
choice makes the
function $h:I\to K$ a $|K|$-decomposition of $D$: if $h^{-1}(K')\in D$ for some
$K'\subseteq K$ with $|K'|<|K|$ then we could change the values of
$f(i)$, $f_\alpha(i) $ and of
$h(i)$ for $i\not\in h^{-1}(K')$, thus making $K'$ work in place of $K$,
contradicting the minimal cardinality of $K$ (the argument is identical
with the one in the proof of \cite[Lemma 4.7]{Lp1}).

Since $|K|\leq|H|= \COV(\lambda,\lambda',\mu )$,
it remains to show that $|K|\geq\lambda $. 
By $(\mu, \cf \lambda )$-regularity, for every $\alpha\in {\rm
cf}\lambda $, $I_\alpha=\{i \in I|\alpha\in f(i) \} \in D$, whence
$\nu_\alpha=|f_\alpha(I_\alpha )|$, since $f_\alpha $ is a
$\nu_\alpha $-decomposition.
Since $h(i)$ has been chosen in such a way that $f_\alpha(i)\in
h(i)$ when $\alpha\in f(i)$, that is, when $i \in I_\alpha $, we have that $f_\alpha(I_\alpha
)=\{f_\alpha(i)|i \in I_\alpha \}\subseteq \bigcup_{i\in I_\alpha } h(i) $, hence
$|f_\alpha(I_\alpha )|
\leq \lambda'\cdot|h(I_\alpha)|$, since $|h(i)|<\lambda'$, for all $i$.

Putting the above inequalities together, we get:
$\nu_\alpha=|f_\alpha(I_\alpha )|\leq \lambda'\cdot|h(I_\alpha)
|\leq\lambda'\cdot|h(I)|$, for all $\alpha\in{\rm cf}\lambda $. 
Hence $\nu_\alpha \leq |h(I)|=|K|$, for all $\alpha $'s
such that  $\nu_\alpha>\lambda'$. 
Since $ \lambda'< \lambda $, and the $ \nu_\alpha $'s are cofinal
in $ \lambda $, we get
$|K|\geq\sup_{\alpha\in {\rm cf}\lambda }\nu_\alpha =\lambda$.
\end{proof}

Notice that Theorem \ref{5.1}(b) is a common generalization of 
\cite[Lemma 4.9]{DJK}, \cite[Proposition 1]{Pr}  and 
\cite[Lemma 4.7, Remark 4.8]{Lp1}.

We do not know whether Theorem \ref{5.1} can be improved; actually, 
we do not even
know whether:

\begin{problem}
\label{5.2}
 Is the following true? If
$\lambda$ is a limit cardinal, and there are  arbitrarily
large $\nu<\lambda $ for which $D$ is $\nu $-decomposable, then $D$ is
either $\lambda$-decomposable or $\lambda^+$-decomposable.
\end{problem}

We believe that Problem \ref{5.2} has a negative answer, in general 
(and we
believe that the case when $\lambda$ is regular is much easier to falsify).
Be that as it may,
\ref{5.2} is the best we can expect: if $\kappa $ is $\kappa 
^{+\omega+1}$-compact
there is a $(\kappa ,\kappa ^{+\omega+1})$-regular ultrafilter 
which is $\kappa$-complete, and hence not
$(\omega,\omega )$-regular
and not $\kappa^{+\omega }$-decomposable,
by \ref{1.1}(vii), 
but $\kappa^{+n} $-decomposable for
all $n< \omega $, and $\kappa^{+\omega+1}$-decomposable, by \ref{1.1}(xii). 
On the other hand, for every $\lambda$
there is a $(\omega,\lambda )$-regular ultrafilter uniform over $\lambda$
(e. g. \cite{CK}),
 hence
not $\lambda^+$-decomposable by \ref{1.1}(iii), but $\kappa $-decomposable for all
$\kappa\leq\lambda $ by Remark \ref{1.5}(b).

Of course, an affirmative solution of \ref{5.2} would show that 
$U'(\lambda )$
(see Definition   \ref{1.7}) holds
for every limit cardinal $\lambda$. Indeed, \ref{5.2} and $U'(\lambda )$ have 
the same
hypothesis, and the conclusion in $U'(\lambda )$ follows from the conclusion in
\ref{5.2}, since, by  \ref{1.1}(vii), every $\lambda$-decomposable
ultrafilter is
$(\lambda,\lambda )$-regular, and,  by  \ref{1.1}(vii) and Theorem
 \ref{2.1}(b), every
$\lambda^+$-decomposable  ultrafilter is $(\lambda,\lambda )$-regular.

Anyway, Theorem \ref{5.1} shows that Problem \ref{5.2} has an 
affirmative 
answer for many
singular cardinals.

\begin{cor}
\label{5.3}
 Suppose that $\lambda$ is singular, and either

(i)
$\COV(\lambda,\lambda,({\rm cf}\lambda)^+)=\lambda^{+n}$, for some
$n<\omega$; or

(i$'$) $ \lambda ^{\cf \lambda }= \lambda ^{+n}$, for some
$n<\omega$; or 

(ii) ${\rm cf}\lambda=\omega $ and there is no measurable cardinal; or

(iii)
 ${\rm cf}\lambda=\omega $ and  $\COV(\lambda,\lambda,\omega_1)<\lambda^{+\mu_0}$,
where $\mu_0$ is the smallest measurable cardinal; or

(iv)
 $\lambda=\omega_{\alpha+\delta}$, {\rm cf}$\delta=\omega$,
 $\delta<\omega_\alpha$, and
  $\delta<\mu_0$,
where $\mu_0$ is the smallest measurable cardinal.

If $D$ is an ultrafilter, and  there are  arbitrarily
large $\nu<\lambda $ for which $D$ is $\nu $-decomposable, then $D$ is
either $\lambda$-decomposable or $\lambda^+$-decomposable.
\end{cor}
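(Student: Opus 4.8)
The plan is to reduce each of the cases (i)--(iv) to the single intermediate conclusion that $D$ is $\lambda^{+j}$-decomposable for some \emph{finite} $j$, and then to run a short, uniform finite descent down to $\lambda$ or $\lambda^+$. The descent is the common last step: if $j=0$ we are already done; if $j\geq 1$, then $\lambda^{+j}$ is regular, so by \ref{1.1}(vii) $D$ is $(\lambda^{+j},\lambda^{+j})$-regular, and applying Theorem \ref{2.1}(b) exactly $j-1$ times (each application passing from $(\lambda^{+i},\lambda^{+i})$-regularity to $(\lambda^{+(i-1)},\lambda^{+(i-1)})$-regularity, which is legitimate because $\lambda^{+i}=(\lambda^{+(i-1)})^+$) we reach $(\lambda^+,\lambda^+)$-regularity; since $\lambda^+$ is regular, \ref{1.1}(viii) gives $\lambda^+$-decomposability. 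Thus the entire task is, in each case, to produce a $\kappa$-decomposability of $D$ with $\lambda\leq\kappa\leq\lambda^{+n}$ for some $n<\omega$: then $\kappa=\lambda^{+j}$ for some $j\leq n$ (no cardinal lies strictly between consecutive successors of $\lambda$), and the descent finishes.

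Cases (i$'$) and (ii) are the transparent ones. In (i$'$) I would simply invoke Theorem \ref{5.1}(a), which furnishes $\kappa$-decomposability with $\lambda\leq\kappa\leq\lambda^{\cf\lambda}=\lambda^{+n}$, exactly the bound required. In (ii), the absence of a measurable cardinal forces the least cardinal for which $D$ is decomposable to be $\omega$ (by the discussion following the definition of measurability), so $D$ is $\omega$-decomposable and hence $(\omega,\omega)=(\omega,\cf\lambda)$-regular by \ref{1.1}(vii); applying the first assertion of Theorem \ref{5.1}(b) with $\mu=\omega$ then yields $\kappa$-decomposability with $\lambda\leq\kappa\leq\lambda^{<\omega}=\lambda$, i.e.\ outright $\lambda$-decomposability, with no descent needed.

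Case (i) is where I expect the first genuine difficulty. Since $(\cf\lambda)^+>\cf\lambda$, every ultrafilter, and $D$ in particular, is $((\cf\lambda)^+,\cf\lambda)$-regular by \ref{1.1}(xiii), so the general, $\COV$-sensitive assertion of Theorem \ref{5.1}(b) applies with $\mu=(\cf\lambda)^+$ and gives $\kappa$-decomposability with $\lambda\leq\kappa\leq\COV(\lambda,\lambda',(\cf\lambda)^+)$ for every $\lambda'$ with $(\cf\lambda)^+\leq\lambda'<\lambda$. The first assertion of \ref{5.1}(b) is too weak here, since its bound $\lambda^{<(\cf\lambda)^+}=\lambda^{\cf\lambda}$ may strictly exceed $\lambda^{+n}$ (this is precisely what distinguishes (i) from (i$'$)). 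The hypothesis, however, controls $\COV(\lambda,\lambda,(\cf\lambda)^+)$, whose inner parameter is $\lambda$ rather than some $\lambda'<\lambda$; and the proof of \ref{5.1}(b) genuinely needs $\lambda'<\lambda$ (the final estimate $\nu_\alpha\leq\lambda'\cdot|K|$ degenerates at $\lambda'=\lambda$). The main obstacle is therefore to bridge this gap: using the monotonicity of $\COV(\lambda,\lambda',(\cf\lambda)^+)$ in $\lambda'$ together with standard covering/pcf facts about $[\lambda]^{\leq\cf\lambda}$ (see \cite{DJK}), I would argue that $\COV(\lambda,\lambda,(\cf\lambda)^+)=\lambda^{+n}$ forces $\COV(\lambda,\lambda',(\cf\lambda)^+)=\lambda^{+n}$ for some $\lambda'<\lambda$, after which \ref{5.1}(b) delivers the bound $\lambda^{+n}$.

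Finally, cases (iii) and (iv) I would reduce to case (i). Note first that $\cf\lambda=\omega$ makes $(\cf\lambda)^+=\omega_1$, so the covering number in the hypotheses of (iii) and (iv) is exactly the one relevant to (i). The task is to show that, below the first measurable $\mu_0$, the covering number $\COV(\lambda,\lambda,\omega_1)$ is a finite successor $\lambda^{+n}$: in (iii) this should follow from the hypothesis $\COV(\lambda,\lambda,\omega_1)<\lambda^{+\mu_0}$ together with the covering-lemma dichotomy placing the first "jump" of this number at $\lambda^{+\mu_0}$, while in (iv) the arithmetic conditions $\lambda=\omega_{\alpha+\delta}$, $\cf\delta=\omega$, $\delta<\omega_\alpha$, $\delta<\mu_0$ are a concrete combinatorial guarantee of the same finiteness, again via covering results below $\mu_0$ as in \cite{DJK}. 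Once $\COV(\lambda,\lambda,\omega_1)=\lambda^{+n}$ is in hand, case (i) applies verbatim. I expect the fine-structural covering input underlying (iii) and (iv)---rather than any ultrafilter-theoretic argument---to be the real heart of these last two cases.
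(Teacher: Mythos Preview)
Your treatment of (i), (i$'$), and (ii) is fine and essentially matches the paper. For (i) you correctly isolated the only real issue---passing from $\COV(\lambda,\lambda,(\cf\lambda)^+)$ to $\COV(\lambda,\lambda',(\cf\lambda)^+)$ for some $\lambda'<\lambda$---and this is indeed a known pcf fact (the paper cites \cite{Sh}, not \cite{DJK}, but that is cosmetic). Your direct use of Theorem~\ref{5.1}(a) for (i$'$) is cleaner than the paper's reduction of (i$'$) to (i).

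The gap is in (iii) and (iv). Your plan is to prove $\COV(\lambda,\lambda,\omega_1)=\lambda^{+n}$ for some finite $n$ and then invoke (i). That claim is simply not available. In (iii) the hypothesis is only $\COV(\lambda,\lambda,\omega_1)<\lambda^{+\mu_0}$, and nothing prevents this value from being, say, $\lambda^{+\omega+1}$; there is no covering-lemma dichotomy that pins it at a finite successor of $\lambda$. In (iv) Shelah's bound gives $\COV(\lambda,\lambda,\omega_1)\leq\lambda^{+|\delta|^{+4}}$, and $|\delta|^{+4}$ is certainly not finite in general (e.g.\ $\delta=\omega$ gives $|\delta|^{+4}=\omega_4$). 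So neither case reduces to (i).

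What the paper does instead for (iii) is genuinely different and uses the ultrafilter, not cardinal arithmetic. If $D$ is $(\omega,\omega)$-regular you get $\lambda$-decomposability directly from \ref{5.1}(b) as you noted; otherwise $D$ is $\mu_0$-complete. Take the \emph{least} $\kappa\geq\lambda$ with $D$ $\kappa$-decomposable (which exists and is $<\lambda^{+\mu_0}$ by \ref{5.1}(b)). Now rule out every possibility except $\kappa\in\{\lambda,\lambda^+\}$: $\kappa$ cannot be a successor of a regular cardinal above $\lambda^+$ (Theorem~\ref{2.1}(b) plus minimality); $\kappa$ cannot be a limit cardinal in $(\lambda,\lambda^{+\mu_0})$ since any such is singular of cofinality $<\mu_0$ and $\mu_0$-completeness blocks $(\cf\kappa,\cf\kappa)$-regularity, contradicting \ref{1.1}(vii); and $\kappa$ cannot be a successor of a singular $\nu>\lambda$ since Theorem~\ref{2.1}(a) then gives $(\kappa',\kappa)$-regularity for some $\kappa'<\nu$, again contradicting minimality via \ref{1.1}(xii). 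Case (iv) is then reduced to (iii), not to (i), via Shelah's bound $\COV(\lambda,\lambda,\omega_1)\leq\lambda^{+|\delta|^{+4}}<\lambda^{+\mu_0}$.
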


\begin{proof} (i) It is not difficult to show that
there is a $\lambda'<\lambda$ such that
$\COV(\lambda,\lambda,({\rm cf}\lambda)^+)=$
$\COV(\lambda,\lambda',({\rm cf}\lambda)^+)$: see
\cite[Analytical Guide, 6.2, or II, Observation 5.3(10)]{Sh};
the notation for $\COV(\lambda,\mu,\nu)$ is {\rm cov}$(\lambda,\mu,\nu,2)$
in \cite{Sh}: see \cite[II, Definition 5.1]{Sh}.

By Theorem \ref{5.1}(b) and \ref{1.1}(xiii), 
$D$ is $\kappa $-decomposable for some $\kappa $ with
$\lambda\leq\kappa\leq\COV(\lambda,\lambda',({\rm
cf}\lambda)^+)=
\COV(\lambda,\lambda,({\rm
cf}\lambda)^+)=
\lambda^{+n}$. If $D$ is $\lambda $-decomposable we are OK,
otherwise, $D$ is $\lambda^{+m}$-decomposable for some $m>0$. Since successors
are regular cardinals, then \ref{1.1}(xi) and an iteration of Theorem
 \ref{2.1}(b)
imply that $D$ is $\lambda^+$-decomposable. Essentially, (i) had been already
noticed in \cite[Remark 4.8]{Lp1}  (under the assumption
$\COV(\lambda,\lambda',(\cf \lambda)^+)=\lambda^{+n}$).

(i$'$) follows from (i), since $\COV(\lambda,\lambda,({\rm
cf}\lambda)^+)\leq
\lambda^{\cf \lambda }$.

In order to prove (ii) and (iii), let  ${\rm cf}\lambda=\omega $. If $D$ is
 $(\omega,\omega )$-regular, then $D$ is $\lambda $-decomposable because of
Theorem \ref{5.1}(b), since $\lambda ^{< \omega }= \lambda $.

Otherwise, $D$ is $\omega_1$-complete (so (ii) cannot hold), hence
$\mu_0$-complete. Let $\kappa $ be
the smallest cardinal $\geq\lambda $ such that $D$ is $\kappa $-decomposable. By
Theorem \ref{5.1}, at least one such $\kappa $ exists, and, by the 
last statement in
\ref{5.1}(b) and \ref{1.1}(xiii), 
$\kappa\leq \COV (\lambda,\lambda',\omega_1)<\lambda^{+\mu_0}$
for some $\lambda'<\lambda$
(as at the beginning of the proof of (i), we can suppose that
$\COV(\lambda,\lambda,\omega_1)=\COV(\lambda,\lambda',\omega_1)$,
for some $\lambda'<\lambda$).

By  Theorem  \ref{2.1}(b),  \ref{1.1}(xi) and the minimality of 
$\kappa $,
$\kappa $ cannot
be the successor of a regular cardinal.

If $\kappa=\lambda $ the result is proved.
If $\kappa> \lambda $ then $\kappa$ cannot be limit:
all limit cardinals between
$\lambda $ and $\lambda^{+\mu_0}$ are singular
cardinals of cofinality $<\mu_0$; if
$\kappa $ is singular  then $D$ is not
$({\rm cf}\kappa,{\rm cf}\kappa )$-regular,
since ${\rm cf}\kappa <\mu_0$ and $D$ is $\mu_0$-complete.
By  \ref{1.1}(vii) this 
implies
that $D$ is not $\kappa $-decomposable.

The remaining case is when $\kappa $ is a successor of a singular cardinal,
say $\kappa=\nu^+$.
As above, $D$ is not  $({\rm cf}\nu,{\rm cf}\nu )$-regular,
and, by Theorem  \ref{2.1} and  \ref{1.1}(vii), $D$ is
$(\kappa',\kappa )$-regular for some $\kappa'<\kappa $; by
 \ref{1.1}(xii), this contradicts the minimality of $\kappa $, unless
$\kappa=\lambda^+ $.

(iv) Since
 $\lambda=\omega_{\alpha+\delta}$ and
 $\delta<\omega_\alpha$, then, by \cite[IX, Claim 3.7(1) and Theorem 2.2]{Sh},
$\COV(\lambda,\lambda,({\rm cf}\lambda)^+)=$ pp$(\lambda)\leq
\omega_{\alpha+|\delta|^{+4}}$.

 Since
{\rm cf}$\lambda=\omega$, we get from above
$\COV(\lambda,\lambda,\omega_1)\leq
\omega_{\alpha+|\delta|^{+4}}=\lambda^{+|\delta|^{+4}}$,
since $\lambda=\omega_{\alpha+\delta}$ and
$\delta+|\delta| ^{+4}=|\delta| ^{+4} $.
Since $\delta<\mu_0$, and $\mu_0$ is a measurable cardinal,
hence inaccessible,
$|\delta|^{+4}< \mu_0$
and the hypotheses of (iii) apply.
\end{proof}

In (iv) above we have used Shelah's tight bounds on
$\COV(\lambda,\lambda,({\rm cf}\lambda)^+)$
in order to show that Problem \ref{5.2} has
an affirmative answer for a large class of singular cardinals of countable cofinality.
Can the argument be extended in order to cover {\it all}
singular cardinals of countable cofinality?
Can the argument be adapted in some way to singular cardinals of uncountable cofinality?

Having dealt with decomposability, we now turn
to the problem of lifting up regularity.

\begin{prop}
\label{5.4}
 Suppose that $\mu$ is singular, $\lambda<\mu $
and $D$ is $(\lambda,\mu')$-regular
for all $\mu'<\mu $
(equivalently, by \ref{1.1}(i), for a sequence of $\mu'$'s cofinal in $\mu$). Then $D$ is $(\lambda,\mu )$-regular, provided at least one
of the following holds:

(a) $\lambda $ is regular; or

(b) $D$ is $({\rm cf}\lambda,{\rm cf}\mu)$-regular
(in particular, this holds when  ${\rm cf}\lambda>{\rm cf}\mu $, by \ref{1.1}(xiii)); or

(c) the cofinality of
$\prod_D \langle {\rm cf}\lambda,<\rangle$ is $\not ={\rm cf}\mu $; or

(d) ${\rm cf}\lambda\not={\rm cf}\mu $ and $D$ is not $({\rm cf}\lambda,{\rm
cf}\lambda )$-regular.
\end{prop}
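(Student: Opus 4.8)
The plan is to build, in every case, a single function $f\colon I\to S_\lambda(\mu)$ witnessing $(\lambda,\mu)$-regularity (Form II) out of the regularity data already at hand. Fix increasing cofinal sequences $(\mu_\xi)_{\xi<\cf\mu}$ in $\mu$ and $(\lambda_\zeta)_{\zeta<\cf\lambda}$ in $\lambda$. By hypothesis $D$ is $(\lambda,\mu_\xi)$-regular, so (Form II) fix $g_\xi\colon I\to S_\lambda(\mu_\xi)$ with $\{i:\alpha\in g_\xi(i)\}\in D$ for every $\alpha<\mu_\xi$; and since $\cf\mu<\mu$, $D$ is $(\lambda,\cf\mu)$-regular, so fix an outer cover $h\colon I\to S_\lambda(\cf\mu)$ with $\{i:\xi\in h(i)\}\in D$ for every $\xi<\cf\mu$. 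The idea is to set $f(i)=\bigcup_{\xi\in h(i)}g_\xi(i)$, possibly after discarding large pieces: for any $\alpha<\mu$, choosing $\xi$ with $\alpha<\mu_\xi$, the set $\{i:\xi\in h(i)\}\cap\{i:\alpha\in g_\xi(i)\}$ lies in $D$ and forces $\alpha\in f(i)$ there, so the covering property is automatic. \emph{The whole difficulty is the size bound $|f(i)|<\lambda$}, i.e.\ keeping the union small; this is where the four hypotheses enter.

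In case (a), $\cf\lambda=\lambda$, so $|h(i)|<\lambda$ exhibits $f(i)$ as a union of fewer than $\lambda$ sets each of size $<\lambda$; as $\lambda$ is regular this has size $<\lambda$. Case (b) is the same computation with a sharper outer cover: using the $(\cf\lambda,\cf\mu)$-regularity hypothesis, choose $h\colon I\to S_{\cf\lambda}(\cf\mu)$ so that $|h(i)|<\cf\lambda$; then $f(i)$ is a union of fewer than $\cf\lambda$ sets of size $<\lambda$, and a sum of $<\cf\lambda$ cardinals each below $\lambda$ is again below $\lambda$. (Case (a) is in fact subsumed by (b), since for $\lambda$ regular $(\lambda,\cf\mu)$-regularity \emph{is} $(\cf\lambda,\cf\mu)$-regularity and holds for free; and the parenthetical $\cf\lambda>\cf\mu$ makes (b) automatic by \ref{1.1}(xiii), so only the range $\cf\lambda\le\cf\mu$ is genuinely at issue.)

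For case (c) the outer cover is too coarse to control the size directly, so I would instead bound the \emph{sizes of the inner pieces} using the linear order $\prod_D\langle\cf\lambda,<\rangle$. Define levels $\ell_\xi\colon I\to\cf\lambda$ by letting $\ell_\xi(i)$ be least with $|g_\xi(i)|\le\lambda_{\ell_\xi(i)}$ (possible as $|g_\xi(i)|<\lambda=\sup_\zeta\lambda_\zeta$), and let $\theta=\cf(\prod_D\langle\cf\lambda,<\rangle)$. The goal is a single $s\colon I\to\cf\lambda$ together with a set $\Xi\subseteq\cf\mu$ of size $\cf\mu$ (hence cofinal in $\cf\mu$, so $\{\mu_\xi:\xi\in\Xi\}$ is cofinal in $\mu$) such that $\{i:\ell_\xi(i)<s(i)\}\in D$ for every $\xi\in\Xi$. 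If $\theta>\cf\mu$ this is immediate: the $\cf\mu$-many elements $(\ell_\xi)_D$ cannot be cofinal in an order of regular cofinality $\theta>\cf\mu$, hence are bounded by some $s$, and we take $\Xi=\cf\mu$. If $\theta<\cf\mu$, fix a cofinal increasing $(u_\beta)_{\beta<\theta}$ and assign to each $\xi$ the least $\beta(\xi)<\theta$ with $(\ell_\xi)_D<u_{\beta(\xi)}$; this maps $\cf\mu$ into $\theta$, and since $\cf\mu$ is regular and $\theta<\cf\mu$, some fibre $\Xi$ has size $\cf\mu$, with $s=u_{\beta^{*}}$ bounding all $(\ell_\xi)_D$, $\xi\in\Xi$. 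In either case I then set $f(i)=\bigcup\{g_\xi(i):\xi\in\Xi\cap h(i),\ \ell_\xi(i)<s(i)\}$: the covering argument of the first paragraph goes through using $\xi\in\Xi$, while now every surviving piece has size $\le\lambda_{s(i)}<\lambda$, so $|f(i)|\le|h(i)|\cdot\lambda_{s(i)}<\lambda$. The hypothesis $\theta\ne\cf\mu$ is exactly what this dichotomy needs; when $\theta=\cf\mu$ the $(\ell_\xi)_D$ may be cofinal with every fibre small and the argument breaks, consistent with $\theta=\cf\mu$ being the excluded value. Finally (d) reduces to (c): since $\cf\lambda$ is regular, Remark \ref{1.5}(a) shows that $D$ not $(\cf\lambda,\cf\lambda)$-regular forces $\theta=\cf\lambda$, and then $\cf\lambda\ne\cf\mu$ gives $\theta\ne\cf\mu$.

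The main obstacle is precisely this size control in (c): the naive union over $h(i)$ can reach cardinality $\lambda$ when $\lambda$ is singular, and the substantive content is the pigeonhole observation that $\theta\ne\cf\mu$ lets one pass to a cofinal sub-family $\Xi$ of inner covers on which the piece-sizes are uniformly bounded below $\lambda$. I expect the remaining parts — the Form~II bookkeeping, the elementary arithmetic that a union of $<\cf\lambda$ sets of size $<\lambda$ has size $<\lambda$, and the verification that $\{\mu_\xi:\xi\in\Xi\}$ stays cofinal in $\mu$ — to be routine once this idea is in place.
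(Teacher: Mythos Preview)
Your proof is correct, and for (a), (b), (d) it is essentially what the paper does or alludes to: the paper cites \cite[Lemma 1.1]{Do} for (a), remarks that the same argument gives (b) (and spells out exactly your construction in the special case $\cf\lambda>\cf\mu$), and reduces (d) to (c) via Remark~\ref{1.5}(a) just as you do.

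For (c), however, your route is genuinely different. The paper does not build a Form~II witness directly. Instead it weakens to $(\lambda^+,\mu')$-regularity for all $\mu'<\mu$, applies case (a) (now legitimate, $\lambda^+$ being regular) to get $(\lambda^+,\mu)$-regularity, and then invokes the black box Theorem~\ref{2.13ex2.11}(i) from \cite{Lp5}: a $(\lambda^+,\mu)$-regular ultrafilter is $(\lambda,\mu)$-regular unless $\cf\bigl(\prod_D\langle\lambda,<\rangle\bigr)=\cf\mu$. Your argument replaces this appeal by an explicit pigeonhole on the level functions $\ell_\xi$ inside $\prod_D\langle\cf\lambda,<\rangle$, extracting a cofinal $\Xi\subseteq\cf\mu$ on which the inner pieces are uniformly bounded, and then assembling the witness by hand. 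This is more elementary and self-contained---it does not need Theorem~\ref{2.13ex2.11}---at the cost of being longer; the paper's version is a two-line reduction but imports a nontrivial result proved elsewhere. Both arguments use the hypothesis $\theta\ne\cf\mu$ in the same essential way (yours as a dichotomy $\theta\lessgtr\cf\mu$, the paper's as the excluded case in \ref{2.13ex2.11}(i)).
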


\begin{proof}  The easy proof of (a) is given in \cite[Lemma 1.1]{Do}, slightly generalizing
\cite[Proposition 1.1]{BK}.

We now prove (c): by  \ref{1.1}(i) $D$  is $(\lambda^+,\mu')$-regular
for all $\mu'<\mu $, hence we can apply (a),
with $ \lambda ^+$ in place of $ \lambda $,
obtaining
$(\lambda^+,\mu)$-regularity.
Now,
$(\lambda,\mu)$-regularity
 follows from Theorem  \ref{2.13ex2.11}(i),
since the cofinality of 
$\prod_D \langle {\rm cf}\lambda,<\rangle$
equals the cofinality of
$\prod_D \langle \lambda,<\rangle$.

(b) is a consequence of (c) because of Proposition  \ref{1.3}. 
Actually, the arguments in the proof of
\cite[Lemma 1.1]{Do}  give a proof of (b).

Notice that (b) is trivial when ${\rm cf}\lambda>{\rm cf}\mu $: let $D$ be over
$I$, let $\mu_\alpha $ $(\alpha\in {\rm cf}\mu )$ be cofinal in $\mu $, and
for $\alpha\in {\rm cf}\mu $ let $f_\alpha:I\to S_\lambda(\mu_\alpha )$ witness
the $(\lambda,\mu_\alpha  )$-regularity (Form II) of $D$. Then $f$ defined by
$f(i)=\bigcup_{\alpha\in{\rm cf}\mu } f_\alpha(i)$ witnesses the
$(\lambda,\mu )$-regularity of $D$.

As for (d), we have, as in the proof of (c), that $D$ is $(\lambda^+,\mu)$-regular, and the
conclusion follows from Theorem  \ref{2.2}. Alternatively, use (c) and
\ref{1.5}(a) with ${\rm
cf}\lambda $ in place of $\lambda $.
\end{proof}

We do not know whether
 Proposition \ref{5.4} can be proved without any of the assumptions 
(a)-(d).

Of course, a positive answer to Conjecture  \ref{2.16ex2.14} would make the 
assumptions
unnecessary. This is proved as follows: because of \ref{5.4}(a), the only case to be discussed 
is $\lambda $
singular, and,
as in the proof of \ref{5.4}(c),
we get $(\lambda^+,\mu )$-regularity from the hypotheses of 
\ref{5.4}; then Conjecture
 \ref{2.16ex2.14}, if true, would imply $(\lambda,\mu )$-regularity.

See also the remark after Proposition \ref{8.2}.

\begin{remark}
\label{5.5}
 On the contrary, Proposition \ref{5.4} does not generalize to the 
case when $\mu$ is
a regular
limit cardinal. As pointed out in \cite[p. 67]{Ket1}, if $\kappa$ is huge then there exist an
inaccessible cardinal $\mu>\kappa $ and a $\kappa$-complete ultrafilter which is
$(\kappa^+,\mu )$-regular, not $(\kappa,\mu )$-regular, but
$(\kappa,\mu')$-regular
 for all $\mu'<\mu $
(for the last statement use Theorem  \ref{2.13ex2.11}(i), or 
\cite[Theorem 1.3]{BK}).
The example shows that, even for fairly large $\alpha$, it is possible to have a
$(\kappa,\lambda^{+\alpha })$-regular  and $(\lambda,\mu )$-regular
ultrafilter which is not $(\kappa,\mu )$-regular.

As in \cite[Theorem 1.10]{Ket1}, we can collapse $\kappa $ to a {\it regular}
cardinal, still having a $(\kappa^+ ,\mu)$-regular not $(\kappa ,\mu)$-regular ultrafilter.
\end{remark}

\begin{problem}
\label{5.6}
We do not know whether, starting from the above example, it
is possible to
make $\kappa $ {\it singular} by forcing, still having a
$(\kappa ,\mu ')$-regular (for all $\mu '<\mu $),
$(\kappa^+,\mu )$-regular
not
$(\kappa ,\mu )$-regular ultrafilter in the extension. Of course, this would
falsify Conjecture  \ref{2.16ex2.14}.
\end{problem}

Meanwhile, we have proved that
Problem \ref{5.2} has an affirmative answer in the particular case when $ \lambda $
is a singular cardinal, and provided that $D$ is $( \nu, \nu )$-regular
for \emph{all} sufficiently large regular cardinals $\nu < \lambda $.
The proof of the following two results can be found in \cite{lpndj}, along with
some generalizations and connections with Shelah's pcf theory.

  \begin{thm} \label{abst}
Suppose that $ \lambda $ is a singular cardinal,
$ \lambda ' < \lambda $, and the ultrafilter $D$ is
$\nu$-decomposable for all regular cardinals $ \nu $
with $ \lambda ' < \nu < \lambda $.  Then $D$ is either
$ \lambda $-decomposable or $ \lambda ^+$-decomposable
(hence $(\lambda,\lambda )$-regular, by \ref{1.1}(vii)(xi) and Theorem 
\ref{2.1}(b)).
\end{thm}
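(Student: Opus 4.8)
The plan is to control the least cardinal $\kappa\geq\lambda$ for which $D$ is $\kappa$-decomposable, and to show that this $\kappa$ can only be $\lambda$ or $\lambda^+$. Since $\lambda$ is a limit cardinal and $D$ is $\nu$-decomposable for every regular $\nu\in(\lambda',\lambda)$, in particular $D$ is $\nu$-decomposable for arbitrarily large $\nu<\lambda$, so Theorem \ref{5.1}(a) guarantees that such a $\kappa$ exists and in fact lies in $[\lambda,\lambda^{\cf\lambda}]$. It then suffices to eliminate every value of $\kappa$ other than $\lambda$ and $\lambda^+$.

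The clean half of the argument rests on Theorem \ref{adesso2.9}. By \ref{1.1}(vii) each of the hypothesised $\nu$-decompositions makes $D$ $(\nu,\nu)$-regular, so $D$ is $(\nu,\nu)$-regular for every regular $\nu$ with $\lambda'<\nu<\lambda$; this is precisely clause (a) of Theorem \ref{adesso2.9}, \emph{provided} $D$ is not $(\cf\lambda,\cf\lambda)$-regular. In that situation clause (a) yields clause (b), i.e. $D$ is $(\lambda^+,\lambda^+)$-regular, and as $\lambda^+$ is regular $D$ is $\lambda^+$-decomposable by \ref{1.1}(viii). This disposes of the whole theorem except in the residual case in which $D$ \emph{is} $(\cf\lambda,\cf\lambda)$-regular (whence $D$ is $(\lambda,\lambda)$-regular by \ref{1.1}(v) and $\cf\lambda$-decomposable by \ref{1.1}(viii)).

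In this residual case I would run the minimality argument of Corollary \ref{5.3}. If $\kappa=\mu^+$ with $\mu\geq\lambda$ regular, then $\kappa$-decomposability gives $(\mu^+,\mu^+)$-regularity (\ref{1.1}(vii)), hence $(\mu,\mu)$-regularity by Theorem \ref{2.1}(b), hence $\mu$-decomposability by \ref{1.1}(viii), contradicting minimality; so $\kappa$ is never the successor of a regular cardinal $\geq\lambda$. If $\kappa=\nu^+$ with $\nu\geq\lambda^+$ singular, then $D$ is $(\nu^+,\nu^+)$-regular, hence $(\nu,\nu)$-regular by Theorem \ref{2.15ex2.13}(i) and \ref{1.1}(i); Proposition \ref{2.6} then splits this, and in the alternative where $D$ is $(\nu',\nu)$-regular for some $\nu'<\nu$ one extracts, via \ref{1.1}(i)(xii), a $\mu$-decomposition with $\mu=(\max(\lambda,\nu'))^+$ satisfying $\lambda\leq\mu\leq\nu<\kappa$, again contradicting minimality. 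What survives is exactly the configuration in which every singular level reached is itself $(\cf,\cf)$-regular of cofinality below $\lambda$, together with the possibility that $\kappa$ is a limit (in particular a weakly inaccessible) cardinal.

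To close these surviving configurations I would feed the $(\cf\lambda,\cf\lambda)$-regularity into Theorem \ref{5.1}(b) with $\mu=\cf\lambda$, producing, for each $\lambda'<\lambda$, a $\kappa$-decomposition with $\lambda\leq\kappa\leq\COV(\lambda,\lambda',\cf\lambda)$, and then invoke Shelah's pcf bounds on these covering numbers — exactly as in the proof of Corollary \ref{5.3}(i),(iv) — to squeeze the upper bound down. Once $\kappa$ is known to be bounded by a successor $\lambda^{+n}$ of $\lambda$, no limit cardinal and no weakly inaccessible can occur, the successor-of-regular elimination above applies repeatedly through Theorem \ref{2.1}(b), and only $\lambda$ and $\lambda^+$ remain. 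I expect the \emph{main obstacle} to be precisely this pcf-theoretic step: controlling $\COV(\lambda,\lambda',\cf\lambda)$, and the analogous covering numbers at each intermediate singular level encountered in the minimality argument, tightly enough to rule out limit cardinals and successors of singulars strictly above $\lambda^+$. This is where the connection with Shelah's pcf theory announced for \cite{lpndj} becomes indispensable, and it is the reason the elementary reductions above cannot finish the proof on their own.
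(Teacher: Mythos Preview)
The paper does not give a proof of this theorem at all: it states immediately before Theorem~\ref{abst} that ``the proof of the following two results can be found in \cite{lpndj}''. So there is no in-paper argument to compare against; what you are trying to do is reconstruct the \cite{lpndj} proof.

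Your Case~1 is circular. Theorem~\ref{adesso2.9} is itself imported from \cite[Corollary~7]{lpndj}, the very same source, and the implication (a)$\Rightarrow$(b) you invoke is \emph{equivalent} to the non-$(\cf\lambda,\cf\lambda)$-regular case of Theorem~\ref{abst}. Indeed, for regular $\nu$, $(\nu,\nu)$-regularity and $\nu$-decomposability coincide by \ref{1.1}(xi); and under non-$(\cf\lambda,\cf\lambda)$-regularity, $\lambda$-decomposability is impossible by \ref{1.1}(vii), so the disjunctive conclusion of \ref{abst} collapses to $\lambda^+$-decomposability, i.e.\ $(\lambda^+,\lambda^+)$-regularity. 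You have therefore not proved anything in Case~1; you have only quoted the target result under a different label.

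Your Case~2 is, as you acknowledge, incomplete, but the specific plan you outline would not close the gap even with pcf input. The \ref{5.3}-style descent needs $\COV(\lambda,\lambda,(\cf\lambda)^+)=\lambda^{+n}$ (or one of the other special hypotheses listed there), and Shelah's ZFC bounds do not force this in general: already at $\aleph_\omega$ one only gets $\mathrm{pp}(\aleph_\omega)<\aleph_{\omega_4}$, so the minimal $\kappa$ produced by Theorem~\ref{5.1} may land far above any $\lambda^{+n}$, and your elimination of successor and limit values above $\lambda^+$ never gets started. The proof in \cite{lpndj} does not go via an upper bound on $\COV$ followed by descent; it works directly with pcf, producing from the given $\nu$-decompositions ($\nu$ regular in $(\lambda',\lambda)$) a map into a product $\prod\mathfrak{a}$ of regular cardinals cofinal in $\lambda$ and extracting a $\lambda$- or $\lambda^+$-decomposition from the pcf structure (generators, scales) of that product. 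You have correctly located where pcf enters, but not the mechanism.
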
 

\begin{corollary} \label{limit} 
If $ \lambda $ is an infinite cardinal, then 
an ultrafilter is $(\lambda ,\lambda )$-regular
if and only if it is either $ \cf \lambda $-decomposable
or $\lambda^+ $-decomposable. 
\end{corollary}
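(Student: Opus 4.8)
The plan is to prove the two implications separately, treating the ``if'' direction as a routine assembly of the basic facts in Properties \ref{1.1} and Theorem \ref{2.1}, and isolating the singular case of the ``only if'' direction as the real content. The overarching strategy is to reduce every case to $(\kappa,\kappa)$-regularity for a \emph{regular} $\kappa$ (namely $\cf\lambda$ or $\lambda^+$), where decomposability and regularity coincide via \ref{1.1}(viii).

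For the ``if'' direction I would argue as follows. If $D$ is $\cf\lambda$-decomposable, then since $\cf\lambda$ is regular \ref{1.1}(vii) gives that $D$ is $(\cf\lambda,\cf\lambda)$-regular, and \ref{1.1}(v) upgrades this to $(\lambda,\lambda)$-regularity. If instead $D$ is $\lambda^+$-decomposable, then $\lambda^+$ is regular, so \ref{1.1}(vii) yields $(\lambda^+,\lambda^+)$-regularity, whence $(\lambda,\lambda)$-regularity by Theorem \ref{2.1}(b). Either way $D$ is $(\lambda,\lambda)$-regular, so this direction is pure bookkeeping.

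For the ``only if'' direction, assume $D$ is $(\lambda,\lambda)$-regular and split on the nature of $\lambda$. If $\lambda$ is regular, then $\cf\lambda=\lambda$ and \ref{1.1}(viii) immediately gives that $D$ is $\lambda$-decomposable, i.e.\ $\cf\lambda$-decomposable. If $\lambda$ is singular and $D$ happens to be $(\cf\lambda,\cf\lambda)$-regular, then since $\cf\lambda$ is regular \ref{1.1}(viii) again shows $D$ is $\cf\lambda$-decomposable. The essential case is $\lambda$ singular with $D$ not $(\cf\lambda,\cf\lambda)$-regular; here I would invoke Theorem \ref{adesso2.9}, whose hypotheses are precisely that $\lambda$ is singular and $D$ is not $(\cf\lambda,\cf\lambda)$-regular, and whose clause (d) is our standing assumption. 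The equivalence (d)$\Leftrightarrow$(b) then delivers that $D$ is $(\lambda^+,\lambda^+)$-regular, and a final application of \ref{1.1}(viii) (with $\lambda^+$ regular) converts this into $\lambda^+$-decomposability.

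The main obstacle is not in the new reasoning but in having Theorem \ref{adesso2.9} available: the whole weight of the last case rests on it, since from $(\lambda,\lambda)$-regularity alone \ref{1.1}(i) only lets one shrink the second coordinate, and gives no direct access to regularity or decomposability at cardinals below $\lambda$. As a sanity check, one can run the argument instead through Theorem \ref{abst}, using the equivalence (d)$\Leftrightarrow$(a) of \ref{adesso2.9} and \ref{1.1}(viii) to get $\nu$-decomposability for all regular $\nu$ in an interval $(\lambda',\lambda)$, then forcing $\lambda$- or $\lambda^+$-decomposability; the $\lambda$-decomposable alternative is excluded because \ref{1.1}(vii) would make $D$ be $(\cf\lambda,\cf\lambda)$-regular, contrary to the case assumption. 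This cross-check confirms that the route via (d)$\Leftrightarrow$(b) is the cleanest, and that everything outside the singular non-$(\cf\lambda,\cf\lambda)$-regular case is elementary.
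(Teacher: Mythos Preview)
Your proof is correct and follows essentially the same approach as the paper: the paper defers the singular case to \cite{lpndj} (whose relevant content is exactly Theorem \ref{adesso2.9}) and disposes of the regular case via \ref{1.1}(xi) and Theorem \ref{2.1}(b), which is precisely the combination you use. Your write-up simply unpacks these citations into explicit steps, and your cross-check through Theorem \ref{abst} is a legitimate alternative route to the same conclusion.
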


Corollary \ref{limit} is stated in \cite{lpndj} under the assumption that
$ \lambda $ is a singular cardinal. The case when 
$ \lambda $ is regular (that is $\cf \lambda =\lambda $) is immediate from   \ref{1.1}(xi) and Theorem \ref{2.1}(b).

\section{Least functions and applications.}
\label{lfaa}

The notion of a least function arises directly from the theory of
$\omega_1$-complete ultrafilters, hence from large cardinals. However, there are
consequences holding in ZFC; in particular, we shall find partial answers to
many conjectures, by a slight extension of important and well-known results of
A. Kanamori and J. Ketonen.
Needless to say, our arguments lead to new open problems (we expect that, at
this point, the reader has become  accustomed to this phenomenon).

The next definition gives us the possibility of extending some results from
\cite{Ka1}, and to provide partial answers to Conjectures  \ref{2.16ex2.14} and 
 \ref{2.12ex2.10}.

\begin{definition}
\label{6.1}
 If $D$ is an ultrafilter over some set $I$,
and $\kappa $ is a cardinal, we say that a function $f:I \to \kappa $ is a {\it
$\kappa $-least function for} $D$ if and only if
$\{i\in I|\alpha<f(i)\}\in D$,
 for every $\alpha\in \kappa $, yet for every   $g:I \to \kappa $,
$\{i\in I|g(i)<f(i)\}\in D$ implies that there is $\alpha\in\kappa $ such that
$\{i\in I|g(i)<\alpha \}\in D$.
In other words, in $\prod_D \langle \kappa,< \rangle$, $f_D$ is the least
element larger than all the $d(\alpha)$'s ($\alpha \in \kappa$) (as usual, $f_D$
denotes the class of $f$ modulo $D$).
\end{definition}

When D is over $\kappa $ we get the usual notion of a
{\it least function}. We need the above definition since we will be dealing
with arbitrary $(\lambda,\kappa )$-regular ultrafilters, not necessarily over
$\kappa $ (however, see Proposition \ref{6.7}(ii)).

Notice that if $D$ has a $\kappa $-least function then $D$ is
$ \kappa $-descendingly incomplete, hence if $ \kappa  $ is regular
then $D$ is  $( \kappa , \kappa )$-regular by  \ref{1.1}(xi). The
case
when $ \kappa $ is singular shall be discussed in Remark \ref{8.5}(a).

The following theorem generalizes \cite[Corollary 2.6]{Ka1}  (see also \cite[Section
1]{Ket2}), and is proved in a
very similar way.

\begin{thm}
\label{6.2}
  If $\kappa $ is regular, and the ultrafilter $D$ is
$(\kappa,\kappa )$-regular and has no $\kappa $-least
function, then $D$ is
$(\omega ,\kappa')$-regular for every $\kappa'<\kappa $.
\end{thm}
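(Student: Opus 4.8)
The plan is to mirror the Kanamori--Ketonen argument and to prove the contrapositive: fixing $\kappa'<\kappa$, I would show that if $D$ is \emph{not} $(\omega,\kappa')$-regular then $D$ possesses a $\kappa$-least function. First I set the stage. Since $\kappa$ is regular and $D$ is $(\kappa,\kappa)$-regular, $D$ is $\kappa$-descendingly incomplete by \ref{1.1}(xi); equivalently, in $\prod_D\langle\kappa,<\rangle$ the set $L$ of \emph{large} elements---those $f_D$ with $d(\alpha)<_D f$ for every $\alpha<\kappa$, i.e.\ $\{\, i : \alpha<f(i)\,\}\in D$ for all $\alpha$---is nonempty, while the constants $d(\alpha)$ are cofinal in the initial segment lying below $L$. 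In this language, the hypothesis ``$D$ has no $\kappa$-least function'' says precisely that $L$ has no $<_D$-minimum, so it suffices to prove that the failure of $(\omega,\kappa')$-regularity forces $L$ to have a minimum.

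Second, the construction. I would try to build a $<_D$-strictly decreasing sequence $\langle f_\xi : \xi<\theta\rangle$ of large functions that is coinitial in $L$ (converging to the cut that separates $L$ from the constants), taking at successor stages any large $f_{\xi+1}<_D f_\xi$---available exactly as long as no minimum has appeared---and a large lower bound at each limit whenever one exists; here $\theta$ is the coinitiality of $L$. From such a sequence one reads off a candidate regularizing map by recording, for each $i$, the \emph{record lows} $F(i)=\{\,\xi<\theta : f_\xi(i)<f_\eta(i)\text{ for all }\eta<\xi\,\}$. The decisive virtue of this definition is finiteness: along a fixed $i$ the running minimum $\min_{\eta\le\xi}f_\eta(i)$ is a nonincreasing sequence of ordinals that strictly drops at each record low, and a strictly decreasing sequence of ordinals is finite, so $F(i)$ is finite. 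Thus $F\colon I\to[\theta]^{<\omega}$, and if one can guarantee both that $\theta\ge\kappa'$ (so that, after reindexing, $F$ may be taken with values in $[\kappa']^{<\omega}$) and that $\{\,i:\xi\in F(i)\,\}\in D$ for $\kappa'$-many indices $\xi$, then $F$ witnesses $(\omega,\kappa')$-regularity by Form~II$'$, contrary to assumption; hence $L$ has a minimum.

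The main obstacle is exactly this covering requirement, and it is where the hypotheses must really be used. The set $\{\,i:\xi\in F(i)\,\}=\bigcap_{\eta<\xi}\{\,i:f_\xi(i)<f_\eta(i)\,\}$ is an intersection of $|\xi|$ members of $D$, so it belongs to $D$ automatically only for finite $\xi$, and for infinite $\xi$ the incompleteness of $D$ can destroy it. Reconciling the mod-$D$ descent (which is all that the order $<_D$ supplies) with the genuinely finitary bookkeeping needed to catch $\kappa'$-many thresholds is the heart of the matter, and this is precisely the point at which Kanamori's least-function machinery enters: one passes at each stage to a canonical, pointwise-minimal representative so that the relevant drops are honestly well-ordered and each point is charged to only finitely many levels, while every threshold below $\kappa'$ is still caught $D$-often. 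I expect the combinatorics of this representative-choosing step (together with verifying $\theta\ge\kappa'$, or otherwise handling a short descent to the cut) to be the crux. By contrast the base case $(\omega,\omega)$-regularity is already free from Theorem~\ref{2.1}(b), and the constant passage between regularity, decomposability, and descending incompleteness is handled by the various clauses of Properties~\ref{1.1}.
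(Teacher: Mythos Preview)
Your proposal follows exactly the Kanamori--Ketonen line, which is precisely what the paper does: the paper's ``proof'' consists entirely of the instruction to run the arguments of \cite[Theorem 2.5 and Corollary 2.6]{Ka1} with ``$f:\kappa\to\kappa$'' replaced by ``$f:I\to\kappa$'' and ``$\xi<\kappa$'' by ``$i\in I$''. You have in fact supplied considerably more detail than the paper itself, correctly isolating the descending sequence in $L$, the record-low map $F$, the finiteness of $F(i)$ via well-foundedness, and the pointwise-minimal-representative trick that converts the mod-$D$ covering requirement at successor stages into a single set in $D$.

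The one point you flag but do not fully discharge---ensuring the construction runs to length $\kappa'$ (handling limit stages where the pointwise minimum may fail to be large)---is indeed part of what must be verified, but once the sequence is made pointwise nonincreasing, the $\kappa'$-many \emph{successor} stages already yield the $(\omega,\kappa')$-regularizing family, and the limit-stage analysis is what Kanamori's original proof handles; the paper, like you, simply defers to \cite{Ka1} for this step rather than writing it out.
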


\begin{proof} The proof goes exactly as in the proofs of \cite[Theorem 2.5 and
Corollary 2.6]{Ka1}; let $D$ be over $I$: just replace everywhere
``$f:\kappa\to\kappa $'' with ``$f:I\to\kappa $'',
and ``$\xi<\kappa $'' with ``$i\in I$''. \end{proof}

The following is proved in \cite[Theorem 1.4]{Ket1}  under the assumption of the
existence of a
least function, but the proof uses only a $\kappa $-least function.

\begin{thm}
\label{6.3}
  Suppose that $\kappa $ is regular, and that the
ultrafilter $D$ over $I$ has a $\kappa $-least function $f$.
Then $D$ is $(\lambda,\kappa )$-regular if and only if $\{i\in I | {\rm cf} f(i) <\lambda \} \in D $.
\end{thm}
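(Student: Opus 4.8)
The plan is to prove the two implications separately, isolating the role of the $\kappa$-least function $f$. Throughout I write $f_D$ for the class of $f$, so that the defining property of a $\kappa$-least function reads: $d(\alpha)<_D f$ for every $\alpha<\kappa$ (equivalently $\{i\mid\alpha<f(i)\}\in D$), and whenever $g\colon I\to\kappa$ satisfies $\{i\mid g(i)<f(i)\}\in D$ there is $\alpha<\kappa$ with $\{i\mid g(i)<\alpha\}\in D$. The direction ``$D$ is $(\lambda,\kappa)$-regular $\Rightarrow\{i\mid\cf f(i)<\lambda\}\in D$'' is the one I would treat first, since it is an almost mechanical boundedness argument, while the converse is Ketonen's construction and will be the real work.

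For the forward implication, suppose $D$ is $(\lambda,\kappa)$-regular and let $R\colon I\to S_\lambda(\kappa)$ witness this in Form II, so $\{i\mid\alpha\in R(i)\}\in D$ for every $\alpha<\kappa$. Assume toward a contradiction that $B=\{i\mid\cf f(i)\geq\lambda\}\in D$. For $i\in B$ the set $R(i)\cap f(i)$ has cardinality $<\lambda\leq\cf f(i)$, hence is bounded below $f(i)$; put $g(i)=\sup\bigl(R(i)\cap f(i)\bigr)<f(i)$ (and $g(i)=0$ off $B$). Then $g<_D f$, so by $\kappa$-leastness there is $\beta<\kappa$ with $\{i\mid g(i)<\beta\}\in D$. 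Since also $\{i\mid\beta<f(i)\}\in D$ (as $d(\beta)<_D f$) and $\{i\mid\beta\in R(i)\}\in D$, intersecting these three $D$-sets with $B$ produces an $i$ with $\beta\in R(i)$, $\beta<f(i)$, and $g(i)<\beta$; but then $\beta\in R(i)\cap f(i)$, whence $g(i)\geq\beta$, a contradiction. Therefore $B\notin D$, i.e. $\{i\mid\cf f(i)<\lambda\}\in D$. This step I expect to go through cleanly exactly as written.

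For the converse I would follow the construction of \cite[Theorem 1.4]{Ket1}, reading every ``$f\colon\kappa\to\kappa$'' there as ``$f\colon I\to\kappa$'' and every ``$\xi<\kappa$'' as ``$i\in I$'', just as Theorem \ref{6.2} was adapted. Assuming $A=\{i\mid\cf f(i)<\lambda\}\in D$, I would pass to $D\restriction A$ (regularity of $D$ and of $D\restriction A$ agree), fix for each $i$ an increasing cofinal sequence $C_i\subseteq f(i)$ of order type $\cf f(i)<\lambda$, and take the candidate witness $x=[\,i\mapsto C_i\,]\in\prod_D S_\lambda(\kappa)$; since $|C_i|<\lambda$ this $x$ lies in the ultrapower of $S_\lambda(\kappa)$, so it remains only to verify the condition of Form III, namely that $d(\{\alpha\})\subseteq x$ for (by Form II$'$ it suffices: for $\kappa$-many) $\alpha$, i.e. that $T=\{\gamma<\kappa\mid\{i\mid\gamma\in C_i\}\in D\}$ has cardinality $\kappa$. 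Here the $\kappa$-least function enters: for any proposed bound $\delta=\sup T<\kappa$ the ``next cofinal point'' function $s(i)=\min\bigl(C_i\setminus(\delta+1)\bigr)$ satisfies $s<_D f$, hence is bounded below some $d(\beta)$, and the aim is to convert this boundedness, together with the leastness of $f$, into a generator above $\delta$ that is captured $D$-often, contradicting $\delta=\sup T$; since $\kappa$ is regular, $T$ being unbounded gives $|T|=\kappa$ and hence $(\lambda,\kappa)$-regularity via Form II$'$ and Remark \ref{1.5}(b).

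The hard part will be precisely this last verification. The naive choice of the $C_i$ does not obviously make $T$ unbounded, because $s$ may distribute its values over the interval $(\delta,\beta)$ without any single value being $D$-large (this is genuinely possible when $D$ is itself decomposable at cardinals below $\kappa$). The resolution, which I would extract from Ketonen's argument, is to build the cofinal sequences coherently by recursion along the ordinals of $\kappa$ — forcing a cofinal set of ordinals to enter $D$-many $C_i$ — or, what amounts to the same thing, to argue inside $M=\prod_D\langle\kappa,<\rangle$ that $f_D$ carries an internal cofinal sequence of internal length $[\cf f]_D<_D d(\lambda)$ and to reflect the external cofinal family $\{d(\alpha)\mid\alpha<\kappa\}$ against it, using $\kappa$-leastness to see that the induced level map $\alpha\mapsto\psi(\alpha)$ is cofinal in $[\cf f]_D$. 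Getting that reflection to deliver an honest size-$<\lambda$ witness containing every standard $d(\alpha)$, rather than merely the cofinal points, is the crux, and is exactly where the strength of having a least (not merely descendingly incomplete) function is consumed.
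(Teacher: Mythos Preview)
The paper itself gives no proof of this theorem: it simply cites \cite[Theorem~1.4]{Ket1} and remarks that Ketonen's argument, stated there for a least function on $\kappa$, goes through verbatim for a $\kappa$-least function on an arbitrary index set $I$. So your proposal is already more detailed than the paper's treatment.

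Your forward direction is complete and correct; the bounding argument via $g(i)=\sup\bigl(R(i)\cap f(i)\bigr)$ is exactly the right use of $\kappa$-leastness.

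For the converse you have the correct setup and you correctly isolate the genuine difficulty: with an arbitrary choice of cofinal $C_i\subseteq f(i)$, the set $T=\{\gamma\mid\{i\mid\gamma\in C_i\}\in D\}$ need \emph{not} be unbounded, and your ``next point'' function $s$ only shows that $D$-many $C_i$ meet $(\delta,\beta)$, not that a single $\gamma$ works. Your proposed resolution---a recursion along $\kappa$ exploiting leastness---is exactly the missing ingredient, though you do not carry it out. One clean way to close the gap (and essentially what Ketonen does) is: keep the $C_i$ as they are, and recursively build an increasing sequence $(\alpha_\xi)_{\xi<\kappa}$ by letting $\alpha_{\xi+1}$ be a $D$-bound for $i\mapsto\min\bigl(C_i\setminus(\alpha_\xi+1)\bigr)$ (which exists since this function is $<_D f$), with suprema at limits; regularity of $\kappa$ keeps the sequence below $\kappa$. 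Then $g(i)=\{\xi<\kappa\mid C_i\cap(\alpha_\xi,\alpha_{\xi+1})\neq\emptyset\}$ satisfies $\{i\mid\xi\in g(i)\}\in D$ for every $\xi$, and the map $\xi\mapsto\min\bigl(C_i\cap(\alpha_\xi,\alpha_{\xi+1})\bigr)$ injects $g(i)$ into $C_i$, so $|g(i)|<\lambda$. This is the ``coherent recursion'' you pointed to; once written out, the converse is done. Since the paper defers entirely to \cite{Ket1}, your sketch is in line with---and in the forward direction exceeds---what the paper provides.
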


From Theorems \ref{6.2} and \ref{6.3} we get the following partial answer to 
Conjecture  
\ref{2.16ex2.14}.

\begin{cor}
\label{6.4}
  If $\kappa>\lambda $, $\kappa $ is regular, $\lambda
$ is singular and
the
ultrafilter $D$ is
$(\lambda^+,\kappa )$-regular,
then $D$ is either
$(\lambda,\kappa )$-regular, or
$(\omega ,\kappa')$-regular for every $\kappa'<\kappa $.
\end{cor}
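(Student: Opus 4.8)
The plan is to run a dichotomy according to whether $D$ possesses a $\kappa$-least function, reducing everything to Theorems \ref{6.2} and \ref{6.3}. First I would observe that the hypotheses already yield $(\kappa,\kappa)$-regularity: since $\lambda<\kappa$ and $\kappa$ is regular we have $\lambda^+\leq\kappa$, so $(\lambda^+,\kappa)$-regularity together with \ref{1.1}(i) (enlarging the first parameter from $\lambda^+$ up to $\kappa$) gives that $D$ is $(\kappa,\kappa)$-regular. This puts us in position to invoke either theorem, both of which require $\kappa$ regular.

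Case 1: $D$ has no $\kappa$-least function. Here I would apply Theorem \ref{6.2} directly: $D$ is $(\kappa,\kappa)$-regular, $\kappa$ is regular, and $D$ lacks a $\kappa$-least function, so $D$ is $(\omega,\kappa')$-regular for every $\kappa'<\kappa$. This is precisely the second alternative in the conclusion.

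Case 2: $D$ (over $I$) has a $\kappa$-least function $f$. Now I would establish the first alternative, that $D$ is $(\lambda,\kappa)$-regular. By Theorem \ref{6.3}, applied with $\lambda^+$ in the role of its first parameter, the hypothesis that $D$ is $(\lambda^+,\kappa)$-regular is equivalent to $\{i\in I\mid {\rm cf}\,f(i)<\lambda^+\}\in D$, i.e.\ $\{i\in I\mid {\rm cf}\,f(i)\leq\lambda\}\in D$. The key step --- and the only place the singularity of $\lambda$ is used --- is the elementary observation that the cofinality of any ordinal is a regular cardinal, whereas $\lambda$ is singular; hence ${\rm cf}\,f(i)\neq\lambda$ for every $i\in I$. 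Consequently $\{i\in I\mid {\rm cf}\,f(i)\leq\lambda\}=\{i\in I\mid {\rm cf}\,f(i)<\lambda\}$, so this latter set lies in $D$. Theorem \ref{6.3}, now applied with the parameter $\lambda$ itself, then gives that $D$ is $(\lambda,\kappa)$-regular, completing this case.

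I do not expect a serious obstacle once the two quoted theorems are in hand; the entire content is organizing the dichotomy and noticing that the singularity of $\lambda$ exactly excludes the boundary value ${\rm cf}\,f(i)=\lambda$. The only mild point to check is that the two applications of Theorem \ref{6.3} use the \emph{same} fixed $\kappa$-least function $f$ with differing first parameters, which is legitimate since $f$ is fixed on entering Case 2 and Theorem \ref{6.3} is stated for a fixed $\kappa$-least function and an arbitrary first parameter.
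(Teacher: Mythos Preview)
Your proof is correct and follows essentially the same route as the paper: obtain $(\kappa,\kappa)$-regularity from \ref{1.1}(i), split on the existence of a $\kappa$-least function, invoke Theorem \ref{6.2} in one case and Theorem \ref{6.3} twice in the other, using that cofinalities are regular to pass from $\operatorname{cf} f(i)<\lambda^+$ to $\operatorname{cf} f(i)<\lambda$. Your write-up is in fact slightly more explicit than the paper's about why the singularity of $\lambda$ is exactly what is needed.
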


\begin{proof} Since $\kappa>\lambda $, $D$ is
$(\kappa,\kappa )$-regular, by \ref{1.1}(i).

 If $D$ has no $\kappa $-least function we are done by Theorem 
\ref{6.2}.

Otherwise, let $D$ be over $I$, and let $f$ be a $\kappa $-least function. By
Theorem \ref{6.3}, $\{i\in I | {\rm cf} f(i) <\lambda^+ \} \in D $, 
hence
$\{i\in I | {\rm cf} f(i) <\lambda \} \in D $, since cofinalities are regular
cardinals. Now Theorem \ref{6.3} implies that $D$ is
$(\lambda,\kappa )$-regular. 
\end{proof}

See Theorem  \ref{2.15ex2.13}(ii) for a similar result in the case when $\kappa $ is singular.

Corollary \ref{6.4} puts some restrictions on possible solutions of 
Problem 
\ref{5.6}.

Corollary \ref{6.4} furnishes another proof of Theorem  
\ref{2.13ex2.11}(ii) in 
the particular case
when $\lambda $ is singular: take $\kappa ^+$ in place of $\kappa $ 
in \ref{6.4}, and
observe that, by  \ref{1.1}(i), both $(\lambda,\kappa^+)$-regularity
and
$(\omega,\kappa )$-regularity imply $(\lambda,\kappa )$-regularity.

The case $\lambda $ singular of Theorem  \ref{2.2}, too, can be 
obtained as a
consequence of Corollary \ref{6.4} (and of Proposition  \ref{2.6}): 
if $\mu 
$ in  \ref{2.2} is regular,
take $\kappa =\mu $ in \ref{6.4}; then if $\mu$ is singular, one can use 
an argument similar
to the one used at the end
of the proof of Theorem \ref{6.5} below.

Corollary \ref{6.4} shows that at least the
following weak form of Conjectures  \ref{2.16ex2.14} and  \ref{2.12ex2.10} holds: 
{\it  
if $\mu $ is regular, $\lambda$ is singular, $\lambda<\mu  $, and $D$
is $(\lambda^+,\mu )$-regular then  $D$ is either
$(\lambda,\mu )$-regular or $\lambda $-decomposable}
(since $(\omega,\lambda )$-regularity implies $\lambda
$-decomposability, by Remark  \ref{1.5}(b)).

Actually, we can use Theorems \ref{6.2} and \ref{6.3} in order to 
show that 
Conjecture  \ref{2.12ex2.10}
holds under some assumptions on cardinal arithmetic and cofinalities 
(recall that Conjecture  \ref{2.12ex2.10} is always true when $\mu $ is regular, as remarked after its statement).
Recall \cite{Sh} that  $\kappa\in {\rm pcf} {\bf a}  $ means that ${\bf a} $ is a set
of (distinct) regular cardinals, and there is an ultrafilter $D'$ over
${\bf a} $ such that $\kappa={\rm cf}\prod_{D'} {\bf a} $.

\begin{thm}
\label{6.5}
  Suppose that  $\kappa>\lambda $,
$\lambda $ is singular, ${\rm cf}\lambda\not={\rm cf}\kappa $, the
ultrafilter $D$ is
$(\lambda^+,\kappa )$-regular, and either

(a) $\lambda$ is a strong limit cardinal and 
$\cf S_\lambda(\lambda )<2^\kappa $
(in particular, this holds if $\lambda^{<\lambda }<2^\kappa $), or

(b) $\lambda^{<\lambda }<\kappa $, or

(c)  $\kappa$ is  regular and
   for no ${\bf a}\subseteq \lambda  $ with $|{\bf a} |<\lambda $
and $\sup{\bf a} =\lambda $
it happens
that
 $\kappa\in {\rm pcf} {\bf a}  $.

Then  $D$ is either $\lambda$-decomposable, or
$(\lambda',\kappa )$-regular for some $\lambda'<\lambda $.
\end{thm}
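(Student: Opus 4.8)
The plan is to use the dichotomy already isolated in Theorem~\ref{2.2} to reduce to a single stubborn configuration, and then to read off decomposability from a projection of $D$ onto the regular cardinals below $\lambda$. Since $\kappa\geq\lambda^+$ and $\cf\kappa\neq\cf\lambda$, Theorem~\ref{2.2} (applied with $\mu=\kappa$) gives that $D$ is either $(\cf\lambda,\cf\lambda)$-regular or $(\lambda',\kappa)$-regular for some regular $\lambda'\leq\lambda$; as $\lambda$ is singular the second alternative forces $\lambda'<\lambda$, which is exactly the second alternative in the conclusion. So I would assume $D$ is $(\cf\lambda,\cf\lambda)$-regular and, aiming at the first alternative, that $D$ is \emph{not} $(\lambda',\kappa)$-regular for any $\lambda'<\lambda$; the goal becomes to show $D$ is $\lambda$-decomposable.

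First I would handle $\kappa$ regular, which is all of (c) and, after the reduction in the last paragraph, the engine for (a) and (b) as well. By \ref{1.1}(i) $D$ is $(\kappa,\kappa)$-regular. If $D$ has no $\kappa$-least function, Theorem~\ref{6.2} makes it $(\omega,\lambda)$-regular, hence $\lambda$-decomposable by Remark~\ref{1.5}(b), and we are done. Otherwise fix a $\kappa$-least function $f$ over $I$. By Theorem~\ref{6.3}, $D$ is $(\nu,\kappa)$-regular iff $\{i:\cf f(i)<\nu\}\in D$; from $(\lambda^+,\kappa)$-regularity, together with $\cf f(i)\neq\lambda$ (cofinalities are regular, $\lambda$ is singular), one gets $\{i:\cf f(i)<\lambda\}\in D$, while the failure of $(\lambda',\kappa)$-regularity for all $\lambda'<\lambda$ makes the values $\cf f(i)$ $D$-essentially cofinal in $\lambda$. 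Pushing $D$ forward along $i\mapsto\cf f(i)$ yields an ultrafilter $D^*$ on the regular cardinals below $\lambda$, all of whose large sets are cofinal in $\lambda$. Let $\theta$ be the least cardinality of a $D^*$-large set $\mathbf a_0$; then the restriction of $D^*$ to $\mathbf a_0$ is uniform, so $D^*$, and hence $D$ by \ref{1.1}(iii)(x), is $\theta$-decomposable. If $\theta=\lambda$ we are done, so the crux is to exclude $\theta<\lambda$.

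When $\theta<\lambda$, the set $\mathbf a_0$ is cofinal in $\lambda$ with $|\mathbf a_0|<\lambda$, and the leastness of $f$ together with $\kappa=\cf\kappa$ gives $\kappa=\cf\prod_{D^*}\mathbf a_0$, i.e.\ $\kappa\in\pcf\mathbf a_0$: indeed the constants are cofinal below $f_D$ in $\prod_D\langle\kappa,<\rangle$, so the cofinality there equals $\cf\kappa=\kappa$, and this cofinality equals that of the reduced product of the $\cf f(i)$, hence that of $\prod_{D^*}\mathbf a_0$. Under (c) this contradicts the hypothesis outright. Under (b) it gives $\kappa=\cf\prod_{D^*}\mathbf a_0\leq\prod_{\nu\in\mathbf a_0}\nu\leq\lambda^{<\lambda}<\kappa$, absurd. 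Under (a), $\lambda$ being a strong limit, I would contradict $\theta<\lambda$ by a covering/pcf estimate on $\mathbf a_0$ fed by the lower bound $|\prod_D 2^\lambda|\geq2^\kappa$ from Proposition~\ref{1.4} and the bound $\cf S_\lambda(\lambda)<2^\kappa$; this is the least transparent of the three and is one of the two places I expect real friction. In every case $\theta=\lambda$, so $D$ is $\lambda$-decomposable.

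Finally, for $\kappa$ singular (relevant only to (a), (b)) I would reduce to the regular case along a cofinal family of regular $\kappa'\in(\lambda,\kappa)$, noting that hypothesis (b) passes to every $\kappa'>\lambda^{<\lambda}$. For each such $\kappa'$ the previous steps give either $\lambda$-decomposability (done) or $(\lambda'_{\kappa'},\kappa')$-regularity with $\lambda'_{\kappa'}<\lambda$, whence $D$ is $\lambda'_{\kappa'}$-decomposable by \ref{1.1}(xii). If the $\lambda'_{\kappa'}$ stay bounded below $\lambda$ by some $\lambda^*$, then $D$ is $((\lambda^*)^+,\kappa')$-regular cofinally and Proposition~\ref{5.4}(a) lifts this to $((\lambda^*)^+,\kappa)$-regularity, the second alternative; if they are unbounded, $D$ is $\nu$-decomposable for arbitrarily large $\nu<\lambda$, so Theorem~\ref{5.1}(b) with $\mu=\cf\lambda$ gives $\lambda$-decomposability in case (a) (since $\lambda^{<\cf\lambda}=\lambda$ for a strong limit $\lambda$) and, in case (b), $\xi$-decomposability for some $\xi\in[\lambda,\lambda^{<\lambda}]$ that is brought down to $\lambda$ as in Corollary~\ref{5.3}, with $\cf\lambda\neq\cf\kappa$ ruling out the competing cardinals. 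The two steps I expect to be genuinely delicate are the cofinality identity $\kappa=\cf\prod_{D^*}\mathbf a_0$ and the closing of case (a), especially the singular-$\kappa$ subcase with $\cf\kappa>\cf\lambda$, where the comparison of cofinalities and the strong-limit arithmetic must be handled together.
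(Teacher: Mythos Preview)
Your route through (b) and (c) is essentially the paper's: least function, project along $i\mapsto\cf f(i)$, read off $\kappa\in\pcf\mathbf a_0$. But the step you flag as delicate is genuinely wrong as you wrote it. From leastness you correctly get $\cf\bigl(\prod_D\langle\cf f(i),<\rangle\bigr)=\kappa$, but the passage ``hence that of $\prod_{D^*}\mathbf a_0$'' does not follow: $\prod_{D^*}\mathbf a_0$ embeds into $\prod_D\cf f(i)$ as the functions constant on $g$-fibers, and these need not be cofinal. The paper isolates exactly this point and proves cofinality \emph{under the extra assumption that $D$ is not $\lambda$-decomposable}: any $h_D\in\prod_D\cf f(i)$ then has range (mod $D$) of size $<\lambda$, so on each fiber $\{i:g(i)=\nu\}$ with $\nu$ regular and larger than that range, $\sup h<\nu$, giving a dominating fiber-constant function. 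The fix is immediate (assume not $\lambda$-decomposable, else done), but your stated justification is insufficient.

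For (a) your pcf attack will not close, and the paper does something entirely different. Assuming $D$ not $\lambda$-decomposable, Proposition~\ref{1.4} and the strong-limit hypothesis give $|\prod_D\lambda|=|\prod_D 2^{<\lambda}|\geq 2^\kappa$; since $\prod_D\lambda=\bigcup_{x\in X}\prod_D x$ for $X$ cofinal in $S_\lambda(\lambda)$ with $|X|=\cf S_\lambda(\lambda)<2^\kappa$, some $|\prod_D x|\geq\lambda$, and the argument from the proof of Theorem~\ref{3.2} yields decomposability at arbitrarily large $\nu<\lambda$. Then Theorem~\ref{5.1}(b) with $\mu=\cf\lambda$ (using $\lambda^{<\cf\lambda}=\lambda$) gives $\lambda$-decomposability directly from your standing assumption of $(\cf\lambda,\cf\lambda)$-regularity---contradiction. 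No pcf bound on $\mathbf a_0$ is needed or available here.

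For singular $\kappa$ your reduction to regular $\kappa'$ is right, but the ``unbounded'' branch is both unnecessary and, in case (b), unjustified (Corollary~\ref{5.3} needs hypotheses you do not have). The paper simply assumes $D$ not $\lambda$-decomposable, collects $\lambda_{\kappa'}<\lambda$ for $\kappa'$ ranging over a cofinal set $C$ of size $\cf\kappa$, and then uses $\cf\kappa\neq\cf\lambda$ directly: if $\cf\kappa<\cf\lambda$ the $\cf\kappa$-many values are bounded by $\lambda'=\sup_{\kappa'\in C}\lambda_{\kappa'}<\lambda$; if $\cf\kappa>\cf\lambda$ a pigeonhole gives $\cf\kappa$-many $\kappa'$ with $\lambda_{\kappa'}$ below a fixed $\lambda'<\lambda$. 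Either way $D$ is $(\lambda',\kappa')$-regular for cofinally many regular $\kappa'<\kappa$, and Proposition~\ref{5.4}(a) lifts this to $(\lambda',\kappa)$. The paper also checks that hypothesis (a) descends to large enough $\kappa'<\kappa$ (i.e.\ $2^{\kappa'}>\cf S_\lambda(\lambda)$), which you should not skip.
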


\begin{proof}
First we prove the result in the case when $\kappa $ is  regular.
We shall suppose that $D$ is not $\lambda$-decomposable, and get
$(\lambda',\kappa )$-regularity for some $\lambda'<\lambda $.

Were $D$  $(\omega, \kappa')$-regular for all $\kappa'<\kappa $ then in
particular $D$  would be
$(\omega, \lambda )$-regular (by  \ref{1.1}(i), and since
$\lambda<\kappa $) and trivially $\lambda $-decomposable (because of
Remark  \ref{1.5}(b)).
So, by \ref{1.1}(i) and Theorem  \ref{6.2}, we can suppose that $D$ has a $\kappa $-least 
function $f$, and
that
$D$ is $(\lambda , \kappa)$-regular,
by Corollary \ref{6.4}.

(a) By Proposition  \ref{1.4}, $|\prod_D 2^{<\lambda }|\geq 2^{\kappa
}$, but
$2^{<\lambda }=\lambda $ since $\lambda $ is a strong limit cardinal, so we get
$|\prod_D \lambda|\geq 2^{\kappa  }$.

Let $X$ be cofinal in $S_\lambda (\lambda )$ with
$|X|=\cf S_\lambda(\lambda )$.
Since $D$ is supposed to be not $\lambda $-decomposable, $\prod_D \lambda= \bigcup_{x\in X} \prod_D x$. 
Thus, there is $ x \in
X$ such that $|\prod_D x| \geq \lambda $,
since 
$|X|=\cf S_\lambda(\lambda )<2^\kappa $,
$ 2^\kappa> \kappa >\lambda $
 and $|\prod_D \lambda|\geq 2^{\kappa  }$. 

Arguing as in the last part of the proof of Theorem \ref{3.2}, we get 
that there are
arbitrarily large $\nu$'s $<\lambda  $ for which $D$ is $\nu$-decomposable; were
$D$  $({\rm cf}\lambda,{\rm cf}\lambda )$-regular, then $D$ would be
 $\lambda $-decomposable because of Theorem \ref{5.1}(b) with $\mu=\cf\lambda $
($\lambda^{<{\rm cf}\lambda  }=\lambda $
since $\lambda $ is a strong limit cardinal).

Hence $D$ is not $({\rm cf}\lambda,{\rm cf}\lambda )$-regular, but then Theorem
 \ref{2.2} implies that $D$
is $(\lambda',\kappa )$-regular for some $\lambda'<\lambda $.

(b) Suppose that $D$ is over $I$ and $f$ is a $\kappa $-least function. By 
Theorem \ref{6.3},
$\{i\in I | {\rm cf} f(i) <\lambda \} \in D $. If for some $\lambda'<\lambda $
$\{i\in I | {\rm cf} f(i) <\lambda' \} \in D $ then $D$ is
$(\lambda',\kappa )$-regular again by Theorem \ref{6.3}.

Otherwise, let $g:I\to\lambda $ be defined by $g(i)=\cf f(i)$
if $\cf f(i)< \lambda $ and $g(i)=0$ $\cf f(i)> \lambda $.
Let
$D'=g(D)$ over $I'=g(I)$. Since $f$ is a least function, ${\rm cf}(\prod_D
\langle f(i), <\rangle)=\kappa $, and, trivially, ${\rm cf}(\prod_D \langle
g(i), <\rangle)=\kappa $.
In general, it is not necessarily the case that also
${\rm cf}(\prod_{D'} \langle g(i), <\rangle)=\kappa $,
but we shall prove that this follows from the assumption that $D$
is not $\lambda $-decomposable.

Indeed, let $h_D\in \prod_D g(i)$; since $g(i)={\rm cf}f(i)<\lambda 
$, for every $i$ in a set in $D$, then we can chose a representative $h$ of $h_D$ which is a function from
$I$ to $\lambda $. If $h$ is not a $\lambda $-decomposition, there is
$X_h\subseteq\lambda $, $|X_h|<\lambda $ such that $\{i \in I|h(i)\in X_h  \}\in D $.
We have that $ \{i\in I |  |X_h|<g(i)\} \in D$, since otherwise $\{i\in I | {\rm
cf} f(i) \leq |X_h| \} \in D $, contrary to the first paragraph, since
$|X_h|<\lambda $.

If $i'\in I'$ and  $|X_h|<i'$, define 
$h'(i')=\sup \{h(j)|j \in I \text{ is such that } h(j)\in X_h \text{ and } 
g(j)=i' \}$. Notice that $h'(i')<i'$, since $|X_h|<i'$,
and $i'$ is a regular cardinal.
 If  $|X_h|\geq i'$, then the definition
of $h'(i')$ is arbitrary and irrelevant, since
$ \{i\in I |  |X_h|\geq g(i)\} \not \in D$ hence $ \{i'\in I' |
|X_h|\geq i'\} \not \in D'$.

Trivially, $\prod_{D'} \langle g(i), < \rangle $ is (can be identified with) a substructure of
$\prod_{D} \langle g(i), < \rangle $; since the above argument shows that for every
$h_D\in \prod_D g(i)$ there is $h'_D\in \prod_{D'} g(i)$ such that 
$h_D\leq h'_D$ (mod $D$),
we have that the two ultraproducts above have the same cofinality, hence
${\rm cf}(\prod_{D'} \langle g(i), <\rangle)=\kappa $.

Moreover, if $g$ is not a $\lambda $-decomposition, we can assume without loss
of generality that $|I'|=|g(I)|<\lambda $, but then $\kappa=\cf \kappa \leq |\prod_{D'}
g(i)|\leq \lambda^{|g(I)|}$ contradicts
the hypothesis  $\lambda^{<\lambda }<\kappa $.

(c) The arguments used in the proof of (b) give also (c), just letting  ${\bf a}=g(I)$:
the proof shows that if the conclusion fails then
$|{\bf a} |<\lambda $,
$\sup{\bf a} =\lambda $, and
$\kappa={\rm cf}\prod_{D'} {\bf a} $, contradicting the hypothesis of (c).

Now let $\kappa $ be singular.

In case (a), if ${\rm cf}\kappa<{\rm cf}\lambda $ then we get from Theorem
 \ref{2.15ex2.13}(ii) that $D$ is $(\lambda,\kappa )$-regular, and we can 
proceed as in the case $\kappa$ regular,
 since in the proof of (a) the existence of a least function has been used only in order to get
$(\lambda,\kappa )$-regularity.

The following argument, however, covers all cases when $\kappa $ is singular.

Having already proved the theorem in the case $\kappa$ regular,
we prove the case $\kappa$ singular by applying the case 
$\kappa$ regular to a set of 
 sufficiently large regular cardinals $\kappa'<\kappa$.

First notice that, in case (a), if $\lambda<\kappa'$, then
$\cf S_\lambda(\lambda )\leq \lambda^{<\lambda }\leq 2^{\lambda}\leq 2^{\kappa'}  $;
moreover, since $\cf S_\lambda (\lambda )< 2^\kappa $,
there is no $\kappa''$ with $\lambda<\kappa''<\kappa $ such that
$ 2^{\kappa'}= \cf S_\lambda(\lambda ) $ for all $\kappa'$ with
$\kappa''<\kappa'<\kappa $. Indeed, $\kappa$ singular,
together with classical cardinal arithmetic (see e.g. \cite[p. 257]{KM}), would imply $2^{\kappa} = \cf S_\lambda(\lambda )$, contradicting the
hypothesis. Hence, $ 2^{\kappa'}>\cf S_\lambda(\lambda ) $,
for large enough $\kappa'<\kappa $.

By what we have proved, the theorem is true for every regular and sufficiently
large $\kappa'<\kappa $ (in (b) it is enough to take $\kappa' >\lambda^{<\lambda
}$, and this is possible since $\kappa$ is singular $>\lambda^{<\lambda
}$).
Hence, if $D$ is not $\lambda $-decomposable, then
  for
every sufficiently large regular $\kappa'<\kappa $ there is $\lambda_{\kappa'}
<\lambda $ such that $D$ is $(\lambda_{\kappa'},\kappa' )$-regular.

Choose a set $C$ of sufficiently large cardinals, 
$C \subseteq \kappa $, $C$ cofinal in $\kappa $, with $|C|=\cf \kappa $, and
consider $C'=\{ \lambda_{\kappa'}|\kappa'\in C\} $.
If $\cf \kappa < \cf \lambda $ let $\lambda'=\sup C'$
and notice that $\cf \kappa < \cf \lambda $
implies that $\lambda'<\lambda $.
If $\cf \kappa > \cf \lambda $, then
there is $\lambda'<\lambda $ such that
$|\{\kappa'\in C|\lambda_{\kappa'}<\lambda'\} | \geq \cf \kappa  $,
since if
$|\{\kappa'\in C|\lambda_{\kappa'}<\lambda'\} |<\cf\kappa  $
for all $\lambda'<\lambda$, then 
$|C|< \cf \kappa  $, contradicting
the assumption that $C$ is cofinal in $\kappa $.

In both cases, if $\lambda'$ is as above, then by  \ref{1.1}(i), $D$ is
$(\lambda',\kappa')$-regular
for cf$\kappa $-many $\kappa' \in C $, hence for a set
of $\kappa' $ unbounded in $\kappa $ and, again by  \ref{1.1}(i),
for all $\kappa'<\kappa $.
Without loss of generality, we can choose $\lambda'$ regular, so that 
Proposition \ref{5.4}(a)
implies that $D$ is $(\lambda',\kappa )$-regular.
\end{proof}

The main argument in the proof of \ref{6.5}(a), in the  case when  
$\kappa $ is
regular, is taken from \cite[Theorems 11 and 12]{Pr}, where a slightly less general
result is obtained for the particular case $\kappa=\lambda^+$. The arguments for
cases (b) and (c), as well as for the case $\kappa$ singular, seem to be new.
See \cite{Lp10,lpndj}  for other applications of Shelah's pcf-theory to decomposability and regularity of ultrafilters. 

The results in \cite{Ka1}, as well as the generalization given in Theorem 
\ref{6.2}, suggest
the following problems. The problems are quite natural, but can 
hardly be found
in the literature; see, anyway, \cite[p.231]{Ket2}, \cite[Remark 6.9]{Sh1}.

\begin{problem}
\label{6.6}
  Are the following theorems of ZFC?

(A)  If $\kappa $ is regular, and the ultrafilter $D$ is
uniform over $\kappa $ and has no least function, then $D$ is
$(\omega ,\kappa)$-regular.

(B)  If $\kappa $ is regular, and the ultrafilter $D$ is
$(\kappa,\kappa )$-regular and has no $\kappa $-least function, then $D$ is
$(\omega ,\kappa)$-regular.

\end{problem}

Clearly, \ref{6.6}(B) implies \ref{6.6}(A), by \ref{1.1}(vi). We 
state below two other 
consequences of \ref{6.6}(B).

\begin{prop}
\label{6.7}
  Suppose that \ref{6.6}(B) holds. Then:

(i) If $\mu $ is regular and $\lambda $ is singular then every
$(\lambda^+,\mu)$-regular ultrafilter is $(\lambda ,\mu)$-regular
(that is, Conjecture  \ref{2.16ex2.14} holds for $\mu $ regular).

(ii) If $\lambda\leq\kappa $, $\kappa $ is regular and $D$ is $(\lambda ,\kappa )$-regular then there
exists a $(\lambda ,\kappa )$-regular $D'\leq D$ (in the Rudin-Keisler
order),
$D'$ uniform over $\kappa $.
\end{prop}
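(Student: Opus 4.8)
The plan is to run, for both parts, the same dichotomy used in the proof of Corollary \ref{6.4} --- ``either $D$ has a least function of the relevant kind, or it does not'' --- but to feed the no-least-function branch into the stronger hypothesis \ref{6.6}(B) instead of into Theorem \ref{6.2}, which is exactly what collapses the two-sided conclusion of Corollary \ref{6.4} into a single alternative.

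For (i) I would first observe that the only interesting case is $\mu>\lambda$: if $\mu<\lambda$ both hypothesis and conclusion are trivial by \ref{1.1}(xiii), and $\mu=\lambda$ is impossible since $\mu$ is regular and $\lambda$ singular. Hence $\mu\geq\lambda^+$ and, by \ref{1.1}(i), $D$ is $(\mu,\mu)$-regular. If $D$ has no $\mu$-least function, then \ref{6.6}(B) gives that $D$ is $(\omega,\mu)$-regular, hence $(\lambda,\mu)$-regular by \ref{1.1}(i); this is precisely the point where the extra strength of \ref{6.6}(B) over Theorem \ref{6.2} is used. If instead $D$ has a $\mu$-least function $f$, then Theorem \ref{6.3} together with the assumed $(\lambda^+,\mu)$-regularity yields $\{i\mid \cf f(i)<\lambda^+\}\in D$; since cofinalities are regular and $\lambda$ is singular, this forces $\{i\mid \cf f(i)<\lambda\}\in D$, and a second application of Theorem \ref{6.3} gives $(\lambda,\mu)$-regularity.

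For (ii) I would again split on whether $D$ has a $\kappa$-least function, noting first that $D$ is $(\kappa,\kappa)$-regular by \ref{1.1}(i). In the no-least-function case, \ref{6.6}(B) makes $D$ be $(\omega,\kappa)$-regular; since $\cf S_\omega(\kappa)=\kappa$, Remark \ref{1.5}(b) then produces an $(\omega,\kappa)$-regular quotient $D'\leq D$ uniform over \emph{exactly} $\kappa$, and $(\omega,\kappa)$-regularity upgrades to $(\lambda,\kappa)$-regularity by \ref{1.1}(i). The decisive feature here is that for $S_\omega$ the cofinality bound pins the uniformity cardinal down to $\kappa$, which fails for general $S_\lambda$ and is exactly why one cannot simply invoke Remark \ref{1.5}(b) with $\mu=\lambda$.

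In the remaining case $D$ has a $\kappa$-least function $f$, and I would use $f$ itself as the decomposition. Because $\kappa$ is regular and $f$ lies above every constant, any $X\subseteq\kappa$ with $|X|<\kappa$ is bounded, say below $\beta$, whence $f^{-1}(X)\subseteq\{i\mid f(i)\leq\beta\}\notin D$; thus $f$ is a $\kappa$-decomposition and $D'=f(D)$ is uniform over $\kappa$ with $D'\leq D$. One then checks that the identity on $\kappa$ is a $\kappa$-least function for $D'$, and that $\{j\mid \cf j<\lambda\}\in D'$ because its $f$-preimage $\{i\mid \cf f(i)<\lambda\}$ lies in $D$ by Theorem \ref{6.3} and the $(\lambda,\kappa)$-regularity of $D$; a final application of Theorem \ref{6.3} to $D'$ gives the desired $(\lambda,\kappa)$-regularity. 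The main obstacle throughout is this no-least-function branch: an arbitrary $\kappa$-uniform quotient of $D$ need only be $(\kappa,\kappa)$-regular and may fail to be $(\lambda,\kappa)$-regular, so without \ref{6.6}(B) there is no evident way to guarantee that a $\kappa$-uniform quotient retains the full $(\lambda,\kappa)$-regularity of $D$.
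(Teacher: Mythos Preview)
Your proof is correct and follows essentially the same approach as the paper. Part (i) is exactly what the paper does (it simply says ``proved as Corollary \ref{6.4}, using \ref{6.6}(B) in place of Theorem \ref{6.2}''), and in part (ii) your dichotomy on the existence of a $\kappa$-least function is the contrapositive of the paper's dichotomy on $(\omega,\kappa)$-regularity; the two branches match up with the same arguments, and you have in fact filled in a couple of details (that $f$ is a $\kappa$-decomposition, that $id$ is a least function for $f(D)$) which the paper leaves implicit.
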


\begin{proof}  (i) is proved as Corollary 
\ref{6.4}, 
using \ref{6.6}(B) in place of Theorem
\ref{6.2}.

(ii) If $D$ over $I$ is
 $(\omega,\kappa )$-regular,
 then there is a function
$f:I\to S_\omega (\kappa  )$ witnessing it. But $|S_\omega (\kappa
)|=\kappa^{<\omega }=\kappa  $ so that $D'=f(D)$ is uniform on $\kappa $ and
$(\omega,\kappa )$-regular.

If, on the contrary, $D$ is not  $(\omega,\kappa )$-regular, then
 by \ref{6.6}(B) there is a $\kappa $-least function 
$f:I\to\kappa $. So
$D'=f(D)$ is uniform on $\kappa $ and
the identity function $id$ is a least
function for $D'$ (such an
ultrafilter is usually called {\it weakly normal}).
By Theorem \ref{6.3},
$\{i\in I | {\rm cf} f(i) <\lambda \} \in D $, hence
$\{\alpha\in\kappa  | {\rm cf} id(\alpha) <\lambda \} \in D' $, so,  by
Theorem \ref{6.3} again, the least function $id$ proves the 
$(\lambda,\kappa 
)$-regularity of
$D'$.  
\end{proof}

\ref{6.7}(ii) is a quite natural requirement: it imports that, for 
most purposes,
it is enough to consider only $(\lambda ,\kappa )$-regular ultrafilters over
$\kappa $.

Notice that the assumption $\kappa $ regular is necessary in 
\ref{6.7}(ii):
every ultrafilter uniform over $\omega $ is $(\omega_\omega , \omega_\omega )$-regular, by \ref{1.1}(vi)(v). A more significant example, in which $\kappa>\lambda $,
is the following:
if $\lambda $ is a strongly compact cardinal, then there is an
$\omega_1$-complete
$(\lambda ,\lambda^{+\omega } )$-regular  ultrafilter, but no such ultrafilter
can be uniform over $\lambda^{+\omega }$, otherwise it would be
$(\omega, \omega ) $-regular 
by
 \ref{1.1}(vi), and this
contradicts $\omega_1$-completeness.

However, in the above example,   the ultrafilter can be chosen to be uniform over
$\lambda^{+\omega+1} $, so that, as far as we know, the following might be a
theorem of ZFC:

\smallskip

{\bf \ref{6.7}(ii)*}  If $\lambda<\kappa $ and
$D$ is $(\lambda ,\kappa )$-regular then there
exists a $(\lambda ,\kappa )$-regular $D'\leq D$
such that $D'$ is uniform either over $\kappa^+ $ or over $\kappa $.

\smallskip

Notice also that \ref{6.7}(ii) is true when $\kappa=\lambda^{+n} $ 
and $\lambda $ is
regular, since we can
always get a $D'\leq D$ over {\rm cf}$S_\lambda(\kappa )$, by
\ref{1.5}(b), and it is easy
to show that
{\rm cf}$S_\lambda(\lambda^{+n}  )=\lambda^{+n} $, if $\lambda$ is regular.

Of course, in case \ref{6.6}(B) turned out to be unprovable in ZFC, 
we might ask
the problem whether \ref{6.7}(ii) or \ref{6.7}(ii)* are theorems 
of 
ZFC.

We suspect that \ref{6.7}(ii)* and \ref{6.7}(ii) are equivalent. 

Is it true that \ref{6.7}(ii) 
implies 
that 
\ref{6.6}(A) and \ref{6.6}(B)
are equivalent?

We now turn to another kind of problem.

\begin{problem}
\label{6.8}
 For which sets $K$ of  infinite cardinals is there  an ultrafilter
which is $\kappa $-decomposable exactly for those $\kappa\in K$?

In other words, if $D$ is an ultrafilter, let $K_D=\{\kappa\geq\omega| D$ is
$\kappa $-decomposable$\}$. Which are the possible values $K_D$ can take?
\end{problem}

Many constraints are known on $K_D$: first, every $\kappa $-decomposable
ultrafilter is ${\rm cf} \kappa $-decomposable, by 
\ref{1.1}(vii)(viii); and the least $\kappa $ for
which an ultrafilter is $\kappa $-decomposable  is always a measurable cardinal
or $\omega $, as mentioned shortly after  \ref{1.5}. Moreover, if
$\kappa $ is regular, then every $\kappa^+$-decomposable
ultrafilter is $\kappa $-decomposable, by \cite{CC}, \cite{KP}, or Theorem 
2.1 here, and
 \ref{1.1}(xi). By the
same results,
if $\kappa $ is singular, then every $\kappa^+$-decomposable
ultrafilter is either ${\rm cf} \kappa $-decomposable
or $\kappa' $-decomposable for all sufficiently large regular $\kappa'<\kappa $
(use \ref{1.1}(xi)(xii)).

Further constraints on $K_D$ are given by Theorems \ref{adesso2.9}, \ref{box}  \ref{2.19ex2.17}, 
\ref{3.2}, 
\ref{4.3}, \ref{5.1}, \ref{abst} \ref{6.5},
Propositions \ref{3.3}, \ref{6.10}, \ref{8.1}, \ref{ex5.10} and  Corollary 
\ref{5.3}. See also
Propositions \ref{7.1}, \ref{7.6}, and Problems \ref{3.5}, \ref{4.4}, 
\ref{5.2}, 
\ref{7.2}.

We now consider in detail the cases when $|K_D|\leq 2$.

If $|K_D|=1$, say $K_D=\{\mu \} $, then $\mu $ is either $\omega $,
or a measurable cardinal.

There are many open problems already for the case $|K_D|=2$. The case when
 $K_D=\{\omega,\kappa  \} $ has originally been studied by J. Silver \cite{Si};
in this case, $D$  is usually called {\it
indecomposable}, and only the following possibilities can occur,
because of  \ref{1.1} and Theorem
 \ref{2.1}:
 either $\kappa $ has cofinality $\omega $,
or $\kappa $ is weakly inaccessible, or
$\kappa $ is the successor of a cardinal of cofinality $\omega $.

If $\mu $ is a measurable cardinal and $D$ is a $\mu $-complete uniform
ultrafilter over $\mu $, then $D$ is $\mu $-decomposable, but not
$\lambda $-decomposable 
for all $\lambda<\mu $, and, trivially, not $\lambda $-decomposable 
for 
all $\lambda>\mu $.
If $D'$ is uniform over $\omega$, then
$D'$ is $\omega$-decomposable, by \ref{1.1}(iii). Thus, by Proposition \ref{7.1},
$K_{D \times D'}=\{\omega, \mu \}$. If 
$\mu $ is measurable, 
and  $\mu $ is made singular by Prikry forcing \cite{Pr1}, then in 
the resulting model
there is an ultrafilter which is $\kappa $-decomposable exactly for
$\kappa=\omega $ and $\kappa=\mu $.

We can get a similar example without using forcing, but starting with $ \omega $
measurable cardinals.
Let $D$ be uniform over $ \omega $, and let
$(\mu_n) _{n \in \omega } $ be a strictly increasing sequence of measurable cardinals,
and take $D_n$ to be a $\mu_n$-complete uniform ultrafilter
over $\mu_n$.
Set $\mu=\sup \{\mu_n |n \in \omega \}  $ and
$D^*=\sum_D D_n$ (see Section \ref{rup} for the definition).
It is easy to show that
$K_{D^*}= \{\omega,\mu  \} $.

The example can be modified in order to get:
$K_{D^{**}}= \{\omega,\mu_0, \mu_1, \mu_2, \dots, \mu_n, \dots \mu  \} $:
just take
$D'_n=D_0 \times D_1 \times \dots \times D _{n-1}\times D_n $, and
$D ^{**} =\sum_D D'_n$.

In the model constructed in \cite{BM}
there is an ultrafilter $D$ such that  
$K_D=\{\omega,\omega_{\omega}, \omega _{ \omega + 1} \}$
($\omega_{\omega}$-decomposability
follows from GCH and Theorem \ref{6.5}(a) with 
$ \kappa = \lambda ^+$). By taking a projection along some
$\omega_{\omega}$-decomposition, we get 
an ultrafilter $D'$ such that  
$K _{D'}  =\{\omega,\omega_{\omega}\}$. See also 
the result by H. Woodin stated in \cite[Theorem 1.5.6(iv)]{Ma}.
The constructions given in \cite{BM, AH} might shed further light on
the problem
of the possible values $K_D$ can take.

\cite{Shr} has constructed an ultrafilter for which $K_D=\{\omega,\kappa  \} $, where
$\kappa $ is inaccessible and not weakly compact. However, it is not exactly known
(\cite[p. 1007]{Shr}) for which inaccessible cardinals $\kappa $ we can have
$K_D=\{\omega,\kappa  \}$; by Theorem  \ref{2.19ex2.17} and 
\ref{1.1}(xi)(xii), 
$\kappa $
must be $\omega $-Mahlo, but it is not known whether we can have, say,
$\kappa $ not $\omega+1$-Mahlo.

As far as the remaining possibilities are concerned in the case 
$|K_D|=2$, we do not know whether we can have:

(a) $K_D=\{\omega,2^{\omega_1}\} $ and $2^{\omega}<2^{\omega_1}$, or

(b) $K_D=\{\omega,\omega_{\omega+1}\} $, or, more generally,

(c) $K_D=\{\omega,\kappa^+\} $ with ${\rm cf}\kappa=\omega $, and $ \kappa \not = \omega $.

An affirmative answer to Conjecture  \ref{2.12ex2.10} would prevent (b) and 
(c), by \ref{1.1}(xi)(xii). If
Problem \ref{3.5}(a) has an affirmative answer then (a) cannot hold: 
this is proved as follows. If $D$ is
$2^{\omega_1 }$-decomposable, then $D$ is
$(2^{\omega_1},2^{\omega_1})$-regular by  \ref{1.1}(vii); now,  take $\kappa=\omega $ and $m=n=1$ in 
the statement of
Theorem \ref{4.3}(a$'$), and recall that if Problem \ref{3.5}(a) has an 
affirmative answer then
the conclusion of \ref{4.3}(a$'$) can be improved to $\kappa<\mu\leq 
2^\kappa $, so that,
in the present case, $D$ is $\mu $-decomposable for some $\mu $ with
$\omega<\mu\leq 2^\omega <2^{\omega_1}$.

Apparently, the case when $|K_D|=2$ and $\inf K_D>\omega $ has never been
studied. There are  trivial cases, e.g., if $D$ is $\mu $-complete 
and uniform over
$\mu $, and $D'$ is $\mu' $-complete and uniform over $\mu' $, then
 $K_{D\times D'}=\{\mu,\mu'\} $.
As another example, if $\kappa$ is $ \kappa ^+$-compact, then there 
is  an ultrafilter
$D$ which is $\kappa$-complete, $( \kappa , \kappa ^+)$-regular, and uniform over $ \kappa ^+$, hence $K_D=\{\kappa,\kappa^+\}$, by \ref{1.1}(xii).

In case $K_D$ is infinite 
Shelah's {\bf pcf} theory \cite{Sh}
deeply influences the possible values of $K_D$ (\cite{Lp10,lpndj}).

The possibility that $K_D$ is an interval can always occur: if $D$ is uniform
over $\lambda $ and $(\omega,\lambda )$-regular, then
$K_D=[\omega,\lambda ]$, by \ref{1.1}(i) and Remark  \ref{1.5}(b).
If there is no inner model with a measurable cardinal, then by Donder's Theorem
  \ref{1.6}, if $D$ is uniform
over $\lambda$ then $K_D$ is always equal to $[\omega,\lambda ]$, since $D$ is $\lambda' $-decomposable for all $\lambda'<\lambda $ by 
\ref{1.1}(i) and Remark
  \ref{1.5}(b); moreover, $D$ is
$\lambda $-decomposable by  \ref{1.1}(iii).

If $\kappa $ is $\lambda $-compact and $\lambda $ is regular then there is
a $\kappa $-complete $(\kappa,\lambda )$-regular ultrafilter uniform over $\lambda $,
hence $K_D= \{ \mu| \kappa \leq \mu \leq \lambda \text{ and } \cf\mu \geq \kappa \}$,
by \ref{1.1}(xii) in the case $\mu$ regular; then apply Theorem 
\ref{5.1}(b) in order to get the case $\mu$ singular 
(since $\mu^{<\kappa }=\mu$, by the
result we mentioned shortly after the definition of strong compactness).
If $\kappa \leq \mu \leq \lambda $
and $\cf \mu < \kappa $ then $\mu \not \in K_D$ by
\ref{1.1}(vii). 

Hence, a general solution of Problem \ref{6.8} appears to be quite 
difficult.

Problem \ref{6.8} appears to be connected also with some variations 
on the principle we
have denoted by $U'(\lambda )$ (see Definition  \ref{1.7}).

\begin{definition}
\label{6.9}
  Let $\lambda $ be a limit cardinal.

(i) \cite[Definition 4.4]{Lp1}  $U^*(\lambda )$ means that for every ultrafilter $D$,
if there are arbitrarily large regular $\kappa<\lambda $ such that  $D$ is
$\kappa $-decomposable, then $D$ is $(\lambda ,\lambda )$-regular.

(ii) $U(\lambda )$ means that for every ultrafilter $D$, if there
is $\lambda'<\lambda $ such that  $D$ is
$\kappa $-decomposable for all regular
$\kappa $ with $\lambda'<\kappa<\lambda $, then $D$ is
$(\lambda ,\lambda )$-regular.
\end{definition}

In \cite[p. 132]{Lp4} $U(\lambda )$ has been stated in the following equivalent
form: ``for every ultrafilter $D$, if there
is $\lambda'<\lambda $ such that  $D$ is
$(\kappa, \kappa ) $-regular for all
$\kappa $ with $\lambda'<\kappa<\lambda $, then $D$ is
$(\lambda ,\lambda )$-regular''. The equivalence follows from
\ref{1.1}(xi) and Theorem \ref{2.1}(b).  

Theorem \ref{abst} shows that  if $ \lambda $ is a singular cardinal, then
$U( \lambda )$ holds.  

Clearly, for every limit cardinal $ \lambda $, $U'(\lambda) $ implies $U^*(\lambda)$, which implies $U(\lambda) $.
\cite{Lp1, Lp4} contain applications of the above principles to abstract logics. See also \cite{lpndj} for improved results, whose proofs
implicitly use $U( \lambda )$. 

As far as we know, it is possible that
$U^*(\lambda )$
 is a theorem of ZFC, for every singular cardinal $\lambda $
(our guess is that
it is not a theorem).
For sure, the failure of 
$U^*(\lambda)$ for $\lambda $ singular
 has a quite large consistency strength; see \cite[Proposition 4.5]{Lp1}, Corollary \ref{5.3} and the next proposition. 
We originally
thought that the above principles were quite similar in strength, but now we
know that it is much harder to make $U(\lambda )$ fail. 
In fact, if $U(\lambda )$ fails then $ \lambda $ is weakly inaccessible,
by Theorem \ref{abst}.
 
The following Proposition gives a condition
equivalent to the failure of $U^*(\lambda )$.

\begin{prop}
\label{6.10}
 Let $\lambda $ be a singular cardinal.
Then
$U^*(\lambda )$ fails if and only if 
 there is an ultrafilter $D$ such that:

(a) $D$ is not $\cf \lambda $-decomposable;

(b) there are arbitrarily large regular $\kappa<\lambda $ for which $D$ is
$\kappa $-decomposable;

(c) there are arbitrarily large regular $\kappa<\lambda $ for which $D$ is not
$\kappa $-decomposable.
\end{prop}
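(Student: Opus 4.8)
The plan is to unwind the failure of $U^*(\lambda )$ and then reduce the substantive part to Theorem \ref{adesso2.9}. By Definition \ref{6.9}(i), $U^*(\lambda )$ fails exactly when there is an ultrafilter $D$ satisfying clause (b) of the Proposition (which is verbatim the hypothesis of $U^*(\lambda )$) that is nevertheless not $(\lambda ,\lambda )$-regular (the negation of the conclusion). Thus it suffices to prove that, for an ultrafilter $D$, the conjunction of (a) and (c) is equivalent to $D$ not being $(\lambda ,\lambda )$-regular; the very same $D$ then serves as a witness on both sides of the stated equivalence, clause (b) simply being carried along.

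First I would record the link between (a) and $(\cf\lambda ,\cf\lambda )$-regularity. Since $\cf\lambda $ is regular, \ref{1.1}(xi) shows that (a), i.e.\ $D$ not $\cf\lambda $-decomposable, is equivalent to $D$ not being $(\cf\lambda ,\cf\lambda )$-regular. Moreover, were $D$ $\cf\lambda $-decomposable it would be $(\cf\lambda ,\cf\lambda )$-regular by \ref{1.1}(vii), hence $(\lambda ,\lambda )$-regular by \ref{1.1}(v); contrapositively, $D$ not $(\lambda ,\lambda )$-regular already forces (a).

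The key step is the claim that, assuming (a), the ultrafilter $D$ fails to be $(\lambda ,\lambda )$-regular if and only if (c) holds. Here (a) supplies precisely the standing hypothesis of Theorem \ref{adesso2.9}, namely that $D$ is not $(\cf\lambda ,\cf\lambda )$-regular, so I may use the equivalence of clauses (a) and (d) there. Invoking \ref{1.1}(xi) again, for a regular $\kappa $ the properties ``$D$ is $\kappa $-decomposable'' and ``$D$ is $(\kappa ,\kappa )$-regular'' coincide; hence clause (a) of Theorem \ref{adesso2.9} --- the existence of $\lambda '<\lambda $ with $D$ $(\kappa ,\kappa )$-regular for all regular $\kappa \in (\lambda ',\lambda )$ --- is exactly the negation of our condition (c). Since clause (a) of Theorem \ref{adesso2.9} is equivalent to its clause (d), which is $(\lambda ,\lambda )$-regularity, negating both sides yields the claim.

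Assembling the pieces is then routine. For the forward direction, a witness $D$ to the failure of $U^*(\lambda )$ satisfies (b) and is not $(\lambda ,\lambda )$-regular; the fact of the second paragraph gives (a), and the key claim gives (c). For the converse, if $D$ satisfies (a), (b) and (c), then (a) together with the key claim applied via (c) shows that $D$ is not $(\lambda ,\lambda )$-regular, so $D$ witnesses the failure of $U^*(\lambda )$. The only genuine content is the key claim, whose difficulty is entirely absorbed by Theorem \ref{adesso2.9}; everything else is bookkeeping with \ref{1.1}. (In the forward direction one could instead derive (c) from Theorem \ref{abst}: were (c) to fail, the hypothesis of \ref{abst} would hold and force $(\lambda ,\lambda )$-regularity, a contradiction.)
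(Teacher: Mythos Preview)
Your proof is correct and follows essentially the same line as the paper's. The only difference is in packaging: you invoke Theorem~\ref{adesso2.9} (the equivalence (a)$\Leftrightarrow$(d) there) to handle both directions at once, whereas the paper uses Theorem~\ref{abst} for the forward derivation of (c) and, for the converse, the more elementary combination of Proposition~\ref{2.6} with \ref{1.1}(xii) to show that $(\lambda,\lambda)$-regularity would contradict (a) or (c). Since Theorem~\ref{adesso2.9} already encapsulates both of these ingredients, the two arguments are interchangeable; your parenthetical remark about Theorem~\ref{abst} shows you are aware of this.
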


\begin{proof} (i) By definition, if $U^*(\lambda )$ fails there is $D$ which satisfies
(b) and is not $(\lambda  ,\lambda  )$-regular.

$D$ satisfies (a) since every  $\cf \lambda $-decomposable ultrafilter is
$(\lambda ,\lambda )$-regular, by \ref{1.1}(xi)(v).

Were (c) false, there would be $\lambda'<\lambda $ such that $D$ is
$\kappa $-decomposable for all regular $\kappa $ with $\lambda'<\kappa<\lambda  $, but
then  Theorem \ref{abst} would imply that $D$ is
$(\lambda ,\lambda )$-regular, a contradiction.

Conversely, let $D$ satisfy (a), (b), and (c). Because of (b), if $D$ is not $(\lambda,\lambda )$-regular then $U^*(\lambda )$ fails. By Proposition  
\ref{2.6} and \ref{1.1}(xi), every
$(\lambda ,\lambda )$-regular ultrafilter is either
 ${\rm cf}\lambda  $-decomposable  or $(\lambda',\lambda )$-regular for some
$\lambda'<\lambda $. But this is impossible: the first possibility cannot occur
because of (a), and the second possibility cannot occur because of (c), since every
$(\lambda',\lambda )$-regular ultrafilter is $\kappa $-decomposable for all
regular $\kappa $ with $\lambda'\leq\kappa\leq\lambda $, by \ref{1.1}(xii).
\end{proof}

Proposition \ref{6.10} still holds if we replace everywhere 
$U^*(\lambda )$ by
$U'(\lambda )$ and we delete the word ``regular'' in condition (b).

Notice that the example mentioned shortly before Definition \ref{6.9}
can be used in order to provide a singular cardinal $\lambda $
and a $(\lambda, \lambda )$-regular not $\cf\lambda $-decomposable ultrafilter  
$D$ such that there are arbitrarily large regular $\kappa<\lambda $ for which $D$ is
$\kappa $-decomposable and there are arbitrarily large singular $\kappa<\lambda $ for which $D$ is not
$\kappa $-decomposable.
Just take $\lambda $ singular with
$\omega<\cf\lambda<\kappa $.

We expect to be able to find more applications of Kanamori and Ketonen's
results, as well as  of their generalizations Theorems \ref{6.2} and 
\ref{6.3}.

\section{ Regularity of products.}
\label{rup}

Given certain regular ultrafilters, we sometimes can ``sum'' their
regularities by taking products. This is because the regularity properties of
$D\times D'$ are determined by the regularity properties of $D$ and of $D'$
(Proposition \ref{7.1}).

In this section we shall present some examples, and, more generally, we shall
consider $D$-sums; some similar results  appeared in \cite[Section 5]{Ket1} under
much stronger assumptions, such as $\omega_1$-completeness.

The product $D\times D'$ of two ultrafilters $D$ and $D'$ (over $I$, $I'$,
respectively) is the ultrafilter over $I\times I'$
defined by:
$X\in D\times D'$ if and only if $ \{i\in I| \{i'\in I'|(i,i')\in X \}\in D'\}\in D $.

The following proposition is useful and has a simple proof, but might be
new.

\begin{prop}
\label{7.1}
  For $D$, $D'$ ultrafilters,  the following are
equivalent:

(a) $D\times D'$ is $(\lambda,\mu )$-regular;

(b) there is a cardinal $\nu$ such that $D$ is $(\nu,\mu )$-regular and $D'$ is
$(\lambda,\nu')$-regular for all $\nu'<\nu$.

Thus, in particular, $D\times D'$ is $(\lambda,\lambda  )$-regular
if and only if either
$D$ is $(\lambda,\lambda  )$-regular
or
$D'$ is $(\lambda,\lambda  )$-regular.
\end{prop}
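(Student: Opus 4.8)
The plan is to derive this corollary directly from the equivalence (a)$\Leftrightarrow$(b), specialized to $\mu=\lambda$, using only the trivial monotonicity facts Properties~\ref{1.1}(i) and (xiii). Specializing, $D\times D'$ is $(\lambda,\lambda)$-regular if and only if there is a cardinal $\nu$ such that $D$ is $(\nu,\lambda)$-regular and $D'$ is $(\lambda,\nu')$-regular for every $\nu'<\nu$. The whole task is then to match this witness-condition against the disjunction ``$D$ is $(\lambda,\lambda)$-regular or $D'$ is $(\lambda,\lambda)$-regular''.

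For the ``if'' direction I would argue by two cases. If $D$ is $(\lambda,\lambda)$-regular, I take $\nu=\lambda$: then $D$ is $(\nu,\lambda)$-regular by hypothesis, and for every $\nu'<\lambda$ the ultrafilter $D'$ is automatically $(\lambda,\nu')$-regular by Properties~\ref{1.1}(xiii) (since $\lambda>\nu'$), so (b) holds and $D\times D'$ is $(\lambda,\lambda)$-regular. If instead $D'$ is $(\lambda,\lambda)$-regular, I take $\nu=\lambda^+$: then $D$ is $(\lambda^+,\lambda)$-regular for free by \ref{1.1}(xiii), and for every $\nu'<\lambda^+$, that is every $\nu'\le\lambda$, the ultrafilter $D'$ is $(\lambda,\nu')$-regular --- by hypothesis when $\nu'=\lambda$, and by \ref{1.1}(i) (shrinking the second argument) when $\nu'<\lambda$. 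Again (b) holds, so $D\times D'$ is $(\lambda,\lambda)$-regular.

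For the ``only if'' direction I would start from a witness $\nu$ as in (b) and split on its size relative to $\lambda$. If $\nu>\lambda$, then $\lambda$ is one of the $\nu'<\nu$, so the clause ``$D'$ is $(\lambda,\nu')$-regular for all $\nu'<\nu$'' yields directly that $D'$ is $(\lambda,\lambda)$-regular. If $\nu\le\lambda$, then $D$ is $(\nu,\lambda)$-regular, and enlarging the first argument from $\nu$ to $\lambda$ via \ref{1.1}(i) gives that $D$ is $(\lambda,\lambda)$-regular. Either way exactly one of the two alternatives is forced.

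The argument has no genuine obstacle beyond the already-established equivalence (a)$\Leftrightarrow$(b); the only point requiring care is the bookkeeping around the boundary value $\nu=\lambda$ --- in particular choosing $\nu=\lambda^+$ rather than $\nu=\lambda$ in the second subcase of the ``if'' direction, so that $\nu'=\lambda$ is permitted, and correctly reading the strict inequality $\nu'<\nu$ in the ``only if'' direction as making $\lambda$ available among the $\nu'$ exactly when $\nu>\lambda$.
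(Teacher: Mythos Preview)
Your derivation of the corollary from the equivalence (a)$\Leftrightarrow$(b) is correct. The ``only if'' direction is exactly what the paper does: split on $\nu\le\lambda$ versus $\nu>\lambda$ and apply \ref{1.1}(i).

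For the ``if'' direction, however, the paper takes a different and slightly shorter route: rather than going back through (b) with a carefully chosen $\nu$, it simply observes that $D\le D\times D'$ and $D'\le D\times D'$ in the Rudin--Keisler order, so \ref{1.1}(ii) transfers $(\lambda,\lambda)$-regularity from either factor up to the product. Your approach has the virtue of staying entirely within the combinatorics of the just-proved equivalence and using nothing about the RK order; the paper's approach is more conceptual and avoids the bookkeeping around $\nu=\lambda$ versus $\nu=\lambda^+$ that you correctly flag as the only delicate point. Both are perfectly sound.
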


\begin{proof} We believe that the proof can be best viewed in model-theoretical terms,
using Form III of the definition of regularity
(at least, we discovered the result by reasoning in model-theoretical terms).
An alternative proof of \ref{7.1} using Form I can be obtained from
 the proof of  \ref{7.4} below (which is a result stronger than \ref{7.1}).

It is well known that, for every model {\bf A},
$\prod_{D\times D'} {\bf A} \cong \prod_{D} \prod_{D'} {\bf A} $
(throughout, we shall use the same names for corresponding elements of
$\prod_{D\times D'} {\bf A} $ and of $  \prod_{D} \prod_{D'} {\bf A} $).

Now, suppose that there exists a $\nu$ as in statement (b) of the Proposition.
Then
the following is true: whenever $X\subseteq\mu $ and $|X|<\nu$ then there is
$x_{X} \in \prod_{D'} \langle S_\lambda(\mu ), \subseteq, \{\alpha \}\rangle_{\alpha<\mu}$ such that $
d(\{\alpha \}) \subseteq x_{_X} $ for every $\alpha\in X$ 
(by Form III, since if $|X|=\nu'<\nu$
then $D'$ is
$(\lambda,\nu')$-regular, and $S_\lambda(X) \subseteq S_\lambda(\mu)$ is isomorphic to $S_\lambda(\nu')
$). 
Further, we can have
$X = \{ \alpha \in \mu | d({\alpha })\subseteq x_X\} $:
just replace $x_X$ by
$x'_X(i')=X \cap x_X (i')$.

In other words,
$\prod_{D'} \langle S_\lambda(\mu ), \subseteq,\{\alpha \} \rangle_{\alpha<\mu}$
contains a ``copy'' of
 $\langle S_{\nu}(\mu), \subseteq, \{\alpha \}\rangle_{\alpha<\mu}$, whence, by the
$(\nu,\mu )$-regularity of $D$, in
$\prod_D \prod_{D'} \langle S_\lambda(\mu ), \subseteq,\{\alpha \} \rangle_{\alpha<\mu}
\cong \prod_{ D
\times D'} \langle S_\lambda(\mu ), \subseteq,\{\alpha \} \rangle_{\alpha<\mu} $ there is an element $y$
such that
$ d(\{\alpha \}) \subseteq y $ for every $\alpha\in\mu $, so that $D\times D'$
is $(\lambda,\mu )$-regular.

If one needs the actual definition of a
$(\lambda,\mu )$-regularizing function, 
this goes as follows: for every $X\subseteq\mu $
with $|X|<\nu$ let $x_{_X}$ be as above.
Thus, $x_{_X}$ is (the equivalence class modulo $D$ of)
a function $x_{_X}:I'\to S_ \lambda (X) \subseteq S_ \lambda (\mu)$
such that if $ \alpha \in X$ then
$\{i'\in I'| \alpha \in  x_{_X}(i')\}\in D' $.
Now, let $g:I\to S_\nu(\mu )$ witness
the $(\nu,\mu )$-regularity of $D$, as given by Form II.
Thus, for every $ \alpha \in \mu $, $\{i\in I| \alpha \in g(i)\}\in D $.
Then $f:I\to \prod_{D'}  S_\lambda(\mu )$
defined by
$f(i)=x_{g(i)}$
witnesses
the $(\lambda,\mu )$-regularity of
$D\times D'$, since, for every $ \alpha \in \mu$,
$\{(i,i')| \alpha \in f(i)(i')\}=
 \{(i,i')| \alpha \in x _{g(i)} (i')\} \in D \times D'$, since
$\{i \in I |\{i' \in I'| \alpha \in x _{g(i)} (i')\}\in D' \} \supseteq
\{i \in I | \alpha \in g(i)\}  \in D $.

Having proved that (b)$ \Rightarrow $(a), let us prove
(a)$ \Rightarrow $(b).
 Suppose that
$D\times D'$ is $(\lambda,\mu )$-regular,
so that, by Form III, in
$\prod_{ D \times D'} \langle S_\lambda(\mu ), \subseteq, \{\alpha \}\rangle_{\alpha<\mu} $ there is an
element $x$ such that
$ d(\{\alpha \}) \subseteq x $ for every $\alpha\in\mu $.
Thus, $x$ is in $\prod_D \prod_{D'} S_\lambda(\mu )$, and this means that there
is a function $f:I\to \prod_{D'}  S_\lambda(\mu )$ such that for every
$\alpha\in\mu $ 
$\{  i\in I| \prod_{D'} S_\lambda(\mu)\models d(\{\alpha \} )\subseteq f(i)\} \in D$.

Define $g:I\to S(\mu )$
by $g(i)= \{\alpha\in \mu| \prod_{D'} S_\lambda(\mu)\models d(\{ \alpha \} ) \subseteq f(i) \} $. Since $\alpha\in g(i)$
if and only if
$\prod_{D'} S_\lambda(\mu)\models d(\{\alpha \} )\subseteq f(i)$, then
for every $\alpha\in\mu $ $\{  i\in I| \alpha\in g(i)\} \in D$.

Let $\nu=\sup_{i\in I}(|g(i)|^+) $; thus, $g:I \to S_\nu(\mu)$ makes $D$ $(\nu,\mu )$-regular, according to Form II. By
the definition of $\nu $, for every $\nu'<\nu $ there is $i\in I$  such that
$|\{\alpha\in \mu|\alpha\in g(i) \}|\geq\nu' $.

Given any $\nu'<\nu $, fix some $i$ as above.
Then
$|\{\alpha\in \mu |\prod_{D'} S_\lambda(\mu)\models d(\{ \alpha \} ) \subseteq f(i) \}|\geq \nu' $.
Choose
$X\subseteq \{\alpha\in \mu |\prod_{D'} S_\lambda(\mu)\models d(\{ \alpha \} ) \subseteq f(i) \}$
with $|X|=\nu'$, and, for $i'\in I'$, define $f'(i')=X\cap f(i)(i')$.
Since $S_\lambda(\nu')$ is isomorphic to
$S_\lambda (X)$, $f':I'\to S_\lambda(X)$ witnesses
the $(\lambda,\nu')$-regularity of $D'$, as given by Form II, since
if $ \alpha \in X$ then $\prod_{D'} S_\lambda(\mu)\models d( \{ \alpha \}  ) \subseteq f(i)$, that is,
$\{i'\in I'| d( \{ \alpha \}  ) \subseteq f(i)(i')\} \in D' $, thus
$\{i'\in I'| d( \{ \alpha \}  ) \subseteq f'(i')\} \in D' $.

As for the last statement in the Proposition, the if-part follows from  \ref{1.1}(ii)
and the fact that both
$D\leq D\times D'$
and $D'\leq D\times D'$.

On the other side, if
$D\times D'$ is $(\lambda,\lambda  )$-regular, then, by the equivalence of (a) and (b),
there is a cardinal $\nu$ such that $D$ is
 $(\nu,\lambda)$-regular and $D'$ is
$(\lambda,\nu')$-regular for all $\nu'<\nu$.
Thus, by  \ref{1.1}(i), if $\nu\leq \lambda $ then $D$ is
$( \lambda ,\lambda)$-regular, and if
 $\nu>\lambda $ then $D'$ is
$( \lambda ,\lambda)$-regular
\end{proof}

Thus, for example, if $D$ is $(\nu^+,\mu )$-regular and $D'$ is
$(\lambda,\nu)$-regular, and neither $D$ nor $D'$ is $(\kappa,\kappa )$-regular, then $D\times
D'$ is
$(\lambda ,\mu )$-regular and not $(\kappa,\kappa )$-regular (see also
Proposition \ref{7.6}).

As another example, if $D$ is not
$(\lambda^+,\lambda^+)$-regular and if $D'$ is not $(\lambda,\lambda^+)$-regular, then $D\times D'$ is not
$(\lambda,\lambda^+)$-regular.

On the contrary,
if $D$ is $(\lambda^+,\lambda^+)$-regular, and
$D'$ is $(\lambda,\lambda)$-regular then $D\times D'$ is
 $(\lambda,\lambda^+)$-regular (this improves \cite[Theorem 5.8]{Ket1});
in particular, Theorem  \ref{2.1}(b) implies that if $D$ is
$(\lambda^+,\lambda^+)$-regular then $D\times D$ is $(\lambda,\lambda^+)$-regular. More generally, 
if $D$ is $(\lambda^{+n},\lambda^{+n})$-regular
then $D\times D \times \dots \times D$ ($n+1$ factors) is  $(\lambda,\lambda^{+n})$-regular. 
Moreover, if $D$ is $(\lambda^{+n+1},\lambda^{+2n+1})$-regular
then $D\times D$ is  $(\lambda,\lambda^{+2n+1})$-regular, since
if $D$ is $(\lambda^{+n+1},\lambda^{+2n+1})$-regular then $D$
is $(\lambda,\lambda^{+n})$-regular by iterating Theorem  
\ref{2.13ex2.11}(ii)
$n+1$ times; then apply Proposition \ref{7.1} with $D=D'$
and $\nu=\lambda^{+n+1}$.

Notice that Proposition \ref{7.1}, together with  \ref{1.1}(xi), implies
that, if $\kappa $
is {\it regular}, then $D \times D'$ is $\kappa $-decomposable if and only if either $D$ or
$D'$ is $\kappa $-decomposable. This is not necessarily true when $\kappa $ is
singular: let $D$ be uniform over $\omega $, and suppose that $\kappa $ is
$\kappa^{+\omega }$-compact; thus, there is an $\omega_1$-complete
$(\kappa, \kappa^{+\omega })$-regular ultrafilter $D'$, which is not
$\kappa^{+\omega } $-decomposable,
by  \ref{1.1}(vii), but which is
 $\kappa^{+n}$-decomposable for all $n<\omega $, by
 \ref{1.1}(xii). Now,  $D\times
D'$ is $(\omega, \omega )$-regular and $\kappa^{+n}$-decomposable for
all $n<\omega $, by  \ref{1.1}(vi)(xi) and the last statement in
Theorem \ref{7.1}; then Theorem \ref{5.1}(b) implies that   $D\times D'$ 
is
$\kappa^{+\omega }$-decomposable, since 
$(\lambda^{+\omega })^{<\omega }= \lambda^{+\omega }$.
However,
neither $D$ nor $D'$ is  $\kappa^{+\omega
}$-decomposable.

We do not know whether we have a counterexample as above in which
$D=D'$.

\begin{problem}
\label{7.2}
Can there be a $D$ not $\kappa $-decomposable such that
$D\times D$ is $\kappa $-decomposable?
\end{problem}

As an application of Proposition \ref{7.1} we can get a
generalization (with a simpler proof) of a result by Ketonen. \cite[Theorem 1.1]{Ket2} 
is the particular case $\lambda = \kappa $ of the next proposition.

\begin{prop}
\label{7.3}
  If $\kappa $ is regular, $\lambda \geq \kappa $, and the ultrafilter $D$
is $(\kappa, \lambda  )$-regular and has no $\kappa $-least function then $D\times
D$ is $(\omega, \lambda )$-regular.
\end{prop}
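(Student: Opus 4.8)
The plan is to deduce the statement directly from Proposition \ref{7.1} (applied with $D'=D$) together with Theorem \ref{6.2}; the only real decision is which cardinal $\nu$ to feed into the product criterion of Proposition \ref{7.1}, and the natural choice is $\nu=\kappa$. First I would observe that since $\lambda\geq\kappa$, Properties \ref{1.1}(i) (shrinking the second coordinate from $\lambda$ down to $\kappa$) gives that $D$ is $(\kappa,\kappa)$-regular. As $\kappa$ is regular and, by hypothesis, $D$ has no $\kappa$-least function, Theorem \ref{6.2} then applies verbatim and yields that $D$ is $(\omega,\kappa')$-regular for every $\kappa'<\kappa$.

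Next I would invoke the implication (b)$\Rightarrow$(a) of Proposition \ref{7.1}, reading the ``$\lambda$'' of that proposition as $\omega$, its ``$\mu$'' as our $\lambda$, and taking $D'=D$ and $\nu=\kappa$. With these identifications the two clauses of condition (b) become precisely: $D$ is $(\kappa,\lambda)$-regular, which holds by hypothesis; and $D$ is $(\omega,\nu')$-regular for every $\nu'<\kappa$, which is exactly the conclusion extracted from Theorem \ref{6.2} in the previous paragraph. Proposition \ref{7.1} then delivers that $D\times D$ is $(\omega,\lambda)$-regular, which is what we want.

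I do not expect a genuine obstacle once the pieces are aligned, since both Theorem \ref{6.2} and Proposition \ref{7.1} are already available. The substance of the argument is the recognition that the ``no $\kappa$-least function'' hypothesis is exactly the ingredient Theorem \ref{6.2} consumes to manufacture $(\omega,\kappa')$-regularity for all $\kappa'<\kappa$, and that this is in turn precisely the lower-coordinate data required by the product criterion of Proposition \ref{7.1} at the threshold $\nu=\kappa$. The one point worth verifying carefully is the variable matching inside Proposition \ref{7.1}: the factor whose first coordinate is pushed down to $\omega$ is the ``$D'$'' factor, but here the two factors coincide, so no asymmetry between $D$ and $D'$ can cause difficulty, and the hypothesis $(\kappa,\lambda)$-regular supplies both the $(\nu,\mu)$-regularity of the first factor and, via Theorem \ref{6.2}, the $(\omega,\nu')$-regularity of the second.
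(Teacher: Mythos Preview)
Your proof is correct and follows essentially the same approach as the paper, which simply cites Properties \ref{1.1}(i), Theorem \ref{6.2}, and Proposition \ref{7.1}. You have just spelled out in detail the choice $\nu=\kappa$ and the variable matching that the paper leaves implicit.
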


\begin{proof} Immediate from \ref{1.1}(i),  Theorem \ref{6.2} and Proposition \ref{7.1}. 
\end{proof}

There is a version of Proposition \ref{7.1} for sums of ultrafilters.

If $D$ is an ultrafilter over $I$, and for every $i\in I$ $D_i$ is an
ultrafilter over some set $I_i$, the {\it $D$-sum} $\sum_D D_i$ of the $D_i$'s
modulo $D$  is the ultrafilter
over
$ \{(i,j)|i\in I , j\in I_i  \}  $
defined by
$X\in \sum_D D_i$ if and only if $ \{i\in I| \{j \in I_i| (i,j)\in X    \} \in D_i       \} \in D $ (cf.
e.g. \cite[Definition 0.4]{Ket1}).

In the particular case when all the $D_i$'s are equal (to, say, $D'$)  we get
the product $D\times D'$, thus the following proposition generalizes Proposition \ref{7.1}.

\begin{prop}
\label{7.4}
 (a) $\sum_D D_i$ is $(\lambda,\mu )$-regular if and only if  there is a
function $g:I\to S(\mu)$
such that
$\{ i\in I |\alpha\in g(i)\}\in D $ for every $\alpha \in \mu $ and such that 
for every $i\in I$ $D_i$ is $(\lambda,|g(i)|)$-regular (equivalently, we can just ask
that
$\{i\in I| D_i$ is $(\lambda,|g(i)|)$-regular$\}\in D$).

(a$'$) $\sum_D D_i$ is $(\lambda,\mu )$-regular if and only if  there is a
family $(X_\alpha )_{\alpha\in\mu }$
of elements in $D$ such that
for every $i\in I$ $D_i$ is
$(\lambda, |\{\alpha\in\mu|i\in X_\alpha \} |)$-regular
(equivalently, we can just ask  that
$\{i\in I| D_i \text{ is }
(\lambda, |\{\alpha\in\mu|i\in X_\alpha \} |)\text{-regular}\}\in D$).

(b)
If $\sum_D D_i$  is $(\lambda ,\mu )$-regular 
then for every cardinal $\nu$
either $D$ is $(\nu,\mu )$-regular, or
$\{i\in I| D_i \text{ is } \brfr (\lambda,\nu)\text{-regular}\}\in D $.

(c)
If $D$ is $(\nu^+,\mu )$-regular and 
$\{i\in I| D_i$ is $(\lambda,\nu)$-regular$\}\in D $,
then
$\sum_D D_i$ is $(\lambda,\mu )$-regular.

(d) $\sum_D D_i$ is $(\lambda,\lambda )$-regular if and only if
either $D$ is $(\lambda,\lambda )$-regular or
$\{i\in I| D_i \text{ is } (\lambda, \lambda )\text{-regular}\}\in D$.
\end{prop}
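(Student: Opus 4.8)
The plan is to establish part (a) first as the core computation, deduce (a$'$) as a cosmetic restatement, derive (b) and (c) from (a), and finally obtain (d) --- the displayed equivalence --- as a formal consequence of (b), (c) together with the monotonicity facts in Properties \ref{1.1}. Throughout I work with Form II of regularity, keeping in mind the convention that in $(\lambda,\mu)$-regularity the first index $\lambda$ bounds the size of the values of the regularizing function while the second index $\mu$ is the set to be covered.

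For (a), let the sum be over the base set $J=\{(i,j)\mid i\in I,\ j\in I_i\}$. For the forward direction, a witness to $(\lambda,\mu)$-regularity of $\sum_D D_i$ is a map $F\colon J\to S_\lambda(\mu)$ with $\{(i,j)\mid\alpha\in F(i,j)\}\in\sum_D D_i$ for each $\alpha\in\mu$. Unwinding the definition of the $D$-sum, this says exactly that $\{i\in I\mid\{j\in I_i\mid\alpha\in F(i,j)\}\in D_i\}\in D$. I would then set $g(i)=\{\alpha\in\mu\mid\{j\mid\alpha\in F(i,j)\}\in D_i\}$, so that $\{i\mid\alpha\in g(i)\}\in D$ for every $\alpha$, and check that $j\mapsto F(i,j)\cap g(i)$ witnesses $(\lambda,|g(i)|)$-regularity of $D_i$ via Form II (composing with a bijection $g(i)\cong|g(i)|$ and using the isomorphism $S_\lambda(g(i))\cong S_\lambda(|g(i)|)$). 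For the converse, given such a $g$, I would pick for each $i$ a Form II witness $F_i\colon I_i\to S_\lambda(g(i))$ of $(\lambda,|g(i)|)$-regularity of $D_i$, set $F(i,j)=F_i(j)\subseteq g(i)\subseteq\mu$, and verify that $\{i\mid\{j\mid\alpha\in F_i(j)\}\in D_i\}\supseteq\{i\mid\alpha\in g(i)\}\in D$. The ``equivalently'' clause is handled by redefining $g(i)=\emptyset$ off a $D$-null set, which destroys neither the covering property nor the (now vacuous) local regularity. I expect this part to be the only place requiring genuine care: matching the quantifier structure of the $D$-sum with Form II, and not confusing the subset $g(i)$ with its cardinality.

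Part (a$'$) is then immediate by passing between Form II and Form I: the family $(X_\alpha)_{\alpha\in\mu}$ and the function $g$ correspond via $X_\alpha=\{i\mid\alpha\in g(i)\}$ and $g(i)=\{\alpha\mid i\in X_\alpha\}$, under which $|g(i)|=|\{\alpha\mid i\in X_\alpha\}|$. For (b), fix $\nu$ and split $I$ according to whether $|g(i)|\geq\nu$ or $|g(i)|<\nu$; exactly one piece lies in $D$. On $\{i\mid|g(i)|\geq\nu\}$ each $D_i$ is $(\lambda,\nu)$-regular by \ref{1.1}(i) (shrinking the second index), giving the second alternative; if instead $\{i\mid|g(i)|<\nu\}\in D$, then truncating $g$ to $\emptyset$ off this set yields a map into $S_\nu(\mu)$ witnessing $(\nu,\mu)$-regularity of $D$. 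For (c), a $(\nu^+,\mu)$-regularity witness $h$ for $D$ has $|h(i)|\leq\nu$, so on the $D$-set where $D_i$ is $(\lambda,\nu)$-regular it is also $(\lambda,|h(i)|)$-regular by \ref{1.1}(i); taking $g=h$ in the ``equivalently'' form of (a) finishes it.

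Finally, for (d) I would read off both implications from the previous parts. The forward direction is (b) applied with $\mu=\nu=\lambda$, which gives precisely the dichotomy: either $D$ is $(\lambda,\lambda)$-regular or $\{i\mid D_i\text{ is }(\lambda,\lambda)\text{-regular}\}\in D$. For the converse, if $\{i\mid D_i\text{ is }(\lambda,\lambda)\text{-regular}\}\in D$, I invoke (c) with $\nu=\mu=\lambda$, noting that $D$ is automatically $(\lambda^+,\lambda)$-regular by \ref{1.1}(xiii); and if instead $D$ itself is $(\lambda,\lambda)$-regular, I use that $D\leq\sum_D D_i$ in the Rudin--Keisler order --- the projection $(i,j)\mapsto i$ satisfies $\pi^{-1}(X)\in\sum_D D_i$ if and only if $X\in D$ --- so $\sum_D D_i$ is $(\lambda,\lambda)$-regular by \ref{1.1}(ii). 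Either way $\sum_D D_i$ is $(\lambda,\lambda)$-regular, completing the proof.
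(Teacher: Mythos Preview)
Your proof is correct and follows essentially the same route as the paper's. The only cosmetic difference is that you carry out part (a) directly in Form~II, whereas the paper gives the direct computation for (a$'$) in Form~I and then notes the equivalence; the paper itself remarks that the two proofs are interchangeable. Your arguments for (b), (c), and (d) match the paper's line for line, including the use of \ref{1.1}(xiii) for the trivial $(\lambda^+,\lambda)$-regularity in (c) and the Rudin--Keisler inequality $D\leq\sum_D D_i$ with \ref{1.1}(ii) in (d).
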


\begin{proof}
(a) can be proved in a way similar to the proof of Proposition \ref{7.1}, noticing that,
for every model ${\bf A}$, if
$E=\sum_D D_i$ then $\prod_E {\bf A}\cong \prod_D\prod_{D_i} {\bf A}  $. A direct proof of (a$'$) is given below.
However, the two proofs are interchangeable, since it is immediate
to see that (a) and (a$'$) are equivalent. Indeed, if  $g$ is a function
as given by (a), then define, for each $\alpha\in \mu $,
$X_\alpha = \{ i\in I|\alpha\in g(i)\} $: the $X_\alpha$'s then satisfy (a$'$).
Conversely, given $X_\alpha$'s as in (a$'$), let
$g(i)= \{\alpha\in\mu |i\in X_\alpha \} $: then $g$
satisfies (a) (indeed, this is nothing but the usual proof for
the equivalence of Forms I and II in the definition of $(\lambda,\mu )$-regularity).

Notice that both in (a) and in (a$'$) the condition inside the parenthesis
is equivalent to the condition outside,  since if $X\not\in D$
then $\sum_D D_i$ does not  depend on the $D_i$'s ($i\in X$).

Now, let us prove (a$'$).
Suppose that $\sum_D D_i$ is $(\lambda,\mu )$-regular, that is, by Form I,
there exist sets $(Z_\alpha)_{\alpha\in\mu} $ in $\sum_D D_i$
such that the intersection of any $\lambda $ of them is empty.
For every $\alpha\in\mu $, let
$X_\alpha=\{i\in I|\{ j\in I_i|(i,j)\in Z_\alpha \}\in D_i \} $.
Thus, for every $\alpha\in\mu $, $X_\alpha\in D$, since
$Z_\alpha \in \sum_D D_i$.

If $i\in X_\alpha $, let
$Y_{\alpha i}=\{j\in I_i|(i,j)\in Z_\alpha \} $. Thus,
$Y_{\alpha i}\in D_i$.
We claim that, for each $i\in I$,
the family $\{ Y_{\alpha i}|\alpha \in \mu$ is
such that $i\in X_\alpha\}$ witnesses the
$(\lambda, |\{\alpha\in\mu|i\in X_\alpha \} |)$-regularity of $D_i$.
If not, for some $i$ there is $B\subseteq \{ \alpha\in\mu|i\in X_\alpha \}$
with $|B|=\lambda $ such that
$\bigcap_{\alpha\in B}Y_{\alpha i} \not= \emptyset$.
If  $j\in \bigcap_{\alpha\in B}Y_{\alpha i} $,
then $(i,j)\in \bigcap_{\alpha\in B}Z_\alpha $, absurd,
since $(Z_\alpha)_{\alpha\in\mu} $ was supposed
to be a $(\lambda,\mu )$-regularizing family for $\sum_D D_i$.

Conversely, suppose that
there are 
$(X_\alpha )_{\alpha\in\mu }$ as given in (a$'$), and
for every $i\in I$ let
$\{ Y_{\alpha i}|\alpha\in \mu$ is
such that $i\in X_\alpha\}$ witness the
$(\lambda, |\{\alpha\in\mu|i\in X_\alpha \} |)$-regularity
of $D_i$.
For $\alpha\in\mu$, let $Z_ \alpha = \{(i,j)|i\in X_ \alpha \ {\rm and}\ j\in Y _{ \alpha i } \}$.
Thus, $Z_ \alpha \in\sum_D D_i$.

We claim that the $Z_ \alpha $'s witness the
$( \lambda , \mu )$-regularity of $\sum_D D_i$.
Indeed, if by contradiction $\bigcap _{ \alpha  \in B} Z_ \alpha  \not = \emptyset $
for some $B$ with $|B|= \lambda $, say
 $(i,j)\in\bigcap _{ \alpha   \in B} Z_ \alpha
$, then
$j\in \bigcap _{ \alpha  \in B} Y_{\alpha i}$,
and this contradicts the assumption that
$\{ Y_{\alpha i}|\alpha\in\mu$ is
such that $i\in X_\alpha\}$ witnesses the
$(\lambda, |\{\alpha\in\mu|i\in X_\alpha \} |)$-regularity
of $D_i$.

Hence, we have proved (a) and (a$'$).

(b) Since $\sum_D D_i$ is $(\lambda,\mu)$-regular, there is a function $g:I\to S(\mu)$ as given by (a). If 
$\{ i\in I | |g(i)|< \nu\} \in D$ then $D$ is
 $(\nu,\mu)$-regular by Form II since
we can change the values of $g$ for a set not in $D$ 
hence, without loss of generality, we can suppose that
$g:I\to S_\nu(\mu)$.

Otherwise, $\{ i\in I | |g(i)| \geq \nu\} \in D$, hence 
$\{i\in I| D_i\text{ is } (\lambda,\nu)\text{-regular}\}\in D $, by (a) and \ref{1.1}(i).

(c) If $D$ is $(\nu^+,\mu )$-regular, then, by Form II, there is
$g:I\to S_{\nu^+}(\mu)$ witnessing it. Thus, $|g(i)|\leq \nu$,
for every $i \in I$, hence, by \ref{1.1}(i) and (a),  
$\sum_D D_i$ is $(\lambda,\mu )$-regular.

(d) The only-if part is immediate from (b) with $\mu=\nu=\lambda $.

Conversely, if 
$\{i\in I| D_i \text{ is } (\lambda,\lambda )\text{-regular}\}\in D $
then (c) with $\mu=\nu=\lambda $ implies that 
$\sum_D D_i$ is $(\lambda,\lambda  )$-regular, by \ref{1.1}(xiii). 
On the other side, if $D$ is $(\lambda, \lambda )$-regular, then
$\sum_D D_i$ is $(\lambda,\lambda  )$-regular, by \ref{1.1}(ii),
since trivially $D \leq \sum_D D_i$.

Notice that in (b) and (d) we cannot replace
``$\{i\in I| D_i$ is $(\lambda,\nu)$-regular$\}\in D $'' by ``for every $i\in I$ $
D_i$ is $(\lambda,\nu)$-regular''. This is because if $X\not\in D$ then $\sum_D
D_i$ does not really depend on the ultrafilters $D_i$ ($i\in X$), hence the
$D_i$'s ($i \in X$) can be chosen arbitrarily.
\end{proof}

For example, suppose that, in the same model of
Set Theory, for every $n<\omega $ there is an ultrafilter $D_n$ which is
$(\omega_n,\omega_n )$-regular, and which for no  $m<n$ is
$(\omega_m,\omega_{m+1} )$-regular.
Then, if we take $D$  uniform over $\omega $, the sum $\sum_D D_n$ is
$(\omega_n,\omega_n )$-regular, for every $n<\omega $, but for no $n<\omega $
$\sum_D D_n$ is $(\omega_n,\omega_{n+1} )$-regular (compare with Problem
 \ref{2.14ex2.12}). Notice that, without loss of generality, each $D_n$ can 
be chosen
uniform over $\omega_n $, by  \ref{1.1}(viii)(iii)(ii); in this
case, $\sum_D D_n$ is uniform over
$\omega_\omega $.

When $D$ has a least function, the condition in Proposition \ref{7.4} can be
simplified.

\begin{cor}
\label{7.5}
  Suppose that $\mu $ is regular,  the ultrafilter $D$
is $(\mu,\mu)$-regular and  has a $\mu$-least function $f$. Then
$\sum_D D_i$ is $(\lambda,\mu )$-regular
if and only if $\{i\in I|D_i$ is $(\lambda, {\rm cf}f(i))$-regular$\}\in D $.
\end{cor}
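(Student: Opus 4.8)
The plan is to read the statement through Proposition~\ref{7.4}, which reduces the $(\lambda,\mu)$-regularity of $\sum_D D_i$ to the existence of a family $(X_\alpha)_{\alpha<\mu}$ of members of $D$ for which $D_i$ is $(\lambda,|g(i)|)$-regular on a set in $D$, where $g(i)=\{\alpha<\mu\mid i\in X_\alpha\}$; throughout I write $\kappa_i=\cf f(i)$. Two facts will be used repeatedly: by \ref{1.1}(i), $(\lambda,\kappa_i)$-regularity of $D_i$ yields $(\lambda,\nu)$-regularity for every $\nu\le\kappa_i$; and, since $f$ is a $\mu$-least function, $\{i\mid\alpha<f(i)\}\in D$ for every $\alpha<\mu$, while no $\beta_D<f_D$ dominates all the constants $d(\alpha)$. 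I will also use that $D\le\sum_D D_i$ in the Rudin--Keisler order together with \ref{1.1}(ii).

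For the implication ``$\sum_D D_i$ is $(\lambda,\mu)$-regular $\Rightarrow$ the displayed set is in $D$'' I would argue by contradiction. By Proposition~\ref{7.4}(a$'$) fix $(X_\alpha)$ and $g$ as above, and suppose $\{i\mid D_i\text{ is not }(\lambda,\kappa_i)\text{-regular}\}\in D$. Intersecting with the set where $D_i$ is $(\lambda,|g(i)|)$-regular and invoking \ref{1.1}(i), on a set in $D$ we must have $|g(i)|<\kappa_i$. Replacing $g(i)$ by $g(i)\cap f(i)$ (which only shrinks it and preserves covering, as $\{i\mid\alpha\in g(i)\}\cap\{i\mid\alpha<f(i)\}\in D$), the bound $|g(i)\cap f(i)|<\kappa_i=\cf f(i)$ forces $\beta(i):=\sup(g(i)\cap f(i))<f(i)$ on a set in $D$. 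Then $\beta_D<f_D$, so by the minimality clause of Definition~\ref{6.1} there is $\alpha_0<\mu$ with $\{i\mid\beta(i)<\alpha_0\}\in D$; but $\{i\mid\alpha_0\in g(i)\cap f(i)\}\in D$ by covering, and on the intersection of these two $D$-sets one gets $\alpha_0\le\beta(i)<\alpha_0$, a contradiction. Hence the displayed set lies in $D$.

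For the converse, assume $A:=\{i\mid D_i\text{ is }(\lambda,\kappa_i)\text{-regular}\}\in D$. By \ref{1.1}(i) and Proposition~\ref{7.4}(a$'$) it suffices to produce members $(X_\alpha)_{\alpha<\mu}$ of $D$ such that each $i$ (on a set in $D$) lies in at most $\kappa_i$ of them, for then $|g(i)|\le\kappa_i$ and $D_i$ is $(\lambda,|g(i)|)$-regular on $A$. Two cases are immediate. If $D$ is itself $(\lambda,\mu)$-regular --- equivalently, by Theorem~\ref{6.3}, if $\{i\mid\kappa_i<\lambda\}\in D$ --- then $\sum_D D_i$ is $(\lambda,\mu)$-regular from $D\le\sum_D D_i$ and \ref{1.1}(ii), with nothing to construct. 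If instead $\kappa_i$ is constant, say $\kappa_i=\rho\ge\lambda$ on a set in $D$, then Theorem~\ref{6.3} gives $(\rho^+,\mu)$-regularity of $D$, and Proposition~\ref{7.4}(c) with $\nu=\rho$ (noting $D_i$ is $(\lambda,\rho)$-regular on $A$) yields the conclusion directly.

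The main obstacle is the remaining case, in which $\kappa_i=\cf f(i)$ realizes unboundedly many values modulo $D$, so that no uniform bound on $|g(i)|$ from a single use of Theorem~\ref{6.3} is tight enough. Here one must genuinely build the covering: choose for each $i$ the range $g(i)$ of an increasing cofinal map $c^i\colon\kappa_i\to f(i)$, so that $|g(i)|=\kappa_i$, and then arrange $\{i\mid\alpha\in g(i)\}\in D$ for every $\alpha<\mu$. The difficulty is precisely to coordinate the choices of the $c^i$ with $D$: a cofinal set of size $\kappa_i$ contains any fixed $\alpha$ only rarely, so coverage cannot be read off from cofinality alone and must be extracted from the minimality of the least function --- in ultrapower terms, one needs an internal set of internal cardinality $\cf(f_D)$ containing every constant $d(\alpha)$. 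I expect to handle this either by such a \L o\v s-style selection, or by decomposing $D$ as a $D$-sum over the push-forward of $i\mapsto\kappa_i$ and reducing, via associativity of sums and Proposition~\ref{7.4}, to the constant-cofinality case already settled. This coverage construction, rather than the bookkeeping around it, is where the real work lies.
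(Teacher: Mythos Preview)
Your forward direction is correct and is essentially the paper's own argument, merely packaged as a contradiction: the paper too forms $f'(i)=\sup(g(i)\cap f(i))\le f(i)$, observes that $f'$ is unbounded mod $D$, concludes $f'=f$ mod $D$ by minimality of $f$, and hence $|g(i)|\ge\cf f(i)$ on a set in $D$, so that $D_i$ is $(\lambda,\cf f(i))$-regular there by \ref{1.1}(i).

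The converse, however, is incomplete: your case 3 is the entire content of the result, and you do not carry it out. The paper does not split into cases at all. It simply invokes the argument of \cite[Theorem 1.3]{Ket1}, which produces directly a function $g:I\to S(\mu)$ with $\{i\mid |g(i)|=\cf f(i)\}\in D$ and $\{i\mid\alpha\in g(i)\}\in D$ for every $\alpha<\mu$; then Proposition~\ref{7.4}(a) finishes at once. That construction is precisely the ``coverage'' you flag as the main obstacle, and it is the substance of the proof rather than a detail. Your cases 1 and 2 are correct but are genuine special cases: case 2 requires $\{i\mid\kappa_i=\rho\}\in D$ for some single $\rho$, and there is no reason this should hold.

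Of your two sketched approaches to case 3, the decomposition idea does not work as stated. Writing $D$ as a sum over the push-forward $D^*$ of $i\mapsto\kappa_i$ would require, for $D^*$-almost every $\rho$, that the fiber $\{i\mid\kappa_i=\rho\}$ lie in $D$; but that forces $D^*$ to be principal, which is exactly your case 2 again. The \L o\v s-style selection is the right direction, but you must actually perform Ketonen's construction: fixing for each $i$ a cofinal $C_i\subseteq f(i)$ of order type $\cf f(i)$ does give $|C_i|=\kappa_i$, yet $\{i\mid\alpha\in C_i\}\in D$ is not automatic from the least-function hypothesis alone and needs the specific argument of \cite{Ket1}.
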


\begin{proof} If
$\sum_D D_i$ is $(\lambda,\mu )$-regular
then 
there is a function $g:I\to S(\mu)$ as given by Proposition \ref{7.4}(a),
such that
for every $i\in I$ $D_i$ is $(\lambda,|g(i)|)$-regular.

For $i\in I$, let $f'(i)=\sup(g(i)\cap f(i))$. Thus, $f'(i)\leq f(i)$ for every $i\in I$.
Since  for every $\alpha<\mu $ both $\{i\in I|\alpha<f(i)\} $ and $\{i\in I|
\alpha\in g(i) \} $ belong to $D$, we have that $\{i\in I| \alpha < f'(i)\}\in D
$. Since $f$ is a $\mu$-least function, $\{i\in I|f(i)=f'(i)  \}\in D $, hence
$\{i\in I|{\rm cf} f(i)\leq |g(i)|  \}\in D $, and we are done by
\ref{1.1}(i).

For the converse, suppose that $\{i\in I|D_i$ is
$(\lambda, {\rm cf}f(i))$-regular$\}\in D $. 
The
arguments in the proof of  \cite[Theorem 1.3]{Ket1}  show that there is a $g:I\to
S(\mu)$
such that $\{i\in I\mid |g(i)|=
{\rm cf}  f(i)\}\in D $ and for every  $\alpha \in \mu$
$\{i \in I| \alpha\in g(i)\}\in D $. Then the conclusion is immediate from Proposition
\ref{7.4}(a).
\end{proof}

Corollary \ref{7.5} improves \cite[Theorem 5.6]{Ket1}.

Notice that if $D$ has a $ \mu$-least function,
 it is not necessarily the case that $D\times D$
 has a $ \mu$-least function (see Remark \ref{8.5}(b))

It is natural to ask whether Proposition \ref{7.4}  can be improved (and simplified)
to:
{\it ``$\sum_D D_i$ is $(\lambda,\mu )$-regular if and only if
there is a cardinal $\nu$ such that $D$ is $(\nu,\mu )$-regular and for every
$\nu'<\nu$  $\{i\in I| D_i \text{ is } (\lambda,\nu')\text{-regular}\}\in D$''}.
The next example shows that the above statement is false.

Let $D$ be $(\omega_\omega ,\omega_{\omega+1})$-regular, but not
$(\omega_n,\omega_n)$-regular for $n>0$ (as we mentioned,
\cite{BM}
produced such an ultrafilter).
By Theorem \ref{6.2} and \ref{1.1}(i), $D$ has an $\omega_{\omega+1}$-least function 
$f$ and, by Theorem
\ref{6.3} and \ref{1.1}(i), for every $n<\omega $ $\{i\in I| {\rm cf} f(i)>\omega_n 
\}\in D $.
For every $i\in I$, if ${\rm cf} f(i)=\omega_n$ and $n>0$, let $D_i$ be an
$(\omega,\omega_{n-1})$-regular ultrafilter over $\omega_{n-1}$; notice that  $\{ i\in I|
\cf f(i)=\omega \}\not\in D $, so that if $\cf f(i)=\omega $ then $D_i$ can be
chosen arbitrarily. Corollary \ref{7.5} implies that
$\sum_D D_i$ is not $(\omega,\omega_{\omega+1})$-regular, but for every
$n<\omega $ $\{i\in I|D_i$ is $(\omega,\omega_n)$-regular$\}\in D $, since
$\{i\in I| {\rm cf} f(i)>\omega_{n+1} \}\in D $.

Notice that, in the above example, 
$\sum_D D_i$ is $(\omega_\omega, \omega_{\omega+1})$-regular,
$(\omega,\omega_n)$-regular for all $n<\omega $,
hence $(\omega,\omega_\omega )$-regular by Proposition \ref{5.4},
but not $(\omega_n, \omega_{\omega+1})$-regular for $n<\omega $,
again by Corollary \ref{7.5}.

The following is a generalization of \cite[Theorem 5.9]{Ket1}.

\begin{prop}
\label{7.6}
 For every cardinals $\lambda,\mu, \chi$, and for every set $\mathcal K$ of cardinals,
the following are equivalent:

(a) There is a $\chi$-complete $(\lambda,\mu )$-regular ultrafilter which for no $\kappa\in \mathcal
K$ is $(\kappa,\kappa )$-regular.

(b) For every $\nu$ with $\lambda\leq\nu\leq\mu$
there is a $\chi$-complete $(\nu,\nu )$-regular  ultrafilter which for no $\kappa\in \mathcal K$ is
$(\kappa,\kappa )$-regular.

(c) For every $\nu$ with $\lambda\leq\nu\leq\mu$
there are an $n\in \omega $ and a $\chi$-complete 
$(\nu^{+n},\nu^{+n} )$-regular  ultrafilter which for no $\kappa\in \mathcal K$ is
$(\kappa,\kappa )$-regular.
\end{prop}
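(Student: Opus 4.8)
The plan is to prove the cycle (a)$\Rightarrow$(b)$\Rightarrow$(c)$\Rightarrow$(b)$\Rightarrow$(a), where three of the four arrows are immediate because a single ultrafilter can be re-used throughout. Assume $\lambda\le\mu$ (otherwise the interval $[\lambda,\mu]$ is empty and the statement degenerates). For (a)$\Rightarrow$(b): if $D$ witnesses (a) and $\lambda\le\nu\le\mu$, then by \ref{1.1}(i) $D$ is $(\nu,\mu)$-regular (enlarging the first entry, as $\nu\ge\lambda$) and hence $(\nu,\nu)$-regular (shrinking the second, as $\nu\le\mu$); the same $D$ is still $\chi$-complete and, for no $\kappa\in\mathcal K$, $(\kappa,\kappa)$-regular, so one ultrafilter works for every $\nu$. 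For (b)$\Rightarrow$(c) take $n=0$. For (c)$\Rightarrow$(b): given $\nu$, the ultrafilter provided by (c) is $(\nu^{+n},\nu^{+n})$-regular for some $n$, and applying Theorem \ref{2.1}(b) $n$ times to it yields $(\nu,\nu)$-regularity of the very same ultrafilter, preserving $\chi$-completeness and the failure of $(\kappa,\kappa)$-regularity.

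The substance is (b)$\Rightarrow$(a), which I would prove by induction on $\mu$ (with $\lambda$, $\chi$, $\mathcal K$ fixed), the inductive statement being exactly (b)$\Rightarrow$(a) for the interval $[\lambda,\mu]$. The base case $\mu=\lambda$ is trivial: the $(\lambda,\lambda)$-regular witness from (b) is already $(\lambda,\mu)$-regular. For $\mu>\lambda$, let $D_\mu$ be a $\chi$-complete $(\mu,\mu)$-regular ultrafilter over $I$ as given by (b), which for no $\kappa\in\mathcal K$ is $(\kappa,\kappa)$-regular. By Form II of $(\mu,\mu)$-regularity fix $f:I\to S_\mu(\mu)$ with $\{i\in I\mid \alpha\in f(i)\}\in D_\mu$ for every $\alpha<\mu$; thus $|f(i)|<\mu$ for all $i$. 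Since (b) holds on every subinterval $[\lambda,\nu']$ with $\nu'<\mu$, the induction hypothesis supplies, for each such $\nu'$, a $\chi$-complete $(\lambda,\nu')$-regular ultrafilter that is not $(\kappa,\kappa)$-regular for any $\kappa\in\mathcal K$. For each $i\in I$ choose such an ultrafilter $D_i$ that is $(\lambda,\max(\lambda,|f(i)|))$-regular, so that $D_i$ is $(\lambda,|f(i)|)$-regular by \ref{1.1}(i). The candidate is then the $D_\mu$-sum $\sum_{D_\mu}D_i$.

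It remains to verify the three required properties of $\sum_{D_\mu}D_i$. Its $(\lambda,\mu)$-regularity is exactly Proposition \ref{7.4}(a), applied with $g=f$: every $\alpha<\mu$ lies in $D_\mu$-many of the $f(i)$, and each $D_i$ is $(\lambda,|f(i)|)$-regular by construction. That it is not $(\kappa,\kappa)$-regular for any $\kappa\in\mathcal K$ follows from Proposition \ref{7.4}(d), since neither $D_\mu$ nor any $D_i$ is $(\kappa,\kappa)$-regular. Finally, $\chi$-completeness is the standard fact that a $D$-sum of $\chi$-complete ultrafilters over a $\chi$-complete base is $\chi$-complete: given fewer than $\chi$ sets in the sum, intersect the $D_i$-large fibers (using $\chi$-completeness of each $D_i$) over the $D_\mu$-large set of coordinates on which all of them are large (using $\chi$-completeness of $D_\mu$). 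The main obstacle is conceptual rather than computational: one must notice that the fibers of the sum need to be $(\lambda,\nu')$-regular, not merely $(\nu',\nu')$-regular, so that a naive one-step product fails and the construction must be fed its own weaker instances through the induction on $\mu$; once this is seen, Propositions \ref{7.4}(a) and (d) do all the work and the successor and limit cases of $\mu$ are handled uniformly.
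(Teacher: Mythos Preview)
Your proof is correct and takes a genuinely different route from the paper's. The paper proves (b)$\Rightarrow$(a) (in the case $\chi=\omega$, reducing the general case to this one by enlarging $\mathcal K$) by building a transfinite chain of models $\mathbf A_\nu$ for $\lambda\le\nu\le\mu$: $\mathbf A_\lambda$ is an ultrapower of $\langle S_\lambda(\mu),\subseteq,\dots\rangle$ by $D_\lambda$, successor stages are ultrapowers by $D_{\nu^+}$, and limit stages take the direct limit first and then an ultrapower by $D_\nu$. One shows inductively that $\mathbf A_\nu$ contains a copy of $S_{\nu^+}(\mu)$, so the final model $\mathbf A_\mu$ contains an element above every $d(\{\alpha\})$; but $\mathbf A_\mu$ is not itself an ultrapower, so a nontrivial model-theoretic fact (the complete-extension theorem, \cite[Theorem 6.4.4]{CK}) is invoked to extract a single ultrafilter $D$ realizing the configuration.

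Your argument replaces all of this with a straight transfinite induction on $\mu$ using $D$-sums and Proposition \ref{7.4}. The key gain is that the inductive hypothesis already hands you a \emph{single} $(\lambda,\nu')$-regular ultrafilter for each $\nu'<\mu$, so one sum $\sum_{D_\mu}D_i$ finishes the step; no direct limits, no case split between successor and limit $\mu$, and no appeal to complete extensions. You also handle $\chi$-completeness directly via closure of sums, rather than by folding $\{\chi':\chi'<\chi\}$ into $\mathcal K$. What the paper's approach buys is an explicit model-theoretic picture (and a link to the framework of \cite{Lp1,Lp2,Lp9} on abstract compactness), but as a proof of the bare equivalence your route is shorter and more elementary.
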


\begin{proof} (a)$\Rightarrow$(b) is trivial, by \ref{1.1}(i).

First, we prove the converse in the case $\chi=\omega $.
Let $D_\nu$ ($\lambda\leq\nu\leq\mu$) be  ultrafilters as
given by (b). Construct inductively,
for each $\nu$ with $\lambda\leq\nu\leq\mu$, a chain of models ${\bf A}_\nu $ as
follows.

${\bf A}_\lambda  $ is
$\prod_{D_\lambda }\langle  S _\lambda(\mu ), S_\kappa(\kappa) ,
\subseteq , \{ \alpha \} \rangle 
_{  \kappa \in {\mathcal K},  \alpha \in \mu \cup \sup{\mathcal K}}$;

${\bf A}_{\nu^+} $ is
$\prod_{D_{\nu^+}} {\bf A_\nu} $; and

${\bf A}_\nu $ is
$\prod_{D_{\nu}}
\left(
 \lim_{\nu'<\nu }{\bf A}_{\nu'}
\right)$,
 if $\nu$ is limit, where
$ \lim_{\nu'<\nu }{\bf A}_{\nu'}$
denotes
 the direct limit of the ${\bf A}_{\nu'}$'s with respect to the
natural embeddings.

Iterating the arguments in the proof of Proposition \ref{7.1}
it can be shown by induction on $\nu$
($\lambda\leq\nu\leq\mu$) that whenever $X\subseteq\mu $
and $|X|\leq\nu$ then there is
$x_{_X} \in {\bf A}_\nu $
 such that
$d(\{\alpha \}) \subseteq x_{_X} $ for every $\alpha\in X$.
In other words, for every $\nu$, ${\bf A}_\nu $
contains a copy of $S_{\nu^+}(\mu)$.

The base $\nu= \lambda $ of the induction is just Form III of the definition
of  $( \lambda , \lambda )$-regularity, since 
if $|X|=\lambda $ then $S_\lambda (X)$
is isomorphic to $S_\lambda (\lambda )$.

The successor step is dealt exactly as in the proof of \ref{7.1}:
by the inductive hypothesis,
${\bf A}_\nu $ contains a copy of $S_{\nu^+}(\mu) $, hence, by the
$( \nu^+ , \nu^+)$-regularity of $D _{\nu^+} $,
${\bf A}_{\nu^+} =\prod_{D_{\nu^+}} {\bf A_\nu} $
contains a copy of $S_{\nu ^{++}} (\mu) $.

The case
$\nu$ limit is similar:
$ \lim_{\nu'<\nu }{\bf A}_{\nu'}$
contains a copy of 
$S_{\nu'}(\mu)$
for each $\nu'<\nu$,
hence a copy of
$\bigcup_{\nu'<\nu}S_{\nu'}(\mu)=S_\nu(\mu)$, thus
$\prod_{D_{\nu}}
\left(
 \lim_{\nu'<\nu }{\bf A}_{\nu'}
\right)$
contains a copy of
$S _{\nu^+} (\mu)$,
by the $(\nu, \nu)$-regularity of $D_\nu$.

Thus, in  the final model ${\bf A}_\mu
$ there is an element $x$ such that
 $ d(\{\alpha \}) \subseteq x $ for every $\alpha\in \mu$;
now, ${\bf A}_\mu $
 is a {\it complete extension} (see \cite[Section 6.4]{CK})
of $\langle S_\lambda(\mu ), \subseteq \rangle$,
and by \cite[Theorem 6.4.4]{CK}
there is an ultrafilter $D$ such that $\prod_D \langle S_\lambda(\mu ),
\subseteq \rangle $ is embeddable in ${\bf A}_\mu  $ and $x$ is in the range of
the embedding. Thus, $D$ is $(\lambda,\mu )$-regular (Form III).

Let $\kappa\in \mathcal K$.
Since no $D_\nu$ is $(\kappa,\kappa )$-regular, we have that $D$ is not
$(\kappa,\kappa )$-regular, again by using the arguments in the proof of
Proposition \ref{7.1}. 
Indeed, at no stage of the construction of the $\mathbf A_{\nu}$'s
there can appear an element witnessing $(\kappa, \kappa )$-regularity.
A fortiori, no such element can be in $\prod_D S_\kappa (\kappa )$.
(This is the reason why
we have included the $S_\kappa(\kappa )$'s in our models).

Having proved
(b)$\Rightarrow$(a)
in the case $\chi=\omega $,
let now
$\chi$ be arbitrary. Since, as we mentioned in the introduction,
an ultrafilter $D$ is $\chi$-complete if and only if for no $\chi'<\chi$ $D$ is 
$(\chi',\chi')$-regular, then the result for $\chi $-complete ultrafilters follows
from the case $\chi=\omega $, by appropriately extending the set
$ \mathcal K$.

Thus, (b)$\Rightarrow$(a) is proved.

(c) is equivalent to (b) by Theorem \ref{2.1}(b). 
\end{proof}

Less direct proofs of Proposition \ref{7.6} can be obtained from the proof of 
\cite[Theorem 3]{Lp2} (stated here as Theorem \ref{4.5}) or from the proof of
\cite[Theorem 10]{lpndj}  (cf. also \cite[Theorem 7]{Lp9} ; notice that there the order of
$\lambda $ and $\mu $ is exchanged, in the definition of regularity).

If $I$ is a finite set, we can generalize Proposition \ref{7.6} to the effect that
there is a $\chi$-complete ultrafilter which is
$(\lambda_i,\mu_i )$-regular for every $i\in I$ and
not $(\kappa,\kappa )$-regular
for $\kappa\in \mathcal K$ if and only if for every $i\in I$ and for every $\nu $
with $\lambda_i\leq\nu\leq\mu_i$ there is a $\chi$-complete ultrafilter which is
$(\nu,\nu )$-regular
and
not $(\kappa,\kappa )$-regular
for $\kappa\in \mathcal K$.

This is because for each $i \in I$ Proposition \ref{7.6} gives
a $(\lambda_i,\mu_i )$-regular not $(\kappa,\kappa )$-regular ultrafilter $D_i$, and then, letting $I= \{i_1, \dots, i_n\} $, $D_{i_1}\times
D_{i_2}\times\dots \times D_{i_n}$ is the desired ultrafilter, by Proposition \ref{7.1}.

The above statement is not true when $I$ is infinite,
already in the case $\lambda_i=\mu_i$.
Suppose that GCH holds, and that $(\mu_i)_{i\in\omega } $ 
is a strictly increasing sequence of measurable
cardinals, and let $\kappa =\sup \mu_i$. Then for every $i<\omega $ there is a
$(\mu_i,\mu_i )$-regular not $(\kappa ,\kappa  )$-regular ultrafilter; but GCH,
\ref{1.1}(vii)(xi)  and
Theorems \ref{5.1}(a) and  \ref{2.1}(b) imply that any ultrafilter which 
is
$(\mu_i,\mu_i )$-regular for every $i<\omega $ is $(\kappa ,\kappa )$-regular.

\section{Further remarks.}
\label{fr}

In this section we add a few disparate remarks (of course, we state some
problems, too, in order  to keep on with the tradition).

\begin{prop}
\label{8.1}
  Suppose that $ \lambda, \mu, \kappa $ are regular cardinals, 
and that there is a sequence $(f_\alpha
)_{\alpha\in\kappa }$ of functions from $\lambda $ to $\mu $ which is increasing
modulo eventual dominance (that is, for every $\alpha<\beta<\kappa $ there is
$\gamma <\lambda $ such that $f_\alpha(\delta )<f_\beta(\delta )$, for every
$\delta>\gamma $), and suppose that there is no function from $\lambda $ to $\mu
$ which eventually dominates all the $f_\alpha $'s.
Then every $( \kappa, \kappa) $-regular ultrafilter is either
$(\lambda,\lambda )$-regular or
$(\mu,\mu )$-regular.
\end{prop}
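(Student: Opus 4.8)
The plan is to argue by contradiction: assume $D$ is $(\kappa,\kappa)$-regular but neither $(\lambda,\lambda)$-regular nor $(\mu,\mu)$-regular, and from this manufacture a single function $\lambda\to\mu$ eventually dominating every $f_\alpha$, contradicting the hypothesis. First I would record three consequences of the assumptions, using that $\lambda,\mu,\kappa$ are regular together with \ref{1.1}(xi). Let $D$ be over $I$. From $(\kappa,\kappa)$-regularity I get $\kappa$-descending incompleteness, i.e.\ a function $t\colon I\to\kappa$ with $\{i\in I\mid t(i)>\alpha\}\in D$ for every $\alpha<\kappa$. From the failure of $(\mu,\mu)$-regularity I get that $D$ is not $\mu$-descendingly incomplete, which I rewrite as: (*) for every $w\colon I\to\mu$ there is $\eta<\mu$ with $\{i\mid w(i)<\eta\}\in D$. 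Likewise the failure of $(\lambda,\lambda)$-regularity gives (**): for every $v\colon I\to\lambda$ there is $\gamma<\lambda$ with $\{i\mid v(i)<\gamma\}\in D$.

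Next I would build the candidate dominating function. For each $\delta<\lambda$, apply (*) to the map $i\mapsto f_{t(i)}(\delta)$ to obtain $\eta_\delta<\mu$ with $\{i\mid f_{t(i)}(\delta)<\eta_\delta\}\in D$, and set $g(\delta)=\eta_\delta$. By the main hypothesis $g$ cannot eventually dominate all the $f_\alpha$, so I fix $\alpha<\kappa$ for which the set $S=\{\delta<\lambda\mid f_\alpha(\delta)\ge g(\delta)\}$ is unbounded in $\lambda$.

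The contradiction then comes from a single well-chosen coordinate. Since $t$ is unbounded mod $D$, $\{i\mid t(i)>\alpha\}\in D$, and for each such $i$ the increasing-mod-eventual-dominance hypothesis (applied to $\alpha<t(i)$) yields a least $\gamma<\lambda$, call it $v(i)$, such that $f_{t(i)}(\delta)>f_\alpha(\delta)$ for all $\delta$ with $v(i)<\delta<\lambda$; set $v(i)=0$ for the remaining $i$. Applying (**) to $v$ gives $\gamma^*<\lambda$ with $\{i\mid v(i)<\gamma^*\}\in D$. Now choose $\delta^*\in S$ with $\delta^*\ge\gamma^*$ (possible since $S$ is unbounded). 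The three sets $\{i\mid t(i)>\alpha\}$, $\{i\mid v(i)<\gamma^*\}$ and $\{i\mid f_{t(i)}(\delta^*)<\eta_{\delta^*}\}$ all lie in $D$, so their intersection is nonempty; any $i$ in it satisfies $f_{t(i)}(\delta^*)>f_\alpha(\delta^*)\ge g(\delta^*)=\eta_{\delta^*}$ and simultaneously $f_{t(i)}(\delta^*)<\eta_{\delta^*}$, which is absurd. Hence $g$ does eventually dominate every $f_\alpha$, contradicting the hypothesis, and the proposition follows.

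I expect the main obstacle to be exactly the step just described. A naive attempt to witness $\lambda$-descending incompleteness directly would define, for each $i$, the supremum of the initial segment of $\lambda$ on which $f_{t(i)}$ stays below $g$, and then try to show it is large mod $D$; but this requires intersecting, over all $\delta\le\gamma$, the sets $\{i\mid f_{t(i)}(\delta)<\eta_\delta\}$, and an ultrafilter need not be closed under $|\gamma|$-fold intersections. The device that saves the argument is to collapse everything to the single coordinate $\delta^*$, so that only finitely many members of $D$ are intersected; locating $\delta^*$ inside the unbounded set $S$ and above the bound $\gamma^*$ supplied by (**) is precisely what forces the two pointwise inequalities at $\delta^*$ to clash.
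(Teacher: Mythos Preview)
Your proof is correct and follows the same underlying idea as the paper's, but the presentations differ in a way worth noting. The paper works model-theoretically: it packages $\kappa,\lambda,\mu$ and the $f_\alpha$'s into a single first-order structure, takes the $D$-ultrapower, and uses \L o\v s' Theorem so that the nonstandard element $x$ above all $d(\alpha)$ produces a ``nonstandard function'' $R(x,-,-)$ that eventually dominates each $f_\alpha$ in the ultrapower; the failure of $(\lambda,\lambda)$- and $(\mu,\mu)$-regularity then lets one read off a genuine $g'\colon\lambda\to\mu$ dominating all $f_\alpha$. Your argument is the direct combinatorial unwinding of this: your $t$ is a representative of $x$, your $g(\delta)=\eta_\delta$ is exactly the paper's $g'(\delta)$, and your use of $(**)$ to bound $v$ by $\gamma^*$ corresponds to the paper's step of replacing the witness $z$ by some standard $d(\gamma)$. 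What your route buys is an entirely elementary argument with no appeal to ultrapowers or \L o\v s; what the paper's route buys is a cleaner conceptual picture and, as the author remarks elsewhere, a template that transfers more readily to related transfer principles of the form $(\lambda,\mu)\Rightarrow(\lambda',\mu')$.
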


\begin{proof} Consider a model {\bf A}
 with unary predicates $U, V, W$ representing
$\kappa,\lambda,\mu $ respectively, with a binary predicate 
$<$ representing the
well orders of $\kappa,\lambda,\mu $, and a ternary relation $R$ such that for
$\alpha\in\kappa $
$R(\alpha,-,-)$ represents the diagram of $f_\alpha $.
Thus, {\bf A} satisfies

$\forall xy (U(x) \wedge U(y) \wedge x<y
\Rightarrow \exists z (V(z) \wedge \forall z'>z
(V(z') \Rightarrow \forall w,w' (R(x,z',w) \wedge R(y,z',w')
\Rightarrow w<w'))))$

Let $D$ be an ultrafilter, consider the ultrapower of the above model, and recall that, by
 \ref{1.1}(xi), if $ \kappa $ is regular, then $\kappa $-descending
incompleteness
is equivalent to $( \kappa, \kappa) $-regularity.
If $D$ is $\kappa $-descendingly incomplete, then in $\prod_D U$ there
is an element $x$ greater than all $d(\alpha)$'s ($\alpha\in\kappa $). Now,
$R(x,-,-)$ is the diagram of a function $g$ from $\prod_D V$
to $\prod_D W$,
and, by the above-displayed formula and \L o\v s Theorem,
 $g$ eventually dominates all the functions with diagram given
by $R(d( \alpha ) ,-,-)$ ($ \alpha \in \kappa $).

If $D$ is not
$(\lambda,\lambda )$-regular, that is, not
$\lambda  $-descendingly incomplete, then the $z$ whose existence
is asserted by (*) is bounded by some $d( \gamma )$ with
$ \gamma < \lambda $, hence, without loss of generality, we can assume
$z=d( \gamma )$.
In particular, for every $ \alpha \in \kappa$ there is $ \gamma_ \alpha  \in \lambda $
such that, from $ \gamma_ \alpha $ on, $g$ dominates the function with diagram given
by $R(d( \alpha ) ,-,-)$.

If $D$ is not 
$(\mu,\mu )$-regular, that is, not
$\mu  $-descendingly incomplete, define,
for $\gamma\in \lambda $,
$g'(\gamma)=\inf \{\delta\in\mu | g(d(\gamma ))\leq d(\delta ) \}$.
Thus, $g': \lambda \to \mu$
and $g_{|\lambda }\leq g'$ pointwise.
Translating (in {\bf A}) the fact that,
from $ \gamma_ \alpha $ on, $g$ dominates the function with diagram given
by $R(d( \alpha ),-,-)$, we have that,
 from $ \gamma_ \alpha $ on, $g'$ dominates $f_ \alpha $,
thus  $g'$ dominates all the $f_ \alpha $'s, contradicting our assumption.
\end{proof}

The above proposition  might be relevant to the problems discussed in 
Section \ref{dfe1}. See
\cite{BK}, \cite[p. 180]{KM} and \cite[Theorem 0.25]{Lp1}  for connections 
between regularity of
ultrafilters and the existence of families of eventually different functions.
Probably, the argument in Proposition  \ref{8.1} can be elaborated further.

In some cases, we can prove Theorem  \ref{2.2} without the hypothesis 
${\rm
cf}\mu\not={\rm cf}\lambda $.
The simplest case is when ${\rm cf}\lambda={\rm cf}\mu=\omega  $, $\lambda $ has
the form $\nu^{+\omega }$ for some $\nu$, while $\mu=\sup_{n\in\omega } \mu_n$,
where the $\mu_n$'s can be chosen to be limit cardinals.
This is a consequence of the next proposition (it is case $\alpha=2$). In order to prove the
general form, we need a definition.

If ${\rm cf}\mu=\omega $, define as follows the relation ``the {\it order } of
$\mu $ is $\geq\alpha $'', for $\alpha \not=0$ an ordinal.

The order of every $\mu $ of cofinality $\omega $ is $\geq1$;

If $\alpha>1$, the order of $\mu $ is $\geq\alpha $ if and only if for every $\beta<\alpha $
$\mu $ is a limit of some sequence of cardinals, each of cofinality $\omega $ and of order $\geq\beta
$.

Of course, we could define the order of $\mu$ to be the least $\alpha $ such
that the order of $\mu$ is $ \geq\alpha $, but we shall not actually need this.

\begin{prop}
\label{8.2}
  Suppose that $\lambda $ and $\mu$ are infinite cardinals, and

(a) $\alpha>0 $ and $\alpha \leq$ the first weakly inaccessible cardinal (or there is
none); and

(b) $\mu $ has cofinality $\omega $ and order $\geq\alpha $; and

(c) $\lambda<\mu $, and $\lambda=\nu^{+\gamma }$, for some $\gamma<\omega^\alpha
$ (ordinal exponentiation), and some regular cardinal $\nu$.

If $D$ is a $(\lambda^+,\mu )$-regular ultrafilter, then $D$ is $(\lambda,\mu
)$-regular. Moreover, $D$ is
either
 $(\omega,\omega )$-regular, or
$(\nu,\mu )$-regular.
\end{prop}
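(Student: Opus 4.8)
The plan is to run a transfinite induction on the order $\alpha$ of $\mu$, after two preliminary reductions. First, since $D$ is $(\lambda^+,\mu)$-regular, Theorem \ref{2.13ex2.11}(i) gives two cases, and in both of them $D$ is $(\lambda,\mu')$-regular for every $\mu'<\mu$: in case (a) by \ref{1.1}(i) from $(\lambda,\mu)$-regularity, in case (b) outright. So I may assume $D$ is $(\lambda,\mu')$-regular for all $\mu'<\mu$, and the task becomes either to lift this to $(\lambda,\mu)$ or to produce $(\nu,\mu)$-regularity (which yields $(\lambda,\mu)$-regularity by \ref{1.1}(i), as $\nu\le\lambda$). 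Second, I split on whether $D$ is $(\omega,\omega)$-regular. If it is, then since $\mathrm{cf}\,\lambda=\mathrm{cf}\,\mu=\omega$ in the only genuinely new case, Proposition \ref{5.4}(b) lifts $(\lambda,\mu')$-for-all-$\mu'$ to $(\lambda,\mu)$ and the ``moreover'' holds in its first alternative. The whole difficulty is thus concentrated in the countably complete case; here hypothesis (a) enters. Indeed, $D$ is $(\lambda,\lambda)$-regular (take $\mu'\ge\lambda$ and apply \ref{1.1}(i)), so the first $\rho$ with $D$ being $(\rho,\rho)$-regular is either $\omega$ or measurable; not being $(\omega,\omega)$-regular it is measurable, hence $\rho>$ the first weakly inaccessible cardinal $\ge\alpha\ge|\gamma|\ge\mathrm{cf}\,\gamma$. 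Therefore $D$ is \emph{not} $(\rho',\rho')$-regular for any $\rho'$ equal to the cofinality of a singular cardinal $\nu^{+\delta}$ with $\delta\le\gamma$, which is exactly what makes the dichotomies of Proposition \ref{2.6} and Theorem \ref{2.2} fall on the useful side throughout the induction.

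For the base $\alpha=1$ we have $\gamma<\omega$, so $\lambda=\nu^{+k}$ is regular; Proposition \ref{5.4}(a) gives $(\lambda,\mu)$-regularity and Theorem \ref{2.15ex2.13}(ii) (base $\nu$ regular, $\mu$ singular) then yields $(\nu,\mu)$-regularity. For the successor-exponent and the $\mathrm{cf}\,\gamma\neq\omega$ limit cases I reduce $\gamma$: if $\gamma=\epsilon+1$ then $\lambda=(\nu^{+\epsilon})^+$ is regular, Proposition \ref{5.4}(a) gives $(\lambda,\mu)$, Theorem \ref{2.13ex2.11}(i) peels one successor to $(\nu^{+\epsilon},\mu')$-for-all-$\mu'$, and the inductive hypothesis at $\epsilon<\gamma$ finishes; if $\gamma$ is a limit with $\mathrm{cf}\,\gamma\neq\mathrm{cf}\,\mu$, then $\mathrm{cf}\,\lambda\neq\mathrm{cf}\,\mu$ and Proposition \ref{5.4}(d) gives $(\lambda,\mu)$, whence $(\lambda^+,\mu)$ and Theorem \ref{2.2} produces $(\lambda',\mu)$ for a regular $\lambda'=\nu^{+\delta'}<\lambda$ with $\delta'<\gamma$, again closing by induction.

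The crux is $\gamma$ a limit with $\mathrm{cf}\,\gamma=\omega$, so $\mathrm{cf}\,\lambda=\omega=\mathrm{cf}\,\mu$ and neither Theorem \ref{2.2} nor clauses (c),(d) of \ref{5.4} is available. Here I exploit the order of $\mu$. If the leading Cantor exponent of $\gamma$ is strictly below $\alpha-1$, one applies the inductive hypothesis at a strictly smaller $\alpha$ directly. Otherwise write $\mu=\sup_n\mu_n$ with each $\mu_n$ of cofinality $\omega$ and order $\ge\alpha-1$ (which order $\ge\alpha$ provides). For each $n$, $D$ is $(\lambda,\mu_n)$-regular, so—$\lambda$ singular and $D$ not $(\omega,\omega)$-regular—Proposition \ref{2.6} descends to $(\nu^{+\delta},\mu_n)$ with $\delta<\gamma$; iterating Proposition \ref{2.6} on singular stages and Theorem \ref{2.13ex2.11}(i) across regular (successor) stages yields a strictly decreasing chain of exponents $\delta$, which by well-foundedness must drop below $\omega^{\alpha-1}$ (or reach $\nu$ itself). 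At that point the inductive hypothesis at order $\alpha-1$ applies to $(\nu^{+\delta},\mu_n)$ and gives $(\nu,\mu_n)$-regularity for every $n$; finally, $\nu$ being regular, Proposition \ref{5.4}(a) lifts $(\nu,\mu_n)$-for-all-$n$ to $(\nu,\mu)$-regularity.

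The main obstacle is precisely the bookkeeping in this crux: synchronizing the descending chain of exponents $\delta<\gamma$ delivered by Proposition \ref{2.6} with the one-step drop in the order of the $\mu_n$, so that the inductive hypothesis is invoked at a matching level, and then re-lifting cleanly across the cofinal family $(\mu_n)$. Keeping the invariant that $D$ is $(\nu^{+\delta},\mu_n')$-regular for all $\mu_n'<\mu_n$ along the chain (so that Theorem \ref{2.13ex2.11}(i) and, where needed, Proposition \ref{5.4} can be reapplied), and verifying termination below $\omega^{\alpha-1}$, is where the real care lies. The alignment of $\gamma<\omega^\alpha$ with ``order $\ge\alpha$'' is engineered exactly so that this descent terminates at the right level, while hypothesis (a) guarantees that $D$ stays non-$(\rho',\rho')$-regular at every intermediate cofinality, keeping all invocations of \ref{2.6} and \ref{2.2} legitimate.
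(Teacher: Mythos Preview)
Your overall architecture—induction on $\alpha$, preliminary reduction to the case where $D$ is not $(\omega,\omega)$-regular, and then descending through the interval $[\nu,\lambda]$—matches the paper's. The initial reductions via Theorem~\ref{2.13ex2.11}(i) and Proposition~\ref{5.4} are fine. The gap is in the crux case.

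First, a minor but real issue: you repeatedly write ``$\alpha-1$'', but $\alpha$ may be a limit ordinal. Nothing in the hypotheses forces $\alpha$ to be a successor.

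Second, and this is the main point: your claim that ``iterating Proposition~\ref{2.6} on singular stages and Theorem~\ref{2.13ex2.11}(i) across regular (successor) stages yields a strictly decreasing chain of exponents $\delta$, which by well-foundedness must drop below $\omega^{\alpha-1}$'' is exactly where the difficulty lies, and it is not justified. The descent gets stuck at a stage $\delta=\varepsilon'+1$ where $\varepsilon'$ is a limit ordinal of cofinality $\omega$: there $\nu^{+\varepsilon'}$ is singular with $\cf(\nu^{+\varepsilon'})=\omega=\cf\mu_n$, so neither Theorem~\ref{2.2} nor Theorem~\ref{2.15ex2.13}(ii) applies, and Theorem~\ref{2.13ex2.11}(i) may land you in its case~(b), which only gives $(\nu^{+\varepsilon'},\mu_n')$-regularity for $\mu_n'<\mu_n$, not for $\mu_n$ itself. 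Well-foundedness tells you the chain is finite, not that it terminates below $\omega^{\alpha-1}$.

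The paper resolves this by a different bookkeeping. Instead of tracking the \emph{leading} Cantor exponent and trying to reach $\omega^{\alpha-1}$, it lets $\delta$ be the \emph{minimal} exponent with $(\nu^{+\delta},\mu)$-regular, shows $\delta=\varepsilon+1$ with $\cf\varepsilon=\omega$, and then looks at the \emph{trailing} term of the Cantor normal form: $\varepsilon=\varepsilon'+\omega^\beta$ with $0<\beta<\alpha$. The $\mu_n$'s are then chosen of order $\ge\beta$ (not $\ge\alpha-1$), and the inductive hypothesis is applied at level $\beta$ with the \emph{shifted regular base} $\nu^{+\varepsilon'+1}$ in place of $\nu$. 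One application of Proposition~\ref{2.6} (after reaching $(\nu^{+\varepsilon},\mu_n)$ via Theorem~\ref{2.13ex2.11}(ii) and $\mu_n^+$) lands you at some $\eta<\varepsilon$, and since $\eta-\varepsilon'<\omega^\beta$, the IH at $\beta$ immediately yields $(\nu^{+\varepsilon'+1},\mu_n)$-regularity—no iteration needed. Lifting by Proposition~\ref{5.4}(a) then contradicts the minimality of $\delta$. The point is that matching the order of the $\mu_n$'s to the \emph{last} Cantor exponent of $\varepsilon$, together with shifting the base, is what makes a single invocation of the IH suffice; your attempt to always use order $\alpha-1$ and base $\nu$ forces an unbounded iteration that you have not shown terminates where you need it to.
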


\begin{proof} If ${\rm cf} \lambda > \omega $
then $D$ is $( \lambda , \mu )$-regular by
Theorem  \ref{2.15ex2.13}(ii).

 If ${\rm cf} \lambda = \omega $ and
$D$ is $(\omega,\omega )$-regular then $D$ is
$(\lambda,\mu )$-regular by Theorem  \ref{2.13ex2.11}(iii) and  
\ref{1.5}(a), since $\cf \prod_D \lambda = \cf \prod_D \cf \lambda $.

We shall prove by induction on $\alpha $ that the proposition is true for every
ultrafilter which is not $(\omega,\omega ) $-regular.

The case $\alpha=1$ is a particular case of Theorem  \ref{2.15ex2.13}(ii),
since in this case $\lambda=\nu^{+n}$ for some $n<\omega $,
and $\nu$ is regular.

Suppose the statement of the proposition true for all
$\beta<\alpha $, and let $\lambda,\mu, \nu, \gamma  $ and $D$ be given.
Let $\delta $ be the smallest ordinal such that $D$ is
$(\nu^{+\delta },\mu )$-regular. Notice that 
$\delta \leq \gamma +1$, since $D$ is $(\nu^{+\gamma+1}, \mu)$-regular,
thus $\nu^{+\delta }<\mu$.
Notice also that $\gamma+1<\omega^\alpha $,
since $\alpha>0$, hence $\omega^\alpha $ is limit.

We want to show that $\delta=0$. If not,
by Theorem  \ref{2.15ex2.13}(ii), either $\delta $ is limit, or
$\delta=\varepsilon+1$ and $\cf\varepsilon=\omega  $.

Since $D$ is not $(\omega,\omega )$-regular, $D$ is $\omega_1$-complete, hence
$\mu^*$-complete, where $\mu^*$ is the first measurable cardinal, hence for all
$\kappa <\mu^*$ $D$ is not $(\kappa,\kappa )$-regular.
We now show that $\delta $ is not limit.
If $\delta $ is limit, then,
since $\delta\leq\gamma+1<\omega^\alpha $, we  necessarily have
$\cf\delta <\sup\{\alpha, \omega_1\}\leq $ the first weakly inaccessible cardinal, hence
 $\delta $ cannot be limit by Proposition  \ref{2.6}, since
then $\nu^{+\delta }$ would be a singular cardinal, $\delta $ being smaller than
the first weakly inaccessible cardinal,
and since $D$ is not $(\cf \nu^{+\delta },\cf \nu^{+\delta })$-regular,
$\cf \nu^{+\delta }$ being smaller than the first measurable cardinal.

So, let $\delta=\varepsilon+1$ and ${\rm cf}\varepsilon=\omega  $.
Since
$\varepsilon<\delta\leq\gamma+1<\omega^\alpha $, then,
by expressing $ \varepsilon $ in normal form, we get that
$\varepsilon $ has the
form
$\varepsilon'+\omega^\beta $, for some $\beta<\alpha $, $\beta>0$.
By hypothesis, $\mu $ has order $\geq \alpha $, so that $\mu=\sup_{n\in\omega }
\mu_n$ for certain $\mu_n$'s of cofinality $\omega $ and order $\geq\beta $.
Without loss of generality, we can assume that $\mu_n>\nu^{+\varepsilon }$,
for every $n\in \omega $, since $\varepsilon \leq \gamma $, 
$\sup_{n\in\omega } \mu_n = \mu > \lambda = \nu^{+\gamma }$,
and $\mu$ is limit. 

Let us fix $n$. By the definition of $\delta $, $D$ is
$(\nu^{+\delta },\mu )$-regular, that is, $(\nu^{+\varepsilon+1 },\mu
)$-regular,
hence, by \ref{1.1}(i), $(\nu^{+\varepsilon+1},\mu_n^+ )$-regular, and
$(\nu^{+\varepsilon
}, \mu_n)$-regular by Theorem  \ref{2.13ex2.11}(ii). By Proposition  
\ref{2.6}, and since $D$ is not $(\omega, \omega )$-regular, 
there is
$\eta<\varepsilon= \varepsilon'+ \omega^\beta   $
 such that $D$ is $(\nu^{+\eta },\mu_n )$-regular;
hence either $\eta\leq \varepsilon'$ or
$\eta= \varepsilon' + \eta'$ with $\eta'< \omega^ \beta  $.
In both cases, 
$D$ is   $(\nu^{+\varepsilon'+1 },
\mu_n)$-regular: if 
$\eta\leq \varepsilon'$, by \ref{1.1}(i);
if $\eta= \varepsilon' + \eta'$ with $\eta'< \omega^ \beta  $
by the
inductive hypothesis
with 
$\beta $ in place of $\alpha $, $\mu_n$ in place of $\mu$,
$\eta$ or $\eta-1$ in place of $\gamma $, 
$\nu ^{+ \varepsilon' +1} $ in place of $\nu$,
and $\nu ^{+n} $ in place of $ \lambda $), and since
$\nu^{+\varepsilon'+1 }$ is a regular cardinal.

Since $\varepsilon'$ does not depend on $n$, then $D$ is
$(\nu^{+\varepsilon'+1},\mu)$-regular by Proposition \ref{5.4}(a), 
but this
contradicts the choice of $\delta $, so
$\delta=0$ and $D$ is
$(\nu,\mu )$-regular.
\end{proof}

Proposition  \ref{8.2} can be used in order to improve
 Proposition \ref{5.4} 
 a bit.
Anyone of the assumptions (a)-(d) in the hypotheses
of Proposition \ref{5.4} can be replaced by the conjunction
of the assumptions (a)-(c) stated in the hypothesis
of Proposition \ref{8.2}. We do not know whether 
the hypothesis ${\rm cf}\lambda\not={\rm 
cf}\kappa $ in Theorem \ref{6.5} (cases (a)(b)), too, can be
replaced by the conjunction
of the assumptions (a)-(c) of Proposition \ref{8.2}.

We do not know whether a result similar to Proposition \ref{8.2} can be proved for
cardinals $\lambda,\mu $ of cofinality $>\omega $. The argument in the proof of  \ref{8.2} breaks
since if $D$ is not  $({\rm cf\lambda },{\rm cf\lambda } )$-regular, nonetheless
$D$ might be $(\lambda',\lambda')$-regular for some
$\lambda'<{\rm cf\lambda } $. In other words, we can prove a result analogous
to \ref{8.2} only if we assume $({\rm cf}\lambda )^+$-completeness.
The situation is similar to the proof of Corollary \ref{5.3}.

We turn to another kind of problem.

\begin{problem}
 \label{8.3}
Are the regularity properties of an ultrafilter $D$
determined by the function $\lambda \to {\rm
cf}(\prod_D\langle\lambda,<\rangle)$?

More precisely, is it always true that if $D$ and $D'$ are ultrafilters
and ${\rm cf}(\prod_D\langle\lambda,<\rangle) = {\rm
cf}(\prod_{D'}\langle\lambda,<\rangle)$ for every cardinal $\lambda $ then for
every pair of cardinals $\nu,\mu $
$D$ is $(\mu,\nu)$-regular if and only if $D'$ is $(\mu,\nu)$-regular?

\end{problem}

Theorem  \ref{2.13ex2.11} suggests that the answer to Problem \ref{8.3} might be 
affirmative.

\begin{problem}
\label{8.4}
  Which results of the present paper generalize to the notion
$(\lambda,\mu )\Rightarrow(\lambda',\mu')$ of \cite[Definition 0.12]{Lp1} or \cite[p. 139]{easter5}. 
See also \cite[Definition 1.2]{Lp3} and \cite{nuotop}.
\end{problem}

We suspect that most results generalize; the problem is whether we need the
parameter $\kappa $ of \cite[0.21(c)]{Lp1} and, if this is necessary, to determine the smallest
possible value of this parameter. Most results in Section \ref{al} do 
not generalize,
unless in the definition of
$(\lambda,\mu )\Rightarrow(\lambda',\mu')$ one considers only {\it simple}
extensions (that is, extensions generated by a single element: in order to make
sense, one has to deal only with models having Skolem functions).

\begin{remarks}
\label{8.5}
  (a) The notions of a least function and
of a $ \kappa $-least function
(Definition \ref{6.1}) are interesting only in the case
$ \kappa $ regular. Indeed, an
ultrafilter $D$ has a $ \kappa $-least function if and only if
it has a $\cf \kappa $-least function,
as we are going to show.

Let $ \kappa $ be singular, and let $C=( \beta _ \alpha ) _{ \alpha \in \cf\kappa } $ be
a sequence of order type cf$ \kappa $ closed and unbounded in $ \kappa $.

If $D$ is over $I$, and $g:I \to \kappa $ is
a $ \kappa $-least function, let
$h:I \to  \kappa $ be defined by
$h(i)=\sup \{ \beta _ \alpha \in C| \beta _ \alpha \leq g(i)\} $:
for every $i$, $h(i)$ belongs to $C$, since $C$ is closed.
Moreover,
$h(i)\leq g(i)$. 
Notice that, for every $\alpha \in \cf \kappa $, $\beta_\alpha \leq h(i)$
if and only if $\beta_\alpha \leq g(i)$.
It cannot be the case
that $h$ is bounded mod $D$ by some $ \beta < \kappa  $: were this the case,
take $ \beta _ \alpha  \in C$ such that $ \beta _ \alpha \geq \beta $;
then $ \{i\in I | \beta < h(i)\}\supseteq
\{i\in I | \beta _ \alpha < h(i)\} \in D $.
Since $g$ is a $ \kappa $-least function,
then $ \{i\in I|h(i)=g(i)\}\in D $. 
This implies that the function $h'$
defined by $h'(i)=$ the $ \alpha \in \kappa $ such that
$ \beta_ \alpha =h(i)$ is a $\cf \kappa $-least function.

Conversely, suppose that $h:I \to {\rm cf} \kappa  $ is a
$\cf \kappa $-least function.
Without loss of generality, we can suppose that
$\{i \in I| h(i) \text{ is limit}\} \in D$. 
Define $g:I \to \kappa  $ by $g(i)= \beta _{h(i)} $.
We want to show that $g$ is a
$ \kappa $-least function.
Indeed, $g$ is clearly unbounded in  $\kappa \pmod D$.
If 
$f:I\to \kappa$,
$f \leq g \pmod D$ and
 $f$ is unbounded in  $\kappa \pmod D $,
let  $f':I \to \cf \kappa$
be defined by 
 $f'(i)=$
the least 
$\alpha\in \cf \kappa $ such that $f(i)\leq \beta_\alpha $.
$f'$ is unbounded in $\cf\kappa \pmod D $, since
were $f'$ bounded $\pmod D $ by $\alpha\in \cf \kappa $,
then $f$ would be bounded $\pmod D $ in $\kappa $ by $\beta_\alpha $,
absurd.
Since $h$ is a $\cf \kappa $-least function, then $h \leq f' \pmod D $;
thus $g(i)=\beta_{h(i)} \leq \beta_{f'(i)}$ for $i$ in a set in $D$.
Thus, for a set in $D$, 
$f(i) \leq g(i) \leq \beta_{f'(i)}$.
Since the image
of $g$ is contained in $C$,
 by the definition of $f'$,
we have $g(i)=\beta_{f'(i)}$.
Again by the definition of $f'$,
and since $g(i)$ is a limit point of 
$C$, we get $f(i)=g(i)$.
Thus, $f=g \pmod D$, hence $g$
is a $\kappa $-least function.

(b) It may happen that $D$ has a least function
 but $D\times D$
 has no least function.
\cite{FMS} constructed an ultrafilter  $D$
uniform over $ \omega _1$, hence
$( \omega _1, \omega _1)$-regular,  which is not
$( \omega, \omega _1)$-regular, hence
$D$ has an $ \omega_1 $-least function
(equivalently, a least function, since $D$
is uniform over $ \omega _1$)
by 
\cite[Section 2]{BK}, \cite[Theorem 2.3]{Ka1}.
However $D\times D$ is
$( \omega, \omega _1)$-regular by Theorem \ref{7.1}, and cannot have
a least function by
\cite[Theorem 2.4]{Ket1}  (see Theorem \ref{6.3}).

(c) The above remarks lead to the following definition.
If $\langle X, \leq \rangle$ is a linear order, let us say
that an ultrafilter $D$ over $I$ has an 
{\it $\langle X, \leq \rangle$-least function}
(or, simply, an {\it 
$X$-least function}, if the order on $X$ is understood)
if and only if there exists
a function $f:I \to X$ such that
$\{i\in I|x<f(i)\}\in D$,
 for every $x \in X$, yet for every $g:I \to X$,
$\{i\in I|g(i)<f(i)\}\in D$ implies that there is $x \in X$ such that
$\{i\in I|g(i)<x\}\in D$.
In other words, in $\prod_D \langle X,\leq \rangle$, $f_D$ is the least
element larger than all the $d(x)$'s ($x \in X$).

Thus, a $ \kappa $-least function as defined in  \ref{6.1}
is the same as a $\langle \kappa ,\leq \rangle$-least
function in the above sense.

Of course, there are orders $X$ for which the existence
of an $X$-least function is forbidden, for example, take $X$ to be any order
in which every element has an immediate predecessor (by elementarity,
this is true in $\prod_D X$, too, hence no least function is possible).

Notice that if $D$ is the ultrafilter given in (b),
then
$D$ has an $ \omega_1 $-least function
but not an
 $ X $-least function, where
$X=\{x\in \prod_{D} \omega_1 |$ for some $\alpha<\omega_1, x<d(\alpha 
)  
\} $.

However, the arguments in (a) give the following. Suppose that
$X$ is a linear order, and $C\subseteq X$ is a well order cofinal in
$X$ of type $\kappa $
($\kappa$ a regular cardinal) and
such that

(*) whenever $H\subseteq C$ is nonempty,  $\sup H$ computed
in $X$ exists and is the same as computed in $C$.

Then
an ultrafilter $D$ has  an $X$-least function if and only if it
has a $\kappa $-least function.

(d) The remarks in (b) and (c) above are connected as follows. We
get a condition under which
 $D \times D'$ has a $\kappa $-least function.

Suppose that $D$,  $D'$ are ultrafilters, and let
$X=\{x\in \prod_{D'} \kappa |$ for some $\alpha<\kappa, x<d(\alpha )  
\} $.
Thus, $X$ has cofinality $\kappa $.
If there exists a
well order $C$ cofinal in
$X$ and satisfying (*) above, then $D$ has a $\kappa $-least function
if and only if 
 $D \times D'$ has a $\kappa $-least function: just 
apply
(c) with $X$ as above,
recalling that $\prod_{D \times D'} \kappa
\cong \prod_D \prod_{D'} \kappa$.
\end{remarks}

In Proposition \ref{nuo2.9ora2.11}
we showed that if $D$ is a 
 $(\lambda,\lambda  )$-regular
 non $({\rm cf}\lambda,{\rm cf}\lambda )$-regular
ultrafilter then $\Box_\lambda$ fails. 
We can show that $\Box_\mu$ fails for many more cardinals.

\begin{prop}
\label{vec2.9}
  Suppose that $\lambda $ is singular, and $D$ is
a $(\lambda,\lambda  )$-regular
 not $(\cf\lambda,\cf\lambda )$-regular
ultrafilter.
Then there is $\mu<\lambda $ such that  for every
$\kappa$ with $\mu \leq \kappa \leq \lambda$
 $D$ is
$(\kappa^+,\kappa^+)$-regular and $\Box_\kappa$ fails.
Moreover,  either

(a) $\mu$ is singular, or

(b) $\lambda =\mu^{+\omega }$, 
$\mu$ is a regular limit $\omega $-weakly-Mahlo
cardinal,
 and $D$ is
$(\mu,\lambda  )$-regular.
\end{prop}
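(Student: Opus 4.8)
The plan is to let $\mu_0$ be the least cardinal such that $D$ is $(\kappa^+,\kappa^+)$-regular for every $\kappa$ with $\mu_0\le\kappa\le\lambda$. Such a $\mu_0$ exists and is $<\lambda$: applying Theorem~\ref{adesso2.9} (legitimate since $\lambda$ is singular and $D$ is not $(\cf\lambda,\cf\lambda)$-regular), condition (d) holds, hence so do (a) and (b), giving some $\lambda'<\lambda$ with $D$ $(\sigma,\sigma)$-regular for all regular $\sigma\in(\lambda',\lambda)$ and $(\lambda^+,\lambda^+)$-regular; thus $(\lambda')^+<\lambda$ witnesses the defining property. Moreover $\mu_0>\cf\lambda$, for if $\mu_0\le\cf\lambda$ then taking $\kappa=\cf\lambda$ in the defining property and using Theorem~\ref{2.1}(b) would make $D$ $(\cf\lambda,\cf\lambda)$-regular, a contradiction. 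I would then note that \emph{any} $\mu\in[\mu_0,\lambda)$ already yields the first assertion: for $\kappa\in[\mu,\lambda]$ the ultrafilter $D$ is $(\kappa^+,\kappa^+)$-regular by choice of $\mu_0$, and $\Box_\kappa$ fails, for otherwise Theorem~\ref{box} applied to $\kappa$ would force $(\chi,\chi)$-regularity for all $\chi\le\kappa$, in particular (as $\cf\lambda<\mu_0\le\kappa$) the forbidden $(\cf\lambda,\cf\lambda)$-regularity.

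The dichotomy comes from choosing the witness $\mu$ to be singular whenever possible. First I would show $\mu_0$ is a limit cardinal: if $\mu_0=\tau^+$, minimality produces a $\kappa\in[\tau,\mu_0)=\{\tau\}$ with $D$ not $(\kappa^+,\kappa^+)=(\mu_0,\mu_0)$-regular, whereas taking $\kappa=\mu_0$ in the defining property and applying Theorem~\ref{2.1}(b) shows $D$ \emph{is} $(\mu_0,\mu_0)$-regular, a contradiction. Now the valid witnesses are exactly $[\mu_0,\lambda)$. If $\mu_0$ is singular, set $\mu=\mu_0$: case (a). If $\mu_0$ is regular it is weakly inaccessible, and since $\lambda$ is a limit cardinal $>\mu_0$ one has $\lambda\ge\mu_0^{+\omega}$; if $\lambda>\mu_0^{+\omega}$, then $\mu_0^{+\omega}$ is a singular cardinal in $(\mu_0,\lambda)$ and $\mu=\mu_0^{+\omega}$ gives case (a). The only remaining configuration, forced precisely when $\mu_0$ is regular and $\lambda=\mu_0^{+\omega}$ (so that $[\mu_0,\lambda)=\{\mu_0^{+n}:n<\omega\}$ consists only of regular cardinals), is case (b), where I set $\mu=\mu_0$.

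In case (b) it remains to verify that $\mu$ is $\omega$-weakly-Mahlo and that $D$ is $(\mu,\lambda)$-regular. Minimality of $\mu_0$, together with $\mu_0$ being a limit, yields arbitrarily large regular $\sigma<\mu$ for which $D$ is not $(\sigma,\sigma)$-regular. Hence $\mu$, which is weakly inaccessible, cannot fail to be $\omega$-weakly-Mahlo: Theorem~\ref{2.19ex2.17} would otherwise force the $(\mu,\mu)$-regular $D$ either to be $(\chi,\mu)$-regular for some $\chi<\mu$—whence $(\sigma,\sigma)$-regular for all regular $\sigma\in[\chi,\mu]$ by Properties~\ref{1.1}(xii),(vii)—or $(\chi,\chi)$-regular for all $\chi<\mu$, both contradicting the cofinal failure just noted. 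For $(\mu,\lambda)$-regularity the clean route is: $\lambda=\mu^{+\omega}$ is singular and $D$ is $(\lambda,\lambda)$-regular but not $(\cf\lambda,\cf\lambda)$-regular, so Theorem~\ref{adesso2.9}(d)$\Rightarrow$(e) supplies $\lambda''<\lambda$ with $D$ $(\lambda'',\lambda)$-regular; as $\lambda''\le\mu^{+n}$ for some $n<\omega$, Properties~\ref{1.1}(i) gives $(\mu^{+n},\lambda)$-regularity, and Theorem~\ref{2.15ex2.13}(ii) (with the singular $\kappa=\lambda$ and the regular cardinal $\mu$) then yields $(\mu,\lambda)$-regularity. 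I expect the main thing to get right is the case bookkeeping of the second paragraph—recognizing that the witness can always be taken singular except in the single configuration $\lambda=\mu_0^{+\omega}$ with $\mu_0$ regular—rather than any delicate cardinal estimate, since the regularity transfers are all immediate from the quoted theorems.
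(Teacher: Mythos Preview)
Your proof is correct and uses the same ingredients as the paper (Theorem~\ref{adesso2.9}, Theorem~\ref{box}, Theorem~\ref{2.15ex2.13}(ii), Theorem~\ref{2.19ex2.17}), but organized more cleanly through the device of the \emph{minimal} $\mu_0$. The paper instead starts from the $\lambda'$ furnished by Proposition~\ref{2.6}, case-splits on whether $\lambda$ has the form $\nu^{+\omega}$, and in the subcase where $\nu$ is regular but not $\omega$-weakly-Mahlo it invokes Theorem~\ref{2.19ex2.17} to push down below $\nu$ and locate a singular witness there. Your minimality choice absorbs that last step: since $\mu_0$ is already as small as possible, if $\mu_0$ turns out regular (hence weakly inaccessible) and $\lambda=\mu_0^{+\omega}$, the cofinal failure of $(\sigma,\sigma)$-regularity below $\mu_0$ forces $\mu_0$ to be $\omega$-weakly-Mahlo by the contrapositive of Theorem~\ref{2.19ex2.17}. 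This is a genuine streamlining---the paper's argument has to chase the witness down one more level in that subcase, whereas yours terminates immediately---though both proofs are short and rely on exactly the same cited results.
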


\begin{proof}
We shall show that there exists some
$\mu<\lambda $ such that $\mu>{\rm cf}\lambda $ and $D$ is
$(\kappa^+,\kappa^+)$-regular 
for every
$\kappa$ with $\mu \leq \kappa \leq \lambda$.
 This necessarily implies that
$\Box_\kappa$ fails.
Suppose by contradiction that
$\Box_\kappa$ holds
for some 
$\kappa$ with $\mu \leq \kappa \leq \lambda$.
Then Theorem \ref{box}  implies that $D$
is $(\kappa' ,\kappa' )$-regular for every
$\kappa'\leq\mu$, but this contradicts the assumption that 
$D$
is not $({\rm cf}\lambda,{\rm cf}\lambda )$-regular, since
$\mu>{\rm cf}\lambda$.

Now we show how to find $\mu$ as above.
By Proposition  \ref{2.6}, $D$ is
 $(\lambda',\lambda  )$-regular for some $\lambda'<\lambda$.
By Corollary \ref{limit} and \ref{1.1}(xi), $D$ is
$(\lambda^+, \lambda^+)$-regular.

Suppose that $\lambda$ has not the form $\nu^{+\omega}$ for some $\nu$.
Then there is a singular cardinal $\mu$ with $\lambda'<\mu<\lambda$
and $\mu>{\rm cf}\lambda $, hence by  \ref{1.1}(i)
$D$ is
$(\kappa^+,\kappa^+)$-regular 
for every
$\kappa$ with $\mu \leq \kappa < \lambda$,
and we fall in case (a).

Hence we can suppose that
$\lambda$ has the form $\nu^{+\omega}$ for some $\nu$;
hence $\cf \lambda= \omega$; moreover, 
without loss of generality, we can suppose that $\nu$ is a limit cardinal.
$D$ is $(\nu^+,\lambda )$-regular 
by \cite[Corollary B]{Lp5}, stated here as Theorem  \ref{2.15ex2.13}(ii),
hence, by \ref{1.1}(i), 
$D$ is
$(\kappa^+,\kappa^+)$-regular 
for every
$\kappa$ with $\nu \leq \kappa < \lambda$.

If $\nu$ is singular, just take $\mu=\nu$, and
 we are in case (a). Hence, suppose
that $\nu$ is a regular cardinal.
Again by \cite[Corollary B]{Lp5}, $D$ is $(\nu,\lambda )$-regular.
Since $\cf \lambda = \omega$
and $D$ is not $(\cf\lambda, \cf\lambda)$-regular,
then necessarily $\nu > \omega$.
If $\nu$ is
$\omega $-weakly-Mahlo, then (b) holds with
$\mu= \nu$.
By  \ref{1.1}(i),
$D$ is $(\nu,\nu)$-regular, and 
if $\nu$ is not
$\omega $-weakly-Mahlo, then the result from \cite{CC} stated here 
as Theorem  \ref{2.19ex2.17}
implies that $D$ is
either $(\mu,\nu )$-regular for
some $\mu<\nu$, or $(\mu,\mu)$-regular for all $\mu<\nu$. By
\ref{1.1}(i),
and since $\nu$ is a limit cardinal of uncountable cofinality, we can choose
$\mu$ to be a singular cardinal in such a way that
 $\mu>{\rm cf}\lambda= \omega $. Hence we are in case (a).
\end{proof}

Parts of the above proof of Proposition  \ref{vec2.9} essentially 
appeared, in a 
somewhat
hidden form, in the course of the proof of \cite[Proposition 0.22]{Lp1}.

The next proposition appeared in a previous version of this paper,
where it  has been used in order to 
show
that if  $\mu $ is a singular cardinal of cofinality $ \omega $, $\lambda^ \omega <\mu $ and  the ultrafilter
$D$ is
$(\lambda,\mu)$-regular, then $D$ is either $\mu $-decomposable or
$\mu^+$-decomposable, a result now subsumed by Theorem \ref{abst},
via \ref{1.1}(i).

However, the next proposition  appears to be of independent interest;
the main idea of its proof is probably due to R. Solovay
(see \cite[p. 74]{Ket1}).

\begin{prop}
\label{ex5.10}
  If $D$ is $(\lambda,\mu )$-regular and
$\kappa $-complete then $D$ is
$((\lambda^{<\kappa })^+,\mu^{<\kappa } )$-regular. Actually, if $\nu^{<\kappa
}<\lambda^{<\kappa }$ for every $\nu<\lambda $ then $D$ is
$(\lambda^{<\kappa },\mu^{<\kappa } )$-regular.
\end{prop}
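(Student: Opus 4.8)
The plan is to argue directly with Form II of the definition, exploiting $\kappa$-completeness to close a regularizing family under subsets of size $<\kappa$. Let $D$ be over $I$ and fix a function $f\colon I\to S_\lambda(\mu)$ witnessing $(\lambda,\mu)$-regularity, so that $\{i\in I\mid\alpha\in f(i)\}\in D$ for every $\alpha<\mu$. I would take the target index set to be $S_\kappa(\mu)$, which has cardinality $\mu^{<\kappa}$, and define $g\colon I\to S(S_\kappa(\mu))$ by $g(i)=S_\kappa(f(i))=\{s\subseteq f(i)\mid |s|<\kappa\}$. Identifying $S_\kappa(\mu)$ with the cardinal $\mu^{<\kappa}$ via a bijection (as is legitimate for Form II, since $\langle S_\mu(X),\subseteq\rangle\cong\langle S_\mu(\lambda),\subseteq\rangle$ whenever $|X|=\lambda$), the claim is that $g$ witnesses the desired regularity.

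Next I would verify the covering condition. Fix $s\in S_\kappa(\mu)$; then $s\in g(i)$ holds exactly when $s\subseteq f(i)$, so $\{i\in I\mid s\in g(i)\}=\bigcap_{\alpha\in s}\{i\in I\mid\alpha\in f(i)\}$. Each set in this intersection lies in $D$ by the choice of $f$, and the intersection is taken over $|s|<\kappa$ indices; hence the $\kappa$-completeness of $D$ gives $\{i\in I\mid s\in g(i)\}\in D$. This is precisely where the hypothesis of $\kappa$-completeness (the idea attributed to Solovay) enters, and it is the only nontrivial input.

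Finally I would check the size constraints on the values $g(i)$, which is what separates the two statements. For each $i$, writing $\nu=|f(i)|<\lambda$, we have $|g(i)|=|S_\kappa(f(i))|=\nu^{<\kappa}$. Since $\nu<\lambda$ gives $\nu^{<\kappa}\leq\lambda^{<\kappa}$ by monotonicity, every $g(i)$ has cardinality $\leq\lambda^{<\kappa}<(\lambda^{<\kappa})^+$, so $g\colon I\to S_{(\lambda^{<\kappa})^+}(\mu^{<\kappa})$ and the first assertion follows. Under the extra hypothesis that $\nu^{<\kappa}<\lambda^{<\kappa}$ for every $\nu<\lambda$, the same computation yields $|g(i)|<\lambda^{<\kappa}$, so $g\colon I\to S_{\lambda^{<\kappa}}(\mu^{<\kappa})$ and $D$ is $(\lambda^{<\kappa},\mu^{<\kappa})$-regular. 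The only routine points to confirm are the standard identity $|S_\kappa(\mu)|=\mu^{<\kappa}$ and the monotonicity of $\nu\mapsto\nu^{<\kappa}$; the substantive step is the use of $\kappa$-completeness in the covering count, and I do not anticipate any genuine obstacle beyond this bookkeeping.
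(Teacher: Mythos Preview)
Your proof is correct and is essentially the paper's own argument, just phrased in Form~II rather than Form~I: the paper takes a witnessing family $(X_\alpha)_{\alpha<\mu}$ and forms $Z_y=\bigcap_{\alpha\in y}X_\alpha$ for $y\in S_\kappa(\mu)$, which under the Form~I/Form~II correspondence is exactly your $g(i)=S_\kappa(f(i))$, since $Z_y=\{i\mid y\subseteq f(i)\}=\{i\mid y\in g(i)\}$. One tiny imprecision: the equality $|S_\kappa(f(i))|=\nu^{<\kappa}$ need not hold literally when $\nu<\kappa$, but the inequality $|S_\kappa(f(i))|\leq\nu^{<\kappa}$ does, and that is all you use.
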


\begin{proof} Let $X_\alpha $
 $(\alpha\in\mu )$ witness the $(\lambda,\mu )$-regularity
of $D$ (Form I).
 For $y\subseteq\mu $ with $|y|<\kappa $ let $ Z_y=\bigcap_{\alpha\in y}
X_\alpha$. Each $Z_y$ is in $D$, since $D$ is $\kappa $-complete. If
$x\subseteq\mu $ with $|x|<\lambda$ then 
$|\{Z_y|y\subseteq  x, |y|<\kappa  \}| \leq|x|^{<\kappa }$.
 Thus, if $Y\subseteq S_\kappa(\mu )$ and $|Y|>\nu^{<\kappa }$
for every $\nu<\lambda $, then
 $|\cup Y|\geq\lambda $; hence $\bigcap_{y\in Y}
Z_y=\bigcap_{\alpha\in \cup Y} X_\alpha = \emptyset$.
Since $|S_\kappa(\mu)|=\mu^{<\kappa}$
 the $Z_y$'s
witness the $((\lambda^{<\kappa })^+, \mu^{<\kappa })$-regularity
 of $D$, respectively,  the
$(\lambda^{<\kappa }, \mu^{<\kappa })$-regularity 
of $D$ if the assumption in the
second statement holds.
 \end{proof}

The following interesting corollary
is hardly mentioned in the literature.

\begin{corollary}\label{dopoex5.10}
If $\kappa $ is $\mu$-compact, $\mu$
is singular and $\cf \mu< \kappa $
then $\kappa $ is $\mu^+$-compact.
 \end{corollary}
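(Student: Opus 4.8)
The plan is to read off the result from Proposition \ref{ex5.10}, but applied with completeness parameter $(\cf\mu)^+$ rather than with $\kappa$ itself; this is the one genuinely delicate point, and I explain below why the naive choice fails.

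First I would fix a witness for the $\mu$-compactness of $\kappa$, namely a $\kappa$-complete $(\kappa,\mu)$-regular ultrafilter $D$. In the relevant case $\mu\geq\kappa$ this $D$ is $(\kappa,\kappa)$-regular by \ref{1.1}(i), hence nonprincipal; being $\kappa$-complete and nonprincipal, $\kappa$ is measurable and therefore strongly inaccessible. This is the only use of the large-cardinal nature of $\kappa$, and it yields the two cardinal-arithmetic facts I need: since $\kappa$ is a regular strong limit and $\cf\mu<\kappa$, one has $\kappa^{\cf\mu}=\kappa$, while $\nu^{\cf\mu}\leq 2^{\nu\cdot\cf\mu}<\kappa$ for every $\nu<\kappa$ (as $\nu\cdot\cf\mu<\kappa$).

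Next I would apply Proposition \ref{ex5.10} with its complete-cardinal parameter taken to be $(\cf\mu)^+$ --- legitimate since $\cf\mu<\kappa$ gives $(\cf\mu)^+\leq\kappa$, so $D$ is $(\cf\mu)^+$-complete --- its first regularity index equal to $\kappa$, and its second index equal to $\mu$. Because $\rho^{<(\cf\mu)^+}=\rho^{\cf\mu}$ for every cardinal $\rho$, the facts above read $\nu^{<(\cf\mu)^+}=\nu^{\cf\mu}<\kappa=\kappa^{\cf\mu}=\kappa^{<(\cf\mu)^+}$ for all $\nu<\kappa$, which is exactly the strict inequality required by the sharp (second) clause of \ref{ex5.10}. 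That clause then gives that $D$ is $(\kappa^{\cf\mu},\mu^{\cf\mu})$-regular, i.e. $(\kappa,\mu^{\cf\mu})$-regular.

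Finally I would invoke König's theorem: $\mu^{\cf\mu}>\mu$, so $\mu^{\cf\mu}\geq\mu^+$, and by \ref{1.1}(i) (shrinking the second index) $D$ is $(\kappa,\mu^+)$-regular. Since $D$ is $\kappa$-complete, it witnesses that $\kappa$ is $\mu^+$-compact, as desired. The reason $(\cf\mu)^+$ is forced upon us is instructive, and is the main obstacle: with completeness parameter $\kappa$ the sharp clause of \ref{ex5.10} is unavailable, because $\nu^{<\kappa}=\kappa=\kappa^{<\kappa}$ for $2\leq\nu<\kappa$, so the strict inequality fails; the first clause then only produces the threshold $(\kappa^{<\kappa})^+=\kappa^+$, which is too weak to witness $\mu^+$-compactness. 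Choosing $(\cf\mu)^+$ is precisely what restores the strict inequality while keeping the intersection threshold pinned at $\kappa$, and it is here that the hypotheses ``$\mu$ singular'' and ``$\cf\mu<\kappa$'' do their work.
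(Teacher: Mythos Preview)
Your proof is correct and follows exactly the same route as the paper's: apply Proposition~\ref{ex5.10} with completeness parameter $(\cf\mu)^+$, use the strong inaccessibility of $\kappa$ to verify the strict-inequality hypothesis of the sharp clause, and then shrink the second index via K\"onig's inequality and \ref{1.1}(i). Your write-up is in fact more detailed than the paper's, which simply asserts ``since $\kappa$ is strongly inaccessible'' without spelling out the arithmetic or explaining why the naive choice of completeness parameter $\kappa$ would fail.
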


\begin{proof}
By $\mu$-compactness there is a $\kappa $-complete
$(\kappa, \mu)$-regular ultrafilter $D$.
By Proposition \ref{ex5.10}
with $(\cf\mu)^+$ in place of $\kappa $,
$D$ is
$(\kappa^{\cf\mu}, \mu^{\cf\mu})$-regular,
since $\kappa $ is strongly inaccessible.
Since  $\mu^{\cf\mu}>\mu$,
then $D$ is $( \kappa , \mu^+)$-regular by
 \ref{1.1}(i), hence $\kappa $ is $\mu^+$-compact.
\end{proof}

Some of the results presented in this paper have been obtained in 
1995, while
the author was visiting the University of Cagliari. A preliminary version of
this paper has been circulating since 1996. That version contained essentially
all the results proved in Sections \ref{fstp}, \ref{dfe1} and 
\ref{al} here. The introduction, too, had been written
in 1996 (at that time, one had the feeling that independence results were taken
in much more consideration than ZFC results; now things are rapidly changing).

Slightly less general versions of the results in Section \ref{dfe2} have been announced in the abstract 
\cite{Lp7}.

We have announced further results about regularity of ultrafilters in the
abstracts \cite{Lp6} and \cite{Lp7};
 however we have found a gap in a proof, so that some
statements in \cite{Lp6, Lp7} have to be considered as problems, or conjectures, so far.

This work has been performed under the auspices of GNSAGA (CNR).

\begin{problem}
\label{ex8.7}
  Which results about regularity of ultrafilters
(in particular, which results of the present paper) hold assuming just the
Prime Ideal Theorem, rather than the Axiom of Choice?
\end{problem}

We wish to express our warmest gratitude
to an anonymous referee for a careful reading of the manuscript,
for a great deal of suggestions that helped improve the exposition and for
detecting some inaccuracies. Last but not least, we appreciate
encouragement in our efforts to ``keep this neglected area of
set theory alive''.

\end{document}